\documentclass[11pt, reqno]{amsart}
\usepackage{graphicx,amssymb,amsmath,amsthm}
\usepackage{enumerate}
\usepackage{dsfont}
\usepackage[colorlinks, citecolor=red]{hyperref}
\usepackage{comment,cite,color}
\usepackage{cite,color}

\usepackage{mathrsfs}
\usepackage{epsfig}
\usepackage{lscape}
\usepackage{subfigure}
\usepackage[outdir=./]{epstopdf}

\usepackage{caption}
\usepackage{bm}
\usepackage{algorithm}
\usepackage{algpseudocode}
\usepackage[english]{babel}

\usepackage{multirow}

\usepackage{fullpage}
\usepackage[margin=1in]{geometry}
\setlength{\textwidth}{16cm} \setlength{\oddsidemargin}{0cm}
\setlength{\evensidemargin}{0cm}
\setlength{\topmargin}{-1cm}
\addtolength{\textheight}{1cm}

\newcommand{\dkh}[1]{\left\{#1\right\}}

\newcommand{\nj}[1]{\langle {#1} \rangle}

\newcommand{\norm}[1]{\|{#1}\|}

\newcommand{\norms}[1]{\|{#1}\|}
\newcommand{\abs}[1]{\lvert#1\rvert}

\newcommand{\R}{{\mathbb R}}
\newcommand{\Rn}{{\mathbb R}^n}

\newcommand{\T}{\top}

\newcommand{\vx}{{\bm x}}

\newcommand{\vz}{{\bm z}}

\newcommand{\va}{{\bm a}}

\newcommand{\RNum}[1]{\uppercase\expandafter{\romannumeral #1\relax}}

\numberwithin{equation}{section}

\newtheorem{definition}{Definition}[section]

\newtheorem{theorem}[definition]{Theorem}

\newtheorem{remark}[definition]{Remark}

\newtheorem{example}[definition]{Example}

\newtheorem{thm}{Theorem}[section]
\newtheorem{lem}{Lemma}[section]
\theoremstyle{remark}
\newtheorem{rem}{Remark}[section]
\newtheorem*{rem*}{Remark}



\date{}

\begin{document}

\author{Jian-Feng Cai}
\thanks{J. F. Cai was supported in part by Hong Kong Research Grant Council grants 16309518, 16309219, 16310620, 16306821.}
\address{Department of Mathematics, The Hong Kong University of Science and Technology, Clear Water Bay, Kowloon, Hong Kong, China}
\email{jfcai@ust.hk}

\author{Meng Huang}
\address{Department of Mathematics, The Hong Kong University of Science and Technology, Clear Water Bay, Kowloon, Hong Kong, China}
\email{menghuang@ust.hk}

\author{Dong Li}
\address{SUSTech International Center for Mathematics and Department of Mathematics, Southern University of Science and Technology, Shenzhen, China}
\email{lid@sustech.edu.cn}

\author{Yang Wang}
\thanks{Y. Wang was supported in part by the Hong Kong Research Grant Council grants 16306415 and 16308518.}
\address{Department of Mathematics, The Hong Kong University of Science and Technology, Clear Water Bay, Kowloon, Hong Kong, China}
\email{yangwang@ust.hk}

\baselineskip 18pt
\bibliographystyle{plain}
\title[Smoothed amplitude model]{The global landscape of phase retrieval \RNum{2}: quotient intensity models}
\maketitle

\begin{abstract}
A fundamental problem in phase retrieval is to reconstruct an unknown signal from a set of
 magnitude-only measurements.
In this work we introduce three novel quotient intensity-based models (QIMs) based a deep
modification of the traditional intensity-based models. A remarkable feature of  the new
 loss functions is that the corresponding geometric landscape  is benign under the optimal sampling complexity.  When the measurements $ a_i\in \Rn$ are Gaussian random vectors and the number of measurements $m\ge Cn$, the QIMs admit no spurious local minimizers with high probability, i.e., the target solution $ x$ is the unique global minimizer (up to a global phase) and the loss function has a negative directional curvature around each saddle point.   Such benign geometric landscape allows the gradient descent methods to find the global solution $x$ (up to a global phase) without spectral initialization.
\end{abstract}

\section{Introduction}
\subsection{Background}
The intensity-based model for phase retrieval is 
\[
y_j=\abs{\nj{a_j, x}}^2, ~j=1,\ldots,m,
\]
where $ a_j\in \Rn, j=1,\ldots,m$ are given vectors and $m$ is the number of measurements.  The phase retrieval problem aims to 
 recover the unknown signal $x\in \Rn$
based on the measurements $\dkh{(a_j,y_j)}_{j=1}^m$.  A natural approach to solve this problem is 
to consider the minimization problem
\begin{align} \label{fu_001}
\min_{u\in \R^n} \qquad f(u) = \frac 1 m \sum_{j=1}^m  { ((a_j \cdot u)^2 - (a_j \cdot x)^2 )^2}.
\end{align}
However, as shown in \cite{Sun18}, to guarantee the above loss function to have benign geometric landscape, the requirement of sampling complexity is $O(n \log^3 n)$.  This result is recently improved to $O(n \log n)$  in  \cite{2021b}. On the other hand, due to the heavy tail of the
quartic random variables in \eqref{fu_001}, such results seem to be optimal for this class of
loss functions.

To remedy this issue, we propose in this work three novel quotient intensity-based models (QIM)s to recover $x$ under optimal sampling complexity.
We rigorously prove that, for Gaussian random measurements,  those empirical loss functions admit the benign geometric landscapes with high probability under the optimal sampling complexity $O(n)$. Here, the phrase ``benign'' means: (1) the loss function has no spurious local minimizers;  and (2) the loss function has a negative directional curvature around each saddle point.  
The three quotient intensity-based models are 

QIM1:
\begin{align}  \label{mo:qwf1}
\min_{u\in \R^n} \qquad f(u) &= \frac 1 m \sum_{k=1}^m 
\frac{ ( (a_k\cdot u)^2 - (a_k\cdot x)^2 )^2}
{(a_k\cdot x)^2}.
\end{align}

QIM2:
\begin{align} \label{mo:qwf2}
\min_{u\in \R^n} \qquad f(u) &= \frac 1 m \sum_{k=1}^m 
\frac{ ( (a_k\cdot u)^2 - (a_k\cdot x)^2 )^2}
{\beta |u|^2 +(a_k\cdot x)^2}.
\end{align}

QIM3:
\begin{align} \label{mo:qwf3}
\min_{u\in \R^n} \qquad f(u) &= \frac 1 m \sum_{k=1}^m 
\frac{ ( (a_k\cdot u)^2 - (a_k\cdot x)^2 )^2}
{ |u|^2 +\beta_1(a_k\cdot u)^2+ \beta_2 (a_k\cdot x)^2}.
\end{align}

The phase retrieval problem arises in many fields of science and engineering such as X-ray crystallography \cite{harrison1993phase,millane1990phase}, microscopy
\cite{miao2008extending}, astronomy \cite{fienup1987phase}, coherent diffractive
imaging \cite{shechtman2015phase,gerchberg1972practical} and optics
\cite{walther1963question} etc.  In practical applications
due to the physical limitations  optical detectors  can only record the magnitude of signals while losing the phase information. 
Many algorithms have been designed to solve the phase retrieval problem, which includes convex algorithms and non-convex ones. The convex algorithms  usually rely on a ``matrix-lifting'' technique, which is computationally inefficient for large scale problems \cite{phaselift,Phaseliftn,Waldspurger2015} . 
 In contrast, many non-convex algorithms bypass the lifting step and operate directly on the lower-dimensional ambient space, making them much more computationally efficient.  Early non-convex algorithms were mostly based on the technique of alternating projections, e.g. Gerchberg-Saxton \cite{Gerchberg1972} and Fineup \cite{ER3}. The main drawback, however, is the lack of theoretical guarantee. Later Netrapalli et al \cite{AltMin} proposed the AltMinPhase  algorithm based on a technique known as {\em spectral initialization}. They proved that the algorithm linearly converges to the true solution with $O(n \log^3 n)$ resampling Gaussian random measurements. This  work led 
further to several other non-convex algorithms based on spectral initialization. 
A common thread  is 
 first choosing a good initial guess through spectral initialization, and then solving an optimization model through gradient descent, such as the Wirtinger Flow  method  \cite{WF},  Truncated Wirtinger Flow algorithm \cite{TWF},  randomized Kaczmarz method \cite{huang2021linear,tan2019phase,Wei2015},
 Gauss-Newton method \cite{Gaoxu}, Truncated Amplitude Flow algorithm \cite{TAF}, Reshaped Wirtinger Flow (RWF) \cite{RWF} and so on.

\subsection{Prior arts and connections} \label{sec:prob}
As was already mentioned earlier, producing a good initial guess using spectral initialization seems to
be a prerequisite for prototypical  non-convex algorithms to succeed with good theoretical guarantees. A natural and fundamental  question is:

{\em Is it possible for non-convex algorithms to achieve successful recovery with a random initialization (i.e. without spectral initialization or any additional truncation)}?  

  In the recent
work \cite{Sun18}, Ju Sun et al.  carried out a deep study of the global geometric structure of  phase retrieval problem. They proved that the loss function does not have any spurious local minima under $O(n \log^3 n)$ Gaussian random measurements. More specifically, it was shown in \cite{Sun18} that all 
minimizers coincide with the target signal $\vx$ up to a global phase, and the loss function has a negative directional curvature around each saddle point. Thanks to this benign geometric landscape any algorithm which can avoid saddle points  converges to the true solution with high probability. 
A trust-region
method was employed in \cite{Sun18} to find the global minimizers with random initialization. To reduce the sampling complexity, it has been shown in \cite{cai2019} that a  combination of the loss function with a judiciously chosen activation function also possesses the benign geometry structure  under $O(n)$ Gaussian random measurements. Recently, a smoothed amplitude flow estimator has been proposed in \cite{2020a} and the authors show that the loss function has benign geometry structure under the optimal sampling complexity. Numerical tests show that the estimator in \cite{2020a}  yields very stable and fast convergence with random initialization and performs as good as or even better than the existing gradient descent methods with spectral initialization.

The emerging concept of a benign geometric landscape has also recently been   explored in many other applications of signal processing and machine learning, e.g. matrix sensing \cite{bhojanapalli2016global,park2016non}, tensor decomposition \cite{ge2016matrix}, dictionary learning\cite{sun2016complete} and matrix completion \cite{ge2015escaping}. 
For general optimization problems there exist a plethora of loss functions with 
well-behaved geometric landscapes such that all local optima are also global optima and 
each saddle point has a negative direction curvature in its vincinity. 
Correspondingly several techniques have been developed to guarantee that the standard gradient based optimization algorithms can escape such saddle points efficiently, see e.g. \cite{jin2017escape,du2017gradient,jin2017accelerated}.

\subsection{Our contributions}
This paper aims to show the intensity-based model \eqref{fu_001} with some deep modification has a benign geometry structure  under the optimal sampling complexity. More specifically, we first  introduce three novel quotient intensity models
 and then we prove rigorously that each loss function of them has no spurious local minimizers. Furthermore,  the loss function of quotient intensity model has a negative directional curvature around each saddle point. Such properties allow first order method like gradient descent  to locate a global minimum with random initial guess.

Our first result shows that the loss function of \eqref{mo:qwf1} has the benign geometric landscape, as stated below. 

\begin{theorem}[Informal]
Consider the quotient intensity model \eqref{mo:qwf1}.
Assume  $\{\va_i\}_{i=1}^m$ are i.i.d.  standard Gaussian random vectors and $\vx\ne 0$. 
There exist positive absolute constants $c$, $C$, such that if $m\ge C n  $, then
with probability at least $1- e^{-cm} $ the loss function $F=F(\vz)$ 
has no spurious local minimizers. The only global minimizers are
$\pm  \vx$. All other critical points are strict saddles.
\end{theorem}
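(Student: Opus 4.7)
The plan is to follow the now-standard three-step landscape template for phase retrieval: (i) derive closed-form expressions for $\nabla f(u)$ and $\nabla^2 f(u)$ and verify that $\pm x$ are strict local minima; (ii) partition $\R^n\setminus\{\pm x\}$ into regions on which either the gradient is bounded away from zero or the Hessian has a direction of strictly negative curvature; (iii) establish the required bounds uniformly via Gaussian concentration plus an $\varepsilon$-net argument at scale $m\ge Cn$. Writing $b_k := a_k\cdot x$ and $c_k := a_k\cdot u$, a direct computation gives
\[
\nabla f(u)=\frac{4}{m}\sum_{k=1}^m \frac{(c_k^2-b_k^2)\,c_k}{b_k^2}\,a_k,\qquad \nabla^2 f(u)=\frac{4}{m}\sum_{k=1}^m \frac{3c_k^2-b_k^2}{b_k^2}\,a_k a_k^\T.
\]

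The structural feature driving everything, and the whole reason QIM1 is easier than the quartic model \eqref{fu_001}, is that the problematic denominator cancels whenever the Hessian is tested against the signal direction:
\[
x^\T\nabla^2 f(u)\,x \;=\; \frac{4}{m}\sum_{k=1}^m\bigl(3c_k^2-b_k^2\bigr),
\]
which is a sub-exponential Gaussian chi-square sum concentrating near $12\|u\|^2-4\|x\|^2$ by Hanson--Wright / Bernstein. So the direction $x$ itself supplies a negative-curvature certificate on the entire ``small-$u$'' region $\{\|u\|<\|x\|/\sqrt{3}\}$; combined with the evenness $f(-u)=f(u)$ and a first-order Taylor expansion around $\pm x$ (again tested with $x$-aligned frames) this pins down the local-minimum behavior at $\pm x$ and rules out any other critical point with small $\|u\|$. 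For the remaining region where $u$ is bounded away from both $0$ and $\pm x$, I would exhibit a nonzero projection of $\nabla f(u)$: testing against $x$ yields
\[
x\cdot \nabla f(u)=\frac{4}{m}\sum_{k=1}^m \frac{(c_k^2-b_k^2)\,c_k}{b_k},
\]
whose conditional expectation (after writing $u=\alpha x+v$ with $v\perp x$) is a polynomial in $\alpha$ and $\|v\|$ that vanishes only at $u=\pm x$; a similar projection onto $u$ handles the large-$\|u\|$ region, and a covering argument on $\{u:\|u\|\le R\|x\|\}$ upgrades the pointwise bounds to uniform ones.

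The main obstacle is that neither $\nabla f$ nor $\nabla^2 f$ has finite expectation in directions $u\not\parallel x$: the residual term $c_k^4/b_k^2$ in $f$, and $c_k^3/b_k$ in $x\cdot\nabla f$, blow up because $\E[1/b_k^2]=\infty$. Hence a naive ``population landscape $+$ concentration'' strategy is unavailable, and a direct empirical analysis is forced. I would handle this in two ways: (a) wherever possible, test the quadratic form or the gradient along directions ($x$ or $u$) that \emph{cancel} the $1/b_k^2$ factor, reducing the task to sub-exponential concentration for Gaussian polynomials of degree at most four; and (b) where such cancellation is incomplete, truncate the sum to indices with $b_k^2\ge\tau\|x\|^2$ and control the discarded mass via anti-concentration, using the fact that at most $O(\sqrt{\tau}\,m)$ of the $b_k$'s fall below the threshold with probability $1-e^{-cm}$. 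Combining these pointwise bounds with a standard net-and-union-bound argument at resolution $1/\mathrm{poly}(n)$ should yield the landscape conclusion on an event of probability at least $1-e^{-cm}$ once $m\ge Cn$.
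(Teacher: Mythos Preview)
Your high-level partition and the observation that $x^\T\nabla^2 f(u)\,x = \frac{4}{m}\sum_k(3c_k^2-b_k^2)$ cancels the denominator match the paper exactly; this is precisely how the paper handles $\|u\|\le\tfrac13\|x\|$ and the point $u=0$. The gap is in the remaining regions. Your test quantities $x\cdot\nabla f(u)=\frac{4}{m}\sum_k(c_k^3/b_k-b_kc_k)$ and $u\cdot\nabla f(u)=\frac{4}{m}\sum_k(c_k^4/b_k^2-c_k^2)$ still contain, after writing $c_k=\alpha b_k+w_k$, the pieces $w_k^3/b_k$ and $w_k^4/b_k^2$ with infinite variance, and truncation at $|b_k|\ge\tau$ does not close the argument: the $O(\tau m)$ discarded indices each carry a contribution of order $1/\tau$, so the discarded part of the sum is $O(1)$ with uncontrolled sign, and a nonzero truncated sum does not certify $\nabla f(u)\ne 0$. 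Your appeal to a ``conditional expectation'' that is ``a polynomial in $\alpha$ and $\|v\|$'' is therefore not available, as you yourself note two paragraphs later.

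The paper avoids ever needing two-sided control of a singular sum; only \emph{lower} bounds are used, and those are obtained either by Cauchy--Schwarz or by regularizing the denominator. Concretely: (i) for $\|u\|^2\ge 1+\epsilon_0$, Cauchy--Schwarz gives $\frac{1}{m}\sum_k c_k^4/b_k^2\ge\bigl(\frac{1}{m}\sum_k c_k^2\bigr)^2\big/\bigl(\frac{1}{m}\sum_k b_k^2\bigr)$, and the right-hand side concentrates, yielding $\partial_R f>0$; (ii) for $\|u\|\sim 1$ with $\hat u$ bounded away from $\pm\hat x$, one does not prove $\nabla f\ne 0$ directly but instead \emph{assumes} $\partial_R f=0$ at a putative critical point, reads off the identity $R\cdot\frac{1}{m}\sum_k Z_k^4/X_k^2=\frac{1}{m}\sum_k Z_k^2$, and then only needs a lower bound $\frac{1}{m}\sum_k Z_k^4/X_k^2\ge 100$, obtained by replacing $X_k^2$ with $\epsilon(1+(a_k\cdot e^\perp)^2)+X_k^2$ (bounded, amenable to union bounds, expectation $\sim\epsilon^{-1/2}$), forcing $R\ll 1$ and hence $H_{xx}(u)<0$; (iii) for strong convexity near $\pm x$, the Hessian is bounded \emph{below} by $\frac{4}{m}\sum_k\bigl(3\,\frac{(a_k\cdot\xi)^2(a_k\cdot u)^2}{\epsilon+X_k^2}\phi(\tfrac{a_k\cdot\xi}{N})-(a_k\cdot\xi)^2\bigr)$, which again concentrates. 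The unifying idea you are missing is that singular positive quantities admit one-sided concentration via regularization from below, and the critical-point equations let you trade two-sided control for one-sided.
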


The second result is the global analysis for the estimator \eqref{mo:qwf2}.

\begin{theorem}[Informal]
Consider the quotient intensity model \eqref{mo:qwf2}.
Let $0<\beta <\infty$. Assume 
$\{\va_i\}_{i=1}^m$ are i.i.d.  standard Gaussian random vectors and $\vx\ne 0$. 
There exist positive constants $c$, $C$ depending only on $\beta$, such that if $m\ge C n  $, then
with probability at least $1- e^{-cm} $ the loss function $F=F(\vz)$ 
has no spurious local minimizers. The only global minimizer is
$\pm  \vx$ and all other critical points are strict saddles.
\end{theorem}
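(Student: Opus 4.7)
The plan is to follow the population-plus-concentration template for benign-landscape proofs in phase retrieval. Let $F(\vu) := \E f(\vu)$ denote the population loss. By the rotational invariance of the standard Gaussian ensemble, $F$ depends on $\vu$ only through the two scalars $\alpha := \nj{\vu,\vx}/|\vx|$ and $\gamma := \sqrt{|\vu|^2 - \alpha^2}$. Writing $\vu = \alpha \hat{\vx} + \gamma \vw$ with $\vw \perp \vx$ and $|\vw|=1$, every critical-point equation and every eigenvalue question on $\nabla^2 F$ reduces to a scalar Gaussian integral in the two coordinates $(\va\cdot\hat{\vx}, \va\cdot\vw)$ of an independent bivariate standard normal.

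With $N_k(\vu) = ((\va_k\cdot\vu)^2 - (\va_k\cdot\vx)^2)^2$ and $D_k(\vu) = \beta|\vu|^2 + (\va_k\cdot\vx)^2$, the gradient of one term reads
\begin{equation*}
\nabla_{\vu}\frac{N_k}{D_k} \;=\; \frac{4\,((\va_k\cdot\vu)^2 - (\va_k\cdot\vx)^2)(\va_k\cdot\vu)}{D_k}\,\va_k \;-\; \frac{2\beta\, N_k}{D_k^2}\,\vu,
\end{equation*}
while the Hessian involves $D_k^{-1}$, $D_k^{-2}$ and $D_k^{-3}$. Taking expectations and using the symmetry reduction, I expect the critical points of $F$ to split into three classes: (i) the global minima $\pm \vx$, at which $N_k \equiv 0$ gives $F = 0$ and the Hessian reduces to that of the unnormalised intensity loss and is positive definite; (ii) the origin $\vu = 0$, where a direct expansion $f(t\vx) = |\vx|^2\bigl(1 - (2+\beta) t^2 + O(t^4)\bigr)$ exhibits explicit negative curvature along $\vx$; and (iii) possibly a sphere $\{\vu \perp \vx,\; |\vu| = r^\ast(\beta)\}$ of strict saddles, whose negative direction should again carry a component along $\vx$ and can be detected from the two-variable profile $F(\alpha,\gamma)$. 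In each case the goal is to exhibit a negative eigenvalue of $\nabla^2 F$ as an explicit scalar Gaussian integral and verify its sign.

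To transfer these properties to the empirical loss, I would establish a uniform deviation estimate
\begin{equation*}
\sup_{\vu \in K} \| \nabla f(\vu) - \nabla F(\vu) \| \;+\; \sup_{\vu \in K} \| \nabla^2 f(\vu) - \nabla^2 F(\vu) \| \;\le\; \delta
\end{equation*}
on a bounded compact set $K \subset \R^n$ containing all critical points, via a Bernstein-type tail bound combined with an $\epsilon$-net argument of cardinality $(C/\epsilon)^n$. The enabling analytic fact is $D_k \ge \max\{\beta|\vu|^2,\,(\va_k\cdot\vx)^2\}$, which keeps each summand and its first two derivatives sub-exponential in $\va_k$ with Orlicz norm uniformly bounded on $K$; for $m \ge Cn$ this yields $\delta$-closeness with probability at least $1 - e^{-cm}$, so that the sign of every eigenvalue of $\nabla^2 F$ is inherited by $\nabla^2 f$. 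A separate coercivity argument, based on the fact that $D_k \sim \beta|\vu|^2$ for $|\vu|$ large and hence $f(\vu) \gtrsim |\vu|^2/\beta$, then rules out critical points outside $K$.

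The principal obstacle is the $\vu$-dependent denominator, which produces additional terms in both $\nabla f$ and $\nabla^2 f$ (absent in QIM1) that must be carried through both the closed-form Gaussian integrals of the population analysis and the Orlicz-norm bookkeeping of the concentration step. In particular, identifying class (iii) and verifying it consists only of strict saddles for every $\beta \in (0,\infty)$ will likely require a careful one-variable analysis of an implicit equation in $\gamma$; the statement's allowing $C$ and $c$ to depend on $\beta$ is a strong hint that the argument degenerates as $\beta \downarrow 0$ (where QIM2 formally reduces to QIM1) or $\beta \uparrow \infty$, and that tracking the $\beta$-dependence quantitatively is where the main calculation lies.
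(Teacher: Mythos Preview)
Your high-level template is sensible, but the concentration step has a genuine gap that breaks the argument at the claimed $m \gtrsim n$ complexity. You assert that the denominator bound $D_k \ge \beta|\vu|^2$ makes each summand of $\nabla^2 f$ sub-exponential with Orlicz norm uniformly bounded on $K$. This is false: the Hessian contains terms such as
\[
\frac{(\va_k\cdot\vu)^4}{(\beta|\vu|^2 + (\va_k\cdot\vx)^2)^2}
\quad\text{and}\quad
\frac{(\va_k\cdot\vu)^3(\va_k\cdot\xi)}{(\beta|\vu|^2 + (\va_k\cdot\vx)^2)^2},
\]
and when $\vu$ (or $\xi$) has a component orthogonal to $\vx$, the numerator involves fourth powers of a Gaussian that is \emph{independent} of the denominator. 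The resulting summand behaves like $Z^4$ with $Z\sim\mathcal N(0,1)$, which is only $\psi_{1/2}$, not sub-exponential. A Bernstein-plus-net argument on such terms yields at best $m \gtrsim n\log n$, exactly the obstruction that already afflicts the unnormalised intensity loss and that QIM2 is meant to overcome.

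The paper does not attempt a uniform Hessian concentration. Instead it partitions $\R^n$ into regimes ($\|\vu\|\ll 1$, $\|\vu\|\gg 1$, $\|\vu\|\sim 1$ with $\hat\vu$ near, far from, or orthogonal to $\vx$) and in each regime exhibits a \emph{specific} scalar quantity (a single directional derivative $\partial_R f$, $\partial_\theta f$, or a single directional curvature $\partial_{\theta\theta}f$, $\partial_{u_1u_1}f$) whose sign certifies either ``not a critical point'' or ``strict saddle.'' The quartic heavy tails are tamed by one-sided truncation: one multiplies the offending factor by a smooth cutoff $\phi((\va_k\cdot\hat\vu)/K)$, concentrates the truncated piece (now genuinely bounded, hence amenable to Bernstein and nets), and bounds the remainder either by absorbing it into a favourable term of the same sign or by showing its expectation is negligible for $K$ large.

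A second, related point: your claim (i) that near $\pm\vx$ ``the Hessian \ldots\ is positive definite'' is precisely what the paper says \emph{cannot} be obtained for QIM2 under $m\gtrsim n$ (see the remark following the informal theorem and the remark after the restricted-convexity result). The paper establishes only \emph{restricted} convexity: positivity of $\xi^\top \nabla^2 f(\vu)\,\xi$ for the single direction $\xi = (\vu-\vx)/\|\vu-\vx\|$. The reason this direction is tractable is that writing $\vu = \vx + t\xi$ lets one expand $(\va_k\cdot\vu)^4$ into pieces each of which either carries an $(\va_k\cdot\vx)$ factor (and is then damped by the denominator) or carries a prefactor $t\ll 1$; the fully quartic term in an arbitrary direction $\xi$ has no such handle.
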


\begin{remark}
There appears some  subtle differences between estimators \eqref{mo:qwf1} and
\eqref{mo:qwf2}.  Although the former looks more singular, one can prove
full strong convexity in the neighborhood of the global minimizers. In
the latter case, however, we only have certain restricted convexity. 
\end{remark}

The third result is the global landscape for the estimator \eqref{mo:qwf3}.
\begin{theorem} [Informal]
Consider the quotient intensity model \eqref{mo:qwf3}.
Let $0<\beta_1,\beta_2 <\infty$. Assume 
$\{\va_i\}_{i=1}^m$ are i.i.d.  standard Gaussian random vectors and $\vx\ne 0$. 
There exist positive constants $c$, $C$ depending only on $\beta$, such that if $m\ge C n  $, then
with probability at least $1- e^{-cm} $ the loss function $F=F(\vz)$ 
has no spurious local minimizers. The only global minimizer is
$\pm  \vx$ and all other critical points are strict saddles.
\end{theorem}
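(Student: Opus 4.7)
The plan is to follow the same region-by-region strategy used for the first two theorems. After scaling so that $\|\vx\|=1$, I would start by computing the gradient and Hessian of the empirical loss via the quotient rule. Writing $N_k=((\va_k\cdot\vu)^2-(\va_k\cdot\vx)^2)^2$ and $D_k=|\vu|^2+\beta_1(\va_k\cdot\vu)^2+\beta_2(\va_k\cdot\vx)^2$, each summand contributes $\nabla N_k/D_k-N_k\nabla D_k/D_k^2$ with
\begin{equation*}
\nabla N_k=4\bigl((\va_k\cdot\vu)^2-(\va_k\cdot\vx)^2\bigr)(\va_k\cdot\vu)\va_k, \qquad \nabla D_k=2\vu+2\beta_1(\va_k\cdot\vu)\va_k,
\end{equation*}
and an analogous but longer formula for the Hessian. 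The denominator inherits the regularizing virtues of both QIM1 and QIM2: $D_k\ge\beta_2(\va_k\cdot\vx)^2$ tames the blow-up of $N_k/D_k$ in the phase-retrieval-critical direction (as in QIM1), while the $|\vu|^2$ term keeps the summands bounded near $\vu=0$ (as in QIM2). This will be used to show that each summand, along with its first two derivatives, has sub-exponential tails uniformly in $\vu$ on bounded sets.

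Next I would analyze the population landscape $F_\infty(\vu)=\E [N/D]$. Partition the parameter space into (i) a ball $\mathcal R_1=\{\vu:\min(\|\vu-\vx\|,\|\vu+\vx\|)\le c_1\}$, (ii) a ball $\mathcal R_2=\{\vu:\|\vu\|\le c_2\}$ near the origin, and (iii) the complement. On $\mathcal R_1$ I would establish restricted strong convexity of $\nabla^2 F_\infty$ along the direction $\vu\mp\vx$; since $\vu$ is bounded away from $0$ there, the $|\vu|^2$ term in $D_k$ only perturbs the analysis of QIM1. On $\mathcal R_2$ I would show $\nabla^2 F_\infty(\vu)[\vx,\vx]<-\mu$: when $\vu$ is small, $N_k\approx(\va_k\cdot\vx)^4$ and $D_k\approx\beta_2(\va_k\cdot\vx)^2$, so the second variation in the $\vx$-direction reduces to a negative-definite Gaussian quadratic form, exactly as in QIM2. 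On $\mathcal R_3$ the goal is to show that no critical point of $F_\infty$ exists, typically by proving $\langle\nabla F_\infty(\vu),\vu-\alpha\vx\rangle>0$ for an appropriate $\alpha=\alpha(\vu)$ depending on the correlation of $\vu$ with $\vx$.

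Finally I would upgrade to the empirical loss via uniform concentration on a bounded region, supplemented by a coercivity estimate showing $F$ diverges for $\|\vu\|\to\infty$. A standard $\varepsilon$-net argument on the sphere together with matrix Bernstein inequalities, applied to the truncated sub-exponential summands, yields $\sup_{\vu\in K}\|\nabla F_m-\nabla F_\infty\|$ and $\sup_{\vu\in K}\|\nabla^2 F_m-\nabla^2 F_\infty\|$ of order $\sqrt{n/m}$ with probability $1-e^{-cm}$; for $m\ge Cn$ with $C$ large relative to the absolute constants $\mu,\mu'$ from the population analysis, the three landscape properties transfer intact.

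The main obstacle, compared to QIM1 and QIM2, is that $D_k$ is the sum of three qualitatively different terms, so no single contribution dominates uniformly in $(\vu,\va_k)$: near $\vu=0$ the $\beta_2(\va_k\cdot\vx)^2$ piece dominates (QIM1-like regime), for $(\va_k\cdot\vu)$ large with $(\va_k\cdot\vx)$ small the $\beta_1(\va_k\cdot\vu)^2$ piece dominates, and for generic $\vu$ all three compete. Consequently, the population integrals defining $F_\infty$ cannot be evaluated in closed form, and the analysis on $\mathcal R_3$ must proceed through two-sided comparisons, e.g.\ bounding $D_k$ above and below by convex combinations of the simpler denominators of QIM1 and QIM2 and invoking monotonicity in $\beta_1,\beta_2$. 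Making the constants $c,C$ in the statement explicitly quantitative in $\beta_1,\beta_2$ is a further bookkeeping burden that runs through every step.
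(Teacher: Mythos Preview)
Your three-region decomposition has a genuine gap: on $\mathcal R_3$ you plan to show that $F_\infty$ has \emph{no} critical points, but this is false. There is a ring of saddle points at $\|\vu\|\sim 1$ with $\vu\perp\vx$. To see this in the population loss, parametrize $\vu=\sqrt{R}(\cos\theta\,e_1+\sin\theta\,e^{\perp})$ and compute that $\mathbb E\,\partial_\theta f$ vanishes at $\theta=\pi/2$, while $\partial_R f$ changes sign in $R$ (negative for small $R$, positive for large $R$), so by continuity there is some $R_*$ with $\partial_R f=0$. These points lie neither in your $\mathcal R_1$ nor in your $\mathcal R_2$, and your test functional $\langle\nabla F_\infty(\vu),\vu-\alpha\vx\rangle$ will vanish on them regardless of $\alpha$. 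The paper handles this by refining the intermediate region $\|\vu\|\sim 1$ into three angular sub-regimes: for $\epsilon_0\le|\hat u\cdot e_1|\le 1-\epsilon_0$ one shows $\partial_\theta f\ne 0$; for $|\hat u\cdot e_1|\le\epsilon_0$ one shows $\partial_{\theta\theta}f<0$ (strict saddle); and for $||\hat u\cdot e_1|-1|\le\epsilon_0$ with $|R-1|\ge c(\epsilon_0)$ one shows $\partial_R f\ne 0$. The polar coordinates $(R,\theta)$ are not incidental---the key computation is that $\partial_{ab}h_1<0$ for $h_1(a,b)=(Ra-b)^2/(R+\beta_1 Ra+\beta_2 b)$, which gives a clean sign for the angular derivative.

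Two smaller points. First, for QIM3 the paper obtains \emph{full} strong convexity of the Hessian near $\pm\vx$, not merely restricted convexity along $\vu\mp\vx$; the extra $\beta_1(\va_k\cdot\vu)^2$ in the denominator provides enough damping to make every Hessian entry amenable to concentration (this is precisely the contrast the paper draws with QIM2). Second, your plan to concentrate $\nabla F_m$ and $\nabla^2 F_m$ uniformly on a bounded set via matrix Bernstein is cleaner in principle than the paper's region-by-region localization with smooth cutoffs $\phi(\cdot/K)$, but you should check carefully that the summands really are sub-exponential uniformly in $\vu$ on your bounded set---the paper's approach sidesteps this by tailoring the truncation to each region.
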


\begin{remark}
For this case, thanks to the strong damping, we have
full strong convexity in the neighborhood of the global minimizers. 
\end{remark}

\subsection{Notations}
Throughout this proof we fix $\beta>0$ as a constant and do not study
the precise dependence of other parameters on $\beta$.  We write $u \in \mathbb S^{n-1}$ if $u\in \mathbb R^n$ and 
$\|u \|_2=\sqrt{ \sum_{j} u_j^2} =1$. 
 We use $\chi$ to denote the usual characteristic function. For example  $\chi_A (x)=1$ if $x \in A$ and $\chi_A(x)=0$ if $x\notin A$. 
 We denote by $\delta_1$, $\epsilon$, $\eta$, $\eta_1$ various 
constants whose value will be taken sufficiently small. The needed smallness will be
clear from the context. 
 For any quantity $X$, we shall write $X=O(Y)$ if $|X| \le C Y$ for some constant
$C>0$.  We write $X\lesssim Y$ if $X\le CY$ for some constant $C>0$.  
We shall write $X \ll Y$ if $ X \le c Y$ where the constant $c>0$ will be sufficiently
small. 
 In our proof it is important for us to specify the precise dependence of the sampling size $m$ in terms of the dimension $n$. For this purpose
we shall write $m\gtrsim n $ if $m\ge C n$ where the constant $C$ is allowed to depend on 
$\beta$ and 
the small constants $\epsilon$, $\epsilon_i$ etc used in the argument.  
One can extract more explicit dependence 
of $C$ on the small constants and $\beta$ but for simplicity we suppress this dependence here. 
 We shall say an event $A$ happens with \textbf{high probability} if   $\mathbb P (A) \ge   1-C e^{-cm}$, 
where $c>0$, $C>0$ are constants.  The constants $c$ and $C$ are allowed to depend
on $\beta$ and the small constants $\epsilon$, $\delta$ mentioned before.

\subsection{Organization}
In Section 2--4 we carry out an in-depth analysis of the corresponding geometric landscape of QIM1, QIM2 and QIM3 under optimal sampling complexity $O(n)$. 
In Section 5, we report some numerical experiments to  demonstrate the efficiency of our proposed estimators.
In Appendix, we collect the technique lemmas which are used in the proof.

\section{Quotient intensity model \RNum{1}}  \label{S:model4a}
In this section, we consider the first quotient intensity model and prove that it has benign geometric landscape, as demonstrated below.

\begin{align} \label{model4ae1}
f(u) &= \frac 1 m \sum_{k=1}^m 
\frac{ ( (a_k\cdot u)^2 - (a_k\cdot x)^2 )^2}
{(a_k\cdot x)^2}.
\end{align}

\begin{thm} \label{thmEa}
Assume 
$\{a_k\}_{k=1}^m$ are i.i.d.  standard Gaussian random vectors and $x\ne 0$. 
There exist positive absolute constants $c$, $C$, such that if $m\ge C n  $, then
with probability at least $1- e^{-cm} $ the loss function $f=f(u)$ 
defined by \eqref{model4ae1}
has no spurious local minimizers. The only global minimizer is
$\pm  x$, and the loss function is strongly convex in a neighborhood of $\pm x$.
The point $u=0$ is a local maximum point with strictly negative-definite Hessian.
All other critical points are strict saddles, i.e., each saddle point has a neighborhood
where the function has negative directional curvature.
\end{thm}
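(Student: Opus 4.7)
The plan is to analyze the landscape by a direct differential calculation of the gradient and Hessian of $f$, followed by a partition of $\mathbb{R}^n$ into regions with qualitatively different behavior. A direct computation gives
\begin{align*}
\nabla f(u) &= \frac{4}{m}\sum_{k=1}^m \frac{((a_k\cdot u)^2 - (a_k\cdot x)^2)(a_k\cdot u)}{(a_k\cdot x)^2}\,a_k, \\
\nabla^2 f(u) &= \frac{4}{m}\sum_{k=1}^m \frac{3(a_k\cdot u)^2 - (a_k\cdot x)^2}{(a_k\cdot x)^2}\,a_k a_k^{\top}.
\end{align*}
The key structural observation is the cancellation
\[
x^{\top}\nabla^2 f(u)\,x \;=\; \frac{4}{m}\sum_{k=1}^m \bigl(3(a_k\cdot u)^2 - (a_k\cdot x)^2\bigr),
\]
in which the singular factor $(a_k\cdot x)^{-2}$ disappears. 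By sub-exponential concentration this quantity is close to $4(3\|u\|^2 - \|x\|^2)$, so $x^{\top}\nabla^2 f(u)x < 0$ on the large set $\|u\|^2 \le (1/3 - \delta)\|x\|^2$; there $x$ itself is a direction of strict negative curvature.

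Equipped with this, I would split $\mathbb{R}^n$ into four regions. In $\mathcal{R}_1 = \{u : \min_{\pm}\|u\mp x\| \le \rho\|x\|\}$ (near the global minimizers) the weight $(3(a_k\cdot u)^2 - (a_k\cdot x)^2)/(a_k\cdot x)^2$ equals $2$ at $u = \pm x$, so $\nabla^2 f(\pm x) = (8/m)\sum_k a_k a_k^{\top} \succeq 4I$ with high probability; a uniform (in $u$) estimate for the Hessian then extends strong convexity to a full neighborhood. In $\mathcal{R}_2 = \{u : \|u\| \le \eta\|x\|\}$ the weight near the origin is approximately $-1$, giving $\nabla^2 f(u)\preceq -2I$ and hence the local-maximum property at $0$. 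In $\mathcal{R}_3 = \{\eta\|x\| < \|u\| \le (1/\sqrt{3}-\delta)\|x\|\}\setminus\mathcal{R}_1$, the cancellation above already supplies $x$ as a negative-curvature direction. The main work is in $\mathcal{R}_4$, consisting of the remaining $u$ with $\|u\|\gtrsim\|x\|$ but $u$ bounded away from $\pm x$: here $v=x$ no longer produces negative curvature, and I would combine a gradient argument (for $\|u\|$ large enough one can show $u\cdot\nabla f(u)>0$, ruling out critical points) with a Hessian test along $v = u - \lambda x$, where $\lambda$ is chosen so that $v^{\top}\nabla^2 f(u)v$ again enjoys a partial cancellation against $(a_k\cdot x)^{-2}$; combined with the hypothesis that $u$ is bounded away from $\pm x$, this should deliver the needed negative eigenvalue and hence the strict-saddle property.

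The main technical obstacle throughout is that $1/(a_k\cdot x)^2$ is heavy-tailed: indeed $\mathbb{E}[1/(a\cdot x)^2] = \infty$, so the naive use of concentration or first-moment identities fails whenever the $(a_k\cdot x)^2$ in the numerator does not cancel. To handle this I would truncate each sum by splitting the index set at $|a_k\cdot x| \ge \sqrt{\tau}\|x\|$: the main part is controlled by standard sub-exponential or Bernstein concentration after multiplication by the smooth numerator factors, while the exceptional part is bounded using the fact that with high probability only $O(\sqrt{\tau}\,m)$ indices satisfy $|a_k\cdot x| < \sqrt{\tau}\|x\|$, and the numerator there is dominated by $(a_k\cdot u)^4 + (a_k\cdot x)^4$, contributing only a small fraction as $\tau\to 0$. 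Uniform control over $u$ is then obtained by a standard $\varepsilon$-net argument on a ball of radius $\sim\|x\|$, together with Lipschitz estimates on the truncated gradient and Hessian. In my estimate the hardest single step will be arranging this truncation so that the bad-index contribution is dominated by the good-index concentration uniformly in $u$ throughout $\mathcal{R}_4$, where no automatic algebraic cancellation is available to tame $1/(a_k\cdot x)^2$.
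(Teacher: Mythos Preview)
Your decomposition into regions, the negative-curvature direction $x$ (the paper's $H_{e_1e_1}$), the radial gradient test $u\cdot\nabla f(u)>0$ for large $\|u\|$, and the Hessian analysis at $0$ and at $\pm x$ all match the paper's proof closely. The substantive difference, and the place where your proposal has a gap, is the handling of the intermediate region $\mathcal R_4$.

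Your claimed test direction $v=u-\lambda x$ does not produce the cancellation you describe. Expanding $(a_k\cdot v)^2=(a_k\cdot u)^2-2\lambda(a_k\cdot u)(a_k\cdot x)+\lambda^2(a_k\cdot x)^2$, only the $\lambda^2$ term kills the denominator; the leading piece of $v^\top\nabla^2 f(u)\,v$ is still $\tfrac{12}{m}\sum_k (a_k\cdot u)^4/(a_k\cdot x)^2$, which is positive and, when $\hat u$ is bounded away from $\pm\hat x$, not even integrable in expectation. So this direction cannot deliver a negative eigenvalue, and your truncation scheme treats the heavy tail purely as an enemy to be suppressed. The paper does the opposite: it turns the divergence into the main tool. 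When $\bigl||\hat u\cdot e_1|-1\bigr|\ge\eta_0$, one shows (by a regularized lower bound) that $\tfrac1m\sum_k (a_k\cdot\hat u)^4/(a_k\cdot e_1)^2\ge 100$ with high probability; plugging this into the radial critical-point equation $\partial_R f=0$ forces $R\le 1/50$, which throws any such critical point back into the small-norm region where your own $x$-direction argument already gives negative curvature. For the remaining sliver where $\hat u$ is close to $\pm e_1$ but $\|u\|$ is not yet in $\mathcal R_1$, the paper combines the two scalar equations $\hat u\cdot\nabla f=0$ and $e_1\cdot\nabla f=0$ with Cauchy--Schwarz to localize $R$ near $1$, reducing to the strongly convex neighborhood. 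Neither of these two moves appears in your plan, and they are what actually closes the argument in $\mathcal R_4$.
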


Without loss of generality we shall assume $x=e_1$ throughout the rest of the proof.
Note that the set $\bigcup_{k=1}^m \{ a_k \cdot e_1=0 \}$ has measure zero.
Thus for typical realization we have $a_k\cdot e_1 \ne 0$ for all $k$. This means
that the loss function $f(u)$ defined by \eqref{model4ae1} is smooth almost
surely. 
We denote the Hessian of the function $f(u)$ along the 
$\xi$-direction ($\xi\in \mathbb S^{n-1}$) as
\begin{align}
H_{\xi\xi}(u)
& = \sum_{i,j=1}^n \xi_i \xi_j (\partial_{ij} f)(u) =  \frac 4 m \sum_{k=1}^m
\Bigl( 3 \frac {(a_k\cdot \xi)^2 (a_k\cdot u)^2} {(a_k\cdot e_1)^2}
-(a_k\cdot \xi)^2 \Bigr). \label{Sep7_Hessian}
\end{align}
\subsection{Strong convexity near the global minimizers
$u=\pm e_1$}
\begin{thm}[Strong convexity near $u=\pm e_1$] \label{thm_Sep7_00}
There exists an absolute constant $0<\epsilon_0\ll 1$ such that
the following hold. For $m\gtrsim n$, it holds with high probability that 
\begin{align*}
H_{\xi\xi}(u) \ge 1, \qquad \forall\, \xi \in \mathbb S^{n-1}, \quad
\forall\, \text{$u$ with $\|u \pm e_1\|_2 \le \epsilon_0$}.
\end{align*}
\end{thm}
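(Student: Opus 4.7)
The plan is to linearize the Hessian around the target point $u=\pm e_1$. Write $u = \varepsilon e_1 + h$ with $\varepsilon \in \{+1,-1\}$ and $\|h\|_2 \le \epsilon_0$. A direct expansion of the quantity $(a_k\cdot u)^2/(a_k\cdot e_1)^2$ then plugged into \eqref{Sep7_Hessian} gives
\begin{align*}
H_{\xi\xi}(u) = \frac{8}{m}\sum_{k=1}^m (a_k\cdot\xi)^2 \;\pm\; \frac{24}{m}\sum_{k=1}^m (a_k\cdot\xi)^2\frac{a_k\cdot h}{a_k\cdot e_1} \;+\; \frac{12}{m}\sum_{k=1}^m (a_k\cdot\xi)^2\frac{(a_k\cdot h)^2}{(a_k\cdot e_1)^2}.
\end{align*}
The last sum is nonnegative and may be dropped for a lower bound. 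The strategy is then to show that the leading term concentrates uniformly around $8$ while the middle (perturbation) term is $O(\epsilon_0^{1/2})$ uniformly in $\xi$ and $h$; choosing $\epsilon_0$ small makes $H_{\xi\xi}(u)\ge 1$.

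For the leading term I would use that $(a_k\cdot\xi)^2$ is sub-exponential with mean $\|\xi\|_2^2=1$, so Bernstein's inequality gives pointwise concentration; a standard $\epsilon$-net over $\mathbb{S}^{n-1}$ of cardinality $e^{O(n)}$ combined with a Lipschitz bound upgrades this to the uniform statement $\frac{1}{m}\sum_k(a_k\cdot\xi)^2 \in [1-\delta,1+\delta]$ for all $\xi\in\mathbb S^{n-1}$, with high probability once $m\gtrsim n$. This yields $\frac{8}{m}\sum_k(a_k\cdot\xi)^2 \ge 8-8\delta$.

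The main obstacle is the middle sum, because the factor $1/(a_k\cdot e_1)$ is heavy-tailed. To handle it I would split the vectors along and perpendicular to $e_1$: write $\xi = \lambda e_1 + \tilde\xi$ and $h = \mu e_1 + \tilde h$ with $\tilde\xi, \tilde h \perp e_1$. Then $a_k\cdot h/(a_k\cdot e_1) = \mu + (a_k\cdot\tilde h)/(a_k\cdot e_1)$, and $a_k\cdot\xi = \lambda (a_k\cdot e_1)+ a_k\cdot\tilde\xi$ with $a_k\cdot\tilde\xi$, $a_k\cdot\tilde h$ independent of $a_k\cdot e_1$. Multiplying out produces several sums; all of them except one are polynomial in $a_k\cdot e_1$ (no denominators) and fall under standard sub-exponential concentration plus an $\epsilon$-net on the $(n-1)$-dimensional sphere for $\tilde\xi$ and ball for $\tilde h$, requiring $m\gtrsim n$.

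The one genuinely singular remainder has the schematic form $\frac{1}{m}\sum_k(a_k\cdot\tilde\xi)^2 (a_k\cdot\tilde h)/(a_k\cdot e_1)$. To control it I would use a truncation argument: fix a threshold $\tau>0$ and separate the indices according to whether $|a_k\cdot e_1|>\tau$ or $\le \tau$. On the large event the summand is sub-exponential with a factor $1/\tau$, so Bernstein plus an $\epsilon$-net yields a uniform bound of order $\|\tilde h\|_2/\tau \lesssim \epsilon_0/\tau$; the conditional mean vanishes by the symmetry of $a_k\cdot e_1$ and the independence with $(a_k\cdot\tilde\xi, a_k\cdot\tilde h)$. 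On the small event the number of contributing indices is $\lesssim \tau m$ with high probability (Bernoulli concentration on the Gaussian's probability mass in $[-\tau,\tau]$), and on each such $k$ the singular factor is bounded in $L^p$ for $p<1$ uniformly; one obtains a uniform bound of order $\sqrt{\tau}\,\|\tilde h\|_2$ after an additional $\epsilon$-net estimate. Optimizing $\tau \sim \epsilon_0^{1/2}$ yields a total perturbation of size $O(\epsilon_0^{1/2})$, which together with the main-term lower bound $8-O(\delta)$ gives $H_{\xi\xi}(u) \ge 8 - O(\delta) - O(\epsilon_0^{1/2}) \ge 1$ after fixing $\delta$ and $\epsilon_0$ sufficiently small. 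I expect the truncation step to be the technically most delicate one, likely appealing to the appendix lemmas on conditional sub-Gaussian moments and uniform concentration of empirical processes.
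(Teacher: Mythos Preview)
Your expansion is correct and the leading term is handled properly, but there is a genuine gap in the treatment of the cross term. After dropping the nonnegative quadratic piece you must control
\[
\frac{1}{m}\sum_k (a_k\cdot\tilde\xi)^2\,\frac{a_k\cdot\tilde h}{a_k\cdot e_1}
\]
uniformly, and you propose truncation at $|a_k\cdot e_1|=\tau$. On the set $\{|a_k\cdot e_1|\le\tau\}$, however, the factor $1/(a_k\cdot e_1)$ has a Cauchy-type tail (the density of $a_k\cdot e_1$ is bounded below near $0$), and a finite $L^p$ moment for $p<1$ gives no usable concentration. Concretely: since $(a_k\cdot\tilde\xi)^2(a_k\cdot\tilde h)$ is independent of $a_k\cdot e_1$, each summand on this event is Cauchy-like with scale $\sim\|\tilde h\|_2/\tau$, there are $\sim\tau m$ such indices, and by Cauchy stability the average over $m$ is again Cauchy-like with scale $\sim\|\tilde h\|_2$, \emph{independent of $\tau$}. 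Optimizing $\tau$ therefore does not help, and a Cauchy tail cannot survive a union bound over an $\epsilon$-net of size $e^{Cn}$ at probability $1-e^{-cm}$. The claimed $\sqrt{\tau}\,\|\tilde h\|_2$ bound is not correct.

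The paper bypasses this by a one-sided regularization rather than a direct estimate. Since only a lower bound on $H_{\xi\xi}$ is needed, it replaces the singular positive term $\frac{(a_k\cdot\xi)^2(a_k\cdot u)^2}{(a_k\cdot e_1)^2}$ by the smaller quantity
\[
\frac{(a_k\cdot\xi)^2(a_k\cdot u)^2}{\epsilon+(a_k\cdot e_1)^2}\,\phi\Bigl(\frac{a_k\cdot\xi}{N}\Bigr),
\]
with $\phi$ a smooth cutoff. Both the $\epsilon$-regularization of the denominator and the truncation in $a_k\cdot\xi$ only decrease this term, so the inequality goes the right way; but now the summand is uniformly sub-exponential in $(\xi,u)$, and standard Bernstein plus a net applies. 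Lemma~\ref{lemSe7_0a} then shows the expectation stays above $0.98$ for $\epsilon$ small, $N$ large, and $\|u\pm e_1\|_2\le\epsilon_0(\epsilon,N)$ small. The key idea you are missing is to tame the singularity \emph{inside} the lower bound rather than attempt to bound it after the fact.
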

\begin{proof}
By Lemma \ref{lemSe7_0a}, we can take $\epsilon>0$ sufficiently small,
$N$ sufficiently large such that 
\begin{align*}
\mathbb E \frac {(a_k\cdot \xi)^2 (a_k\cdot e_1)^2}
{\epsilon+ (a_k\cdot e_1)^2}
\phi(\frac {a_k\cdot \xi} N)
\ge 0.99, \qquad\forall\, \xi \in \mathbb S^{n-1}, \;\forall\, 1\le k\le m.
\end{align*}
In the above $\phi \in C_c^{\infty}(\mathbb R)$ satisfies
$0\le \phi(x) \le 1$ for all $x$, $\phi(x)=1$ for $|x| \le 1$
and $\phi(x)=0$ for $|x|\ge 2$. 
Clearly  if $\|u\pm e_1\|_2\le \epsilon_0$ and $\epsilon_0$ 
is sufficiently small (depending on $\epsilon$ and $N$), then
\begin{align*}
\mathbb E \frac {(a_k\cdot \xi)^2 (a_k\cdot u)^2}
{\epsilon+ (a_k\cdot e_1)^2}
\phi(\frac {a_k\cdot \xi} N)
\ge 0.98, \qquad\forall\, \xi \in \mathbb S^{n-1}, \;\forall\, 1\le k\le m.
\end{align*}
The above term inside the expectation is clearly OK for union bounds. 
Thus for $\|u\pm e_1\|\le \epsilon_0$ and $m\gtrsim n$, it holds with high probability that 
\begin{align*}
\frac 14 H_{\xi\xi} (u)
&\ge   \frac 1m \sum_{k=1}^m \Bigl(
\frac {(a_k\cdot \xi)^2 (a_k\cdot u)^2}
{\epsilon+ (a_k\cdot e_1)^2}
\phi(\frac {a_k\cdot \xi} N) -  (a_k\cdot \xi)^2 \Bigr)  \ge 3\cdot 0.97 - 1.01, \qquad\forall\, \xi \in \mathbb S^{n-1}.
\end{align*}
Thus the desired inequality follows.
\end{proof}

\subsection{The regimes $\|u\|_2\ll 1 $ and $\|u\|_2\gg 1$ are fine}$\;$

We first investigate the point $u=0$. It is trivial to verify that $\nabla f(0)=0$ since 
$a_k\cdot e_1 \ne 0$ for all $k$ almost surely. 
\begin{lem}[$u=0$ has strictly negative-definite Hessian] \label{Sep7_e0}
We have $u=0$ is a local maximum point with strictly
negative-definite Hessian. More precisely, for $m\gtrsim n$, it holds with high probability that
\begin{align*}
\sum_{k,l=1}^n \xi_k \xi_l (\partial_{kl} f)(0)
\le -1, \quad\forall\, \xi \in \mathbb S^{n-1}.
\end{align*}
\end{lem}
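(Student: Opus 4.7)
The strategy is straightforward: at $u=0$ the Hessian expression collapses, and the statement reduces to a standard Gaussian concentration bound.

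First I would substitute $u=0$ into the Hessian formula \eqref{Sep7_Hessian}. Since the numerator $(a_k\cdot u)^2$ in the first term vanishes identically, one gets
\begin{align*}
H_{\xi\xi}(0) \;=\; -\frac{4}{m}\sum_{k=1}^m (a_k\cdot \xi)^2, \qquad \xi \in \mathbb S^{n-1}.
\end{align*}
Thus it suffices to establish the lower bound $\frac{1}{m}\sum_{k=1}^m (a_k\cdot\xi)^2 \ge \tfrac14$ uniformly over $\xi \in \mathbb S^{n-1}$ with high probability; the target inequality $H_{\xi\xi}(0)\le -1$ follows immediately. (One gets much better constants with no extra effort, but the statement as formulated only needs $\tfrac14$.)

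Next I would prove this uniform lower bound. For each fixed $\xi\in\mathbb S^{n-1}$, the random variables $(a_k\cdot\xi)^2$ are i.i.d.\ $\chi^2_1$ with mean $1$, so by standard sub-exponential concentration (Bernstein's inequality for $\chi^2$ variables),
\begin{align*}
\PP\Bigl( \frac1m \sum_{k=1}^m (a_k\cdot\xi)^2 < \tfrac12 \Bigr) \;\le\; e^{-c_0 m}
\end{align*}
for some absolute constant $c_0>0$. To upgrade this pointwise bound to a uniform bound on $\mathbb S^{n-1}$, I would use a standard $\epsilon$-net argument: take a $1/8$-net $\mathcal N$ on $\mathbb S^{n-1}$ with $|\mathcal N| \le 25^n$, apply a union bound so that $\frac1m\sum_k (a_k\cdot\xi)^2 \ge \tfrac12$ holds simultaneously for every $\xi \in \mathcal N$ (this needs $m \gtrsim n$ to beat $|\mathcal N|$), and then approximate a general $\xi\in \mathbb S^{n-1}$ by the nearest net point, absorbing the $O(1/8)$ error using the operator-norm bound $\|\frac1m \sum_k a_k a_k^\top\| \lesssim 1$ which itself holds with high probability for $m\gtrsim n$. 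Alternatively, one can simply invoke the non-asymptotic bound on the smallest singular value of a Gaussian matrix: for $m\ge Cn$, $\sigma_{\min}^2(\frac{1}{\sqrt m} A) \ge \tfrac12$ with probability at least $1-e^{-cm}$, where $A$ is the $m\times n$ matrix with rows $a_k^\top$.

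I do not anticipate any serious obstacle here; everything is standard concentration for quadratic forms in Gaussian vectors. The only minor technicality is making sure the net argument gives the uniform bound with the stated probability $1-e^{-cm}$, which requires the sampling rate $m\gtrsim n$ (the $\log n$ factor often appearing in such bounds is not needed because the tail of $\chi^2_1$ is sub-exponential and the net has cardinality exponential in $n$). The fact that $\nabla f(0)=0$ has already been observed immediately before the lemma using $a_k\cdot e_1 \ne 0$ almost surely, so combined with the strict negative-definiteness of the Hessian just proved, $u=0$ is indeed a local maximizer.
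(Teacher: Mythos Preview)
Your proposal is correct and follows essentially the same approach as the paper: substitute $u=0$ into the Hessian formula \eqref{Sep7_Hessian} to get $H_{\xi\xi}(0)=-\frac{4}{m}\sum_k (a_k\cdot\xi)^2$, then invoke Bernstein-type concentration (with the implicit union bound over $\xi$) to conclude. The paper compresses this into a single line, while you spell out the $\epsilon$-net/singular-value argument, but the content is identical.
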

\begin{proof}
By \eqref{Sep7_Hessian}, it is obvious that 
\begin{align*}
 H_{\xi\xi}(0) = -4 \frac 1m \sum_{k=1}^m (a_k\cdot \xi)^2.
\end{align*}
The desired conclusion then easily follows from Bernstein's inequality.
\end{proof}

Write  $u=\sqrt{R} \hat u$ where $\hat u \in S^{n-1}$ and $R>0$.   Then
\begin{align*}
f (u) = \frac 1m \sum_{k=1}^m 
\frac { \Bigl( R (a_k\cdot \hat u)^2 -(a_k\cdot e_1)^2 \Bigr)^2}
{(a_k\cdot e_1)^2}.
\end{align*}
A simple calculation leads to 
\begin{align}
&\partial_{R} f =   
2R \frac 1m \sum_{k=1}^m \frac {(a_k\cdot \hat u)^4}{(a_k\cdot e_1)^2}
- 2 \frac 1m \sum_{k=1}^m (a_k\cdot \hat u)^2;
   \label{Sep7e10a}\\
&\partial_{RR} f =2\frac 1m \sum_{k=1}^m
\frac{ (a_k\cdot \hat u)^4}
{  (a_k\cdot e_1)^2}. \label{Sep7e10b}
\end{align}

\begin{lem}[The regime $\|u\|_2\ge 1+\epsilon_0$ is OK] \label{Sep7_e1}
Let $0<\epsilon_0\ll 1$ be any given small constant.  Then
the following hold: For $m\gtrsim n$,  with high probability it holds that
\begin{align*}
\partial_{R} f >0,  \quad\forall\, R \ge 1+\epsilon_0 , \;\text{}\forall\, \hat u \in \mathbb S^{n-1}.
\end{align*}
\end{lem}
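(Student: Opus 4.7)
The plan is to reduce the claim to a single application of the Cauchy--Schwarz inequality followed by two textbook concentration estimates, thereby avoiding any direct control on the singular random variable $1/(a_k\cdot e_1)^2$.

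First I rewrite
\begin{align*}
\frac 1 m \sum_{k=1}^m (a_k\cdot \hat u)^2 = \frac 1 m \sum_{k=1}^m \frac{(a_k\cdot \hat u)^2}{|a_k\cdot e_1|}\cdot |a_k\cdot e_1|
\end{align*}
and apply Cauchy--Schwarz to obtain
\begin{align*}
\Bigl(\frac 1 m \sum_{k=1}^m (a_k\cdot \hat u)^2\Bigr)^2 \le \Bigl(\frac 1 m \sum_{k=1}^m \frac{(a_k\cdot \hat u)^4}{(a_k\cdot e_1)^2}\Bigr)\Bigl(\frac 1 m \sum_{k=1}^m (a_k\cdot e_1)^2\Bigr).
\end{align*}
Setting $S_{\hat u} := \frac 1 m \sum_k (a_k\cdot \hat u)^2$ and $T := \frac 1 m \sum_k (a_k\cdot e_1)^2$, this bound combined with \eqref{Sep7e10a} gives
\begin{align*}
\partial_R f \ge 2R\cdot \frac{S_{\hat u}^2}{T} - 2 S_{\hat u} = \frac{2 S_{\hat u}}{T}\bigl(R S_{\hat u} - T\bigr),
\end{align*}
so the problem reduces to proving $R S_{\hat u} > T$ uniformly in $\hat u \in \mathbb S^{n-1}$.

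Two independent deviation bounds then finish the job. A scalar Bernstein inequality applied to the sum of squared standard Gaussians gives $T \le 1 + \delta$ with high probability, with no union bound needed. The classical uniform estimate for the sample covariance,
\begin{align*}
\Bigl\|\frac 1 m \sum_{k=1}^m a_k a_k^{\T} - I\Bigr\|_{\mathrm{op}} \le \delta \quad\text{for } m \gtrsim n/\delta^2,
\end{align*}
then yields $S_{\hat u} \ge 1 - \delta$ uniformly over all $\hat u \in \mathbb S^{n-1}$. Choosing $\delta$ sufficiently small compared with $\epsilon_0$ (for example $\delta = \epsilon_0/4$) makes $T/S_{\hat u} < 1 + \epsilon_0$, so $R \ge 1 + \epsilon_0$ forces $R S_{\hat u} > T$ and hence $\partial_R f > 0$ strictly.

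The only potentially troublesome feature would be the singular factor $1/(a_k\cdot e_1)^2$, which has infinite expectation and would ordinarily require a truncation argument (in the spirit of Theorem~\ref{thm_Sep7_00}), with cutoffs in both $|a_k\cdot e_1|$ and $|a_k\cdot \hat u|$ together with a covering net over $\mathbb S^{n-1}$. The Cauchy--Schwarz reduction finesses this by absorbing all of the singularity into a one-sided lower bound, after which only well-behaved Gaussian quadratic forms need to be concentrated. The trade-off is that the trick exploits the precise matching between the numerator $(a_k\cdot u)^4$ and the denominator $(a_k\cdot x)^2$ in QIM1, and does not obviously extend to QIM2 or QIM3, where I would expect to fall back on the truncation approach used elsewhere in the paper.
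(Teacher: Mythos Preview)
Your proof is correct and follows essentially the same route as the paper: both apply Cauchy--Schwarz to the sum $\frac 1 m \sum_k (a_k\cdot \hat u)^2$ to obtain the lower bound $\partial_R f \ge 2R\, S_{\hat u}^2/T - 2 S_{\hat u}$, thereby sidestepping the singular factor $1/(a_k\cdot e_1)^2$, and then finish with standard Bernstein-type concentration on $S_{\hat u}$ and $T$. The only cosmetic difference is that the paper bounds $2R(1-\delta_1)-2(1+\delta_1)$ directly while you factor as $\tfrac{2S_{\hat u}}{T}(R S_{\hat u}-T)$; the underlying idea and the required estimates are identical.
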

\begin{proof}
 Denote $X_k =a_k\cdot e_1$ and $Z_k= a_k \cdot \hat u$. 
By \eqref{Sep7e10a} and Cauchy-Schwartz, 
 we have 
\begin{align*}
\partial_R f & \ge 
 \frac {2R}m  \frac { \Bigl(\sum_{k=1}^m (a_k\cdot \hat u)^2
\Bigr)^2 } {
\sum_{k=1}^m (a_k\cdot e_1)^2 } - \frac 2m \sum_{k=1}^m
(a_k\cdot \hat u)^2 \notag \\
& \ge \;2R \cdot (1-\delta_1) -2 (1+\delta_1),
\qquad\forall\, \hat u \in \mathbb S^{n-1},
\end{align*}
where  $0<\delta_1 \ll 1$ is an absolute constant which we can take
to be sufficiently small, and in the last inequality
we have used Bernstein.  The desired result
then easily follows by taking $R\ge R_1=\frac {1+2\delta_1} {1-\delta_1}$ and
choosing $\delta_1$ such that $R_1\le 1+\epsilon_0$.
\end{proof}

From \eqref{Sep7e10a},  due to the highly irregular coefficients near $R$, it is 
difficult to control the upper bound of $\partial_R f$ in the regime $R\ll 1$. 
To resolve this difficulty, we shall examine the Hessian in this regime.

\begin{lem}[The regime $\|u\|_2 \le \frac 13 $ is OK] \label{Sep7_e2}
For $m\gtrsim n$,  with high probability it holds that
\begin{align*}
 H_{e_1e_1} (u)  \le - \frac 12 <0 ,  \quad\forall \quad  0<\|u\|_2 \le \frac 13,
\end{align*}
where $H_{e_1e_1}$ is defined in \eqref{Sep7_Hessian}.
\end{lem}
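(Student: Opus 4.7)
The plan is to exploit the remarkable cancellation that occurs when we specialize the Hessian formula \eqref{Sep7_Hessian} to the direction $\xi = e_1$. Indeed, since $\xi = e_1$ makes $(a_k\cdot\xi)^2 = (a_k\cdot e_1)^2$, the singular denominator $(a_k\cdot e_1)^2$ is exactly cancelled, and we obtain the clean expression
\begin{align*}
H_{e_1 e_1}(u) = \frac{4}{m}\sum_{k=1}^m\Bigl(3(a_k\cdot u)^2 - (a_k\cdot e_1)^2\Bigr) = \frac{12}{m}\sum_{k=1}^m (a_k\cdot u)^2 - \frac{4}{m}\sum_{k=1}^m (a_k\cdot e_1)^2.
\end{align*}
This removes all of the delicate heavy-tail issues, since we now face only quadratic, not quartic-over-quadratic, random variables.

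The rest of the argument reduces to standard sample covariance concentration. First I would invoke the Bai--Yin / Vershynin-type estimate to show that, for $m \gtrsim n$ and any small $\delta_1 > 0$, the event
\begin{align*}
\Bigl\| \frac{1}{m}\sum_{k=1}^m a_k a_k^\top - I_n \Bigr\|_{\mathrm{op}} \le \delta_1
\end{align*}
holds with high probability. On this event, uniformly in $u$,
\begin{align*}
\frac{1}{m}\sum_{k=1}^m (a_k\cdot u)^2 \le (1+\delta_1)\|u\|_2^2, \qquad \frac{1}{m}\sum_{k=1}^m (a_k\cdot e_1)^2 \ge 1-\delta_1.
\end{align*}
Plugging $\|u\|_2 \le \tfrac{1}{3}$ into the identity for $H_{e_1 e_1}(u)$ gives
\begin{align*}
H_{e_1 e_1}(u) \le 12(1+\delta_1)\cdot\tfrac{1}{9} - 4(1-\delta_1) = \tfrac{4}{3} - 4 + \tfrac{16}{3}\delta_1,
\end{align*}
which is strictly less than $-\tfrac{1}{2}$ once $\delta_1$ is chosen small enough (there is more than two units of slack, so a very crude choice of $\delta_1$ suffices).

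There is really no hard step here, because the $\xi = e_1$ specialization avoids the need for a truncation/union-bound argument of the type used in Theorem \ref{thm_Sep7_00}; only the operator-norm control of the empirical covariance is needed, and that is uniform in $u$ automatically. The only thing to double-check is that the estimate must hold for all $u$ in the (continuous) ball $0 < \|u\|_2 \le \tfrac{1}{3}$, but this is built into the operator-norm bound above and does not require a separate net argument. Thus the lemma follows directly.
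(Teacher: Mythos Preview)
Your proposal is correct and matches the paper's own proof essentially line for line: the paper also specializes $\xi=e_1$ in \eqref{Sep7_Hessian} to cancel the denominator, then applies Bernstein-type concentration to $\frac{1}{m}\sum_k (a_k\cdot u)^2$ and $\frac{1}{m}\sum_k (a_k\cdot e_1)^2$ and plugs in $\|u\|_2\le \tfrac13$. The only cosmetic difference is that the paper works with $\tfrac14 H_{e_1e_1}$ and uses the explicit constants $\tfrac{10}{9}$ and $\tfrac{8}{9}$ in place of your $1\pm\delta_1$.
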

\begin{proof}
It follows from \eqref{Sep7_Hessian} together with Bernstein's inequality that for $m\gtrsim n$ with high probability, it holds
\begin{align*}
\frac 14 H_{e_1e_1}(u)
&= \frac 1 m\sum_{k=1}^m 
\Bigl( 3 (a_k\cdot u)^2 -(a_k\cdot e_1)^2 \Bigr) \le \|u\|_2^2 \cdot 3 \cdot \frac {10}9 
- \frac 89 \le -\frac 12.
\end{align*}
This completes the proof.
\end{proof}

\begin{thm}[The regimes $\|u\|_2\le \frac 13$
and $\|u\|_2\ge 1+\epsilon_0$ are OK] \label{thmSep7_1}
Let $0<\epsilon_0\ll 1$ be a given small constant.
For $m\gtrsim n$, with high probability the following hold:
\begin{enumerate}
\item We have 
\begin{align*}
&\partial_{R} f >0 , \qquad\forall\, R\ge 1+\epsilon_0, 
\quad \forall\, \hat u \in \mathbb S^{n-1}.
\end{align*}
\item The point $u=0$ is a local maximum point with strictly negative-definite
Hessian, 
\begin{align*}
\sum_{k,l=1}^n  \xi_k \xi_l (\partial_{kl} f)(0) \le -1,
\qquad \forall\, \xi \in \mathbb S^{n-1}. 
\end{align*}
\item We have 
\begin{align*}
H_{e_1e_1}(u) \le -1, \qquad \forall\, \|u\|_2 \le \frac 13.
\end{align*}
\end{enumerate}
\end{thm}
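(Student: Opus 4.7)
My plan is to observe that Theorem \ref{thmSep7_1} is simply a consolidated restatement of the three preceding lemmas packaged together for later use, so the proof is essentially a union bound over their underlying high-probability events, with only one minor bookkeeping step.

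First, parts (1) and (2) are verbatim the conclusions of Lemma \ref{Sep7_e1} and Lemma \ref{Sep7_e0} respectively. Each of those lemmas produces an event of probability at least $1-e^{-cm}$ under $m\gtrsim n$ on which its conclusion holds, so I would simply invoke them on a common probability space and intersect the two events. The constant $\epsilon_0$ in part (1) is passed through from Lemma \ref{Sep7_e1}, where it is absorbed by choosing $\delta_1$ sufficiently small in the Bernstein estimate; since $\epsilon_0$ is fixed a priori, this causes no difficulty.

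For part (3), I would note that the proof of Lemma \ref{Sep7_e2} in fact already delivers a stronger bound than is stated there. Tracking the constants in that calculation: on the high-probability event where the two Bernstein estimates $\frac1m\sum_k(a_k\cdot u)^2\le \tfrac{10}{9}\|u\|_2^2$ and $\frac1m\sum_k(a_k\cdot e_1)^2\ge \tfrac{8}{9}$ both hold, one has for $\|u\|_2\le \tfrac13$
\[
\tfrac14 H_{e_1e_1}(u)\le 3\cdot \tfrac19\cdot \tfrac{10}{9}-\tfrac{8}{9}=-\tfrac{42}{81},
\]
so $H_{e_1e_1}(u)\le -\tfrac{168}{81}<-1$. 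The boundary case $u=0$ is absorbed either by continuity or by part (2) applied with $\xi=e_1$. No new probabilistic input is required beyond the event already used in Lemma \ref{Sep7_e2}.

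The final step is a union bound: on the intersection of the three high-probability events we obtain all three conclusions simultaneously with probability at least $1-3e^{-cm}\ge 1-e^{-c'm}$ after shrinking the constant. I do not foresee any genuine obstacle here; the substantive work is contained in Lemmas \ref{Sep7_e0}--\ref{Sep7_e2}, and the role of Theorem \ref{thmSep7_1} is merely to record the combined statement in a form convenient for the global landscape analysis that follows.
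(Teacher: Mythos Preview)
Your proposal is correct and matches the paper's approach exactly: the paper's proof is the single line ``This follows from Lemma \ref{Sep7_e0}, \ref{Sep7_e1} and \ref{Sep7_e2}.'' Your additional bookkeeping in part (3) --- tracking the constants in the proof of Lemma \ref{Sep7_e2} to upgrade the stated bound $H_{e_1e_1}(u)\le -\tfrac12$ to the $-1$ claimed in the theorem --- is a useful clarification that the paper itself omits, but it does not constitute a different method.
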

\begin{proof}
This follows from Lemma \ref{Sep7_e0}, \ref{Sep7_e1} and \ref{Sep7_e2}.
\end{proof}

\begin{thm}[The regime $\|u\|_2 \sim 1 $, 
$||\hat u\cdot e_1|-1|\ge \eta_0$]\label{Sep7_e3}
 Let $0<\eta_0\ll 1$ be given. Then for $m\gtrsim n$, the following hold
 with high probability:
 Suppose $u =\sqrt R \hat u$, $ 1/9 \le R \le 2$, and  $ \Bigl | |\hat u \cdot e_1 | -1 \Bigr | \ge \eta_0$.
 If $(\partial_R f)(u) =0$, then we must have
 \begin{align*}
 H_{e_1 e_1} (u) <0.
 \end{align*}
 \end{thm}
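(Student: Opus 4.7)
The plan is to use the critical-point hypothesis $(\partial_R f)(u)=0$ to eliminate $R$ and thereby reduce the Hessian inequality to a single scalar estimate on empirical averages. Set
\begin{align*}
S_0 = \frac{1}{m}\sum_{k=1}^m (a_k\cdot e_1)^2,\quad S_1 = \frac{1}{m}\sum_{k=1}^m (a_k\cdot \hat u)^2,\quad T = \frac{1}{m}\sum_{k=1}^m \frac{(a_k\cdot \hat u)^4}{(a_k\cdot e_1)^2}.
\end{align*}
With $u=\sqrt R\,\hat u$, formula \eqref{Sep7_Hessian} with $\xi=e_1$ gives $H_{e_1e_1}(u) = 4(3RS_1 - S_0)$, while \eqref{Sep7e10a} gives $(\partial_R f)(u) = 2(RT - S_1)$. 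The critical-point condition therefore forces $R=S_1/T$, and substitution yields
\begin{align*}
H_{e_1e_1}(u) = \frac{4}{T}\bigl(3S_1^2 - S_0\,T\bigr).
\end{align*}
Hence $H_{e_1e_1}(u)<0$ is equivalent to the uniform inequality $3S_1^2 < S_0\,T$ for all $\hat u\in\mathbb S^{n-1}$ with $||\hat u\cdot e_1|-1|\ge \eta_0$; the restriction $R\ge 1/9$ serves only to keep us away from $R=0$ so that the identity is non-degenerate.

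The averages $S_0$ and $S_1$ are routine. A direct application of Bernstein plus a standard covering argument on $\mathbb S^{n-1}$, exactly of the type already used in Lemmas \ref{Sep7_e0}--\ref{Sep7_e2}, gives, for $m\gtrsim n$ and with high probability, $S_0,S_1\in[1-\delta_1,\,1+\delta_1]$ uniformly in $\hat u$, so that $3S_1^2/S_0\le 3+O(\delta_1)$. The subtle quantity is $T$: since $1/(a\cdot e_1)^2$ is not integrable against the standard Gaussian, a concentration inequality cannot be applied to $T$ directly. Following the strategy of Theorem \ref{thm_Sep7_00}, I would pass to a smoothed truncated lower proxy
\begin{align*}
T_{\epsilon,N}(\hat u) = \frac{1}{m}\sum_{k=1}^m \frac{(a_k\cdot \hat u)^4}{(a_k\cdot e_1)^2+\epsilon}\,\phi\!\left(\frac{a_k\cdot \hat u}{N}\right) \;\le\; T,
\end{align*}
with the same smooth cutoff $\phi$ used there. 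Each summand is then bounded by $N^4/\epsilon$, so $T_{\epsilon,N}$ concentrates uniformly in $\hat u$ by Bernstein together with a net argument.

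The point is that $\mathbb E[T_{\epsilon,N}]$ can be made as large as desired. Decompose $\hat u=\alpha e_1+\beta \hat v$ with $\hat v\perp e_1$, $\|\hat v\|_2=1$, $\alpha^2+\beta^2=1$; then $a\cdot\hat u=\alpha X+\beta Z$ with $X,Z$ independent standard Gaussians, and letting $N\to\infty$,
\begin{align*}
\mathbb E\!\left[\frac{(a\cdot\hat u)^4}{(a\cdot e_1)^2+\epsilon}\right] = \alpha^4\,\mathbb E\!\left[\frac{X^4}{X^2+\epsilon}\right] + 6\alpha^2\beta^2\,\mathbb E\!\left[\frac{X^2}{X^2+\epsilon}\right] + 3\beta^4\,\mathbb E\!\left[\frac{1}{X^2+\epsilon}\right].
\end{align*}
The first two expectations stay bounded as $\epsilon\to 0^+$, whereas $\mathbb E[1/(X^2+\epsilon)]\to +\infty$. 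Under the hypothesis $||\alpha|-1|\ge\eta_0$ one has $\beta^2\gtrsim \eta_0$, so by choosing $\epsilon=\epsilon(\eta_0)$ sufficiently small (and then $N$ sufficiently large) the right-hand side can be forced above any prescribed constant, say $100$. Combining this with the uniform concentration of $T_{\epsilon,N}$ and the bounds on $S_0,S_1$ yields $S_0\,T>3S_1^2$ with wide margin.

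The main technical obstacle is the singular factor $1/(a_k\cdot e_1)^2$: its non-integrability is what forces the two-scale truncation in $\epsilon$ and $N$ and the uniform-in-$\hat u$ covering argument, precisely as in the proof of Theorem \ref{thm_Sep7_00}. Once that machinery is in place the factor $3$ in the target inequality is easy to beat, because the divergence of $\mathbb E[1/(X^2+\epsilon)]$ as $\epsilon\to 0$ provides an arbitrarily large gain over the $O(1)$ quantities $S_0$ and $S_1$ whenever $\hat u$ is bounded away from $\pm e_1$.
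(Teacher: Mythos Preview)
Your proposal is correct and follows essentially the same approach as the paper: both arguments use the critical equation $\partial_R f=0$ to pin $R=S_1/T$ and then reduce everything to showing that $T=\frac1m\sum_k (a_k\cdot\hat u)^4/(a_k\cdot e_1)^2$ is large whenever $\hat u$ is bounded away from $\pm e_1$, which is exactly the content of the paper's Lemma~\ref{lemSe7_1a}. The only cosmetic differences are that you substitute $R=S_1/T$ into $H_{e_1e_1}$ to obtain the clean equivalence $H_{e_1e_1}<0\iff 3S_1^2<S_0T$ (the paper instead notes $R\le 1/50$ and plugs into $H_{e_1e_1}=4(3RS_1-S_0)$ directly), and your truncated proxy $T_{\epsilon,N}$ uses $+\epsilon$ in the denominator together with a cutoff $\phi(\cdot/N)$ on the numerator, whereas the paper's Lemma~\ref{lemSe7_1a} uses $+\epsilon(1+(a_k\cdot e^{\perp})^2)$ in the denominator with no numerator cutoff.
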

 \begin{proof}
By \eqref{Sep7e10a}, we have if $\partial_R f (u)=0$, then 
\begin{align*}
R \frac 1m \sum_{k=1}^m \frac {(a_k\cdot \hat u)^4}{(a_k\cdot e_1)^2}
=  \frac 1m \sum_{k=1}^m (a_k\cdot \hat u)^2.
\end{align*}
By Lemma \ref{lemSe7_1a},  we have for $m\gtrsim n$, it holds
 with high probability that
 \begin{align*}
\frac 1m \sum_{k=1}^m \frac {(a_k\cdot \hat u)^4}{(a_k\cdot e_1)^2}
\ge 100, \quad\forall\, \hat u \in \mathbb S^{n-1} 
\text{ with $||\hat u \cdot e_1|-1| \ge \eta_0$}.
\end{align*}
Clearly then $R \le \frac 1 {50}$ with high probability. Thus it follows
easily that $H_{e_1e_1} (u)<0$ also with high probability. 
\end{proof}

\begin{thm}[Localization of $R$ when $||\hat u\cdot e_1|-1| \le \eta_0$, $R\le 1+\eta_0$ 
and $u$
is a critical point] \label{Sep7_e3bb}
Let $0<\eta_0\ll 1$ be given. For $m\gtrsim n$, the following hold with high probability:
Assume $u =\sqrt R \hat u$ is a critical point with
 $\frac 19 \le R \le 1+\eta_0$, and $ || \hat u\cdot e_1| -1| \le \eta_0$.  Then we must have
 \begin{align*}
 |R-1 | \le c(\eta_0),
 \end{align*}
 where $c(\eta_0) \to 0$ as $\eta_0\to 0$. 
\end{thm}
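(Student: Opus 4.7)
The plan is to combine two critical-point equations algebraically so that the heaviest-tailed averaged quantities can be eliminated, or replaced by a trivial non-negativity bound, rather than bounded directly.

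Without loss of generality assume $\hat u\cdot e_1 > 0$ and write $\hat u = \alpha e_1 + \beta w$ with $w\in \mathbb S^{n-1}\cap e_1^\perp$, $\alpha^2+\beta^2=1$, and $\alpha \ge 1-\eta_0$, so $\beta^2 \le 2\eta_0$. The case $\beta=0$ is immediate from the identity $RB=A$ (from $\partial_R f=0$) together with standard concentration, so we may assume $\beta>0$. From $\nabla f(u)=0$, taking the inner product with $u$ recovers
\[
RB=A, \qquad A=\tfrac 1m\sum(a_k\cdot\hat u)^2, \quad B=\tfrac 1m\sum\frac{(a_k\cdot\hat u)^4}{(a_k\cdot e_1)^2};
\]
taking the inner product with $w$ yields
\[
R B_2 = A_2, \qquad A_2 = \tfrac 1m\sum (a_k\cdot\hat u)(a_k\cdot w), \quad B_2 = \tfrac 1m\sum \frac{(a_k\cdot\hat u)^3 (a_k\cdot w)}{(a_k\cdot e_1)^2}.
\]
Binomial expansion of $(a_k\cdot\hat u)^j = (\alpha (a_k\cdot e_1) + \beta(a_k\cdot w))^j$ introduces the five averaged quantities $C = \tfrac 1m\sum(a_k\cdot e_1)^2$, $D = \tfrac 1m\sum (a_k\cdot e_1)(a_k\cdot w)$, $E = \tfrac 1m\sum (a_k\cdot w)^2$, $F = \tfrac 1m\sum(a_k\cdot w)^3/(a_k\cdot e_1)$, and $G = \tfrac 1m\sum(a_k\cdot w)^4/(a_k\cdot e_1)^2$. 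For $m\gtrsim n$, standard sample-covariance concentration gives $\max(|C-1|,|E-1|,|D|)\le \delta$ uniformly in $w\in \mathbb S^{n-1}$ with high probability; by contrast $F$ and $G$ have infinite means and cannot be bounded by $O(1)$ constants uniformly in $m$.

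Introduce $P := R\beta^3 F$ and $Q := R\beta^4 G$. After multiplying the $w$-equation through by $\beta$, the two identities become
\[
R\alpha^4 + 6R\alpha^2\beta^2 + 4\alpha P + Q = 1 + O(\delta),
\]
\[
3R\alpha^2\beta^2 + 3\alpha P + Q = \beta^2 + O(\delta).
\]
Subtracting and using $\alpha^2+\beta^2=1$ solves for $P = \alpha\bigl(1 - R(1+2\beta^2)\bigr) + O(\delta)$. Substituting back into the second displayed identity yields
\[
Q = \beta^2 + 3\alpha^2(R-1) + 3R\alpha^2\beta^2 + O(\delta).
\]
The decisive observation is that $Q = R\beta^4 G \ge 0$ is automatic since $G$ is a sum of non-negative terms. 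Hence $3\alpha^2(1-R) \le \beta^2(1 + 3R\alpha^2) + O(\delta)$, and plugging in the hypotheses $\alpha^2 \ge 1-2\eta_0$, $\beta^2 \le 2\eta_0$, $R\le 1+\eta_0$ bounds the right side by $C_1\eta_0 + O(\delta)$ for an absolute constant $C_1$. For $m \ge C(\eta_0) n$ with $C(\eta_0)$ large enough we arrange $\delta \ll \eta_0$, giving $R\ge 1 - c(\eta_0)$ with $c(\eta_0)\to 0$; combined with the assumed $R\le 1+\eta_0$, this gives $|R-1|\le c(\eta_0)$.

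The main obstacle is that $F$ and $G$ have infinite means and are too heavy-tailed to be controlled by constants independent of $m$ — indeed $G$ is typically of order $m$ with high probability. The essential trick is that we never need upper bounds on $F$ or $G$ at all: pairing the two critical-point identities lets us eliminate $P$ algebraically, and only the trivial non-negativity $Q \ge 0$ is required to conclude.
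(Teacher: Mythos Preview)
Your argument is correct and takes a genuinely different route from the paper's. The paper projects $\nabla f(u)=0$ onto the directions $\hat u$ and $e_1$, obtaining the two relations
\[
R\,\frac 1m\sum_k\frac{(a_k\cdot\hat u)^4}{X_k^2}=\frac 1m\sum_k(a_k\cdot\hat u)^2,
\qquad
R\,\frac 1m\sum_k\frac{(a_k\cdot\hat u)^3}{X_k}=\frac 1m\sum_k(a_k\cdot\hat u)X_k,
\]
eliminates $R$, and then applies Cauchy--Schwarz in the form
$\tfrac 1m\sum_k\frac{(a_k\cdot\hat u)^3}{X_k}\le\bigl(\tfrac 1m\sum_k\frac{(a_k\cdot\hat u)^4}{X_k^2}\bigr)^{1/2}\bigl(\tfrac 1m\sum_k(a_k\cdot\hat u)^2\bigr)^{1/2}$
to force $\tfrac 1m\sum_k(a_k\cdot\hat u)^4/X_k^2\le 1+O(\eta)$, whence $R\ge 1-O(\eta)$. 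You instead project onto $\hat u$ and the orthogonal component $w$, expand binomially in $(\alpha,\beta)$, and eliminate the cubic heavy-tailed piece $P=R\beta^3 F$ algebraically; what remains is the quartic piece $Q=R\beta^4 G$, whose mere nonnegativity already forces $R\ge 1-c(\eta_0)$. Both arguments sidestep any quantitative control of the infinite-mean averages; the paper does so via Cauchy--Schwarz, you via an explicit elimination that isolates a manifestly nonnegative residual. Your route is a bit longer computationally but arguably more transparent about \emph{why} no upper bounds on $F,G$ are needed, and the orthogonal decomposition may generalize more readily to variants where the Cauchy--Schwarz pairing is less natural.
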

\begin{proof}
Denote $\partial_{\xi} f = \xi \cdot \nabla f $ for $\xi \in \mathbb S^{n-1}$. It is not
difficult to check that
\begin{align*}
\frac 1 4 \partial_{\xi} f
= \frac 1m \sum_{k=1}^m \frac {(a_k\cdot u)^3 (a_k\cdot \xi)}
{X_k^2}  - \frac 1m \sum_{k=1}^m (a_k\cdot u) (a_k\cdot \xi)=0,
\end{align*}
where $X_k = a_k \cdot e_1$.  
Setting $\xi = \hat u$ and $\xi =e_1$ respectively give us two equations:
\begin{align}
&R \cdot \Bigl( \frac 1m \sum_{k=1}^m 
\frac {(a_k\cdot \hat u)^4} {X_k^2} \Bigr) -
\frac 1m \sum_{k=1}^m (a_k\cdot \hat u)^2 =0, \label{Sep7_e3bb.01}\\
& R \cdot \Bigl( \frac 1m \sum_{k=1}^m
\frac {(a_k\cdot \hat u)^3} {X_k } \Bigr)
- \frac 1m \sum_{k=1}^m (a_k\cdot \hat u) X_k =0.
\end{align}
We then obtain 
\begin{align} \label{Sep7_e3bb.02}
\Bigl( \frac 1m \sum_{k=1}^m (a_k\cdot \hat u)^2 \Bigr)
\cdot \Bigl( \frac 1m \sum_{k=1}^m
\frac {(a_k\cdot \hat u)^3} {X_k}
\Bigr)
= \Bigl
( \frac 1m \sum_{k=1}^m \frac {(a_k\cdot \hat u)^4} {X_k^2}
\Bigr) \cdot \Bigl( \frac 1m \sum_{k=1}^m (a_k\cdot \hat u) X_k \Bigr).
\end{align}
Without loss of generality we assume $\| \hat u -e_1\|_2 \le \eta \ll 1$.  Then
with high probability we have
\begin{align*}
&  \frac 1m \sum_{k=1}^m (a_k\cdot \hat u) X_k  = 1 +O(\eta), \\
& \frac 1m \sum_{k=1}^m (a_k\cdot \hat u)^2 = 1+ O(\eta).
\end{align*}
Observe that by Cauchy-Schwartz,
\begin{align*}
\sum_{k=1}^m \frac {|a_k\cdot \hat u|^3} {|X_k|}
\le \Bigl( \sum_{k=1}^m \frac {|a_k\cdot \hat u|^4} {X_k^2} \Bigr)^{\frac 12}
\cdot \Bigl( \sum_{k=1}^m (a_k\cdot \hat u)^2 \Bigr)^{\frac 12}.
\end{align*}
Plugging the above estimates into \eqref{Sep7_e3bb.02}, we obtain
\begin{align*}
\sqrt{\frac 1m \sum_{k=1}^m \frac {(a_k\cdot \hat u)^4} {X_k^2}} 
\le 1 +O(\eta).
\end{align*}
Using \eqref{Sep7_e3bb.01}, we then get
\begin{align*}
R \ge  1+O(\eta).
\end{align*}
The desired result then easily follows.
\end{proof}
We now complete the proof of the main theorem.

\begin{proof}[Proof of Theorem \ref{thmEa}]
We proceed in several steps.
\begin{enumerate}
\item By Theorem \ref{thm_Sep7_00}, the function $f(u)$ is strongly convex
when $\| u\pm e_1\|_2 \ll 1$.

\item By Theorem \ref{thmSep7_1}, $f$ has non-vanishing gradient when $R\ge 1+\epsilon_0$.
Also $H_{e_1e_1}(u) \le -1$ when $\|u \|_2 \le \frac 13$. The point $u=0$
is a strict local maximum point with strictly negative-definite Hessian.

\item By Theorem \ref{Sep7_e3}, we have $H_{e_1e_1}(u)<0$ if $\|u\|_2 \sim 1$
and $| |\hat u \cdot e_1| -1| \ge \epsilon_0$.

\item Theorem \ref{Sep7_e3bb} shows that if $R\le 1+\epsilon_0$,
$| |\hat u\cdot e_1| -1| \le \epsilon_0$ and $u$ is a critical point, then
we must have $ |R-1| \le c(\epsilon_0) \ll 1$.  In yet other words we must
have $\| u\pm e_1\|_2 \ll 1$.  This regime is then treated by Step 1.

\end{enumerate}
\end{proof}

\section{Quotient intensity model \RNum{2}}  \label{S:model4b}
Consider for $\beta>0$, 
\begin{align} \label{model4be1}
f(u) &= \frac 1 m \sum_{k=1}^m 
\frac{ ( (a_k\cdot u)^2 - (a_k\cdot x)^2 )^2}
{\beta |u|^2 +(a_k\cdot x)^2}.
\end{align}

\begin{thm} \label{thmEb}
Let $0<\beta <\infty$. 
Assume 
$\{a_k\}_{k=1}^m$ are i.i.d.  standard Gaussian random vectors and $x\ne 0$. 
There exist positive constants $c$, $C$ depending only on $\beta$, such that if $m\ge C n  $, then
with probability at least $1- e^{-cm} $ the loss function $f=f(u)$ 
defined by \eqref{model4be1}
has no spurious local minimizers. The only global minimizer is
$\pm  x$, and the loss function is restrictively convex in a neighborhood of $\pm x$.
The point $u=0$ is a local maximum point with strictly negative-definite Hessian.
All other critical points are strict saddles, i.e., each saddle point has a neighborhood
where the function has negative directional curvature.
\end{thm}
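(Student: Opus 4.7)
The plan is to mirror the four-region argument used for Theorem~\ref{thmEa}. After setting $x=e_1$ without loss of generality, I partition $\mathbb R^n$ into the regimes (i) $\|u\pm e_1\|_2\le \epsilon_0$ near the global minimizers; (ii) $\|u\|_2\le 1/3$ near the origin; (iii) $\|u\|_2\ge 1+\epsilon_0$ in the far field; and (iv) the intermediate annulus where $\|u\|_2\sim 1$. In each regime I establish either restricted positive curvature, strictly negative curvature in the $e_1$-direction, non-vanishing radial gradient, or the localization of critical points to regime (i). Because the denominator $D_k(u):=\beta|u|^2+(a_k\cdot e_1)^2$ is now strictly positive for all $u$ and all $a_k$, the loss is smooth everywhere, but its $u$-dependence is the main source of extra terms relative to QIM1.

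After writing out $\nabla f$ and $\nabla^2 f$ with $g_k(u)=(a_k\cdot u)^2-(a_k\cdot e_1)^2$, the Hessian along $\xi\in\mathbb S^{n-1}$ has a leading QIM1-shaped contribution
\begin{equation*}
\frac{4}{m}\sum_{k=1}^m \frac{(a_k\cdot\xi)^2\bigl(3(a_k\cdot u)^2-(a_k\cdot e_1)^2\bigr)}{D_k(u)},
\end{equation*}
together with correction terms coming from differentiating $1/D_k$. For the restricted convexity near $\pm e_1$ in regime (i), I would not attempt full positive-definiteness of $\nabla^2 f$, as flagged in the remark following the theorem; instead I show that along the descent-relevant direction $\xi$ the quadratic form is bounded below by a positive constant. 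This can be done by the truncation-and-concentration scheme of Theorem~\ref{thm_Sep7_00}: cut off $(a_k\cdot\xi)/N$ by the smooth bump $\phi$, evaluate the expectation of the truncated leading summand via the analog of Lemma~\ref{lemSe7_0a} with the weighted denominator $D_k$, and apply a Bernstein-type union bound valid once $m\gtrsim n$. The $1/D_k$-derivative corrections produce terms proportional to $g_k$ or $g_k^2$, all of which are small in regime (i).

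For $u=0$ a direct computation gives $H_{\xi\xi}(0) = -\frac{4}{m}\sum_k(a_k\cdot\xi)^2 - 2\beta$ (the extra $-2\beta$ coming from $\nabla^2 D_k=2\beta I$ acting on $-\phi_k^2/D_k^2$), which by Bernstein is bounded above by a strictly negative constant uniformly over $\xi\in\mathbb S^{n-1}$, yielding Lemma~\ref{Sep7_e0} in this setting. For $\|u\|_2\ge 1+\epsilon_0$ I would follow Lemma~\ref{Sep7_e1}: write $u=\sqrt R\hat u$, expand $\partial_R f$, and apply Cauchy-Schwarz to the quartic sums. The new feature is a negative piece $-\beta g_k^2/D_k^2$ coming from differentiating $D_k$ in $R$, but for $R\ge 1+\epsilon_0$ this is dominated by $R$ times the leading quartic contribution, and the same Cauchy-Schwarz/Bernstein bookkeeping as in Lemma~\ref{Sep7_e1} gives $\partial_R f>0$. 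For $\|u\|_2\le 1/3$ the Hessian $H_{e_1e_1}(u)$ is handled as in Lemma~\ref{Sep7_e2}, since there $D_k(u)$ is comparable to $(a_k\cdot e_1)^2$.

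The main obstacle is the intermediate regime (iv), and specifically the localization of critical points. I would split this into the two subcases of Theorems~\ref{Sep7_e3} and \ref{Sep7_e3bb}. If $||\hat u\cdot e_1|-1|\ge \eta_0$, the equation $\partial_R f(u)=0$ together with a lower bound on a weighted quartic sum, obtained from an analog of Lemma~\ref{lemSe7_1a} with denominator $D_k$, forces $R$ to be small and returns the point to the $\|u\|_2\le 1/3$ regime where $H_{e_1e_1}<0$. If $||\hat u\cdot e_1|-1|\le \eta_0$, I test $\nabla f(u)=0$ against $\xi=\hat u$ and $\xi=e_1$ and eliminate as in Theorem~\ref{Sep7_e3bb}. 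The delicate point is that both resulting identities carry additional $\beta g_k^2/D_k^2$ terms from the $u$-dependence of the denominator, so the clean Cauchy-Schwarz elimination of Theorem~\ref{Sep7_e3bb} must be redone with $\beta$-tracking; this is exactly where the constants in Theorem~\ref{thmEb} pick up their $\beta$-dependence. Once $|R-1|$ is shown to be $c(\eta_0)$-small, the point lies in regime (i) and restricted convexity closes the argument.
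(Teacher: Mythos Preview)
Your proposal has a genuine gap in regime (iv), the subcase $\bigl||\hat u\cdot e_1|-1\bigr|\ge\eta_0$. You propose to mimic Theorem~\ref{Sep7_e3}: use $\partial_R f=0$ together with an analog of Lemma~\ref{lemSe7_1a} to force $R$ small. But Lemma~\ref{lemSe7_1a} works only because the QIM1 denominator $(a_k\cdot e_1)^2$ is singular: the proof makes $\mathbb E\bigl[(a\cdot\hat u)^4/(\epsilon+(a\cdot e_1)^2)\bigr]$ blow up like $\epsilon^{-1/2}$, which is what drives the bound $\ge 100$. In QIM2 the denominator is $D_k=\beta R+(a_k\cdot e_1)^2\ge\beta R$, so for $R$ in your annulus (say $R\ge 1/9$) the analogous sum is bounded by $\tfrac{9}{\beta}\cdot\tfrac1m\sum_k(a_k\cdot\hat u)^4=O(1/\beta)$ and cannot force $R\le 1/50$. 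The regularization in $D_k$ kills exactly the mechanism you invoke, and in fact QIM2 \emph{does} have critical points in this regime (the strict saddles), so no ``localization to small $R$'' argument can succeed here. The paper proceeds completely differently: it introduces the angular variable $\theta$ via $\hat u=e_1\cos\theta+e^\perp\sin\theta$ and shows (Lemmas~\ref{Sep19_e0} and \ref{Sep19_e1}) that either $|\partial_\theta f|>0$ when $\theta$ is bounded away from $0,\pi/2,\pi$, or $\partial_{\theta\theta}f<0$ near $\theta=\pi/2$. This rests on an explicit computation of $\mathbb E f$ as a quadratic in $\cos^2\theta$ with sign-controlled coefficients (Lemma~\ref{lemSeH:3}, which in turn uses refined bounds on $\operatorname{Erfc}$).

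A secondary issue is your treatment of the near-origin regime. The claim that $D_k(u)$ is ``comparable to $(a_k\cdot e_1)^2$'' is not uniform in $k$ (when $(a_k\cdot e_1)^2\ll\beta|u|^2$ it fails), and more importantly the correction term $(RZ_k^2-X_k^2)^2\cdot(6\beta^2 u_1^2-2\beta^2|u'|^2-2\beta X_k^2)/D_k^3$ arising from differentiating $D_k$ twice can be \emph{positive} when $|\hat u\cdot e_1|$ is large. This is why the paper splits the small-$\|u\|$ regime on $|\hat u\cdot e_1|\lessgtr 1/10$ (Lemmas~\ref{Sep8_e2} and \ref{Sep8_e2aa}), showing negative curvature in one case and absence of critical points (via testing both $\partial_R f=0$ and $\partial_{u_1}f=0$) in the other. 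Your uniform $H_{e_1e_1}<0$ claim does not obviously survive these corrections. Likewise, the paper's far-field Lemma~\ref{Sep8_e1} only delivers $\partial_R f>0$ for $R\ge R_1(\beta)$ with $R_1$ possibly much larger than $1+\epsilon_0$; the QIM1 Cauchy--Schwarz trick does not transplant directly, and the gap between $R\sim 1$ and $R_1$ is again filled by the angular analysis and by Theorem~\ref{Sep19_e3}.
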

\begin{rem}
See Theorem \ref{Sep19e8} for the precise statement concerning
restrictive convexity.
\end{rem}

Without loss of generality we shall assume $x=e_1$ throughout the rest of the proof.

\subsection{The regimes $\|u\|_2\ll 1 $ and $\|u\|_2\gg 1$ are fine}$\;$

We first investigate the point $u=0$. It is trivial to verify that $\nabla f(0)=0$ since 
$a_k\cdot e_1 \ne 0$ for all $k$ almost surely. 
\begin{lem}[$u=0$ has strictly negative-definite Hessian] \label{Sep8_e0}
We have $u=0$ is local maximum point with strictly
negative-definite Hessian. More precisely, 
 for $m\gtrsim n$, it holds with high probability that
\begin{align*}
\sum_{k,l=1}^n \xi_k \xi_l (\partial_{kl} f)(0)
\le -d_1, \quad\forall\, \xi \in \mathbb S^{n-1},
\end{align*}
where $d_1>0$ is an absolute constant.
\end{lem}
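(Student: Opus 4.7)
The plan is to compute the Hessian of $f$ at $u=0$ by direct differentiation and then invoke a standard uniform lower bound on $\frac{1}{m}\sum_k (a_k\cdot\xi)^2$. The key observation is that, for each summand, both the numerator and denominator have vanishing gradient at $u=0$, which makes the quotient rule collapse dramatically.

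Concretely, I would fix $k$ and write $p(u) = (a_k\cdot u)^2 - X_k^2$ and $q(u) = \beta|u|^2 + X_k^2$, where $X_k = a_k\cdot e_1$, so that the $k$-th summand is $p^2/q$. A short computation gives $\nabla p(0) = 0$, $\nabla q(0) = 0$, $\mathrm{Hess}(p)(0) = 2 a_k a_k^\top$ and $\mathrm{Hess}(q)(0) = 2\beta I$, while $p(0) = -X_k^2$ and $q(0) = X_k^2$. Plugging these into
\begin{align*}
\partial_{ij}(p^2/q) = \frac{2\partial_i p\,\partial_j p + 2p\,\partial_{ij}p}{q} - \frac{2p\,\partial_i p\,\partial_j q + 2p\,\partial_j p\,\partial_i q + p^2\,\partial_{ij}q}{q^2} + \frac{2p^2\,\partial_i q\,\partial_j q}{q^3},
\end{align*}
all cross terms containing $\nabla p$ or $\nabla q$ vanish at $u=0$, leaving
\begin{align*}
\partial_{ij}(p^2/q)\big|_{u=0} \;=\; \frac{-4 X_k^2 (a_k)_i (a_k)_j}{X_k^2} \;-\; \frac{X_k^4 \cdot 2\beta\,\delta_{ij}}{X_k^4} \;=\; -4(a_k)_i(a_k)_j - 2\beta\,\delta_{ij}.
\end{align*}
Summing over $k$, I obtain the clean identity
\begin{align*}
\sum_{i,j} \xi_i \xi_j (\partial_{ij} f)(0) \;=\; -\frac{4}{m}\sum_{k=1}^m (a_k\cdot\xi)^2 \;-\; 2\beta,\qquad \forall\,\xi\in\mathbb S^{n-1}.
\end{align*}

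The second step is the uniform concentration. By a standard $\epsilon$-net plus Bernstein argument (or directly by the matrix Bernstein inequality applied to $\frac{1}{m}\sum_k a_k a_k^\top - I$), for $m\gtrsim n$ we have with high probability
\begin{align*}
\frac{1}{m}\sum_{k=1}^m (a_k\cdot\xi)^2 \;\ge\; \tfrac{1}{2},\qquad \forall\,\xi\in\mathbb S^{n-1}.
\end{align*}
Combining the two displays yields $\sum_{i,j}\xi_i\xi_j(\partial_{ij}f)(0) \le -2 - 2\beta \le -2$ uniformly in $\xi$, so any fixed $d_1 \in (0,2]$ works.

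This proof is essentially a direct computation, so I do not expect a genuine obstacle. The only point requiring a little care is to verify that the apparent singularity of each summand at $u=0$ is not an issue: since $X_k \neq 0$ almost surely for every $k$, the summand $p^2/q$ is smooth in a neighborhood of $u=0$ and the formal quotient-rule differentiation above is fully justified. Everything else is bookkeeping and an off-the-shelf concentration bound.
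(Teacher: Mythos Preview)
Your proof is correct and follows essentially the same approach as the paper: compute the Hessian quadratic form at $u=0$ explicitly, obtaining $-\frac{4}{m}\sum_k (a_k\cdot\xi)^2 - 2\beta$, and then invoke Bernstein for the uniform lower bound on $\frac{1}{m}\sum_k (a_k\cdot\xi)^2$. The only cosmetic difference is that the paper reaches the same formula via the one-variable substitution $u=\sqrt{t}\,\xi$, $G(t)=f(\sqrt{t}\,\xi)$, noting that the Hessian form equals $2G'(0)=-2\beta-\frac{4}{m}\sum_k(a_k\cdot\xi)^2$, which sidesteps the multivariate quotient rule.
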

\begin{proof}
We begin by noting that  since almost surely $a_k\cdot e_1\ne 0$ for all $k$,
the function $f$ is smooth at $u=0$.  It suffices for us to consider 
(write $u=\sqrt{t} \xi$)
\begin{align*}
G(t) = \frac 1 m
\sum_{k=1}^m \frac { (t(a_k\cdot \xi)^2 - (a_k\cdot e_1)^2)^2}
{\beta t+ (a_k\cdot e_1)^2}.
\end{align*}
Clearly
\begin{align*}
G^{\prime}(0) =-\beta -2 \frac 1m \sum_{k=1}^m (a_k\cdot \xi)^2.
\end{align*}
The desired conclusion then easily follows by using Bernstein's inequality.
\end{proof}

Write  $u=\sqrt{R} \hat u$ where $\hat u \in S^{n-1}$ and $R>0$.   Then
\begin{align*}
f (u) = \frac 1m \sum_{k=1}^m 
\frac { \Bigl( R (a_k\cdot \hat u)^2 -(a_k\cdot e_1)^2 \Bigr)^2}
{\beta R + (a_k\cdot e_1)^2}.
\end{align*}
Clearly
\begin{align}
&\partial_{R} f =   
\frac 1m \sum_{k=1}^m
\frac 
{ R^2 (\beta (a_k\cdot \hat u)^4)
+2 R (a_k\cdot \hat u)^4 (a_k\cdot e_1)^2
-\beta (a_k\cdot e_1)^4 -2(a_k\cdot e_1)^4 (a_k\cdot \hat u)^2}
{
(\beta R+ (a_k\cdot e_1)^2 )^2};
   \label{Sep8e10a}\\
&\partial_{RR} f =2\frac 1m \sum_{k=1}^m
\frac{ (a_k\cdot e_1)^4 (\beta+ (a_k\cdot \hat u)^2)^2}
{ (\beta R+ (a_k\cdot e_1)^2)^3}. \label{Sep8e10b}
\end{align}

\begin{lem}[The regime $\|u\|_2\gg 1$ is OK] \label{Sep8_e1}
There exist constants $R_1=R_1(\beta)>0$, $d_1=d_1(\beta)>0$ such that the 
following hold: 
For $m\gtrsim n$,  with high probability it holds that
\begin{align*}
\partial_{R} f \ge d_1,  \quad\forall\, R \ge R_1 , \;\text{}\forall\, \hat u \in \mathbb S^{n-1}.
\end{align*}
\end{lem}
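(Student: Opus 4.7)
The plan is to exploit the explicit formula \eqref{Sep8e10a} and show that the leading-in-$R$ positive term $\beta R^2 Y_k^4$ dominates the $R$-independent negative terms $-2Y_k^2 X_k^4 - \beta X_k^4$ once $R\ge R_1(\beta)$ is large. Write $X_k = a_k\cdot e_1$ and $Y_k = a_k\cdot \hat u$. A crucial structural feature is that the denominator $(\beta R + X_k^2)^2$ depends only on $X_k$, not on $\hat u$; therefore every $\hat u$-dependent sum we must control is a (truncated) quadratic or quartic form in $\hat u$, whose coefficients depend only on $R$ and the $X_k$. This reduces the problem to standard concentration for Gaussian quadratic and quartic forms.

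For the positive contribution I first discard the middle term $2R Y_k^4 X_k^2\ge 0$, then fix truncation constants $K=K(\beta)$ and $N=N(\beta)$. On the event $\{X_k^2\le K,\ |Y_k|\le N\}$ and for $R\ge K/\beta$ we have $\beta R + X_k^2\le 2\beta R$, which gives
\begin{align*}
\frac{\beta R^2 Y_k^4}{(\beta R + X_k^2)^2}\ge \frac{1}{4\beta}\, Y_k^4\, \chi_{\{|Y_k|\le N\}}\chi_{\{X_k^2\le K\}}.
\end{align*}
Using the joint Gaussian decomposition $Y_k = \gamma X_k + \sqrt{1-\gamma^2}Z_k$ with $\gamma = \hat u\cdot e_1$ and $Z_k\sim N(0,1)$ independent of $X_k$, a direct moment computation shows $\mathbb E[Y_k^4\,\chi_{\{|Y_k|\le N\}}\chi_{\{X_k^2\le K\}}]\to 3$ uniformly in $\hat u\in \mathbb S^{n-1}$ as $K, N\to\infty$ (uniformity follows from $\mathbb E[Y_k^8]=105$ independent of $\hat u$ together with Cauchy--Schwartz on the tail). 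A standard $\epsilon$-net plus Bernstein argument applied to this bounded truncated quartic form, in the spirit of Lemma \ref{lemSe7_0a} as used in Theorem \ref{thm_Sep7_00}, then yields
\begin{align*}
\frac{1}{m}\sum_{k=1}^m Y_k^4\,\chi_{\{|Y_k|\le N\}}\chi_{\{X_k^2\le K\}}\ge 2, \qquad \forall\, \hat u \in \mathbb S^{n-1},
\end{align*}
with high probability provided $m\gtrsim n$; consequently the positive part is at least $1/(2\beta)$ uniformly.

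For the negative contribution, the elementary AM-GM bound $(\beta R + X_k^2)^2\ge 4\beta R X_k^2$ gives $X_k^4/(\beta R + X_k^2)^2\le X_k^2/(4\beta R)$, so
\begin{align*}
\frac{1}{m}\sum_{k=1}^m\frac{X_k^4(2Y_k^2+\beta)}{(\beta R + X_k^2)^2}\le \frac{1}{4\beta R}\cdot \frac{1}{m}\sum_{k=1}^m X_k^2\bigl(2Y_k^2+\beta\bigr).
\end{align*}
The right side equals, up to an additive scalar that concentrates around $\beta$, the quadratic form $\hat u^\T M\hat u$ with $M = \tfrac{2}{m}\sum_k X_k^2 a_k a_k^\T$; since $\mathbb E[X_k^2 a_k a_k^\T] = I + 2 e_1 e_1^\T$, a truncated covariance estimate (again via an $\epsilon$-net, $m\gtrsim n$) gives $\|M\|_{\mathrm{op}}\le C(\beta)$ with high probability. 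The whole negative contribution is therefore at most $C'(\beta)/R$ uniformly in $\hat u$. Combining,
\begin{align*}
\partial_R f \ge \frac{1}{2\beta} - \frac{C'(\beta)}{R}, \qquad R\ge K/\beta,
\end{align*}
and choosing $R_1(\beta) := 4\beta C'(\beta)$ produces $\partial_R f\ge d_1 := 1/(4\beta)$ for every $R\ge R_1$ and every $\hat u\in \mathbb S^{n-1}$.

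The main obstacle is the uniform-in-$\hat u$ lower bound for the truncated empirical fourth moment at the optimal sample complexity $m\gtrsim n$: untruncated quartic Gaussian forms incur extra $\log$ factors, and the twin truncations $\chi_{\{X_k^2\le K\}}$ and $\chi_{\{|Y_k|\le N\}}$ are precisely what restores linear sample complexity by making all summands bounded before invoking Bernstein and the net. This is the technical crux, handled in the paper by appendix lemmas of $\textrm{Lemma \ref{lemSe7_0a}}$ type.
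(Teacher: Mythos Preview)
Your overall strategy—isolating a positive contribution of order $1$ and a negative contribution of order $1/R$—matches the paper, and your treatment of the positive term via twin truncations is essentially the paper's argument with a cosmetically different denominator bound. The gap is in your treatment of the negative term.

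You claim that $\|M\|_{\mathrm{op}}\le C(\beta)$ for $M=\tfrac{2}{m}\sum_k X_k^2 a_k a_k^\top$ once $m\gtrsim n$. This is false at the optimal sample complexity. Take $k^*=\arg\max_k X_k^2$ and set $\hat u = b_{k^*}/\|b_{k^*}\|$, where $b_k:=a_k-X_ke_1$. Since $k^*$ is measurable with respect to $\{X_k\}$ and hence independent of $\{b_k\}$, with high probability $X_{k^*}^2\approx 2\log m$ and $\|b_{k^*}\|^2\approx n$, so the single summand $k=k^*$ already gives
\[
\hat u^\top M\hat u \;\ge\; \frac{2}{m}\,X_{k^*}^2\,\|b_{k^*}\|^2 \;\approx\; \frac{4n\log m}{m}.
\]
For $m=Cn$ with $C$ a fixed constant this is of order $(\log n)/C$, which is unbounded as $n\to\infty$. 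Truncating $X_k$ at a fixed level $L$ does not help: the tail $\{|X_k|>L\}$ contains $k^*$ almost surely for large $n$ and reproduces the same $\log n$ contribution. Consequently your inequality becomes $\partial_R f\ge \tfrac{1}{2\beta}-C''(\log n)/R$, forcing $R_1\gtrsim \log n$ rather than $R_1=R_1(\beta)$.

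The paper sidesteps this by \emph{not} applying the AM--GM bound $X_k^4/(\beta R+X_k^2)^2\le X_k^2/(4\beta R)$ uniformly in $k$. Instead it uses the two-regime estimate
\[
\frac{X_k^4}{(\beta R+X_k^2)^2}\;\le\;C_1\Bigl(\frac{1}{R}+\chi_{|X_k|\ge R^{1/4}}\Bigr),
\]
which places \emph{bounded} weights on the quadratic form $\tfrac{1}{m}\sum_k(\cdot)Z_k^2$; the resulting weighted covariance then concentrates at $m\gtrsim n$ by the standard sub-exponential argument, and the indicator piece has uniformly small mean once $R$ is large. Your argument is repaired by exactly this substitution: use AM--GM only on $\{|X_k|<R^{1/4}\}$ and the trivial bound $X_k^4/(\beta R+X_k^2)^2\le 1$ on the complement.
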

\begin{proof}
We only sketch the proof.  Denote $X_k =a_k\cdot e_1$ and $Z_k= a_k \cdot \hat u$. 
Using the inequalities (assume $R\gg 1$ and denote by $C_1>0$ a constant depending
only on $\beta$)
\[
\beta R+ X_k^2 \le  R (\beta +X_k^2), \qquad (\beta R +X_k^2)^2 \ge 4 \beta R X_k^2
\]
and 
\[
\frac {X_k^4} {(\beta R+ X_k^2)^2} \le C_1\cdot ( \frac {R} {R^2} +   \chi_{|X_k|\ge R^{{\frac 14} } }),
\]
we have
\begin{align*}
\partial_R f
&\ge \frac 1m \sum_{k=1}^m
\frac {\beta Z_k^4} {(\beta +X_k^2)^2}
\phi(\frac {Z_k} N) 
- \frac 1m \sum_{k=1}^m \frac  1 {4R} X_k^2
-\frac 2 m \sum_{k=1}^m C_1 \cdot (R^{-1} +\chi_{|X_k|\ge R^{\frac 14} } )
\cdot Z_k^2,
\end{align*}
where we have chosen $\phi \in C_c^{\infty}$ such that $0\le \phi(x)\le 1$
for all $x$, $\phi(x)=1$ for $|x| \le 1$ and $\phi(x)=0$ for $|x| \ge 2$. 
Observe that for $a\sim \mathcal N (0, \operatorname{I_n} )$, $Z \sim \mathcal N(0,1)$,
\begin{align*}
\mathbb E (a\cdot \hat u)^4 \chi_{|a\cdot \hat u|\ge N} 
\le \mathbb E Z^4 \chi_{|Z|\ge N} \to 0, \qquad \text{as $N\to \infty$}.
\end{align*}
It is also easy to show that 
\begin{align*}
\inf_{\hat u \in \mathbb S^{n-1}}
\mathbb E \frac {(a\cdot \hat u)^4} {(\beta +(a\cdot e_1)^2)^2} \gtrsim 1.
\end{align*}
Thus we can take $N$ large such that
\begin{align*}
\inf_{\hat u \in \mathbb S^{n-1} }\mathbb E \frac { (a\cdot \hat u)^4} {(\beta +(a\cdot e_1)^2)^2}
\phi( \frac {a\cdot \hat u} N) \gtrsim 1.
\end{align*}
It is easy to show that by taking $R$ large, for $m\gtrsim n$, it holds with high
probability that
\begin{align*}
\frac 1m \sum_{k=1}^m \chi_{|X_k| \ge R^{\frac 14} } Z_k^2 \le \epsilon.
\end{align*}
Since all the other terms are OK for union bounds,
the desired result then clearly follows by taking $R$ large.
\end{proof}

\begin{lem}[The regime $\|u\|_2 \ll 1 $ with
$ \frac {|u_1|} {\|u\|_2} \le \frac 1 {10}$ is OK] \label{Sep8_e2}
There exist a constant $R_2=R_2(\beta)>0$  such that the 
following hold: 
For $m\gtrsim n$,  with high probability it holds that
\begin{align*}
\partial_{u_1 u_1} f \le -2<0 ,  \quad\forall\, 0<R \le R_2 , \;\text{}\forall\, \hat u \in \mathbb S^{n-1} \text{with $  |\hat u \cdot e_1| \le \frac 1 {10} $}. 
\end{align*}
\end{lem}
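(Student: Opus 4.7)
The plan is to compute $\partial_{u_1 u_1} f(u)$ in closed form and show that its value at $u = 0$ already serves as a sufficiently negative baseline, from which any perturbation in the prescribed regime can be absorbed. Writing $X_k = a_k \cdot e_1$, $W_k = a_k \cdot u$, $P_k = W_k^2 - X_k^2$, and $D_k = \beta R + X_k^2$ with $R = \|u\|_2^2$, a direct differentiation of \eqref{model4be1} yields
\begin{align*}
\partial_{u_1 u_1} f = \frac{1}{m} \sum_{k=1}^m \Bigl[ \frac{4 X_k^2 (3 W_k^2 - X_k^2)}{D_k} - \frac{16 \beta u_1 W_k X_k P_k}{D_k^2} - \frac{2\beta P_k^2}{D_k^2} + \frac{8 \beta^2 u_1^2 P_k^2}{D_k^3} \Bigr].
\end{align*}
At $u = 0$ each summand collapses to $-4 X_k^2 - 2\beta$, whose empirical mean, by Bernstein's inequality, sits below $-(4 + 2\beta) + 0.1$ with high probability once $m \gtrsim n$.

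The crucial algebraic observation is that the two $\beta$-terms combine favorably:
\begin{align*}
-\frac{2\beta P_k^2}{D_k^2} + \frac{8 \beta^2 u_1^2 P_k^2}{D_k^3} = -\frac{2\beta P_k^2}{D_k^2}\Bigl(1 - \frac{4\beta u_1^2}{D_k}\Bigr).
\end{align*}
Since $D_k \ge \beta R$ and the constraint $|\hat u \cdot e_1| \le 1/10$ forces $u_1^2 \le R/100$, the bracket is at least $1 - 4/100 = 0.96$. Thus this combined contribution is non-positive and, in the limit $u \to 0$, reduces per index to $-2\beta$.

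For the remaining pieces I would decompose the leading term as
\begin{align*}
\frac{4 X_k^2 (3 W_k^2 - X_k^2)}{D_k} = -4 X_k^2 + \frac{4 \beta R X_k^2}{D_k} + \frac{12 X_k^2 W_k^2}{D_k},
\end{align*}
where the first piece supplies the baseline $-4$ after averaging, and the other two pieces are each $O(R) \cdot Y_k$ for a random variable $Y_k$ of uniformly bounded expectation (using $W_k^2 = R Z_k^2$ with $Z_k = a_k \cdot \hat u$, and $D_k \ge X_k^2$). The cross term is handled by $|u_1 W_k| \le R|Z_k|$, the triangle bound $|P_k| \le W_k^2 + X_k^2$, and the AM--GM inequality $D_k \ge 2 \sqrt{\beta R} |X_k|$; splitting into the cases $X_k^2 \gtrless \beta R$ shows it contributes $O(\sqrt R)$ times uniformly bounded quantities.

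The main obstacle is obtaining uniform concentration over the slice $\{\hat u \in \mathbb S^{n-1} : |\hat u \cdot e_1| \le 1/10\}$, since several summands involve heavy-tailed ratios like $Z_k^4$ that are not directly sub-exponential. I would follow the truncation strategy already employed in Lemma \ref{Sep8_e1}: insert a smooth cutoff $\phi(Z_k/N)$ to remove atypically large $|Z_k|$, handle the truncated sums by Bernstein combined with an $\epsilon$-net argument on the slice, and control the discarded tails via $\mathbb E Z^4 \chi_{|Z|\ge N} \to 0$ as $N\to\infty$. Taking $N$ sufficiently large and then $R_2 = R_2(\beta)$ sufficiently small that the total perturbation off the baseline is at most $2$ gives $\partial_{u_1 u_1} f \le -(4 + 2\beta) + 2 \le -2$ uniformly on the high-probability event.
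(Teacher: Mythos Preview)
Your Hessian formula is correct and the observation that the condition $|\hat u\cdot e_1|\le\tfrac1{10}$ forces the combined $\beta$-contribution
\[
-\frac{2\beta P_k^2}{D_k^2}\Bigl(1-\frac{4\beta u_1^2}{D_k}\Bigr)\le -1.92\,\beta\,\frac{P_k^2}{D_k^2}
\]
to be non-positive is exactly the right starting point. The difficulty is in what you do next. By simply discarding this term (bounding it above by $0$) you lose the only piece that can absorb the heavy tail coming from the cross term. After your case split the cross term is controlled by $C\sqrt{R}\cdot\frac1m\sum_k|Z_k|^3$, and this sum is \emph{not} uniformly $O(1)$ over $\hat u\in\mathbb S^{n-1}$ when $m\sim n$: choosing $\hat u=a_j/\|a_j\|$ gives a single summand of order $\|a_j\|^3\sim n^{3/2}$, so $\sup_{\hat u}\frac1m\sum_k|Z_k|^3\gtrsim\sqrt n$. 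Truncation at level $N$ does not cure this, because the discarded tail $\frac1m\sum_k|Z_k|^3(1-\phi(Z_k/N))$ carries the same pathology; only its \emph{expectation} is small, not its supremum over $\hat u$. Since $R_2$ must depend only on $\beta$, the prefactor $\sqrt R$ cannot compensate.

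The paper's proof hinges on \emph{retaining} the negative $\beta$-contribution rather than dropping it. Writing $P_k^2\ge R^2Z_k^4-2RX_k^2Z_k^2$, one extracts a term $-c\,\beta\,\frac{R^2Z_k^4}{D_k^2}$ and then handles the dangerous piece of the cross term pointwise via Young's inequality,
\[
\frac{R^2|Z_k|^3|X_k|}{D_k^2}\le \epsilon_1\frac{R^2Z_k^4}{D_k^2}+\epsilon_1^{-3}\frac{R^2X_k^4}{D_k^2}\le \epsilon_1\frac{R^2Z_k^4}{D_k^2}+\epsilon_1^{-3}R^2,
\]
so that for $\epsilon_1$ small the $Z_k^4$ part is swallowed by the negative term and only sub-exponential quantities (plus the Lemma~\ref{lemSeH:0} term $\frac Rm\sum_k\frac{Z_k^2}{\beta R+X_k^2}$) remain. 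That pointwise absorption is the missing idea; once you put it in, your baseline-plus-perturbation scheme goes through at the optimal sampling rate. (A minor separate point: since you only show the $\beta$-block is $\le 0$, your final inequality should read $\partial_{u_1u_1}f\le -4+2=-2$, not $-(4+2\beta)+2$; but this is cosmetic compared with the issue above.)
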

\begin{proof}
We only sketch the proof.  Denote $X_k=a_k\cdot e_1$ and $Z_k= a_k \cdot \hat u$. 
A short computation gives
\begin{align*}
\partial_{u_1u_1} f 
&=  \frac 4m \sum_{k=1}^m \frac {3 R X_k^2 Z_k^2 - X_k^4} {\beta R + X_k^2}+ \frac 1m \sum_{k=1}^m ( R Z_k^2-X_k^2)^2
\cdot \frac { 6\beta^2 u_1^2 -2\beta^2 |u^{\prime}|^2-2\beta X_k^2 } { (\beta R+X_k^2)^3} \notag \\
&\quad -16\beta (\hat u \cdot e_1) R^2 \frac 1m \sum_{k=1}^m
\frac { Z_k^3 X_k } {(\beta R+ X_k^2)^2} 
+16\beta (\hat u\cdot e_1) R \frac 1m \sum_{k=1}^m \frac{ X_k^3 Z_k} { (\beta R+ X_k^2)^2},
\end{align*}
where $u_1= u\cdot e_1$ and $u^{\prime}= u - u_1 e_1$. 
Now observe that 
\begin{align*}
& \frac 1m \sum_{k=1}^m\frac {Z_k^2} {\beta R+ X_k^2} X_k^2 
\le \frac 1m \sum_{k=1}^m Z_k^2; \\
& \frac 1m \sum_{k=1}^m \frac {X_k^4}
{\beta R+ X_k^2} = 
\frac 1m \sum_{k=1}^m \frac { (\beta R+ X_k^2-\beta R)^2}
{\beta R+X_k^2}  
\ge \Bigl( \frac 1m \sum_{k=1}^m (\beta R+X_k^2)\Bigr) - 2\beta R
\ge -\beta R + \frac 1m \sum_{k=1}^m X_k^2 ; \\
& \frac 1m \sum_{k=1}^m \frac {R^{\frac 32} |Z_k|^3 R^{\frac 12} |X_k|}
{(\beta R+X_k^2)^2}
\le \epsilon_1 \frac 1m \sum_{k=1}^m \frac { R^2 Z_k^4}
{ (\beta R+X_k^2)^2} + \frac 1 {\epsilon_1^3}
\frac 1m \sum_{k=1}^m \frac {R^2X_k^4}{ (\beta R+X_k^2)^2} 
\le  \frac {R^2} {\epsilon_1^3} +\epsilon_1 \frac 1m \sum_{k=1}^m \frac { R^2 Z_k^4}
{ (\beta R+X_k^2)^2} ; \\
& \frac R  m \sum_{k=1}^m \frac {|X_k|^3 |Z_k|} {(\beta R+X_k^2)^2}
\le   \frac R  m\sum_{k=1}^m
\frac {|X_k|^3 |Z_k|}{ ( 3 (\beta R)^{\frac 13} ( \frac 14 X_k^4)^{\frac 13})^2} 
 \lesssim R^{\frac 13} \beta^{-\frac 23} \frac 1m \sum_{k=1}^m
|X_k|^{\frac 13} |Z_k|
\lesssim  R^{\frac 13} \beta^{-\frac 23}
\frac 1 m\sum_{k=1}^m (X_k^2 +Z_k^2+1),
\end{align*}
where in the above the constant $\epsilon_1>0$ will be taken sufficiently small. The needed
smallness will become clear momentarily.
Since $|u_1|/\|u\|_2 \le \frac 1 {10}$, it is clear that for some absolute constant $C_1>0$, 
\begin{align*}
\frac { 6\beta^2 u_1^2 -2\beta^2 |u^{\prime}|^2-2\beta X_k^2 } { (\beta R+X_k^2)^3}
\le - \beta  C_1 \cdot \frac 1 { (\beta R +X_k^2)^2}.
\end{align*}
Now
\begin{align*}
- \frac 1m \sum_{k=1}^m \frac {(RZ_k^2-X_k^2)^2}
{(\beta R+X_k^2)^2} \le
-\frac 1 m \sum_{k=1}^m \frac {R^2 Z_k^4}
{ (\beta R+X_k^2)^2} +  \frac {2R}m \sum_{k=1}^m \frac {Z_k^2} {\beta R+ X_k^2}. 
\end{align*}
Now take $\epsilon_1 = \frac {C_1}{1000}$.  By Lemma \ref{lemSeH:0}, we can take
$R$ sufficiently small such that with high probability
\begin{align*}
 \frac {2R}m \sum_{k=1}^m \frac {Z_k^2} {\beta R+ X_k^2} <\frac 1 {100}.
\end{align*}
All the other terms can be treated by taking $R$ sufficiently small, and the desired result
follows easily.
\end{proof}

\begin{lem}[The regime $\|u\|_2 \ll 1 $ with
$ \frac {|u_1|} {\|u\|_2} > \frac 1 {10}$ is OK] \label{Sep8_e2aa}
There exist a constant $R_3=R_3(\beta)>0$  such that the 
following hold: 
For $m\gtrsim n$, with high probability it holds that the loss function
$f=f(u)$ has no critical points in the regime 
\begin{align*}
\Bigl\{ u = \sqrt {R} \hat u: \quad 0<R \le R_3, \, \hat u \in \mathbb S^{n-1}
\text{ and } |\hat u \cdot e_1|>\frac 1{10} 
\Bigr\}.
\end{align*}
\end{lem}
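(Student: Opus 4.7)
The plan is to test the gradient against $e_1$ and show $\partial_{e_1} f(u) \ne 0$ throughout this regime; since a critical point would force every directional derivative to vanish, this rules them out. The key heuristic is that as $R = \|u\|_2^2 \to 0$, the leading asymptotics of $\partial_{e_1} f$ is proportional to $u_1 = \sqrt{R}\,(\hat u \cdot e_1)$, and the hypothesis $|\hat u \cdot e_1| > 1/10$ keeps $|u_1|$ bounded below by $\sqrt{R}/10$.

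A direct computation, writing $X_k = a_k \cdot e_1$ and $Z_k = a_k \cdot \hat u$, gives
\[
\partial_{e_1} f(u) = \frac{4}{m} \sum_{k=1}^m \frac{(R Z_k^2 - X_k^2)\sqrt{R}\, Z_k X_k}{\beta R + X_k^2} - \frac{2\beta u_1}{m} \sum_{k=1}^m \frac{(R Z_k^2 - X_k^2)^2}{(\beta R + X_k^2)^2}.
\]
After expansion, the dominant pieces turn out to be $-\frac{4\sqrt{R}}{m}\sum Z_k X_k^3/(\beta R + X_k^2)$ and $-\frac{2\beta u_1}{m}\sum X_k^4/(\beta R + X_k^2)^2$. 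Using $X_k^2/(\beta R + X_k^2) = 1 - \beta R/(\beta R + X_k^2)$ together with the AM--GM bound $\beta R + X_k^2 \ge 2\sqrt{\beta R}\,|X_k|$, the first piece reduces to $-\frac{4\sqrt{R}}{m}\sum Z_k X_k$ modulo an $O(R)$ correction, which concentrates at $-4\sqrt{R}\,\hat u_1$ by Bernstein. An explicit Gaussian computation shows $\mathbb{E}[\beta R/(\beta R + X^2)] = O(\sqrt{R})$, so the second piece equals $-2\beta u_1(1 + O(\sqrt{R}))$. Combining, the leading order is $-(4 + 2\beta)\sqrt{R}\,\hat u_1$.

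The remainder terms require care. The cubic contribution $Z_k^3 X_k/(\beta R + X_k^2)$ already carries a prefactor $R^{3/2}$ and is trivially $o(\sqrt{R})$. The cross-term $R Z_k^2 X_k^2/(\beta R + X_k^2)^2$ appears singular because $\sup_X X^2/(\beta R + X^2)^2 \sim 1/(\beta R)$; however, computing $\mathbb{E}[(a\cdot\hat u)^2 X^2/(\beta R + X^2)^2]$ via the decomposition $a \cdot \hat u = \hat u_1 X + V$ with $V$ independent of $X$ and applying the change of variables $X = \sqrt{\beta R}\,Y$, one finds this expectation scales like $R^{-1/2}$, so after the external factors $R$ (from $Z_k^2$) and $u_1 \sim \sqrt{R}$ its contribution to $\partial_{e_1} f$ is $O(R) = o(\sqrt{R})$. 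The pure quartic term $R^2 Z_k^4/(\beta R + X_k^2)^2$ is even smaller, of order $R^{3/2}$, by the dichotomy $X_k^2 \gtrless \sqrt{R}$ combined with $\mathbb{P}(X_k^2 < \sqrt{R}) = O(R^{1/4})$. Uniformity over $\hat u \in \mathbb{S}^{n-1}$ follows from a covering-net and union-bound argument, exactly as in Lemmas \ref{Sep8_e1}--\ref{Sep8_e2}, and is valid for $m \gtrsim n$. Putting this together yields
\[
\partial_{e_1} f(u) = -(4 + 2\beta)\sqrt{R}\,\hat u_1 + o(\sqrt{R}) \quad \text{with high probability},
\]
uniformly over the prescribed regime. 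Choosing $R_3(\beta)$ small enough so the error is dominated by the main term, we get $|\partial_{e_1} f(u)| \ge \tfrac{4+2\beta}{20}\sqrt{R} > 0$ whenever $|\hat u \cdot e_1| > 1/10$, which forbids critical points.

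The principal obstacle is the cross-term $R Z_k^2 X_k^2/(\beta R + X_k^2)^2$: naive bounds such as $X_k^2/(\beta R + X_k^2)^2 \le 1/(4\beta R)$ cost a full factor of $1/R$ and would drown the leading order. Handling it correctly requires exploiting the bivariate Gaussian structure of $(X_k, a_k \cdot \hat u)$, splitting the transverse component from $X_k$, and recognizing that although the $X$-integral $\int X^2/(\beta R + X^2)^2 \,e^{-X^2/2}\,dX \sim 1/\sqrt{R}$, the extra power of $R$ carried by $Z_k^2$ renders the entire term harmless. A parallel caveat applies to the claim $\mathbb{E}[X^4/(\beta R + X^2)^2] = 1 + O(\sqrt{R})$ used for the leading third-sum contribution, which likewise relies on a careful Gaussian tail computation.
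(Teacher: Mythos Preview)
Your approach is genuinely different from the paper's, and the overall strategy---expand $\partial_{e_1} f$ in powers of $R$ and show the leading coefficient is $-(4+2\beta)\hat u_1 \ne 0$---is appealing. The expectation computations you sketch are correct: $\mathbb E\,\partial_{e_1}f$ does behave like $-(4+2\beta)\sqrt R\,\hat u_1$ as $R\to 0$. The gap is in passing from expectation to the empirical sum \emph{uniformly in $\hat u$ at the optimal complexity $m\gtrsim n$}.

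The problematic terms are precisely the ones you flag as needing care. For the cubic piece $\frac{4R^{3/2}}m\sum_k \frac{Z_k^3 X_k}{\beta R+X_k^2}$ and the quartic piece $\frac{2\beta u_1 R^2}m\sum_k \frac{Z_k^4}{(\beta R+X_k^2)^2}$, any direct bound eventually reduces to controlling quantities like $\frac1m\sum_k |Z_k|^3$ or $\frac1m\sum_k Z_k^4$ uniformly over $\hat u\in\mathbb S^{n-1}$. At $m\sim n$ these heavy-tailed sums are \emph{not} uniformly $O(1)$ with high probability; this is exactly the obstruction the paper's introduction highlights for the raw intensity model. Your dichotomy $X_k^2\gtrless\sqrt R$ for the quartic term and the claim that the cubic term is ``trivially $o(\sqrt R)$'' both implicitly use such a bound. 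The cross term $\frac{R Z_k^2 X_k^2}{(\beta R+X_k^2)^2}$ is, by contrast, genuinely controllable via Lemma~\ref{lemSeH:0} (dominate by $\frac{R Z_k^2}{\beta R+X_k^2}$), so that part of your argument is salvageable; but the higher-order pieces are not, and without them the $o(\sqrt R)$ remainder estimate fails uniformly.

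The paper avoids this by never attempting to bound the $Z_k^4$-type sums directly. Instead it assumes a critical point exists, writes out \emph{both} $\partial_R f=0$ and $\partial_{u_1}f=0$ as $RA_1=B_1$ and $R(-A_2+A_3)=B_2$, shows the right-hand sides are $\sim 1$, and eliminates $R$ to obtain $A_1+B_3 A_2=B_3 A_3$. A H\"older inequality then gives $A_3\lesssim A_1^{3/4}\cdot O(1)$, which forces $A_1\lesssim 1$; feeding this back into $RA_1=B_1\sim 1$ yields $R\sim 1$, the desired contradiction. The point is that the critical-point equations themselves supply the missing a~priori bound on the quartic sum, so no uniform fourth-moment control is ever invoked.
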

\begin{proof}
We assume that for $0<R \ll 1$ there exists some critical point. 
The idea is to 
examine the necessary conditions for a potential critical point and then derive
a lower bound on $R$.
Denote $X_k=a_k\cdot e_1$ and $Z_k=a_k\cdot \hat u$. By 
\eqref{Sep8e10a}, we have $\partial_R f =0$ which gives
\begin{align*}
R \underbrace{\frac 1 m  \sum_{k=1}^m\frac { R \beta Z_k^4
+2 Z_k^4 X_k^2} 
{ (\beta R+ X_k^2)^2}}_{=: A_1}
= \underbrace{\frac 1m \sum_{k=1}^m
\frac { \beta X_k^4 + 2 X_k^4  Z_k^2}
{(\beta R +X_k^2 )^2}}_{=:B_1}.
\end{align*}
On the other hand, by using $\partial_{u_1} f(u) =0$, we obtain
\begin{align*}
\beta u_1 R^2 
\frac 1m \sum_{k=1}^m
\frac {Z_k^4} {(\beta R+X_k^2)^2}
-2 R^{\frac 32}
\frac 1m \sum_{k=1}^m
\frac {Z_k^3 X_k} {\beta R+ X_k^2}
=\beta u_1 \frac 1m \sum_{k=1}^m
\frac {2R Z_k^2 X_k^2-X_k^4}
{(\beta R+X_k^2)^2}
-2R^{\frac 12}
\frac 1m \sum_{k=1}^m 
\frac {Z_k X_k^3} {\beta R+ X_k^2}.
\end{align*}
Thus
\begin{align*}
-R \underbrace{ \widehat u_1 
\frac 1m \sum_{k=1}^m
\frac {\beta R Z_k^4} {(\beta R+X_k^2)^2}}_{=:A_2}
+R \cdot \underbrace{2 
\frac 1m \sum_{k=1}^m
\frac {Z_k^3 X_k} {\beta R+ X_k^2}}_{=:A_3}
=\underbrace{\beta \widehat u_1 \frac 1m \sum_{k=1}^m
\frac {-2R Z_k^2 X_k^2+X_k^4}
{(\beta R+X_k^2)^2}
+2
\frac 1m \sum_{k=1}^m 
\frac {Z_k X_k^3} {\beta R+ X_k^2}}_{=:B_2}.
\end{align*}
Without loss of generality we assume $\widehat{u}_1>\frac 1 {10}$.  Observe 
that for $0<R \le 1$, we have
\begin{align*}
\frac 1m \sum_{k=1}^m
\frac {X_k^4} {(\beta +X_k^2)^2}
\lesssim B_1 \lesssim 1+\frac 1m \sum_{k=1}^m Z_k^2.
\end{align*}
Thus with high probability $B_1 \sim 1$. 

Now by Lemma \ref{lemSeH:0}, for $0<R\ll 1$, we have
\begin{align*}
\frac 1m \sum_{k=1}^m
\frac {R Z_k^2 X_k^2} {(\beta R+ X_k^2)^2}
\le \frac 1m \sum_{k=1}^m \frac { R Z_k^2} { \beta R +X_k^2}
\ll 1.
\end{align*}
Also for $0<R\le 1$, we have
\begin{align*}
\frac 1m \sum_{k=1}^m
\frac {X_k^4} {(\beta +X_k^2)^2}
\le \frac 1m \sum_{k=1}^m
\frac {X_k^4}{ (\beta R+ X_k^2)^2}
\le 1.
\end{align*}
By Lemma \ref{lemSeH:1}, for $0<R \ll 1$, we have
\begin{align*}
c_1\le \frac 1m \sum_{k=1}^m \frac {Z_k X_k^3}
{\beta R+ X_k^2} \le c_2, 
\end{align*}
where $c_1$, $c_2>0$ are constants depending only on $\beta$. 
Thus with high probability we have for $0<R \ll 1$, $B_2 \sim 1$. 

Now since 
\begin{align*}
R A_1 = B_1, \qquad -R A_2 +R A_3=B_2;
\end{align*}
we obtain
\begin{align*}
A_1 + B_3  A_2 = B_3 A_3,
\end{align*}
where $B_3 = B_1 /B_2$.  Observe that $A_1>0$, $A_2>0$, and
\begin{align*}
& A_1 \sim  \frac 1m \sum_{k=1}^m \frac {Z_k^4} {\beta R + X_k^2}; \\
& A_3 \le \Bigl(
\frac 1m \sum_{k=1}^m 
\frac {Z_k^4} {\beta R+ X_k^2} \Bigr)^{\frac 34}
\Bigl( \frac 1m \sum_{k=1}^m \frac {X_k^4}{\beta R+ X_k^2}
\Bigr)^{\frac 14}. 
\end{align*}
It follows easily that with high probability we have
\begin{align*}
A_1 \sim 1.
\end{align*}
But then it follows from the equation $R A_1 =B_1$ that we must have
$R \sim 1$.  Thus the desired result follows.
\end{proof}

\begin{thm}[The regimes $\|u\|_2\ll 1$
and $\|u\|_2\gg 1$ are OK] \label{thmSep8_1}
For $m\gtrsim n$, with high probability the following hold:
\begin{enumerate}
\item We have 
\begin{align*}
\partial_R f \ge d_1, \qquad \forall\, R \ge R_1, \, \forall\, \hat u
\in \mathbb S^{n-1},
\end{align*}
where $d_1$,  $R_1$ are constants depending only on $\beta$. 
\item We have 
\begin{align*}
\partial_{u_1 u_1} f \le -2<0 ,  \quad\forall\, 0<R \le R_2 , \;\text{}\forall\, \hat u \in \mathbb S^{n-1} \text{with $  |\hat u \cdot e_1| \le \frac 1 {10} $},
\end{align*}
where $R_2>0$ is a constant depending only on $\beta$.
\item The loss function $f=f(u)$ has no critical points in the regime 
\begin{align*}
\Bigl\{ u = \sqrt {R} \hat u: \quad 0<R \le R_3, \, \hat u \in \mathbb S^{n-1}
\text{ and } |\hat u \cdot e_1|>\frac 1{10} 
\Bigr\},
\end{align*}
where $R_3>0$ is a constant depending only on $\beta$.

\item The point $u=0$ is a local maximum point with strictly negative-definite
Hessian, 
\begin{align*}
\sum_{k,l=1}^n  \xi_k \xi_l (\partial_{kl} f)(0) \le -d_2 <0, 
\qquad \forall\, \xi \in \mathbb S^{n-1},
\end{align*}
where $d_2>0$ is an absolute constant.
\end{enumerate}
\end{thm}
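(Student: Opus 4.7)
The statement is a consolidation of four separate landscape facts, each already isolated as a lemma immediately above: item (1) is Lemma \ref{Sep8_e1}, item (2) is Lemma \ref{Sep8_e2}, item (3) is Lemma \ref{Sep8_e2aa}, and item (4) is Lemma \ref{Sep8_e0}. So the overall plan is a one-line assembly --- invoke those four lemmas and observe that the finite intersection of their high-probability events is again high-probability, with constants $c, C$ depending only on $\beta$ (take the minimum/maximum of the four). Below I sketch the approach I would take for each of the four lemmas, which carry the real content.

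For the boundary regimes (items (4) and (1)): at $u=0$ the function is almost surely smooth, and a one-variable reduction along $u=\sqrt{t}\,\xi$ gives $G'(0) = -\beta - 2m^{-1}\sum_k (a_k\cdot\xi)^2$. Bernstein's inequality combined with an $\epsilon$-net over $\xi\in\mathbb S^{n-1}$ yields the uniform upper bound $-d_2<0$. For $R\gg 1$, I would start from \eqref{Sep8e10a} and split $\partial_R f$ into the dominant positive piece $m^{-1}\sum_k \beta Z_k^4/(\beta+X_k^2)^2$, truncated by a smooth $\phi(Z_k/N)$ so the summand is bounded and amenable to a uniform union bound over $\hat u$, plus error terms controlled via $(\beta R + X_k^2)^2 \ge 4\beta R X_k^2$ and the pointwise estimate $X_k^4/(\beta R+X_k^2)^2 \lesssim R^{-1}+\chi_{|X_k|\ge R^{1/4}}$. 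Taking $N$ large gives uniform expectation $\gtrsim 1$; taking $R$ large drives the error pieces to $o(1)$ in probability, yielding the uniform lower bound $d_1$.

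For item (2), I would expand $\partial_{u_1 u_1} f$ and observe that when $|u_1|/\|u\|_2 \le 1/10$ the prefactor $6\beta^2 u_1^2 - 2\beta^2 |u'|^2 - 2\beta X_k^2$ is bounded above by $-\beta C_1(\beta R + X_k^2)$ for a universal $C_1>0$. This yields a dominant negative contribution $\lesssim -m^{-1}\sum R^2 Z_k^4/(\beta R+X_k^2)^2$, while the remaining cross terms $R^{3/2}|Z_k|^3 R^{1/2}|X_k|/(\beta R+X_k^2)^2$ and $R |X_k|^3|Z_k|/(\beta R + X_k^2)^2$ are dispatched by Cauchy--Schwartz with a small weight $\epsilon_1 \ll C_1$, and the singular piece $R\cdot m^{-1}\sum Z_k^2/(\beta R+X_k^2)$ is killed by Lemma \ref{lemSeH:0}. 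Taking $R$ small then makes the negative part win uniformly.

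The main obstacle is item (3), because no single second derivative has a fixed sign in that regime, so a pure Hessian argument cannot exclude critical points. My plan is a contradiction argument simultaneously using $\partial_R f = 0$ and $\partial_{u_1} f = 0$, which give the two identities $R A_1 = B_1$ and $-R A_2 + R A_3 = B_2$ in the author's notation. Lemma \ref{lemSeH:0} kills the cross $R Z_k^2 X_k^2/(\beta R + X_k^2)^2$ pieces, and Lemma \ref{lemSeH:1} pinches $m^{-1}\sum Z_k X_k^3/(\beta R+X_k^2)$ between two positive constants, forcing $B_1 \sim 1$ and $B_2 \sim 1$, hence $B_3 := B_1/B_2 \sim 1$. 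Eliminating $R$ from the two identities gives $A_1 + B_3 A_2 = B_3 A_3$, and combining this with the Cauchy--Schwartz bound $A_3 \lesssim A_1^{3/4}$ and $A_2 \ge 0$ forces $A_1 \lesssim 1$. Plugging back into $R A_1 = B_1 \sim 1$ yields $R \gtrsim 1$, contradicting $R \le R_3 \ll 1$. The delicate part is the bookkeeping --- identifying exactly which pieces survive as $R\to 0$ and matching them to the correct appendix estimate --- and this is where I expect the main difficulty to lie.
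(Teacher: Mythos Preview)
Your proposal is correct and follows the paper's approach essentially verbatim: the theorem is indeed a one-line assembly of Lemmas \ref{Sep8_e0}, \ref{Sep8_e1}, \ref{Sep8_e2}, and \ref{Sep8_e2aa}, and your sketches of each lemma---the $G'(0)$ computation, the $\phi(Z_k/N)$ truncation with the $\chi_{|X_k|\ge R^{1/4}}$ tail split, the sign analysis of $6\beta^2 u_1^2-2\beta^2|u'|^2-2\beta X_k^2$, and the two-equation elimination $A_1+B_3A_2=B_3A_3$ combined with $A_3\lesssim A_1^{3/4}$---match the paper's arguments in both structure and detail. The only cosmetic discrepancy is that what you call ``Cauchy--Schwartz'' in items (2) and (3) is, in the paper, Young's inequality (for the $|Z_k|^3|X_k|$ splitting) and H\"older with exponents $4/3,4$ (for the $A_3$ bound), but this does not affect correctness.
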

\begin{proof}
This follows from Lemma \ref{Sep8_e0},  \ref{Sep8_e1}, \ref{Sep8_e2} and \ref{Sep8_e2aa}.
\end{proof}

\subsection{The regime $\|u \|_2 \sim 1$}

\begin{lem}[The regime $\| u\|_2 \sim 1$ with $\epsilon_0 \le |\hat u \cdot e_1| \le 1-
\epsilon_0$ is OK] \label{Sep19_e0}
Let $0<\epsilon_0 \ll 1$ be given. Assume $0<c_1 <c_2<\infty$ are two given constants.
Then for $m\gtrsim n$, the following hold with high probability:
The loss function $f=f(u)$ has no critical points in the regime:
\begin{align*}
\Bigl\{ u= \sqrt R \hat u: \;  c_1<R <c_2, \; \epsilon_0 \le |\hat u \cdot e_1|
\le 1 -\epsilon_0  \Bigr\}.
\end{align*}
More precisely, introduce the parametrization $\hat u = e_1 \cos \theta + e^{\perp} \sin \theta$,
where $\theta \in [0,\pi]$ and $e^{\perp} \in \mathbb S^{n-1}$ satisfies $e^{\perp}
\cdot e_1=0$. Then  in the aforementioned regime, we have
\begin{align*}
|\partial_{\theta} f | \ge \alpha_1>0,
\end{align*}
where $\alpha_1$ depends only on ($\beta$, $\epsilon_0$, $c_1$, $c_2$).
\end{lem}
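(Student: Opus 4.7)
The plan is to compute $\partial_\theta f$ in closed form, show that its expectation factors as $\sin(2\theta)$ times a residual that is bounded away from zero on the compact regime, and then transfer to the empirical sum via a Bernstein-plus-net concentration argument.

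Writing $X_k = a_k\cdot e_1$ and $W_k = a_k\cdot e^\perp$ (independent standard Gaussians) and $Z_k = X_k\cos\theta + W_k\sin\theta$, I first note that $|u|^2 = R$ is $\theta$-independent, so the denominator $\beta R + X_k^2$ does not move with $\theta$, and direct differentiation yields
\[
\partial_\theta f \;=\; \frac{2R}{m}\sum_{k=1}^m \frac{(RZ_k^2 - X_k^2)\bigl((W_k^2 - X_k^2)\sin(2\theta) + 2X_kW_k\cos(2\theta)\bigr)}{\beta R + X_k^2}.
\]
Taking expectation first over $W_k$ with $X_k$ held fixed and using the Gaussian moments $\mathbb{E}[W^j]$ for $j\le 4$, all odd-in-$W_k$ contributions vanish and the even-in-$W_k$ terms combine to produce a common factor $\sin(2\theta)$; one arrives at
\[
\mathbb{E}[\partial_\theta f] \;=\; 2R\sin(2\theta)\,F(R,\theta,\beta),
\]
where $F$ is an affine function of $E_0(R) := \mathbb{E}[(\beta R + X^2)^{-1}]$ whose coefficients are polynomials in $R$ and $\cos^2\theta$.

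Next I would bound $|F|$ below on the compact set $[c_1,c_2]\times\{\theta : \epsilon_0 \le |\cos\theta|\le 1-\epsilon_0\}$. The key observation is that the population loss $\bar f := \mathbb{E}[f]$ is rotationally symmetric about $e_1$ and even in $\cos\theta$ as a function of direction, so on each sphere $|u|^2 = R$ with $R$ in the prescribed range its only critical angles on $[0,\pi]$ are the two minima $\theta = 0,\pi$ and the stationary direction $\theta = \pi/2$; none lies in the regime. By continuity and compactness it follows that $|F|\ge \alpha_1^{*}(\beta,\epsilon_0,c_1,c_2)>0$, and together with $|\sin(2\theta)|\ge c(\epsilon_0)>0$ this gives $|\mathbb{E}[\partial_\theta f]|\ge 2c_1\, c(\epsilon_0)\,\alpha_1^{*}$ on the regime. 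To pass from expectation to the empirical sum I would run a standard $\eta$-net argument: since $\beta R + X_k^2\ge \beta c_1$, each summand of $\partial_\theta f$ is a polynomial of degree at most $4$ in $(X_k,W_k)$ scaled by a bounded factor, so it is sub-Weibull of order $\tfrac12$ and Bernstein-type concentration gives pointwise $\mathbb{P}(|\partial_\theta f - \mathbb{E}[\partial_\theta f]|\ge \tau)\le 2\exp(-cm\min(\tau,\tau^2))$. Covering $(R,\theta,e^\perp)$ by an $\eta$-net of cardinality $(C/\eta)^{n-1}$, together with Lipschitz estimates for the summand on the high-probability event $\{\max_k\|a_k\|_2\lesssim\sqrt n\}$, upgrades pointwise to uniform concentration, and for $m\gtrsim n$ forces the deviation to be at most half of the lower bound on $|\mathbb{E}[\partial_\theta f]|$, yielding $|\partial_\theta f|\ge \alpha_1$.

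The main obstacle is the non-vanishing of $F$ in the second step: since $F$ depends on the transcendental moment $E_0(R)$, positivity cannot be read off from a polynomial inequality. The cleanest route is to identify $F$ (up to a positive factor $1/(Rc)$) with $\partial_c \bar F(R,c)$, where $\bar F(R,c) := \bar f(\sqrt R(c\,e_1 + s\,e^\perp))$ is the radial-angular slice of the population loss, and then combine Jensen's inequality $(\beta R+1)E_0(R)>1$ (which fixes the sign at $c=0$) with a direct monotonicity check of $\bar F(R,\cdot)$ on the relevant $R$-range; the concentration step is routine given the polynomial moment bounds already developed in Section 2.
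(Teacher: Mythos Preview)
Your overall strategy matches the paper's: compute $\partial_\theta f$, factor $\mathbb E\,\partial_\theta f$ as $\sin(2\theta)$ times a nonvanishing quantity, then concentrate. The gap is in the concentration step. You claim that since each summand is sub-Weibull of order $\tfrac12$, ``Bernstein-type concentration gives $\exp(-cm\min(\tau,\tau^2))$.'' That bound holds for sub-exponential (sub-Weibull of order $1$) variables, not order $\tfrac12$; for the latter the correct tail for the empirical mean is of the form $\exp\bigl(-c\min(m\tau^2,(m\tau)^{1/2})\bigr)$, which after union-bounding over an $(n{-}1)$-dimensional net only delivers $m\gtrsim n^2$, not the optimal $m\gtrsim n$ asserted in the lemma. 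The denominator $\beta R+X_k^2$ damps powers of $X_k$ but not of $W_k$, so a genuine undamped quartic term $\sim W_k^4\sin^2\theta/(\beta c_1)$ survives in your summand.

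The paper circumvents this with one-sided truncation rather than two-sided concentration. Rewriting in the $(Z_k,X_k)$ basis via $\partial_\theta Z_k=\cot\theta\,Z_k-\csc\theta\,X_k$, the two heavy pieces are $H_0=\tfrac1m\sum Z_k^4/(\beta R+X_k^2)$ (carrying a positive coefficient $4R^2\cot\theta$ on the relevant half) and $\tfrac1m\sum Z_k^3X_k/(\beta R+X_k^2)$. The latter is split by a smooth cutoff $\phi\bigl(Z_k/(M\langle X_k\rangle)\bigr)$ into a bounded piece (now amenable to genuine Bernstein and a net) plus a tail dominated by $\eta_0 H_0$ and a small bounded remainder; $H_0$ itself is bounded \emph{below} by its truncation $\tfrac1m\sum Z_k^4\phi(Z_k/K)/(\beta R+X_k^2)$, which is bounded and concentrates with the full $e^{-cm}$ rate. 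This yields $\partial_\theta f\ge\mathbb E\,\partial_\theta f-o(1)$ uniformly at the optimal sampling complexity. Separately, your sketch for the sign of $F$ (Jensen plus an unspecified monotonicity check) is too thin: symmetry only tells you $\theta=0,\tfrac\pi2,\pi$ are stationary, not that there are no others. The paper establishes this by explicit integration (Lemma~\ref{lemSeH:3}): $\mathbb E f$ is a quadratic $c_1s^2+2c_2s+c_3$ in $s=\cos^2\theta$, and the inequalities $c_1>0$, $c_2<0$, $c_1+c_2<0$ require the sharp complementary-error-function bounds of Lemma~\ref{lemH3.ad01}.
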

\begin{proof}
See appendix.
\end{proof}

\begin{lem}[The regime $\| u\|_2 \sim 1$ with $ |\hat u \cdot e_1| \le 
\epsilon_1$ is OK] \label{Sep19_e1}
Let $0<\epsilon_1 \ll 1$ be a sufficiently small constant.
 Assume $0<c_1 <c_2<\infty$ are two given constants.
Then for $m\gtrsim n$, the following hold with high probability:
Consider the regime
\begin{align*}
\Bigl\{ u= \sqrt R \hat u: \;  c_1<R <c_2, \; |\hat u \cdot e_1|
\le \epsilon_1  \Bigr\}.
\end{align*}
Introduce the parametrization $\hat u = e_1 \cos \theta + e^{\perp} \sin \theta$,
where $\theta\in[0,\pi]$ and $e^{\perp} \in \mathbb S^{n-1}$ satisfies $e^{\perp}
\cdot e_1=0$. Then  in the aforementioned regime, we have
\begin{align*}
\partial_{\theta \theta } f  \le  -\alpha_2<0,
\end{align*}
where $\alpha_2>0$ depends only on ($\beta$, $\epsilon_1$, $c_1$, $c_2$).
\end{lem}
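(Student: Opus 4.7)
The plan is to compute $\partial_{\theta\theta}f$ explicitly, identify the dominant contribution at $\theta=\pi/2$, show the expectation is strictly negative by a Jensen argument, then use concentration together with continuity in $\theta$ to transfer the bound to the whole regime $|\hat u\cdot e_1|\le\epsilon_1$.

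First, using the parametrization $\hat u=e_1\cos\theta+e^\perp\sin\theta$ and writing $X_k=a_k\cdot e_1$, $Y_k=a_k\cdot e^\perp$, $W_k=X_k\cos\theta+Y_k\sin\theta$, $V_k=-X_k\sin\theta+Y_k\cos\theta$, a direct computation (noting $\partial_\theta(a_k\cdot u)^2=2RW_kV_k$ and $\partial_{\theta\theta}(a_k\cdot u)^2=2R(V_k^2-W_k^2)$) gives
\begin{equation*}
\partial_{\theta\theta}f=\frac{4R}{m}\sum_{k=1}^m\frac{2RW_k^2V_k^2+(RW_k^2-X_k^2)(V_k^2-W_k^2)}{\beta R+X_k^2}.
\end{equation*}
I would first analyze this at the base point $\theta=\pi/2$, where $W_k=Y_k$, $V_k=-X_k$. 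With $A(R)=\mathbb E[1/(\beta R+X^2)]$ and $B(R)=\mathbb E[X^2/(\beta R+X^2)]=1-\beta R\,A(R)$, using $\mathbb E[Y^2]=1$, $\mathbb E[Y^4]=3$ and the independence of $X_k,Y_k$, a short computation using the identity $B-C=\beta R(B-A)$ (where $C=\mathbb E[X^4/(\beta R+X^2)]$) collapses the expectation to
\begin{equation*}
\mathbb E\!\left[\partial_{\theta\theta}f\bigr|_{\theta=\pi/2}\right]=4R^2(3+\beta)\bigl(B(R)-A(R)\bigr).
\end{equation*}
The key observation is that $A(R)>1/(1+\beta R)$ strictly by Jensen's inequality applied to the convex function $x\mapsto 1/(\beta R+x)$ and the non-constant variable $X^2$; this is equivalent to $B(R)-A(R)<0$. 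Since $R$ varies over the compact interval $[c_1,c_2]$ and $A,B$ are continuous in $R$, we obtain a uniform bound $B(R)-A(R)\le -\delta$ for some $\delta=\delta(\beta,c_1,c_2)>0$, hence $\mathbb E[\partial_{\theta\theta}f|_{\theta=\pi/2}]\le -\alpha'$ with $\alpha'>0$ depending only on $\beta,c_1,c_2$.

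The second main ingredient is uniform concentration. Because $R\ge c_1>0$, the denominator obeys $\beta R+X_k^2\ge\beta c_1$, so every summand is a polynomial of degree four in $(X_k,Y_k)$ divided by a quantity bounded below by a positive constant. Thus the summands have uniformly bounded sub-exponential norm (depending only on $\beta,c_1,c_2$), and Bernstein's inequality combined with an $\epsilon$-net argument over $e^\perp\in\mathbb S^{n-2}$ and $R\in[c_1,c_2]$, together with standard Lipschitz estimates of the summand in $(R,e^\perp,\cos\theta)$, yields for $m\gtrsim n$ with high probability that the empirical average of $\partial_{\theta\theta}f$ is within $\alpha'/4$ of its expectation uniformly in $\hat u$ and $R$ in our regime. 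This is the step I expect to be the main technical obstacle, since one must handle the ratio uniformly; the relevant auxiliary estimates should parallel those in the appendix lemmas already invoked in Section 3 (\textit{e.g.} Lemma \ref{lemSeH:0}, \ref{lemSeH:1}).

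Finally, I would transfer from the base point $\theta=\pi/2$ to the whole slab $|\cos\theta|\le\epsilon_1$. Writing $W_k=Y_k\sin\theta+X_k\cos\theta$ and $V_k=-X_k\sin\theta+Y_k\cos\theta$ and expanding, each summand of $\partial_{\theta\theta}f$ equals its value at $\theta=\pi/2$ plus a remainder that is $O(|\cos\theta|)$ times polynomial expressions in $X_k,Y_k$ with the same bounded-denominator structure. By the same concentration argument the remainder contributes at most $C(\beta,c_1,c_2)\,\epsilon_1$ to the empirical average with high probability. Choosing $\epsilon_1$ so that $C\epsilon_1<\alpha'/4$ then gives $\partial_{\theta\theta}f\le-\alpha'/2=:-\alpha_2$ uniformly in the claimed regime, which is the assertion.
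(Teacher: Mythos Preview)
Your expectation computation is correct and in fact cleaner than the paper's. The paper reaches the same conclusion via Lemma~\ref{lemSeH:3}: it writes $\mathbb E f(u)=h(\beta,R,\cos^2\theta)$, computes $\partial_s h$ and $\partial_{ss}h$ by explicit Gaussian integration, and then has to verify the signs of the resulting coefficients $c_1,c_2,c_1+c_2$ using two-sided bounds on the complementary error function (Lemma~\ref{lemH3.ad01}). Your Jensen argument at $\theta=\pi/2$ --- reducing everything to $B(R)-A(R)<0$, i.e.\ $\mathbb E[1/(\beta R+X^2)]>1/(\beta R+1)$ --- replaces all of that with one line of strict convexity, and the perturbation in $\cos\theta$ you describe is enough for the slab $|\hat u\cdot e_1|\le\epsilon_1$. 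This is a genuinely different and more elementary route to the sign of the expectation.

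There is one technical inaccuracy in your concentration step. The assertion that ``the summands have uniformly bounded sub-exponential norm'' is false: the term $RY_k^4/(\beta R+X_k^2)$ is of order $Y_k^4$ when $|X_k|$ is small, and the fourth power of a Gaussian is \emph{not} sub-exponential (its $p$-th moment grows like $p^2$, not $p$). Bernstein plus an $\epsilon$-net therefore does not apply directly to the raw sum. The paper handles exactly this issue by localization: in the proof of Lemma~\ref{Sep19_e0} the $Z_k^4/(\beta R+X_k^2)$ and $Z_k^3X_k/(\beta R+X_k^2)$ terms are split using a smooth cutoff $\phi(Z_k/K)$, the truncated piece is then amenable to union bounds, and the tail piece is controlled in expectation by taking $K$ large. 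You flag this step as the main technical obstacle, and that instinct is right --- but the fix is not the appendix lemmas you cite; it is the truncation device from the proof of Lemma~\ref{Sep19_e0}, and your write-up should invoke it explicitly rather than a bare Bernstein.
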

\begin{proof}
See appendix.
\end{proof}

\begin{thm}[The regime $\|u\|_2 \sim 1 $, $||\hat u\cdot e_1|-1|\le \epsilon_0$,
 $|\|u\|_2-1|\ge c(\epsilon_0)$ is OK] \label{Sep19_e3}
Let $0<R_1<1<R_2<\infty$ be given constants. 
Let $0<\epsilon_0\ll 1$ be a given sufficiently small constant and consider the regime $ \Bigl | |\hat u \cdot e_1|-1
\Bigr| \le \epsilon_0$ with $R_1 \le \| u\|_2^2 \le R_2$.  There exists 
a constant $c_0=c_0(\epsilon_0,R_1,R_2, \beta)>0$ 
which tends to zero as $\epsilon_0\to 0$ such that the following
hold:
For $m\gtrsim n$,  with high probability it holds that (below $u=\sqrt R \hat u$)
\begin{align*}
&\partial_{R} f <0,  \quad\forall\, R_2\le R \le 1-c_0 , \;\;\text{}\forall\, \hat u \in \mathbb S^{n-1} \text{\quad with\quad  $| |\hat u \cdot e_1|-1| \le \epsilon_0$};\\
&\partial_{R} f >0,  \quad\forall\, 1+ c_0 \le  R \le R_1,\; \;\text{}\forall\, \hat u \in \mathbb S^{n-1} \text{\quad with \quad $| |\hat u \cdot e_1| -1| \le \epsilon_0$}.
\end{align*}
\end{thm}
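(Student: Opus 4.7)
The plan is to reduce $\partial_R f(\sqrt R \hat u)$ to an explicit computation at the base point $\hat u = e_1$, then control the deviation by a perturbation estimate. By the symmetry $(a_k\cdot\hat u)^2 = (a_k\cdot (-\hat u))^2$ it suffices to handle the case $\hat u \cdot e_1 > 0$ (so $|\hat u\cdot e_1 - 1|\le \epsilon_0$). Set $X_k := a_k \cdot e_1$ and $Z_k := a_k \cdot \hat u$. Substituting $Z_k = X_k$ into the numerator of \eqref{Sep8e10a} one finds the factorization $(R-1)\bigl[\beta(R+1) X_k^4 + 2 X_k^6\bigr]$, hence
\begin{align*}
\partial_R f\bigl|_{\hat u = e_1} = (R-1)\cdot \mathcal M(R), \qquad \mathcal M(R) := \frac 1m \sum_{k=1}^m \frac{X_k^4\bigl(\beta(R+1) + 2 X_k^2\bigr)}{(\beta R + X_k^2)^2}.
\end{align*}
The summand is strictly positive, its expectation is continuous and strictly positive on $[R_1, R_2]$, and since the asymptotic behavior of the ratio as $|X_k|\to\infty$ is $O(X_k^2)$ it has uniformly bounded $\psi_1$-norm on $R\in[R_1,R_2]$. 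Hence Bernstein's inequality together with a net in $R$ on the compact interval $[R_1,R_2]$ yields a constant $\kappa = \kappa(\beta, R_1, R_2) > 0$ such that $\mathcal M(R) \ge \kappa$ uniformly in $R \in [R_1, R_2]$, with high probability under $m\gtrsim n$.

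Next I would establish the uniform perturbation bound
\begin{align*}
\bigl|\partial_R f(\sqrt R \hat u) - \partial_R f(\sqrt R e_1)\bigr| \le L\,\|\hat u - e_1\|_2 \le L\sqrt{2\epsilon_0}
\end{align*}
for some $L = L(\beta, R_1, R_2)$, valid for all $R\in[R_1,R_2]$ and all $\hat u\in\mathbb S^{n-1}$ with $|\hat u\cdot e_1 - 1|\le \epsilon_0$. Writing $\hat u = e_1 + w$, the difference in the numerators expands as a polynomial of bounded degree in $(a_k \cdot w)$ with coefficients involving $X_k^j$ and $R$, while the denominator $(\beta R + X_k^2)^2 \ge (\beta R_1)^2 > 0$ is uniformly bounded below. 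Each resulting empirical average is a sum of sub-exponential terms; a standard $\epsilon$-net on the unit vector of $w$ inside $e_1^\perp$, combined with a net in $R \in [R_1, R_2]$, gives the uniform bound.

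Combining the two steps and choosing $c_0 := 4 L \sqrt{2\epsilon_0}/\kappa$ (so $c_0 \to 0$ as $\epsilon_0 \to 0$), we obtain for $|R-1| \ge c_0$ and $\hat u$ in the permitted range
\begin{align*}
|\partial_R f(\sqrt R \hat u)| \ge \kappa |R-1| - L\sqrt{2\epsilon_0} \ge \tfrac{\kappa}{2}|R-1| > 0,
\end{align*}
with sign equal to $\sgn(R-1)$, yielding both assertions of the theorem. The main technical obstacle is the uniform perturbation bound in Step 2: a direct union bound over $\hat u \in \mathbb S^{n-1}$ would pay a $\sqrt n$ factor that we cannot afford, so the reduction to finitely many scalar empirical statistics via nets (using the uniform denominator lower bound to keep $\psi_1$-norms under control) is essential. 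The needed estimates are of the same flavor as those in Lemmas \ref{Sep8_e1}--\ref{Sep8_e2aa} and Lemma \ref{lemSeH:1}, so the required machinery is already in place.
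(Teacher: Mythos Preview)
Your Step~2 contains a genuine gap. The claim that ``each resulting empirical average is a sum of sub-exponential terms'' fails for the cubic and quartic terms in $W_k := a_k\cdot w$ (with $w=\hat u-e_1$). Expanding the numerator difference in \eqref{Sep8e10a}, the coefficient of $W_k^4$ is $(R^2\beta + 2RX_k^2)/(\beta R + X_k^2)^2$, which is bounded but does not damp $W_k$; the resulting summand behaves like $\mathrm{const}\cdot W_k^4$, which has only $\psi_{1/2}$ tails, so Bernstein plus a net cannot deliver $e^{-cm}$ concentration at $m\gtrsim n$. Concretely, choose $\hat u^\perp$ proportional to $a_1^\perp := a_1 - X_1 e_1$, normalized so that $\|\hat u^\perp\|^2 = 2\epsilon_0-\epsilon_0^2$. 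Then $Z_1 = a_1\cdot\hat u \approx X_1 + \sqrt{2\epsilon_0}\,\|a_1^\perp\| \asymp \sqrt{\epsilon_0 n}$, and the single $k=1$ summand already contributes $\gtrsim \epsilon_0^2 n^2/m = \epsilon_0^2 n/C$ to $\partial_R f$, unbounded in $n$. For $R<1$ this large positive contribution kills the desired inequality $\partial_R f<0$; in particular your uniform bound $|\partial_R f(\sqrt R\hat u)-\partial_R f(\sqrt R e_1)|\le L\sqrt{2\epsilon_0}$ with $L=L(\beta,R_1,R_2)$ independent of $n$ cannot hold.

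The paper avoids this trap by treating the two regimes asymmetrically. For $R\ge 1+c_0$ it does not attempt a two-sided perturbation estimate: it inserts a smooth cutoff $\phi(Z_k/K)$ on the \emph{positive} $Z_k^4$ terms, which only lowers $\partial_R f$; the truncated quantity is bounded, passes the union bound, and its expectation is close to the untruncated expectation for $K$ large, after which the perturbation from $\hat u$ to $e_1$ is done at the level of expectations where it is trivial. For $R\le 1-c_0$ the paper abandons the direct sign argument on $\partial_R f$ altogether: it assumes both $\partial_R f=0$ and $\partial_{u_1}f=0$ (i.e.\ it argues about genuine critical points), combines the two identities to extract a priori control on the heavy-tailed quantity $\tfrac1m\sum_k Z_k^4/(\beta R+X_k^2)$, deduces $|R-1|\ll 1$, and then invokes $\partial_{RR}f>0$ together with the already-established sign for $R>1+c(\epsilon_0)$ to pin down the sign of $\partial_R f$. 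Your direct Lipschitz-perturbation route cannot substitute for this second step.
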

\begin{proof}
We first consider the regime $R\ge 1+c$.  Let $\phi \in C_c^{\infty}(\mathbb R)$ be an even
function satisfying
$0\le \phi(x) \le 1$ for all $x$, $\phi(x)=1$ for $|x|\le 1$ and $\phi(x)=0$
for $|x| >2$.  By using \eqref{Sep8e10a}, we have
\begin{align}
&\partial_{R} f  \notag \\
\ge & 
\frac 1m \sum_{k=1}^m
\frac 
{ R^2  \beta (a_k\cdot \hat u)^4  \phi(\frac {a_k\cdot \hat u} K)
+2 R (a_k\cdot \hat u)^4 \phi(\frac {a_k\cdot \hat u} K) (a_k\cdot e_1)^2
-\beta (a_k\cdot e_1)^4 -2(a_k\cdot e_1)^4 (a_k\cdot \hat u)^2}
{
(\beta R+ (a_k\cdot e_1)^2 )^2}. \label{Sep19_e3_00a}
\end{align}
By taking $K$ sufficiently large, we can easily obtain
\begin{align*}
\mathbb E  (1- \phi( \frac {a \cdot e_1} K) ) (1+(a\cdot e_1)^2) \ll 1,
\end{align*}
where $a\sim \mathcal N(0, \operatorname{I_n})$.  For fixed $K$, it is not difficult
to check that the lower bound \eqref{Sep19_e3_00a} are OK for union
bounds and they can be made close to the expectation with high probability, 
uniformly in $R \sim 1$ 
and $\hat u \in \mathbb S^{n-1}$.  The perturbation argument (i.e.
estimating the error terms coming from replacing $a_k\cdot \hat u$
by $a_k\cdot e_1$ and so on) becomes rather
easy after taking the expectation. It is then not difficult to show that 
\begin{align*}
\partial_R f >0,
\end{align*}
for $R \ge 1+c(\epsilon_0)$. 

Next we turn to the regime $R_2 \le R \le 1-c(\epsilon_0)$. Without loss of
generality we may assume $|1-\hat u\cdot e_1| \le \epsilon_0$.
The idea is to exploit the
decomposition used in the proof of Lemma \ref{Sep8_e2aa}. Namely
using  $\partial_R f =0$ and $\partial_{u_1} f =0$, we have
\begin{align*}
R \underbrace{\frac 1 m  \sum_{k=1}^m\frac { R \beta Z_k^4
+2 Z_k^4 X_k^2} 
{ (\beta R+ X_k^2)^2}}_{=: A_1}
&= \underbrace{\frac 1m \sum_{k=1}^m
\frac { \beta X_k^4 + 2 X_k^4  Z_k^2}
{(\beta R +X_k^2 )^2}}_{=:B_1};  \\
-R \underbrace{ \widehat u_1 
\frac 1m \sum_{k=1}^m
\frac {\beta R Z_k^4} {(\beta R+X_k^2)^2}}_{=:A_2}
+R \cdot \underbrace{2 
\frac 1m \sum_{k=1}^m
\frac {Z_k^3 X_k} {\beta R+ X_k^2}}_{=:A_3}
&=\underbrace{\beta \widehat u_1 \frac 1m \sum_{k=1}^m
\frac {-2R Z_k^2 X_k^2+X_k^4}
{(\beta R+X_k^2)^2}
+2
\frac 1m \sum_{k=1}^m 
\frac {Z_k X_k^3} {\beta R+ X_k^2}}_{=:B_2}.
\end{align*}
It is not difficult to check that with high probability, we have $B_1 \sim 1$, $B_2 \sim 1$,
and
\begin{align*}
\Bigl |\frac {B_2} {B_1} -1 \Bigr| \le \eta(\epsilon_0) \ll 1,
\qquad\forall\, R_2\le R\le 1, \; \forall\, \hat u \in \mathbb S^{n-1}
\text{ with $|\hat u \cdot e_1 -1| \le \epsilon_0$},
\end{align*}
where $\eta(\epsilon_0) \to 0$ as $\epsilon_0\to 0$.  We then obtain
\begin{align*}
A_1 = \Bigl( 1+ O(\eta(\epsilon_0)) \Bigr) (-A_2+A_3).
\end{align*}
From this it is easy (similar to an argument used in the proof of Lemma \ref{Sep8_e2aa})
to derive that 
\begin{align*}
A_1 +A_2 +|A_3| \lesssim 1.
\end{align*}
Now note that the pre-factor of $A_2$ is $\hat u_1=1+O(\epsilon_0)$. By using the relation
\begin{align*}
A_1 +A_2 -A_3 = O(\eta(\epsilon_0) ),
\end{align*}
 we obtain
\begin{align*}
\frac 1m \sum_{k=1}^m
\frac {Z_k^4} {\beta R+ X_k^2}
-\frac 1m \sum_{k=1}^m
\frac {Z_k^3 X_k} {\beta R+ X_k^2} = O(\eta(\epsilon_0) ).
\end{align*}
By using localization (i.e. decomposing $Z_k^3 X_k=Z_k^3 \phi(\frac {Z_k} M) X_k
+ Z_k^3 (1- \phi(\frac {Z_k} M ) ) X_k$), H\"older and taking $M$ sufficiently large, one can
then derive that (with high probability)
\begin{align*}
\Bigl |\frac 1m \sum_{k=1}^m \frac {Z_k^4 -X_k^4} { \beta R+ X_k^2} \Bigr|
+ \Bigl|
\frac 1m \sum_{k=1}^m \frac {Z_k^3 X_k -X_k^4} { \beta R+ X_k^2} \Bigr| = O(\eta_1(\epsilon_0)), \qquad\forall\, R_2\le R\le 1, \; \forall\, \hat u \in \mathbb S^{n-1}
\text{ with $|\hat u \cdot e_1 -1| \le \epsilon_0$},
\end{align*}
where $\eta_1(\epsilon_0)\to 0$ as $\epsilon_0\to 0$.  It then follows easily that
(with high probability)
\begin{align*}
\Bigl| \frac 1m \sum_{k=1}^m \frac {(Z_k-X_k)^4} {\beta R+X_k^2} 
\Bigr| = O(\eta_2(\epsilon_0) ), 
\qquad\forall\, R_2\le R\le 1, \; \forall\, \hat u \in \mathbb S^{n-1}
\text{ with $|\hat u \cdot e_1 -1| \le \epsilon_0$},
\end{align*}
where $\eta_2(\epsilon_0) \to 0$ as $\epsilon_0 \to 0$. 

Now observe that for $A_1$, we have
\begin{align*}
|Z_k^4 -X_k^4| & \le |Z_k-X_k| ( O(|Z_k|^3) +O(|X_k|^3) ) \notag \\
& \le  C_{\epsilon} |Z_k-X_k|^4 + \epsilon\cdot ( O(|Z_k|^4 )+O( X_k^4) ),
\end{align*}
where $C_{\epsilon}>0$ depends only on $\epsilon$. Clearly by taking $\epsilon>0$ 
sufficiently small and using the derived quantitative estimates preceding this
paragraph, we can guarantee that (with high probability)
\begin{align*}
\Bigl | A_1 - B_1 \Bigr| \ll 1, \qquad\forall\, R_2\le R\le 1, \; \forall\, \hat u \in \mathbb S^{n-1}
\text{ with $|\hat u \cdot e_1 -1| \le \epsilon_0$}.
\end{align*}
It follows that we must have $|R-1| \ll 1$ for a potential critical point.  By using
\eqref{Sep8e10a} we have $\partial_R f (R=0) <0$. By using \eqref{Sep8e10b}
we have $\partial_{RR} f >0$.  Since we have shown $\partial_R f >0$ for $R >1+c(\epsilon_0)$,
it then follows that  $\partial_R f=0$ occurs at a unique point $|R-1|\ll 1$ and
$\partial_R f <0$ for $ R<1-c(\epsilon_0)$ provided $c(\epsilon_0)$ is
suitably re-defined. 
\end{proof}

We now show restrictive convexity of the loss function $f(u)$ near the global
minimizer $u=\pm e_1$.

\begin{thm}[Restrictive convexity near the global minimizer] \label{Sep19e8}
There exists $0<\epsilon_0\ll 1$  sufficiently small such that if
$m\gtrsim n$, then the following hold with high probability:

\begin{enumerate}
\item If $\|u -e_1\|_2\le \epsilon_0$ and $u\ne e_1$, then for $\xi = 
\frac {u-e_1} {\| u-e_1\|_2} \in \mathbb S^{n-1}$, we have
\begin{align*}
\sum_{i,j=1}^n \xi_i \xi_j (\partial_{ij} f)(u) \ge \gamma>0, 
\end{align*}
where $\gamma$ is a constant depending only on $\beta$. 

\item If $\|u +e_1\|_2\le \epsilon_0$, then then for $\xi = 
\frac {u+e_1} {\| u+ e_1\|_2} \in \mathbb S^{n-1}$, we have
\begin{align*}
\sum_{i,j=1}^n \xi_i \xi_j (\partial_{ij} f)(u) \ge \gamma>0, 
\end{align*}
where $\gamma$ is a constant depending only on $\beta$. 

\item Alternatively we can use the parametrization $u= \pm e_1+ t \xi$, where $\xi\in \mathbb S^{n-1}$, and $|t|\le \epsilon_0$. Then with this special parametrization, we have 
\begin{align*}
\sum_{i,j=1}^n \xi_i \xi_j (\partial_{ij} f)(u) \ge \gamma>0.
\end{align*}
Note that this includes the global minimizers $u=\pm e_1$. 

\end{enumerate}

In yet other words, $f(u)$ is restrictively convex in a sufficiently small neighborhood of $\pm e_1$.
\end{thm}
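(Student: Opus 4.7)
The three parts all reduce to the same one-dimensional statement. Since $f(u)=f(-u)$, it suffices to work near $u=e_1$. Given $u=e_1+t\xi$ with $\xi\in\mathbb S^{n-1}$ and $|t|\le\epsilon_0$, the quantity $\sum_{i,j}\xi_i\xi_j(\partial_{ij}f)(u)$ equals $g''(t)$ where $g(s):=f(e_1+s\xi)$. Parts (1) and (3) are covered by $t>0$ with $\xi=(u-e_1)/\|u-e_1\|_2$ and by $t\in[-\epsilon_0,\epsilon_0]$ with arbitrary $\xi$, respectively; part (2) follows from the sign symmetry. Writing $X_k=a_k\cdot e_1$, $Y_k=a_k\cdot\xi$, $\xi_1=\xi\cdot e_1$, one has $(a_k\cdot u)^2-X_k^2=2tX_kY_k+t^2Y_k^2$ and $\beta|u|^2+X_k^2=\beta(1+2t\xi_1+t^2)+X_k^2=:D_k(t)$, so
\[
g(t)=\frac{t^2}{m}\sum_{k=1}^{m}\frac{(2X_kY_k+tY_k^2)^2}{D_k(t)}.
\]
Since $g(0)=g'(0)=0$ (because $e_1$ is a global minimizer), a straightforward differentiation gives
\[
g''(0)=\frac{8}{m}\sum_{k=1}^{m}\frac{X_k^2Y_k^2}{\beta+X_k^2}.
\]

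\textbf{Uniform lower bound at $t=0$.} Decomposing $\xi=\xi_1 e_1+\xi^\perp$ with $\xi^\perp\perp e_1$, the coordinate $a_k\cdot\xi^\perp$ is Gaussian with variance $1-\xi_1^2$ and independent of $X_k$, hence
\[
\mathbb E\,\frac{X^2Y^2}{\beta+X^2}=\xi_1^2\,\mathbb E\!\left[\frac{X^4}{\beta+X^2}\right]+(1-\xi_1^2)\,\mathbb E\!\left[\frac{X^2}{\beta+X^2}\right]\;\ge\; c_\ast(\beta)>0,
\]
a convex combination of two strictly positive constants depending only on $\beta$. Introducing the truncation $Y_k^2\phi(Y_k/N)$ to control sub-exponential tails (with $N=N(\beta)$ so that the truncated expectation still exceeds $\tfrac34 c_\ast(\beta)$), followed by Bernstein's inequality and a standard $\varepsilon$-net argument over $\mathbb S^{n-1}$, yields
\[
g''(0)\;\ge\; 4c_\ast(\beta)\quad\text{for all }\xi\in\mathbb S^{n-1},
\]
with probability at least $1-e^{-cm}$ whenever $m\gtrsim n$.

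\textbf{Lipschitz control in $t$ and conclusion.} Differentiate $g''(t)$ once more: every resulting term is of the form $\frac{1}{m}\sum_k P(X_k,Y_k,t)/D_k(t)^{a}$ with $a\in\{1,2,3\}$, $P$ polynomial of bounded degree, and each ratio dominated (up to $\beta$-dependent constants) by a polynomial in $|X_k|,|Y_k|$. A second application of the truncation--Bernstein--net package produces a constant $M=M(\beta)$ such that
\[
\sup_{|t|\le\epsilon_0,\;\xi\in\mathbb S^{n-1}}|g'''(t)|\;\le\; M
\]
with high probability for $m\gtrsim n$. Choosing $\epsilon_0=\epsilon_0(\beta)$ so small that $\epsilon_0 M\le 2c_\ast(\beta)$ gives $g''(t)\ge 2c_\ast(\beta)=:\gamma$ uniformly, which is exactly the claim. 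The main obstacle is the uniform (in $\xi$) concentration: the weight $X_k^2/(\beta+X_k^2)\le 1$ provides no damping of the sub-exponential factor $Y_k^2$, so careful truncation is indispensable, and this same lack of damping is precisely why convexity holds only along the radial direction $\xi=(u-e_1)/\|u-e_1\|_2$ rather than in all directions (as reflected in the remark following Theorem~\ref{thmEb}): a transverse perturbation $\eta\perp\xi$ would replace $(2X_kY_k+tY_k^2)^2$ by a quantity of order $t^2$ which cannot be made strictly positive after dividing by $D_k(t)$.
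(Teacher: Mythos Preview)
The gap is in your Lipschitz step. The third derivative $g'''(t)$ contains (among other things) the term $\frac{24}{m}\sum_k \frac{X_kY_k^3}{D_k(t)}$ coming from $N_k'''(t)=24X_kY_k^3+24tY_k^4$. Since $|X_k|/D_k(t)\lesssim 1/\sqrt\beta$, you can only dominate this by $\frac{1}{m}\sum_k|Y_k|^3$. But for $m=Cn$ this quantity is \emph{not} uniformly bounded over $\xi\in\mathbb S^{n-1}$: taking $\xi=a_1/\|a_1\|_2$ gives $|Y_1|=\|a_1\|_2\asymp\sqrt n$, so the single summand $|Y_1|^3/m\asymp n^{3/2}/(Cn)=\sqrt n/C$, while the remaining terms contribute $O(1)$. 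The same obstruction hits the $Y_k^4$ pieces even harder. Hence no constant $M=M(\beta)$ with $\sup_{|t|\le\epsilon_0,\xi}|g'''(t)|\le M$ exists under $m\gtrsim n$, and your choice $\epsilon_0\le 2c_\ast(\beta)/M$ would force $\epsilon_0\lesssim n^{-1/2}$, which is not the claimed absolute constant. Truncation does not help for the \emph{upper} bound here: the tail piece $\frac1m\sum_k|Y_k|^3\mathbb 1_{|Y_k|>K}$ is precisely what blows up along $\xi=a_1/\|a_1\|_2$.

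The paper avoids this by working directly with $H_{\xi\xi}(u)=g''(t)$ at each $u$, not with $g'''$. It writes out the six Hessian pieces (B.1a)--(B.1f), singles out the two negative terms that contain $(a_k\cdot u)^3(a_k\cdot\xi)$ and $(a_k\cdot u)^4$, and uses the identity $a_k\cdot u=X_k+t(a_k\cdot\xi)$ to split each of them. The parts carrying an extra $(a_k\cdot\xi)^2$ factor are then \emph{absorbed into the positive term} $12\frac1m\sum_k\frac{(a_k\cdot\xi)^2(a_k\cdot u)^2}{D_k}$ (which is exactly the unbounded-in-$\xi$ object you would otherwise have to estimate from above), using only that $t$ is small. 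The remaining parts carry an $X_k$ factor that, together with the $X_k^2$ in $D_k$, yields genuine damping; these are then handled by the localization $\phi((a_k\cdot u)/K)$ and a Young inequality. After this rearrangement one gets $H_{\xi\xi}\ge \mathbb E H_{\xi\xi}-o(1)$ uniformly, and the expectation is controlled by Lemma~B.5. The essential idea you are missing is that the ``bad'' high-degree pieces must be \emph{compared against} the good positive term rather than bounded in absolute value.
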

\begin{proof}
See appendix.
\end{proof}

\begin{proof}[Proof of Theorem \ref{thmEb}]
We proceed in several steps as follows.
\begin{enumerate}
\item For the regime $\|u\|_2 \ll 1$ and $\|u\|_2\gg 1$, we use 
Theorem \ref{thmSep8_1}. The point $u=0$ is a local maximum point with
strictly negative-definite Hessian. All other possible critical points must have
negative curvature direction.
\item For the regime $\|u\|_2 \sim 1$, $| |\hat u \cdot e_1|-1| \ge \epsilon_0$,
we use Lemma \ref{Sep19_e0} and \ref{Sep19_e1}. The loss function either has
a nonzero gradient, or it is a strict saddle with a negative curvature direction.
\item For the regime $\|u\|_2 \sim 1$, $ | \hat u \cdot e_1 | -1|\le \epsilon_0$,
$| \|u\|_2 -1|\ge c(\epsilon_0)$, we apply Theorem \ref{Sep19_e3}. The loss
function has nonzero gradient in this regime.
\item Finally for the regime close to the global minimizers $\pm e_1$, we use
Theorem \ref{Sep19e8} to show restrictive convexity. This ensures that $\pm e_1$
are the only minimizers. 
\end{enumerate}
\end{proof}

\section{Quotient intensity model \RNum{3}}  \label{S:model4c}
Consider for $\beta_1>0$, $\beta_2>0$,  
\begin{align} \label{model4ce1}
f(u) &= \frac 1 m \sum_{k=1}^m 
\frac{ ( (a_k\cdot u)^2 - (a_k\cdot x)^2 )^2}
{ |u|^2 +\beta_1(a_k\cdot u)^2+ \beta_2 (a_k\cdot x)^2}.
\end{align}

\begin{thm} \label{thmEc}
Let $0<\beta_1,\beta_2 <\infty$. 
Assume 
$\{a_k\}_{k=1}^m$ are i.i.d.  standard Gaussian random vectors and $x\ne 0$. 
There exist positive constants $c$, $C$ depending only on $(\beta_1,\beta_2)$, such that if $m\ge C n  $, then
with probability at least $1- e^{-cm} $ the loss function $f=f(u)$ 
defined by \eqref{model4ce1}
has no spurious local minimizers. The only global minimizer is
$\pm  x$, and the loss function is strongly convex in a neighborhood of $\pm x$.
The point $u=0$ is a local maximum point with strictly negative-definite Hessian.
All other critical points are strict saddles, i.e., each saddle point has a neighborhood
where the function has negative directional curvature.
\end{thm}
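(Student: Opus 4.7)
The proof will follow the template already set by Theorems \ref{thmEa} and \ref{thmEb}. Without loss of generality assume $x = e_1$, write $u = \sqrt{R}\,\hat u$ with $\hat u \in \mathbb{S}^{n-1}$, and set $X_k = a_k \cdot e_1$, $Z_k = a_k \cdot \hat u$, so that the denominator becomes $D_k(u) = R + \beta_1 R Z_k^2 + \beta_2 X_k^2$. I will dissect the landscape according to $(R, |\hat u \cdot e_1|)$ with separate arguments on: (i) the boundary regimes $R \ll 1$ and $R \gg 1$, together with the isolated point $u = 0$; (ii) the intermediate regime $R \sim 1$ with $|\hat u \cdot e_1|$ bounded away from $1$; (iii) a neighborhood of $\pm e_1$. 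In every case the technique is to extract a population-level estimate under a smooth cutoff $\phi(Z_k/N)$ and then invoke uniform concentration under $m \gtrsim n$.

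For the boundary regimes I would imitate Lemmas \ref{Sep8_e1}--\ref{Sep8_e2aa}. When $R \gg 1$, $D_k \sim R(1 + \beta_1 Z_k^2)$, and the Cauchy--Schwarz/Bernstein route used for QIM2 (together with truncation of $Z_k$) yields $\partial_R f \ge d_1(\beta_1,\beta_2) > 0$ uniformly in $\hat u$. At $u = 0$ one has $\nabla f(0) = 0$ almost surely; differentiating $G(t) = f(\sqrt{t}\,\xi)$ at $t = 0$, the contribution of $X_k^4 / (\beta_2 X_k^2)$ dominates and produces a strictly negative-definite Hessian. For $R \ll 1$ with $|\hat u \cdot e_1| \le 1/10$ I would reproduce the $\partial_{u_1 u_1} f \le -2$ bound of Lemma \ref{Sep8_e2}, now with the enlarged denominator. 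For $R \ll 1$ with $|\hat u \cdot e_1| > 1/10$, a joint use of $\partial_R f = 0$ and $\partial_{u_1} f = 0$ as in Lemma \ref{Sep8_e2aa} generates an algebraic obstruction that forces $R \gtrsim 1$, ruling out critical points in this slice.

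For the intermediate regime I would parametrize $\hat u = e_1 \cos\theta + e^\perp \sin\theta$ and follow the template of Lemmas \ref{Sep19_e0}--\ref{Sep19_e1}: in the band $\epsilon_0 \le |\cos\theta| \le 1 - \epsilon_0$ we get $|\partial_\theta f| \ge \alpha_1 > 0$, while for $|\cos\theta| \le \epsilon_1$ we get $\partial_{\theta\theta} f \le -\alpha_2 < 0$, both by computing bounded expectations and invoking uniform concentration. Finally, when $|\hat u \cdot e_1|$ is close to $1$ but $|R - 1|$ is not small, I would rerun the argument of Theorem \ref{Sep19_e3}: combine $\partial_R f = 0$ with $\partial_{u_1} f = 0$ to extract an identity forcing
$$\frac{1}{m} \sum_{k=1}^m \frac{(Z_k - X_k)^4}{D_k(u)} = o(1),$$
and combine with an explicit positivity $\partial_{RR} f > 0$ (immediate from a formula analogous to \eqref{Sep8e10b}) to pin every critical point to $|R - 1| \ll 1$.

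The last and most delicate step is to upgrade the local analysis from \emph{restrictive} convexity (as in QIM2) to \emph{full} strong convexity near $\pm e_1$, as flagged in the remark following the theorem. The idea is that the extra $|u|^2$ in $D_k$ provides a uniform, direction-independent lower bound on the denominator once $\|u\|_2 \sim 1$, removing the singular behavior responsible for the restricted convexity in QIM2. Concretely, I would compute the directional Hessian $H_{\xi\xi}(u)$ for arbitrary $\xi \in \mathbb{S}^{n-1}$ with $\|u \mp e_1\|_2 \le \epsilon_0$, truncate $Z_k$ by a cutoff, pass to expectations and prove $H_{\xi\xi}(u) \ge \gamma(\beta_1, \beta_2) > 0$ uniformly in $\xi$, then close with standard uniform concentration. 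The hardest piece will be the bookkeeping of the cross terms generated when the derivatives $\partial_{ij}$ fall on $D_k$ rather than on the squared factor in the numerator: one must check that the leading positive $X_k^4 / D_k$ contribution dominates these cross terms in \emph{every} direction $\xi$, not merely in the radial direction $u \mp e_1$ used for QIM2. Once this algebraic dominance is verified, the remainder assembles exactly as in the proof of Theorem \ref{thmEb}.
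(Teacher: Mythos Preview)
Your overall decomposition is correct and would likely succeed, but the paper takes a noticeably simpler route in several places by exploiting the extra damping term $\beta_1 R(a_k\cdot\hat u)^2$ in the denominator much more aggressively than you do.

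First, for $R\ll 1$ the paper does \emph{not} split on $|\hat u\cdot e_1|$ or reuse the Lemma~\ref{Sep8_e2}/\ref{Sep8_e2aa} machinery. Instead Lemma~\ref{Sep21_e1} shows directly that $\partial_R f\le -d_2<0$ uniformly in $\hat u$: the point is that the troublesome high-$Z_k$ terms such as $R^2 Z_k^6/D_k^2$ are now automatically damped by the $\beta_1 R Z_k^2$ in $D_k$, so no auxiliary critical-point identity is needed. Second, for $\|u\|_2\sim 1$ with $||\hat u\cdot e_1|-1|\le\epsilon_0$ but $|R-1|\ge c(\epsilon_0)$, the paper (Theorem~\ref{Sep22_e3}) avoids the $(Z_k-X_k)^4/D_k$ route entirely: it writes $f=\frac1m\sum_k g(R,(a_k\cdot\hat u)^2,X_k^2)$ with $g(R,a,b)=(Ra-b)^2/(R+\beta_1 Ra+\beta_2 b)$, observes the Lipschitz bound $|(\partial_R g)(R,a,b)-(\partial_R g)(R,b,b)|\lesssim|b-a|$, and uses $(\partial_R g)(1,b,b)=0$, $(\partial_{RR} g)(R,b,b)\sim b$ to get $\partial_R f\gtrsim\eta-O(\epsilon_0)$ for $R=1+\eta$. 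Your rerun of Theorem~\ref{Sep19_e3} is more delicate here because the denominator now depends on $Z_k$, so the QIM2 algebraic identities do not transfer verbatim. Third, for strong convexity near $\pm e_1$ (Theorem~\ref{Sep24e1}) the paper organizes the Hessian via $G(a,b,c)=(b-c)^2/(a+\beta_1 b+\beta_2 c)$ and shows that each of the five pieces $\partial_a G$, $\partial_{aa}G$, $\partial_{ab}G$, and the combination $2b\,\partial_{bb}G+\partial_b G$ is bounded with bounded derivatives, hence amenable to uniform concentration; the positivity then comes from the expectation (Lemma~\ref{lemSeI:2}). This is exactly the ``algebraic dominance in every direction $\xi$'' you flag as the hard step, but the $G$-parametrization makes the bookkeeping systematic rather than ad hoc.

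In short: your plan follows the QIM2 template too closely. The paper's message for QIM3 is that the additional $\beta_1(a_k\cdot u)^2$ damping lets one replace the subtle localization/critical-point arguments of Section~\ref{S:model4b} by direct concentration and elementary Lipschitz estimates throughout.
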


Without loss of generality we shall assume $x=e_1$ throughout the rest of the proof.
Thus we consider
\begin{align} \label{model4ce2}
f(u) &= \frac 1 m \sum_{k=1}^m 
\frac{ ( (a_k\cdot u)^2 - (a_k\cdot e_1)^2 )^2}
{ |u|^2 +\beta_1(a_k\cdot u)^2+ \beta_2 (a_k\cdot e_1)^2}.
\end{align}

\subsection{The regimes $\|u\|_2\ll 1 $ and $\|u\|_2\gg 1$ are fine}$\;$

We first investigate the point $u=0$. It is trivial to verify that $\nabla f(0)=0$ since 
$a_k\cdot e_1 \ne 0$ for all $k$ almost surely. 
\begin{lem}[$u=0$ has strictly negative-definite Hessian] \label{Sep21_e0}
We have $u=0$ is local maximum point with strictly
negative-definite Hessian. More precisely, it holds (almost surely) that
\begin{align*}
\sum_{k,l=1}^n \xi_k \xi_l (\partial_{kl} f)(0)
\le -d_1, \quad\forall\, \xi \in \mathbb S^{n-1},
\end{align*}
where $d_1>0$ is a constant depending only on $\beta_2$.
\end{lem}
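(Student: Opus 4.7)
The plan is to compute the Hessian quadratic form along an arbitrary unit direction by a one-line Taylor expansion, exploiting the fact that $f$ is smooth in a neighborhood of $u=0$ almost surely. Indeed, on the full-measure event $\mathcal{E}:=\{a_k\cdot e_1\ne 0\text{ for all }k\}$ the denominator of the $k$-th summand in \eqref{model4ce2} equals $\beta_2 (a_k\cdot e_1)^2>0$ at $u=0$ and stays bounded away from zero locally. For $\xi\in\mathbb{S}^{n-1}$ I would set $g(t):=f(t\xi)$, note that $g''(0)=\sum_{i,j}\xi_i\xi_j(\partial_{ij}f)(0)$, and reduce the lemma to the uniform bound $g''(0)\le -d_1$.

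Writing $X_k=a_k\cdot e_1$, $Z_k=a_k\cdot\xi$, $B_k=1+\beta_1 Z_k^2$ and $h_k(t)=N_k(t)/D_k(t)$ with
\begin{align*}
N_k(t)=(t^2 Z_k^2-X_k^2)^2,\qquad D_k(t)=t^2 B_k+\beta_2 X_k^2,
\end{align*}
I would record the elementary values $N_k(0)=X_k^4$, $N_k'(0)=0$, $N_k''(0)=-4Z_k^2 X_k^2$, $D_k(0)=\beta_2 X_k^2$, $D_k'(0)=0$, $D_k''(0)=2B_k$. Since both first derivatives vanish at $t=0$, differentiating $D_k h_k=N_k$ twice and evaluating at $t=0$ gives
\begin{align*}
h_k''(0)=\frac{N_k''(0)-h_k(0)D_k''(0)}{D_k(0)}=-\frac{4Z_k^2}{\beta_2}-\frac{2(1+\beta_1 Z_k^2)}{\beta_2^2}.
\end{align*}

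The decisive structural observation is that \emph{every} term on the right is non-positive (as $\beta_1,\beta_2>0$), and the purely deterministic piece $-2/\beta_2^2$ already supplies a uniform upper bound independent of $\xi$ and of the random data. Averaging over $k$ therefore yields, on $\mathcal{E}$,
\begin{align*}
g''(0)=\frac 1 m\sum_{k=1}^m h_k''(0)\le -\frac{2}{\beta_2^2},\qquad\forall\,\xi\in\mathbb{S}^{n-1},
\end{align*}
and the claim follows with $d_1=2/\beta_2^2$.

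There is essentially no analytic obstacle: the entire argument is a two-derivative quotient-rule expansion, and the conclusion holds deterministically on the full-measure event $\mathcal{E}$ with no recourse to Bernstein's inequality and no need for the assumption $m\gtrsim n$. This is precisely what justifies the "almost surely" wording of the statement, in contrast to the high-probability formulations of Lemmas \ref{Sep7_e0} and \ref{Sep8_e0}: the $|u|^2$ damping inside the denominator of QIM3 produces the $D_k''(0)=2B_k$ term that dominates the Hessian at the origin regardless of the realization of the $a_k$.
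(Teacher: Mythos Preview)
Your proof is correct and essentially the same as the paper's: the paper uses the substitution $u=\sqrt{t}\,\xi$ and computes $G'(0)=-\tfrac{1}{\beta_2^2}-\tfrac{\beta_1+2\beta_2}{\beta_2^2}\cdot\tfrac{1}{m}\sum_k (a_k\cdot\xi)^2$, which is exactly half of your $g''(0)$ under $u=t\xi$, and draws the identical deterministic conclusion from the non-positivity of the $Z_k^2$ coefficient. Your explicit remark that no concentration is needed here (hence the ``almost surely'' in the statement) is spot on and mirrors the paper's one-line ``the desired conclusion then easily follows.''
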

\begin{proof}
We begin by noting that  since almost surely $a_k\cdot e_1\ne 0$ for all $k$,
the function $f$ is smooth at $u=0$.  It suffices for us to consider 
(write $u=\sqrt{t} \xi$)
\begin{align*}
G(t) = \frac 1 m
\sum_{k=1}^m \frac { (t(a_k\cdot \xi)^2 - (a_k\cdot e_1)^2)^2}
{ t+ t\beta_1 (a_k\cdot \xi)^2+\beta_2(a_k\cdot e_1)^2}.
\end{align*}
By a simple computation, we have
\begin{align*}
G^{\prime}(0) =-\frac 1 {\beta_2^2} -
\frac {\beta_1+2\beta_2}{\beta_2^2} \cdot \frac 1m \sum_{k=1}^m (a_k\cdot \xi)^2.
\end{align*}
The desired conclusion then easily follows.
\end{proof}

Write  $u=\sqrt{R} \hat u$ where $\hat u \in S^{n-1}$ and $R>0$.  Denote
$X_k=a_k\cdot e_1$.  Then
\begin{align*}
f (u) = \frac 1m \sum_{k=1}^m 
\frac { \Bigl( R (a_k\cdot \hat u)^2 -X_k^2 \Bigr)^2}
{ R + \beta_1 R (a_k\cdot \hat u)^2 +\beta_2 X_k^2}.
\end{align*}
Clearly
\begin{align}
&\partial_{R} f =   
\frac 1m \sum_{k=1}^m
\frac 
{R^2((a_k\cdot \hat u)^4+\beta_1 (a_k\cdot \hat u)^6) +
2R \beta_2 (a_k\cdot \hat u)^4 X_k^2 -X_k^4-(\beta_1+2\beta_2)(a_k\cdot \hat u)^2 X_k^4}
{
( R+ R\beta_1(a_k\cdot \hat u)^2 +\beta_2 X_k^2)^2};
   \label{Sep21e1.0a}\\
&\partial_{RR} f =2\frac 1m \sum_{k=1}^m
\frac{  \Bigl( 1+ (a_k\cdot \hat u)^2 (\beta_1+\beta_2) \Bigr) X_k^4}
{ ( R+ R \beta_1(a_k\cdot \hat u)^2+\beta_2  X_k^2)^3}. \label{Sep21e1.0b}
\end{align}

\begin{lem}[The regimes $\|u\|_2\gg 1$ or $\|u\|_2\ll 1$ are OK] \label{Sep21_e1}
There exist constants $R_i=R_i(\beta_1,\beta_2)>0$, $d_i=d_i(\beta_1,\beta_2)>0$, $i=1,2$
such that the 
following hold: 
For $m\gtrsim n$,  with high probability it holds that
\begin{align*}
&\partial_{R} f \ge d_1,  \quad\forall\, R \ge R_1 , \;\text{}\forall\, \hat u \in \mathbb S^{n-1};\\
&\partial_{R} f \le  -d_2<0,  \quad\forall\, 0<R \le R_2 , \;\text{}\forall\, \hat u \in \mathbb S^{n-1}.
\end{align*}
\end{lem}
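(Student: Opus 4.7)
The plan is to follow the template of Lemma \ref{Sep8_e1}, treating the two regimes separately by extracting a dominant expectation and invoking the same union-bound machinery. Write $X_k = a_k \cdot e_1$, $Z_k = a_k \cdot \hat u$, and
\[
D_k := R + R\beta_1 Z_k^2 + \beta_2 X_k^2.
\]
The two structural lower bounds $D_k \ge R(1+\beta_1 Z_k^2)$ and $D_k \ge \beta_2 X_k^2$ will be used repeatedly, together with the cross bound $D_k^2 \ge R(1+\beta_1 Z_k^2)\,\beta_2 X_k^2$. The middle term in \eqref{Sep21e1.0a} is nonnegative and can be dropped in the lower-bound direction (for the large-$R$ regime) or dominated pointwise (for the small-$R$ regime).

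For the regime $R\ge R_1\gg 1$: I would keep only the first positive piece in \eqref{Sep21e1.0a} and estimate
\[
\frac{R^2(Z_k^4+\beta_1 Z_k^6)}{D_k^2}\,\chi_{\{|X_k|\le R^{1/4}\}} \;\ge\; \frac{Z_k^4}{1+\beta_1 Z_k^2}\cdot\frac{R^2(1+\beta_1 Z_k^2)^2}{(R(1+\beta_1 Z_k^2)+\beta_2 R^{1/2})^2}\,\chi_{\{|X_k|\le R^{1/4}\}},
\]
which, after insertion of a smooth cutoff $\phi(Z_k/N)$ as in Lemma \ref{Sep8_e1}, is a uniformly bounded random variable whose expectation converges to the positive absolute constant $\mathbb E[Z^4/(1+\beta_1 Z^2)]$ as $R,N\to\infty$ (the $\chi_{\{|X_k|\le R^{1/4}\}}$ factor loses nothing since $\PP(|X_k|>R^{1/4})\to 0$). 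The negative piece is estimated by $D_k\ge R$, giving the upper bound $X_k^4(1+(\beta_1+2\beta_2)Z_k^2)/R^2$ whose empirical mean is $O(1/R^2)$ by Bernstein. Taking $R_1$ large, Bernstein together with the $\epsilon$-net argument used throughout the paper produces the uniform lower bound $\partial_R f\ge d_1$ for $m\gtrsim n$.

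For the regime $0<R\le R_2\ll 1$: the negative term dominates. Using $D_k\ge \beta_2 X_k^2$,
\[
\frac{X_k^4\bigl(1+(\beta_1+2\beta_2)Z_k^2\bigr)}{D_k^2} \;\le\; \frac{1+(\beta_1+2\beta_2)Z_k^2}{\beta_2^2},
\]
which is integrable and converges pointwise to the same expression as $R\to 0^+$; dominated convergence then delivers $\E[\text{negative part}] \to (1+\beta_1+2\beta_2)/\beta_2^2$. Each of the three positive contributions is pointwise dominated by the integrable envelope $Z_k^4/(1+\beta_1 Z_k^2)\le Z_k^2/\beta_1$ (the first via $D_k^2\ge R^2(1+\beta_1 Z_k^2)^2$, the middle via $D_k^2\ge R(1+\beta_1 Z_k^2)\beta_2 X_k^2$) and tends to $0$ pointwise as $R\to 0^+$, so by DCT their expectations vanish. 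Consequently $\E\partial_R f\to -(1+\beta_1+2\beta_2)/\beta_2^2<0$; inserting $\phi(Z_k/N)$ to pass to a bounded random variable and invoking Bernstein plus an $\epsilon$-net on $\mathbb S^{n-1}$ yields $\partial_R f\le -d_2$ uniformly in $\hat u$ for $R_2$ small and $m\gtrsim n$.

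The main obstacle is lifting the pointwise/DCT statements to uniform-in-$\hat u$ concentration, since the natural envelopes $Z_k^2/\beta_1$ and $(1+(\beta_1+2\beta_2)Z_k^2)/\beta_2^2$ are only sub-exponential rather than bounded. This is handled exactly as in Lemma \ref{Sep8_e1}: truncate $Z_k$ by $\phi(Z_k/N)$ to obtain bounded variables suitable for a standard net + Bernstein argument, and separately bound the $\phi$-tail contribution by an expectation that is small uniformly in $\hat u\in\mathbb S^{n-1}$ once $N$ is large.
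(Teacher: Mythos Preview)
Your small-$R$ argument is essentially workable (though the paper organizes it differently, lower-bounding the negative term by the $R$-independent quantity $\frac 1m\sum\frac{Z_k^2 X_k^4}{(1+Z_k^2+X_k^2)^2}$ and splitting the positive terms on $|X_k|\lessgtr R^{1/4}|Z_k|$).  The large-$R$ argument, however, has a genuine gap.

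For the negative piece you use $D_k\ge R$, which produces the envelope $X_k^4\bigl(1+(\beta_1+2\beta_2)Z_k^2\bigr)/R^2$ and then claim its empirical mean is $O(1/R^2)$ ``by Bernstein''.  This fails: the quantity $\frac 1m\sum_k X_k^4 Z_k^2$ is \emph{not} uniformly $O(1)$ over $\hat u\in\mathbb S^{n-1}$ when $m\sim n$.  Indeed, choosing $\hat u=a_{k^*}/|a_{k^*}|$ with $k^*=\arg\max_k |X_k|$ gives a single summand $\frac 1m X_{k^*}^4|a_{k^*}|^2\approx \frac{(\log m)^2\cdot n}{m}\asymp \log^2 m$, so the supremum over $\hat u$ diverges and no fixed $R_1$ can absorb it.  The truncation $\phi(Z_k/N)$ you propose does not help here because the obstruction is the undamped $X_k^4$ factor, not $Z_k$; truncating $X_k$ instead forces $K\gtrsim\sqrt{\log m}$, again not a constant.

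The fix --- and this is exactly what the paper does --- is to retain the $X_k^2$ damping in the denominator: use $D_k\ge \min(1,\beta_2)(R+X_k^2)$ so that $\frac{X_k^4 Z_k^2}{D_k^2}\lesssim Z_k^2$, and then split on $|X_k|\le R^{1/4}$ versus $|X_k|>R^{1/4}$.  On the first set the ratio is $\lesssim R^{-1}Z_k^2$; on the second it is $\lesssim Z_k^2\chi_{|X_k|>R^{1/4}}$, whose expectation is small and whose envelope $Z_k^2$ is sub-exponential uniformly in $\hat u$, so the standard net argument applies.  Dropping the $X_k$ damping is precisely what re-introduces the heavy-tail issue that QIM3 was designed to avoid.
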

\begin{proof}
Denote $Z_k= a_k\cdot \hat u$. We first consider the regime $R\gg 1$.
 Observe that
\begin{align*}
\frac 1m \sum_{k=1}^m \frac { R^2 Z_k^4} { (R +R\beta_1 Z_k^2+\beta_2 X_k^2)^2}
& \gtrsim \frac 1 m\sum_{k=1}^m \frac {Z^4_k} { (1+Z_k^2+X_k^2)^2}
\gtrsim 1, \quad\forall\, \hat u \in \mathbb S^{n-1},
\end{align*}
where the last inequality holds for $m\gtrsim n$ with high probability. 
On the other hand we note that 
\begin{align*}
\frac 1m \sum_{k=1}^m \frac {X_k^4} { (R + R \beta_1 Z_k^2 + \beta_2 X_k^2)^2}
& \lesssim \frac 1m \sum_{k=1}^m \frac {X_k^4} { (R +X_k^2)^2} (\chi_{|X_k|\le R^{\frac 14}}
+ \chi_{|X_k|>R^{\frac 14} } )\notag \\
& \lesssim R^{-1} + \frac 1m \sum_{k=1}^m  \chi_{|X_k|>R^{\frac 14} }
 \ll 1, \qquad\forall\, \hat u \in \mathbb S^{n-1},
 \end{align*}
 where again the last inequality holds for $R$ sufficiently large, and for 
 $m\gtrsim n$ with high probability.
  Similarly we have for $R$ sufficiently large,
  \begin{align*}
  \frac 1m \sum_{k=1}^m
  \frac {(\beta_1+2\beta_2) Z_k^2 X_k^4} 
  { (R+ R\beta_1 Z_k^2+\beta_2 X_k^2)^2}
  & \lesssim R^{-1} \frac 1m\sum_{k=1}^m
  Z_k^2 + \frac 1m \sum_{k=1}^m Z_k^2 \chi_{|X_k| > R^{\frac 14} } \ll 1,
  \qquad\forall\, \hat u \in \mathbb S^{n-1}.
  \end{align*}
 Thus it follows easily that $\partial_R f \gtrsim 1$ for $R\gg 1$.
 
 Now we turn to the regime $0<R\ll 1$. 
 First we note that the main negative term is OK.
 This is
  due to the fact that for $0<R\le 1$, we have
 (for $m\gtrsim n$ and with high probability)
 \begin{align*}
 \frac 1m \sum_{k=1}^m \frac {Z_k^2 X_k^4}
 { (R+R Z_k^2+ X_k^2)^2} \ge \frac 1m \sum_{k=1}^m
 \frac {Z_k^2 X_k^4} { (1+Z_k^2+ X_k^2)^2} \gtrsim 1, \qquad\forall\,
 \hat u \in \mathbb S^{n-1}.
 \end{align*}
 On the other hand, we have (for $m\gtrsim n$ and with high probability)
 \begin{align*}
 \frac 1m \sum_{k=1}^m
 \frac {R^2 (Z_k^4 +Z_k^6)} { (R +R Z_k^2 + X_k^2)^2}
 & \le \frac 1m \sum_{k=1}^m \frac {RZ^4_k} {R+RZ_k^2 +X_k^2} 
 \cdot (\chi_{|X_k|\ge R^{\frac 14} |Z_k|} + 
 \chi_{|X_k|<R^{\frac 14} |Z_k|} )\notag \\
 & \le R^{\frac 12} \frac 1m \sum_{k=1}^m 
  Z_k^2
 + \frac 1m \sum_{k=1}^m Z_k^2 \chi_{|X_k| <R^{\frac 14} |Z_k| }
 \notag \\
 &\le R^{\frac 12} \frac 1m \sum_{k=1}^m 
  Z_k^2
 + \frac 1m \sum_{k=1}^m Z_k^2 \chi_{|Z_k| \ge K }
 +\frac 1m \sum_{k=1}^m K^2
 \chi_{|X_k|< K R^{\frac 14} } \notag\\
 & 
  \ll 1,
 \qquad\forall\, \hat u \in \mathbb S^{n-1},
 \end{align*}
 if we first take $K$ sufficiently large 
 followed by taking $R$ sufficiently small.
 The estimate of the other term
 $\frac {R Z_k^4 X_k^2}{
 (R+R Z_k^2+ X_k^2)^2}$ is similar and we omit
 further details.
 
 Collecting the estimates, it is then clear that we can
 obtain the desired estimate for $\partial_R f$
 when $0<R\ll 1$.
\end{proof}

\subsection{The regime $\|u \|_2 \sim 1$}

\begin{lem}[The regime $\| u\|_2 \sim 1$ with $\epsilon_0 \le |\hat u \cdot e_1| \le 1-
\epsilon_0$ is OK] \label{Sep22_e0}
Let $0<\epsilon_0 \ll 1$ be given. Assume $0<c_1 <c_2<\infty$ are two given constants.
Then for $m\gtrsim n$, the following hold with high probability:
The loss function $f=f(u)$ has no critical points in the regime:
\begin{align*}
\Bigl\{ u= \sqrt R \hat u: \;  c_1<R <c_2, \; \epsilon_0 \le |\hat u \cdot e_1|
\le 1 -\epsilon_0  \Bigr\}.
\end{align*}
More precisely, introduce the parametrization $\hat u = e_1 \cos \theta + e^{\perp} \sin \theta$,
where $\theta \in [0,\pi]$ and $e^{\perp} \in \mathbb S^{n-1}$ satisfies $e^{\perp}
\cdot e_1=0$. Then  in the aforementioned regime, we have
\begin{align*}
|\partial_{\theta} f | \ge \alpha_1>0,
\end{align*}
where $\alpha_1$ depends only on ($\beta$, $\epsilon_0$, $c_1$, $c_2$).
\end{lem}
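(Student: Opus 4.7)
The plan is to mirror the proof of Lemma \ref{Sep19_e0}, adapted to the more complicated denominator in \eqref{model4ce2}. Fix $R \in [c_1,c_2]$ and write $\hat u(\theta) = e_1 \cos\theta + e^{\perp} \sin\theta$, so that $u = \sqrt R\,\hat u$. Since $|u|^2 = R$ is independent of $\theta$, only $Z_k = a_k \cdot \hat u$ carries $\theta$-dependence. Setting $X_k = a_k \cdot e_1$, $Y_k = a_k \cdot e^{\perp}$ and $W_k = \partial_\theta Z_k = -X_k \sin\theta + Y_k \cos\theta$, a direct differentiation of \eqref{model4ce2} yields
\begin{align*}
\partial_\theta f = \frac{2R}{m} \sum_{k=1}^m \frac{Z_k W_k (R Z_k^2 - X_k^2)\bigl(2R + \beta_1 R Z_k^2 + (2\beta_2+\beta_1) X_k^2\bigr)}{\bigl(R + \beta_1 R Z_k^2 + \beta_2 X_k^2\bigr)^2}.
\end{align*}
The task is then to show the absolute value of the right-hand side is bounded below uniformly over the prescribed regime with high probability.

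Next I would analyse the expectation. Since $(X_k,Y_k)$ are independent standard Gaussians, each summand has expectation that is a two-dimensional integral depending only on $R,\beta_1,\beta_2,\theta$. Noting that $f$ is a function of $(X_k^2, Z_k^2)$ alone and that the joint law of $(X^2,Z^2)$ depends only on $s := \cos^2\theta$ (flipping the sign of $\cos\theta$ amounts to $Y\mapsto -Y$), one obtains $\mathbb E[f] = G(R,s)$ for some smooth $G$, whence
\begin{align*}
\mathbb E[\partial_\theta f] = -2\sin\theta\cos\theta \cdot \partial_s G(R,s).
\end{align*}
In the stated regime $|\sin\theta\cos\theta|$ is bounded below by a positive constant depending on $\epsilon_0$, so the whole problem reduces to the one-variable monotonicity $|\partial_s G(R,s)| \ge c(\beta_1,\beta_2,\epsilon_0,c_1,c_2) > 0$ on the compact set $[c_1,c_2]\times[\epsilon_0^2,(1-\epsilon_0)^2]$. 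Using the representation $Z = \sqrt s\, X + \sqrt{1-s}\, Y$ with independent $N(0,1)$ variables $X,Y$, one can differentiate under the integral and study $\partial_s G$ directly, or use the Ornstein--Uhlenbeck/Price-type identity $\partial_s G = \mathbb E[\partial_{XZ}^2 H(X,Z)]$ applied to $H(X,Z) = (RZ^2-X^2)^2/(R+\beta_1 RZ^2+\beta_2 X^2)$ to fix the sign.

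To lift the expectation bound to the empirical average I would use the standard truncate-and-concentrate recipe already present in the paper. Choose a cutoff $K = K(\beta_1,\beta_2,\epsilon_0)$, and split each summand into its $\chi_{|a_k|\le K}$ part (uniformly bounded by a constant depending only on $R,\beta_1,\beta_2,K$) plus a tail whose expectation can be made smaller than $\alpha_1/4$ by taking $K$ large. Bernstein's inequality handles the bounded part for each fixed $(R,\hat u)$, and a standard $\varepsilon$-net of cardinality $e^{Cn}$ over $\hat u \in \mathbb S^{n-1}$ together with a one-dimensional net over $R\in[c_1,c_2]$ upgrades pointwise control to a uniform estimate, absorbed by the deviation probability $e^{-cm}$ as soon as $m\gtrsim n$.

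The main obstacle is the sign analysis of $\partial_s G(R,s)$. Unlike in QIM2, where the denominator depends only on $X_k$ and the $s$-dependence of $\mathbb E[f]$ is relatively clean, here the extra term $\beta_1 R Z_k^2$ means both numerator and denominator carry $s$-dependence, so the identity $\partial_s G \neq 0$ requires some care to establish without accidental cancellations. Once this expectation-level monotonicity is secured, the truncation, Bernstein, and $\varepsilon$-net steps are entirely parallel to those of Lemma \ref{Sep19_e0} and introduce no new ideas.
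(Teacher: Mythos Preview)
Your outline is broadly on the right track, but you are over-engineering the concentration step and under-specifying the expectation step, and in both places the paper's argument is sharper.

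For concentration, you propose the truncate-and-Bernstein recipe from Lemma~\ref{Sep19_e0}. That was necessary for QIM2 because the denominator $\beta R + X_k^2$ does not damp $Z_k$. Here the denominator $R + \beta_1 R Z_k^2 + \beta_2 X_k^2$ dominates both $Z_k^2$ and $X_k^2$ (once $R\in[c_1,c_2]$), and since $W_k = Z_k\cot\theta - X_k\csc\theta$ with $\theta$ bounded away from $0,\pi$, each summand in your expression for $\partial_\theta f$ is already uniformly bounded by a constant depending only on $(\beta_1,\beta_2,c_1,c_2,\epsilon_0)$. So no truncation is needed at all: Bernstein plus an $\varepsilon$-net gives $|\partial_\theta f - \mathbb E\partial_\theta f|\le\epsilon$ directly. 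This is what the paper means by ``thanks to the strong damping''.

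For the expectation, you correctly reduce to showing $\partial_s G(R,s)\ne 0$, but you do not actually establish the sign, and your proposed Price-type identity $\partial_s G = \mathbb E[\partial_{XZ}^2 H]$ is not quite right as stated (the correlation is $\sqrt s$, not $s$). The paper's route (Lemma~\ref{lemSeI:1}) is more direct: write $h_1(a,b)=(Ra-b)^2/(R+\beta_1 R a+\beta_2 b)$, use the rotational integration-by-parts $\partial_\theta = y\partial_x - x\partial_y$ to express $\mathbb E\partial_\theta f$ as an integral against $\partial_b h_1$, and then compute explicitly that $\partial_{ab}h_1<0$ for all $a,b\ge 0$. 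This single pointwise inequality fixes the sign of $a_1$ in $\mathbb E\partial_\theta f = a_1\sin(2\theta)$ and closes the argument. Your approach would work if you filled in the sign analysis, but the paper's computation is the missing ingredient you should supply.
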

\begin{proof}
We first recall 
\begin{align*}
f (u) = \frac 1m \sum_{k=1}^m 
\frac { \Bigl( R (a_k\cdot \hat u)^2 -X_k^2 \Bigr)^2}
{ R + \beta_1 R (a_k\cdot \hat u)^2 +\beta_2 X_k^2}.
\end{align*}
Clearly $a_k\cdot \hat u = X_k \cos \theta +(a_k\cdot e^{\perp}) \sin \theta$, and 
\begin{align*}
& \partial_{\theta} (a_k\cdot \hat u) = X_k  (-\sin \theta)
+(a_k\cdot e^{\perp}) \cos \theta; \\
& \partial_{\theta\theta} ( a_k\cdot \hat u)=
- (a_k\cdot \hat u).
\end{align*}
In particular, if $\theta$ is away from the end-points $0, \pi$, then 
\begin{align*}
\partial_{\theta} (a_k\cdot \hat u) = (a_k\cdot \hat u) \cot \theta - X_k \csc \theta.
\end{align*}
We then obtain (below $Z_k=a_k\cdot \hat u$)
\begin{align*}
\partial_{\theta} f 
& =-\csc \theta \frac 1 m\sum_{k=1}^m 
\frac { 2R Z_k (-X_k^2+R Z_k^2)\cdot \Bigl(
(\beta_1+2\beta_2)X_k^2+R (2+\beta_1 Z_k^2) \Bigr)
X_k} 
{ (R+ \beta_1 R Z_k^2 +\beta_2 X_k^2)^2}  \notag \\
& \quad + \cot \theta \frac 1 m\sum_{k=1}^m 
\frac { 2R Z_k (-X_k^2+R Z_k^2)\cdot \Bigl(
(\beta_1+2\beta_2)X_k^2+R (2+\beta_1 Z_k^2) \Bigr)
Z_k} 
{ (R+ \beta_1 R Z_k^2 +\beta_2 X_k^2)^2}.
\end{align*}
Thanks to the strong damping, it is not difficult to check that for any
$\epsilon>0$, if $m\gtrsim n$, then with high probability we have
\begin{align*}
|\partial_{\theta} f - \mathbb E \partial_{\theta} f |
\le \epsilon, \qquad\forall\, c_1\le R \le c_2, \,\forall\,
\hat u \in \mathbb S^{n-1}.
\end{align*}
The desired result then follows from Lemma \ref{lemSeI:1}. 
\end{proof}

\begin{lem}[The regime $\| u\|_2 \sim 1$ with $ |\hat u \cdot e_1| \le 
\epsilon_0$ is OK] \label{Sep22_e1}
Let $0<\epsilon_1 \ll 1$ be a sufficiently small constant.
 Assume $0<c_1 <c_2<\infty$ are two given constants.
Then for $m\gtrsim n$, the following hold with high probability:
Consider the regime
\begin{align*}
\Bigl\{ u= \sqrt R \hat u: \;  c_1<R <c_2, \; |\hat u \cdot e_1|
\le \epsilon_1  \Bigr\}.
\end{align*}
Introduce the parametrization $\hat u = e_1 \cos \theta + e^{\perp} \sin \theta$,
where $\theta\in[0,\pi]$ and $e^{\perp} \in \mathbb S^{n-1}$ satisfies $e^{\perp}
\cdot e_1=0$. Then  in the aforementioned regime, we have
\begin{align*}
\partial_{\theta \theta } f  \le  -\alpha_2<0,
\end{align*}
where $\alpha_2>0$ depends only on ($\beta$, $\epsilon_1$, $c_1$, $c_2$).
\end{lem}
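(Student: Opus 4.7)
The plan is to follow the same template used for Lemma \ref{Sep22_e0} and the parallel QIM2 result Lemma \ref{Sep19_e1}: write out $\partial_{\theta\theta} f$ via the chain/quotient rule, exploit the uniform strong damping in the QIM3 denominator to upgrade pointwise concentration to uniform concentration over the relevant compact parameter set, and finally identify a strict negative lower bound for the expected Hessian in the target regime.

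Set $Z_k = a_k\cdot \hat u$, $X_k = a_k\cdot e_1$, and $Y_k = a_k\cdot e^\perp$, so that $\hat u = e_1\cos\theta + e^\perp \sin\theta$ gives
\[
\partial_\theta Z_k = -X_k \sin\theta + Y_k \cos\theta, \qquad \partial_{\theta\theta} Z_k = -Z_k.
\]
First I would differentiate
\[
f(u) = \frac{1}{m}\sum_{k=1}^m \frac{(RZ_k^2 - X_k^2)^2}{R + \beta_1 R Z_k^2 + \beta_2 X_k^2}
\]
twice in $\theta$, obtaining a finite linear combination of terms of the form $P_j(X_k, Y_k, Z_k)/(R+\beta_1 R Z_k^2 + \beta_2 X_k^2)^j$ with $P_j$ polynomial of controlled degree and $j \in \{1,2,3\}$. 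Because the denominators dominate the numerators pointwise, each summand defines a sub-exponential random variable whose moments are uniformly controlled in $(R, \hat u) \in [c_1,c_2]\times \mathbb S^{n-1}$. A standard $\varepsilon$-net plus Bernstein argument then yields, for $m\gtrsim n$ and any preassigned small $\varepsilon > 0$,
\[
\sup_{\substack{c_1 \le R \le c_2\\|\hat u\cdot e_1|\le \epsilon_1}} \bigl|\partial_{\theta\theta} f - \mathbb E \partial_{\theta\theta} f\bigr| \le \varepsilon
\]
with high probability.

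The main point is then to show $\mathbb E \partial_{\theta\theta} f \le -2\alpha_2 < 0$ uniformly in this regime. I would evaluate first at $\theta = \pi/2$ (i.e., $\hat u \perp e_1$), where $Z_k = Y_k$ is independent of $X_k$. All contributions involving odd powers of $X_k$ or $Y_k$ vanish by symmetry, and the surviving terms reduce to one-dimensional Gaussian integrals in $X_k$ and $Y_k$ separately. A direct expansion shows that the leading negative contribution comes from differentiating $(RZ_k^2-X_k^2)^2$ twice, producing a term proportional to $-(X_k^2 - Y_k^2)(RY_k^2 - X_k^2)$ divided by a power of $D_k = R + \beta_1 R Y_k^2 + \beta_2 X_k^2$; its expectation can be shown to be strictly negative by a sign/monotonicity argument (effectively a QIM3-analogue of the expectation lemma invoked for Lemma \ref{Sep22_e0}). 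A continuity-in-$\theta$ perturbation then extends the strict negativity of $\mathbb E \partial_{\theta\theta} f$ from $\theta=\pi/2$ to all $|\cos\theta|\le \epsilon_1$, provided $\epsilon_1$ is small, and uniformly over $R \in [c_1,c_2]$ by compactness. Combining this with the concentration estimate yields the claimed bound.

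The main technical obstacle is establishing strict negativity of the expectation at $\theta = \pi/2$ with an explicit quantitative lower bound. Because the QIM3 denominator depends nontrivially on both $X_k$ and $Z_k$, the Gaussian integrals are not closed-form; one must extract negativity qualitatively by isolating a dominant term whose sign is dictated by symmetry (the denominator is even in both $X_k$ and $Y_k$) and then bound the remaining error terms by a fraction of the leading one — exactly the strategy deployed in the appendix analogue for Lemma \ref{Sep22_e0}, and one I expect to transcribe here with only cosmetic changes due to the extra $\beta_1 R Z_k^2$ term.
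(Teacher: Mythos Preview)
Your proposal is correct and follows the paper's overall template: compute $\partial_{\theta\theta} f$ explicitly, use the strong damping of the QIM3 denominator to get uniform concentration $|\partial_{\theta\theta} f - \mathbb E \partial_{\theta\theta} f| \le \varepsilon$ over the compact parameter set, and then verify that the expectation is strictly negative near $\theta=\pi/2$.

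The one substantive difference is in how you handle the expectation. You propose evaluating $\mathbb E \partial_{\theta\theta} f$ directly at $\theta=\pi/2$, isolating a dominant negative term and bounding the remainder. The paper instead reuses the same structural lemma (Lemma \ref{lemSeI:1}) already invoked for Lemma \ref{Sep22_e0}: via an integration-by-parts identity it writes $\mathbb E \partial_\theta f = a_1(\beta_1,\beta_2,R,\theta)\sin(2\theta)$, and shows $a_1>0$ by checking that the mixed partial $\partial_{ab} h_1$ of the single-sample function $h_1(a,b)=(Ra-b)^2/(R+\beta_1 R a+\beta_2 b)$ is strictly negative. Differentiating once more gives $\mathbb E \partial_{\theta\theta} f = 2a_1\cos(2\theta)+(\partial_\theta a_1)\sin(2\theta)$, which is manifestly $\approx -2a_1<0$ near $\theta=\pi/2$. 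This route avoids any term-by-term bounding of the second-derivative expansion and gets the sign for free from a one-line monotonicity check, whereas your direct approach would require carefully controlling several competing Gaussian integrals. Both work, but the paper's factorization is cleaner and reuses machinery already in place.
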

\begin{proof}
This is similar to the argument in the proof of Lemma \ref{Sep22_e0}. By a tedious
computation, we have
\begin{align*}
\partial_{\theta\theta} f &= \frac 1m \sum_{k=1}^m 
\frac{2R G_k}{\left(R+\beta_2 x^2+\beta_1 R Z_k^2\right)^3},
\end{align*}
where
\begin{align*}
G_k&=-8 \beta_1 R Z_k^2 \left(-X_k^2+R Z_k^2\right) \left(R+\beta_2 X_k^2+\beta_1 R Z_k^2\right) (X_k-Z_k \cos\theta)^2 \csc^2 \theta \notag \\
&-2 \left(R+\beta_2 X_k^2+\beta_1 R Z_k^2\right)^2 \Bigl(X_k^4-3 R X_k^2 Z_k^2-R Z_k^4-
2 X_k Z_k \left(X_k^2-3 R Z_k^2\right) \cos\theta \notag \\
&\quad \qquad +Z_k^2 (X_k^2-2 R Z_k^2)
 \cos2\theta \Bigr) \csc^2 \theta
\notag \\
&+\beta_1 \left(X_k^2-R Z_k^2\right)^2 \Bigl(Z_k^2 \left(R+\beta_2 X_k^2+\beta_1R Z_k^2\right)+4 \beta_1 R Z_k^2 (X_k-Z_k \cos\theta)^2 \csc^2\theta  \notag \\
&\quad -\left(R+\beta_2 X_k^2+\beta_1R Z_k^2\right) (X_k-Z_k \cos\theta)^2 
\csc^2\theta\Bigr).
\end{align*}
It is then tedious but not difficult to check that
that for any
$\epsilon>0$, if $m\gtrsim n$, then with high probability we have
\begin{align*}
|\partial_{\theta\theta} f - \mathbb E \partial_{\theta\theta} f |
\le \epsilon, \qquad\forall\, c_1\le R \le c_2, \,\forall\,
\hat u \in \mathbb S^{n-1}.
\end{align*}
The desired result then follows from Lemma \ref{lemSeI:1}. 
\end{proof}

\begin{thm}[The regime $\|u\|_2 \sim 1 $, $||\hat u\cdot e_1|-1|\le \epsilon_0$,
 $|\|u\|_2-1|\ge c(\epsilon_0)$ is OK] \label{Sep22_e3}
Let $0<c_1<1<c_2<\infty$ be given constants. 
Let $0<\epsilon_0\ll 1$ be a given sufficiently small constant and consider the regime $ \Bigl | |\hat u \cdot e_1|-1
\Bigr| \le \epsilon_0$ with $c_1 \le \| u\|_2^2 \le c_2$.  There exists 
a constant $c_0=c_0(\epsilon_0,c_1,c_2, \beta)>0$ 
which tends to zero as $\epsilon_0\to 0$ such that the following
hold:
For $m\gtrsim n$,  with high probability it holds that (below $u=\sqrt R \hat u$)
\begin{align*}
&\partial_{R} f <0,  \quad\forall\, c_2\le R \le 1-c_0 , \;\;\text{}\forall\, \hat u \in \mathbb S^{n-1} \text{with $| |\hat u \cdot e_1|-1| \le \epsilon_0$};\\
&\partial_{R} f >0,  \quad\forall\, 1+ c_0 \le  R \le c_1,\; \;\text{}\forall\, \hat u \in \mathbb S^{n-1} \text{with $| |\hat u \cdot e_1| -1| \le \epsilon_0$}.
\end{align*}
\end{thm}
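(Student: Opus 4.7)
The plan is to exploit the strict convexity in $R$ of the sectional map $R\mapsto f(\sqrt R\,\hat u)$. Formula \eqref{Sep21e1.0b} expresses $\partial_{RR}f$ as a sum of strictly positive terms, so $R\mapsto \partial_R f(\sqrt R\,\hat u)$ is strictly increasing in $R$ for each fixed $\hat u$; combined with Lemma \ref{Sep21_e1}, which gives $\partial_R f\le -d_2<0$ for $R\le R_2$ and $\partial_R f\ge d_1>0$ for $R\ge R_1$, this produces a unique critical radius $R^*(\hat u)\in(R_2,R_1)$ at which $\partial_R f$ changes sign. The theorem will therefore follow once we establish $|R^*(\hat u)-1|\le c(\epsilon_0)$ uniformly for $\hat u$ with $\bigl||\hat u\cdot e_1|-1\bigr|\le \epsilon_0$; the two sign conclusions are then immediate from the monotonicity in $R$.

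The algebraic anchor of the whole argument is the identity $R^*(e_1)=1$, which in this model holds deterministically and term by term. Indeed, substituting $Z_k=X_k$ and $R=1$ in the numerator of each summand of \eqref{Sep21e1.0a} gives
\begin{align*}
X_k^4 + \beta_1 X_k^6 + 2\beta_2 X_k^6 - X_k^4 - (\beta_1+2\beta_2) X_k^6 = 0,
\end{align*}
so every summand of $\partial_R f$ vanishes simultaneously at $(R,\hat u)=(1,e_1)$. This is the pointwise reflection of $u=\pm e_1$ being the global minimizer, and it is the object that must be deformed stably.

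To propagate the identity to the regime $\bigl||\hat u\cdot e_1|-1\bigr|\le \epsilon_0$ I will combine uniform concentration with an implicit-function argument. Insert a cutoff $\phi(Z_k/K)$ with $K$ large to tame the $Z_k^6$ tail in the numerator; the denominator $R+\beta_1 R Z_k^2+\beta_2 X_k^2$ carries strong damping and is bounded below by a positive constant on the parameter region of interest, so on the truncated event each summand and its $R$-derivative are uniformly bounded in terms of $K$, $c_1$, $c_2$ and $(\beta_1,\beta_2)$ only. A standard $\varepsilon$-net argument over the compact set $\{c_1\le R\le c_2,\ \hat u\in \mathbb S^{n-1}\}$ combined with Bernstein then yields, for $m\gtrsim n$ and with high probability,
\begin{align*}
\sup_{R,\hat u}\bigl|\partial_R f - \mathbb E\partial_R f\bigr|\le \varepsilon,
\end{align*}
where $\varepsilon$ is made arbitrarily small by choosing $K$ large enough to absorb the truncation bias in expectation. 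Since $(R,\hat u)\mapsto \mathbb E\partial_R f$ is smooth, vanishes at $(1,e_1)$ by the identity above, and satisfies $\partial_R(\mathbb E\partial_R f)=\mathbb E\partial_{RR}f\ge \gamma>0$, the implicit function theorem yields a smooth deterministic root $R^*_\infty(\hat u)$ with $|R^*_\infty(\hat u)-1|\lesssim \|\hat u-e_1\|_2=O(\sqrt{\epsilon_0})$; strict convexity in $R$ then transfers this bound to the random root with additional slack $O(\varepsilon/\gamma)$, giving $|R^*(\hat u)-1|\le c(\epsilon_0)$ as required.

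The main obstacle is the uniform concentration step: the summands involve $Z_k^6$, whose heavy tails make the cutoff $\phi(Z_k/K)$ essential, and one must verify that the induced bias in the expectation does not spoil the vanishing identity at $(1,e_1)$ beyond acceptable precision. On the other hand, this model is \emph{easier} than QIM1 and QIM2 in one structural respect: the denominator contains the additive $R$ term, ruling out any $1/X_k^2$-type singularity as $X_k\to 0$, so no separate localization in $X_k$ is needed. In broad strokes the argument parallels the proof of Theorem \ref{Sep19_e3}, but the presence of the deterministic pointwise identity at $(1,e_1)$ streamlines the perturbation step considerably.
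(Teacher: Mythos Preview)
Your proposal is correct and rests on the same two pillars as the paper: the termwise identity $\partial_R f\bigr|_{(R,\hat u)=(1,e_1)}=0$ and the strict convexity $\partial_{RR}f>0$ from \eqref{Sep21e1.0b}. The route, however, differs. You concentrate the full empirical $\partial_R f$ to its expectation uniformly over $(R,\hat u)$, then run the implicit function theorem on the smooth deterministic map $\mathbb E\partial_R f$ to locate the root near $R=1$. The paper instead perturbs \emph{summand by summand}: writing $f(u)=\frac1m\sum_k g(R,(a_k\cdot\hat u)^2,X_k^2)$ with $g(R,a,b)=\frac{(Ra-b)^2}{R+\beta_1 Ra+\beta_2 b}$, it observes that $\|\partial_{Ra}g\|_\infty\lesssim 1$, so
\[
(\partial_R g)(R,a,b)\ \ge\ (\partial_R g)(R,b,b)-\gamma_1|b-a|\ \ge\ \gamma_2(R-1)\,b-\gamma_1|b-a|,
\]
the last step using $(\partial_R g)(1,b,b)=0$ and $\partial_{RR}g(R,b,b)\sim b$. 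Summing and applying Bernstein only to the two elementary averages $\frac1m\sum X_k^2$ and $\frac1m\sum\bigl|(a_k\cdot\hat u)^2-X_k^2\bigr|$ finishes. This avoids both the implicit function theorem and the uniform concentration of $\partial_R f$ itself; the only union bound needed is over the simpler sub-exponential $\bigl|(a_k\cdot\hat u)^2-X_k^2\bigr|$. Your approach is more systematic and would transplant to other models with less algebraic inspection; the paper's is shorter here precisely because the mixed partial $\partial_{Ra}g$ happens to be globally bounded. One minor remark: your worry about the ``$Z_k^6$ heavy tail'' is overstated, since the denominator $(R+\beta_1 R Z_k^2+\beta_2 X_k^2)^2\ge \beta_1^2 R^2 Z_k^4$ already reduces every summand to $O(1+Z_k^2)$, so the cutoff $\phi(Z_k/K)$ is convenient but not essential.
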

\begin{proof}
We rewrite 
\begin{align*}
f(u) = \frac 1m \sum_{k=1}^m g(R, (a_k\cdot \hat u)^2, X_k^2),
\end{align*}
where
\begin{align*}
g(R,a,b) = \frac { (Ra-b)^2} { R+\beta_1 R a+ \beta_2 b}.
\end{align*}
It is not difficult to check that for $R\sim 1$, we have
\begin{align*}
| (\partial_R g)(R,a,b)- (\partial_R g)(R,b,b)|
\le \| \partial_{Ra} g \|_{\infty} |b-a| \lesssim |b-a|, \qquad\forall\, a,b \ge 0.
\end{align*}
On the other hand, note that $(\partial_R g)(1,b,b)=0$, and for $R \sim 1$,
\begin{align*}
(\partial_{RR} g)(R, b, b) =
\frac{ 2 b^2 (1+ b(\beta_1+\beta_2) )^2}
{ (R+b (\beta_2+\beta_1 R) )^3} \sim b.
\end{align*}
Thus for $R=1+\eta$, $\eta>0$ we have 
\begin{align*}
(\partial_R g )(R, a, b) & \ge \partial_R g (R, b,b) - \gamma_1 |b-a| \notag \\
& \ge \gamma_2 \cdot \eta \cdot b - \gamma_1 |b-a|,
\end{align*}
where $\gamma_1>0$, $\gamma_2>0$ are constants depending only 
on ($\beta_1$, $\beta_2$, $c_1$, $c_2$).  The desired result (for $\partial_R f >0$ when $R\to 1+$)
then follows from this and simple application of Bernstein's inequalities. The estimate for the regime $R\to 1-$ is similar. We omit
the details.
\end{proof}
\begin{thm}[Strong convexity near the global minimizer] \label{Sep24e1}
There exist $0<\epsilon_0\ll 1$ and a positive constant $\gamma$ such that if
$m\gtrsim n$, then the following hold with high probability:
\begin{enumerate}
\item If $\|u -e_1\|_2\le \epsilon_0$, then 
\begin{align*}
\sum_{i,j=1}^n \xi_i \xi_j (\partial_{ij} f)(u) \ge \gamma>0, \qquad \forall\, \xi \in \mathbb
S^{n-1}.
\end{align*}
\item If $\|u +e_1\|_2\le \epsilon_0$, then 
\begin{align*}
\sum_{i,j=1}^n \xi_i \xi_j (\partial_{ij} f)(u) \ge \gamma>0, \qquad \forall\, \xi \in \mathbb
S^{n-1}.
\end{align*}
\end{enumerate}
In yet other words, $f(u)$ is strongly convex in a sufficiently small neighborhood of $\pm e_1$.
\end{thm}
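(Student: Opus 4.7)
The plan is to establish strong convexity of $f$ in a small neighborhood of $\pm e_1$ by (i) computing the Hessian exactly at $u=e_1$, where the quotient's numerator vanishes, (ii) establishing a uniform positive lower bound at $e_1$ via concentration, and (iii) extending to a neighborhood by a perturbation argument that exploits the strong damping of $D_k(u):=|u|^2+\beta_1(a_k\cdot u)^2+\beta_2 X_k^2$, where $X_k:=a_k\cdot e_1$. Writing $N_k(u):=(a_k\cdot u)^2-X_k^2$ so that $f=\frac1m\sum_k N_k^2/D_k$, the identity $N_k(e_1)=0$ kills every summand of $\partial_{ij}(N_k^2/D_k)$ that carries a factor of $N_k$ or $N_k^2$, and a short direct calculation yields
\begin{align*}
\sum_{i,j}\xi_i\xi_j(\partial_{ij}f)(e_1)=\frac{8}{m}\sum_{k=1}^m\frac{X_k^2(a_k\cdot\xi)^2}{1+(\beta_1+\beta_2)X_k^2},\qquad\forall\,\xi\in\mathbb S^{n-1}.
\end{align*}

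The summand is bounded by $(\beta_1+\beta_2)^{-1}(a_k\cdot\xi)^2$ and is therefore sub-exponential, so Bernstein's inequality combined with a standard $\epsilon$-net over $\xi\in\mathbb S^{n-1}$ (valid once $m\gtrsim n$) yields uniform concentration around the expectation with high probability. Decomposing $\xi=\xi_1 e_1+\xi'$ with $\xi'\perp e_1$ and using the independence of $X_k$ from the components of $a_k$ orthogonal to $e_1$, the expectation reduces to $\xi_1^2\,\mathbb E[X^4/(1+(\beta_1+\beta_2)X^2)]+\|\xi'\|^2\,\mathbb E[X^2/(1+(\beta_1+\beta_2)X^2)]$ with $X$ standard normal, which admits a strictly positive infimum $\gamma_0=\gamma_0(\beta_1,\beta_2)$ over $\mathbb S^{n-1}$.

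To extend to $\|u-e_1\|_2\le\epsilon_0$, I would write $u=e_1+\delta$ and expand $\partial_{ij}(N_k^2/D_k)(u)$ in its six summands. The dominant term $2(\partial_i N_k)(\partial_j N_k)/D_k$ differs from its value at $e_1$ by an $O(\epsilon_0)$ amount after the covering argument, while each of the remaining five summands carries a factor of $N_k$ or $N_k^2$. Since $N_k(u)=(a_k\cdot\delta)(2X_k+a_k\cdot\delta)$, we have $|N_k|\lesssim\epsilon_0\|a_k\|^2$ on the neighborhood, and the bounds $D_k\ge|u|^2\gtrsim 1$ and $D_k\ge\beta_1(a_k\cdot u)^2$ together imply that every ratio of the form $|\partial^\alpha D_k|/\sqrt{D_k}$ remains sub-Gaussian in $a_k$. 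Consequently each perturbation summand is sub-exponential of size $O(\epsilon_0)$, and Bernstein with a joint $\epsilon$-net on $(u,\xi)$ yields
\begin{align*}
\sum_{i,j}\xi_i\xi_j(\partial_{ij}f)(u)\ge \tfrac12\gamma_0>0,\qquad\forall\,\xi\in\mathbb S^{n-1},\ \|u-e_1\|_2\le\epsilon_0,
\end{align*}
once $\epsilon_0$ is chosen small enough depending only on $(\beta_1,\beta_2)$. The case $\|u+e_1\|_2\le\epsilon_0$ is identical since $f(u)=f(-u)$. The main obstacle is obtaining uniform sub-exponential control of the most singular summand $6N_k^2(\partial_i D_k)(\partial_j D_k)/D_k^4$ across the $\epsilon$-net on $u$: this is precisely where QIM3 improves over QIM2, because the additional damping $\beta_1(a_k\cdot u)^2$ in $D_k$ gives $|\partial_i D_k|/\sqrt{D_k}\lesssim 1+|a_{k,i}|$, which is sub-Gaussian; QIM2 lacks this uniform control and therefore yields only the restrictive convexity of Theorem \ref{Sep19e8}.
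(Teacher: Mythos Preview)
Your route---evaluate the Hessian exactly at $u=e_1$ (where only $8X_k^2(a_k\cdot\xi)^2/(1+(\beta_1+\beta_2)X_k^2)$ survives), bound it below by concentration, then perturb---is a legitimate alternative to the paper's argument, but two steps as written would fail. First, the estimate $|N_k|\lesssim\epsilon_0\|a_k\|^2$ is fatal: $\|a_k\|^2$ concentrates near $n$, so every perturbation term picks up dimension dependence and the $m\gtrsim n$ regime is lost. The usable bound only involves directions already present: with $\hat\delta=\delta/\|\delta\|$ one has $|N_k|\le 2\epsilon_0|a_k\cdot\hat\delta|\,|X_k|+\epsilon_0^2(a_k\cdot\hat\delta)^2$, a quadratic in two one-dimensional Gaussian projections. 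Second, ``sub-exponential of size $O(\epsilon_0)$'' conflates mean with sub-exponential norm. What the damping actually gives is sub-exponential norm $O(1)$ (e.g.\ $|N_k|/D_k\le 1/\beta_1+1/\beta_2$ yields $|4N_k(a_k\cdot\xi)^2/D_k|\lesssim(a_k\cdot\xi)^2$); only the \emph{expectation} of each perturbation summand is $O(\epsilon_0)$. Bernstein plus a net then gives deviation $\ll 1$ from an $O(\epsilon_0)$ mean, which still suffices, but the quantifiers change: first fix the tolerance and the constant in $m\gtrsim n$, then choose $\epsilon_0$. Moreover the summands carrying $(a_k\cdot\xi)^2$ times a merely bounded factor still need a truncation $\phi((a_k\cdot\xi)/K)$ to make the net argument Lipschitz, which you omit.

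The paper proceeds differently: it writes $f=\frac1m\sum_k G(|u|^2,(a_k\cdot u)^2,X_k^2)$ with $G(a,b,c)=(b-c)^2/(a+\beta_1b+\beta_2c)$, splits the Hessian into five groups, and proves each concentrates around its mean uniformly over $\xi\in\mathbb S^{n-1}$ and $\tfrac12\le\|u\|_2\le 2$ (combining the $\partial_{bb}G$ and $\partial_bG$ pieces algebraically so that the coefficient of $(a_k\cdot\xi)^2$ becomes bounded, then localizing). Strong convexity of the \emph{population} Hessian near $\pm e_1$ is handled separately in Lemma~\ref{lemSeI:2}. The advantage is that concentration and the small-neighborhood analysis are decoupled, so $\epsilon_0$ enters only through the population computation; your route merges them, which is shorter once the two fixes above are made but forces the constants to interact.
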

\begin{proof}
See appendix. 
\end{proof}

Finally we complete the proof of Theorem \ref{thmEc}.

\begin{proof}[Proof of Theorem \ref{thmEc}]
We proceed in several steps. All the statements below hold under the assumption
that $m\gtrsim n$ and with high probability. 

\begin{enumerate}
\item For $u=0$, we use Lemma \ref{Sep21_e0}.  In particular $u=0$ is a local maximum
point with strictly negative Hessian.

\item For $\| u\|_2\ll 1$ or $\| u\|_2 \gg 1$, we use Lemma \ref{Sep21_e1}. The loss
functions has a nonzero gradient ($\partial_R f \ne 0$) in this regime.

\item For $\|u\|_2 \sim 1$ with $\epsilon_0 \le |\hat u\cdot e_1| \le 1-\epsilon_0$,
we use Lemma \ref{Sep22_e0} to show that the loss function has a nonzero gradient
($\partial_{\theta} f\ne 0$)
 in this regime.

\item For $\|u\|_2 \sim 1$ with $|\hat u \cdot e_1|\le \epsilon_0$, by Lemma
\ref{Sep22_e1}, the loss function has a negative curvature direction
(i.e. $\partial_{\theta\theta} f<0$) in this regime.

\item For $\|u\|_2 \sim 1$, $||\hat u\cdot e_1|-1| \le \epsilon_0$,
$|  \|u\|_2-1 |\ge c(\epsilon_0)$,  Theorem \ref{Sep22_e3} shows that
the gradient of the loss function does not vanish (i.e. $\partial_R f \ne 0$).

\item For $\| u\pm e_1 \| \ll 1$, Theorem \ref{Sep24e1} gives the strong convexity
in the full neighborhood. 
\end{enumerate}
It is not difficult to check that the above 6 scenarios cover the whole of $\mathbb R^n$.
We omit further details.
\end{proof}

\section{Numerical Experiments} \label{S:numerics}

In this section, we demonstrate the numerical efficiency of our estimators by simple gradient descent and compare their performance with other competitive algorithms. 
Our Quotient intensity models are:\\

QIM1:
\begin{align*}
\min_{u\in \R^n} \qquad f(u) &= \frac 1 m \sum_{k=1}^m 
\frac{ ( (a_k\cdot u)^2 - (a_k\cdot x)^2 )^2}
{(a_k\cdot x)^2}.
\end{align*}

QIM2:
\begin{align*}
\min_{u\in \R^n} \qquad f(u) &= \frac 1 m \sum_{k=1}^m 
\frac{ ( (a_k\cdot u)^2 - (a_k\cdot x)^2 )^2}
{\beta |u|^2 +(a_k\cdot x)^2}.
\end{align*}

QIM3:
\begin{align*} 
\min_{u\in \R^n} \qquad f(u) &= \frac 1 m \sum_{k=1}^m 
\frac{ ( (a_k\cdot u)^2 - (a_k\cdot x)^2 )^2}
{ |u|^2 +\beta_1(a_k\cdot u)^2+ \beta_2 (a_k\cdot x)^2}.
\end{align*}

We have shown theoretically that any gradient descent algorithm will not get trapped in a local minimum for the estimators above. Here we present numerical experiments to show that the estimators perform very well with randomized initial guess.
   
We test the  performance of our  QIM2 and QIM3 and compare with SAF \cite{2020a}, Trust Region \cite{sun2016complete},  WF \cite{WF}, TWF \cite{TWF} and TAF \cite{TAF}. Here, it is worth emphasizing that random initialization is used for SAF, Trust Region \cite{sun2016complete} and our  QIM2, QIM3 algorithms while all other algorithms have adopted a spectral initialization. 

\subsection{Recovery of 1D Signals}
In our numerical experiments, the target vector $x\in \Rn$ is chosen randomly from the standard Gaussian distribution and the measurement vectors $ a_i, \,i=1,\ldots,m$ are generated randomly from standard Gaussian distribution or CDP model.  For the real Gaussian case, the signal $ x \sim  \mathcal{N}(0,I_n)$ and measurement vectors $ a_i \sim  \mathcal{N}(0,I_n)$ for $i=1,\ldots,m$. For the complex Gaussian case, the signal $ x \sim  \mathcal{N}(0,I_n)+i  \mathcal{N}(0,I_n)$ and measurement vectors $ a_i  \sim \mathcal{N}(0,I_n/2)+i \mathcal{N}(0,I_n/2)$. For the CDP model, we use masks of octanary patterns as in \cite{WF}.  For simplicity, our parameters and step size are fixed for all experiments. Specifically, we adopt parameter $\beta=1$ and step size $\mu=0.4$ for QIM2 and choose the parameter $\beta_1=0.1, \beta_2=1$, step size $\mu=0.3$ for QIM3.
  For Trust Region, WF, TWF and TAF, we use the codes provided in the original papers with suggested parameters.
 
 \begin{figure}[H]
\centering
    \subfigure[]{
     \includegraphics[width=0.45\textwidth]{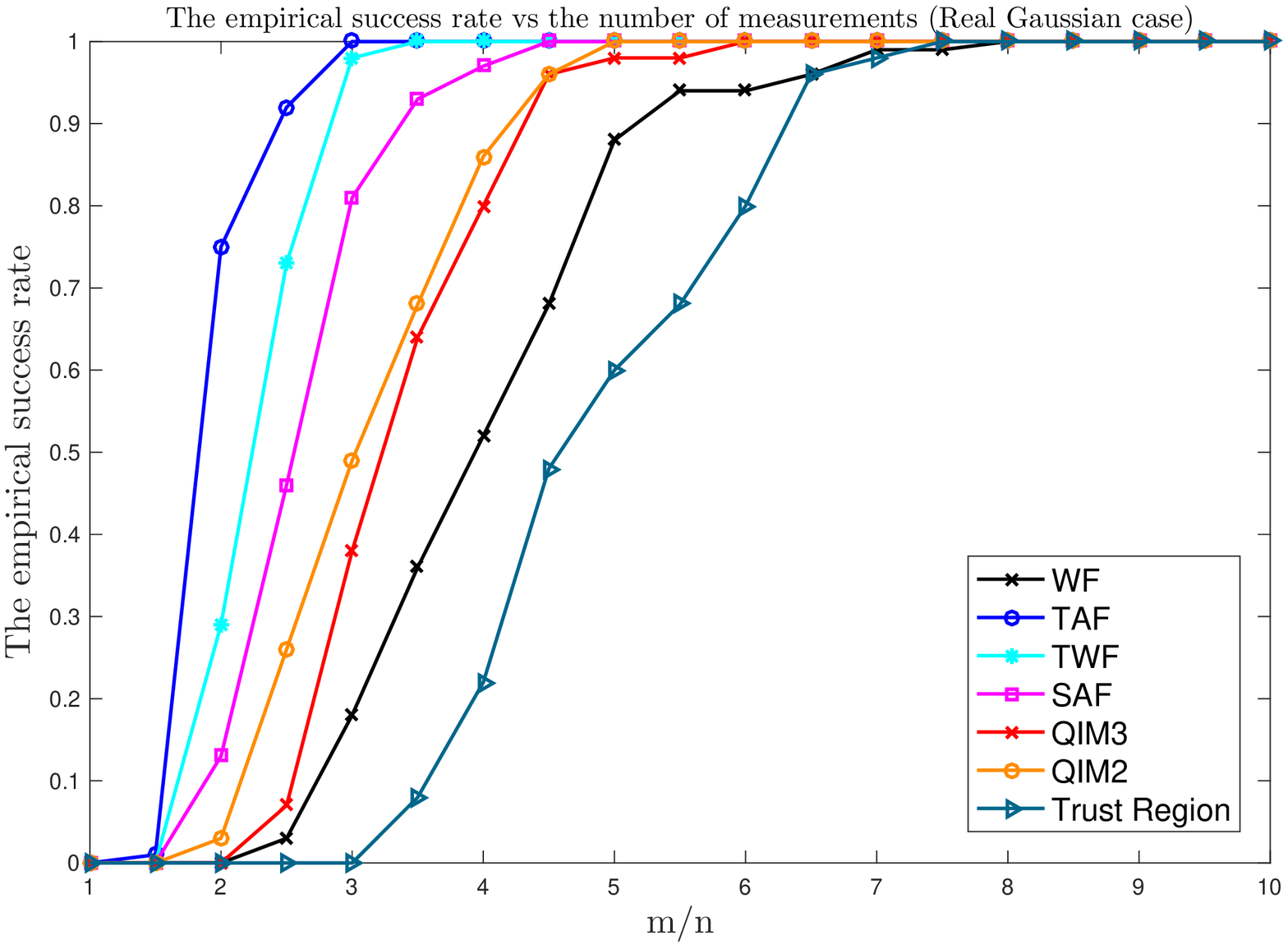}}
\subfigure[]{
     \includegraphics[width=0.45\textwidth]{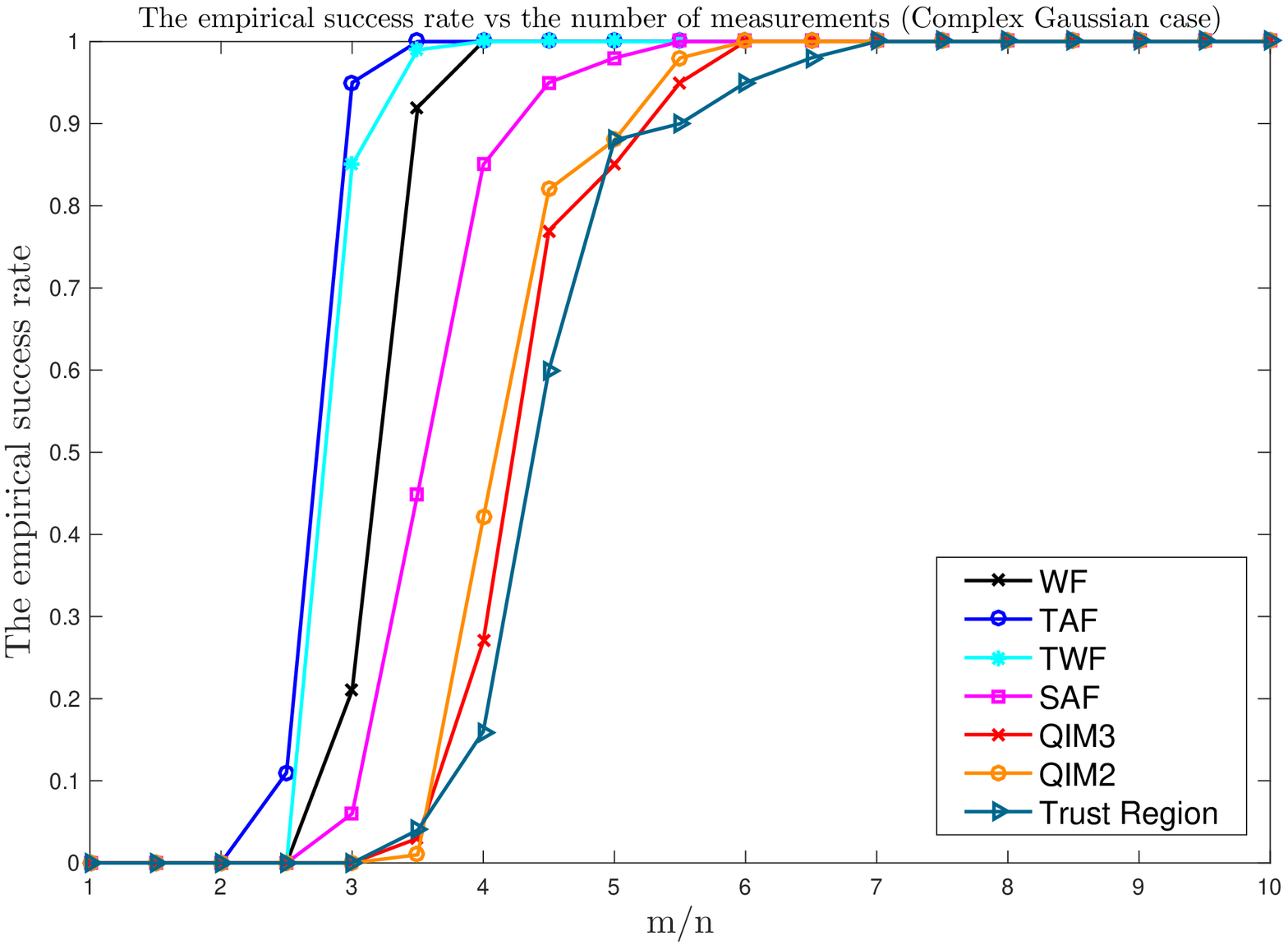}}
   \subfigure[]{
     \includegraphics[width=0.45\textwidth]{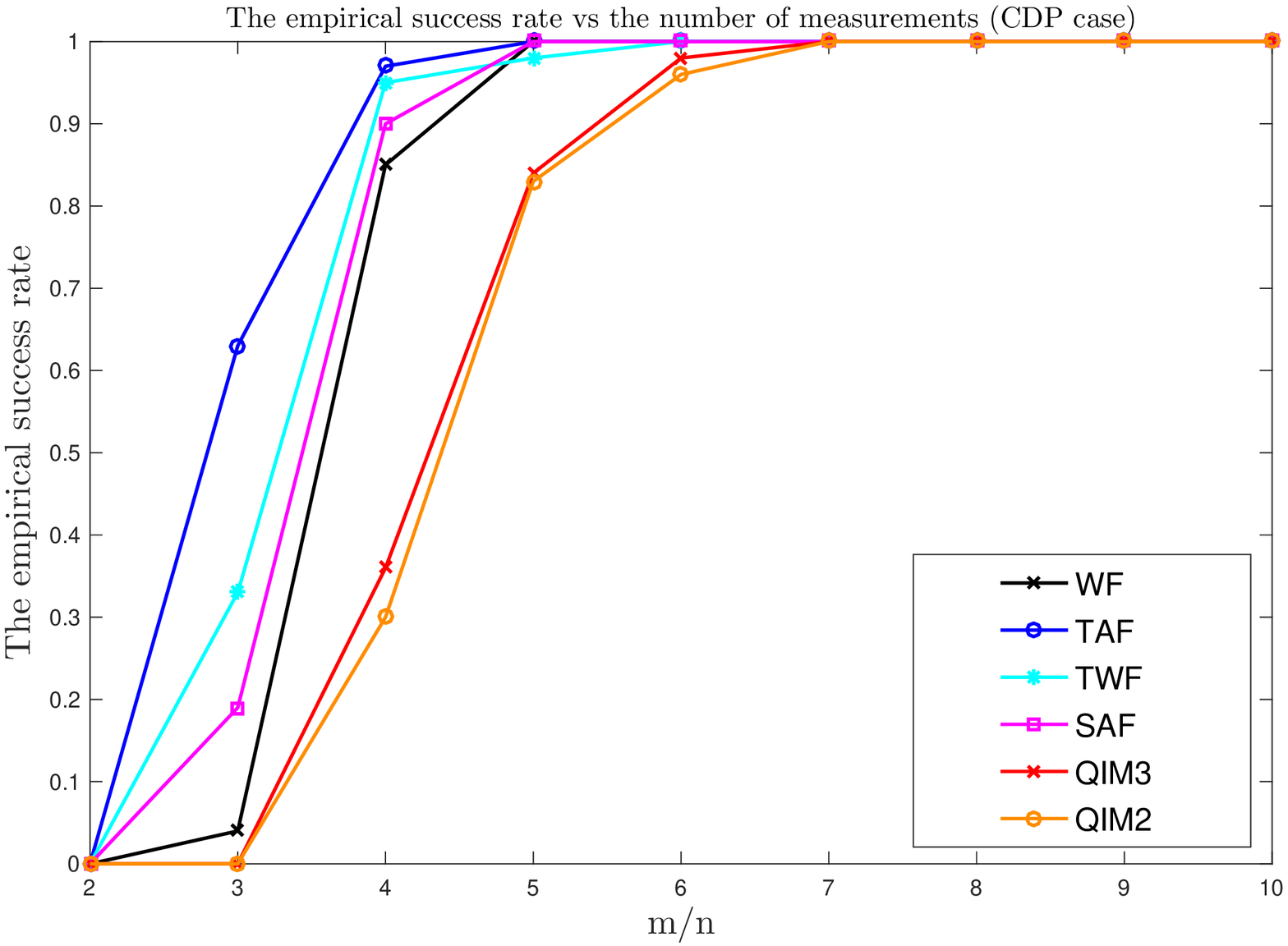}}  
\caption{ The empirical success rate for different $m/n$ based on $100$ random trails. (a) Success rate for real Gaussian case, (b) Success rate for complex Gaussian case, (c) Success rate for CDP case.}
\label{figure:succ}
\end{figure}
\begin{example}{\rm 
In this example, we test the empirical success rate of QIM2, QIM3 versus the number of measurements.  We conduct the experiments for the real Gaussian, complex Gaussian and CDP cases, respectively.
We choose $n=128$ and the maximum number of iterations is $T=2500$.   For real and complex Gaussian cases, we vary $m$ within the range $[n,10n]$. For CDP case, we set the ratio $m/n=L$ from $2$ to $10$.
For each $m$, we run $100$ times trials to calculate the success rate. Here, we say a trial to have successfully reconstructed the target signal if the relative error satisfies $\mbox{dist}(u_{T}-x)/\norm{x} \le 10^{-5}$.
The results are plotted in Figure \ref{figure:succ}. 
It can be seen that
$6n$ Gaussian phaseless measurement  or $7$ octanary patterns are enough for exactly recovery for QIM2 and QIM3.
}
\end{example}

\begin{example}
{\rm In this example, we compare the convergence rate of QIM2, QIM3 with those of SAF, WF, TWF, TAF for real Gaussian and complex Gaussian cases. We choose $n=128$ and $m=6n$. The results are presented in Figure \ref{figure:relative_error}. We can see that our algorithms perform well comparing with state-of-the-art algorithms with spectral initialization.
\begin{figure}[H]
\centering
\subfigure[]{
     \includegraphics[width=0.45\textwidth]{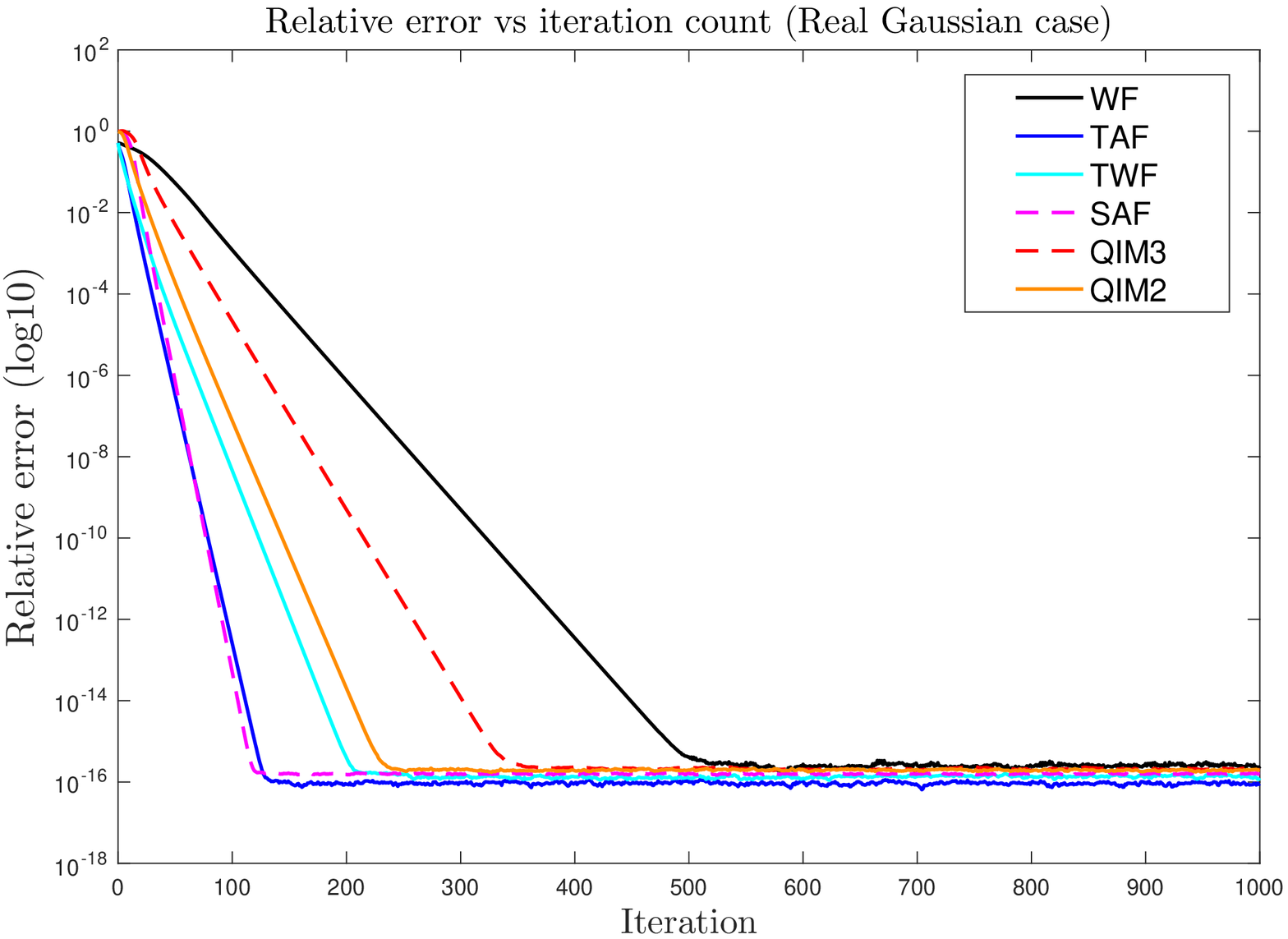}}
\subfigure[]{
     \includegraphics[width=0.45\textwidth]{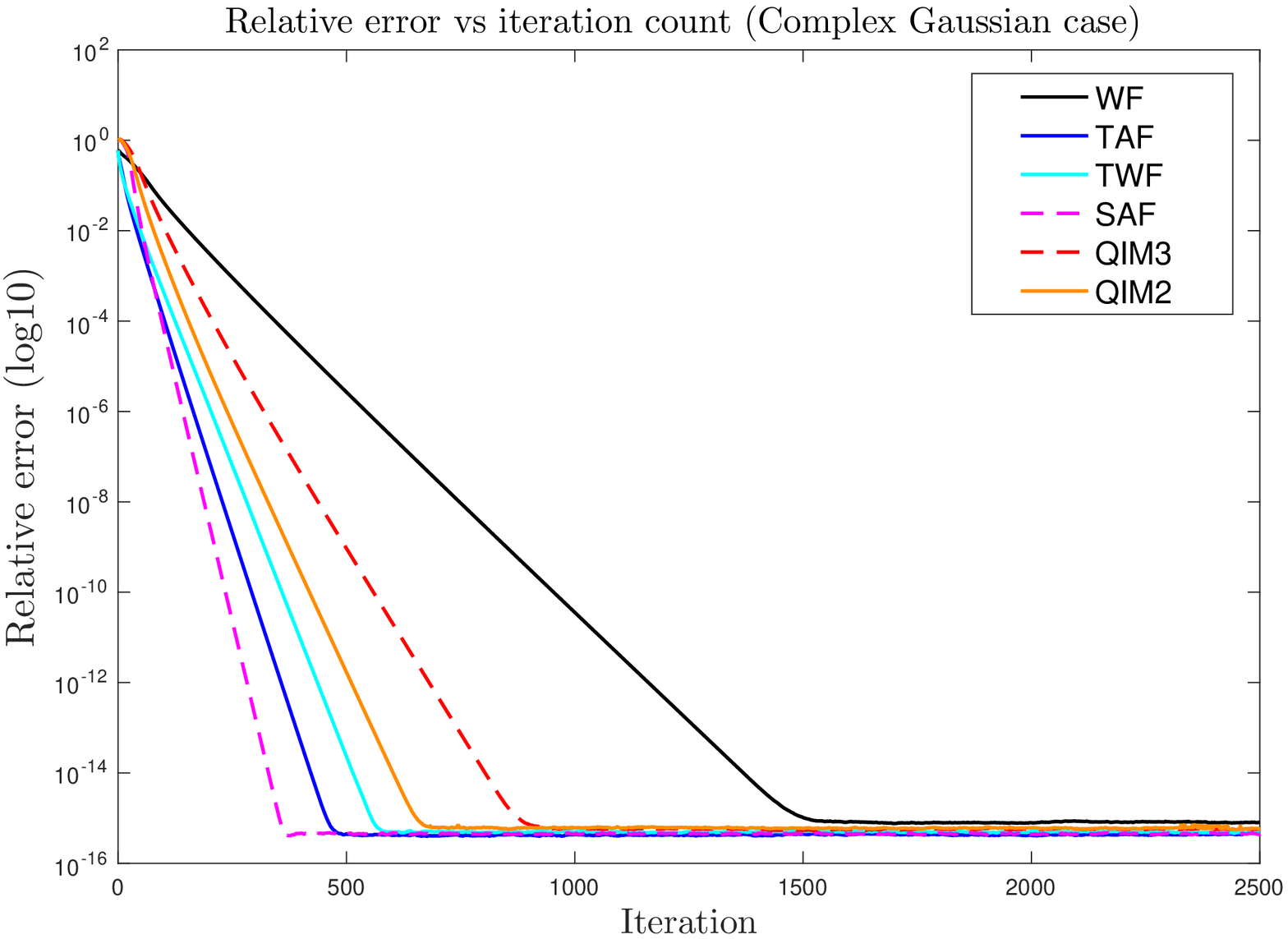}}
\caption{ Relative error versus number of iterations for QIM, SAF, WF, TWF, and TAF method: (a) Real-valued signals; (b) Complex-valued signals.}
\label{figure:relative_error}
\end{figure}
}
\end{example}

\begin{example}{\rm
In this example, we compare the time elapsed and the iteration needed for WF, TWF, TAF, SAF and our QIM2, QIM3 to achieve the relative error  $10^{-5}$ and $10^{-10}$, respectively.  We choose $n=1000$ with $m=8n$. We adopt the same spectral initialization method for WF, TWF, TAF and the initial guess is obtained by power method with $50$ iterations.  We run $50$ times trials to calculate the average time elapsed and iteration number for those algorithms. The results are shown in Table \ref{tab:performance_comparison}. The numerical results show that QIM3 takes around $27$ and $50$ iterations to escape the saddle points for the real and complex Gaussian cases, respectively. 

}
\end{example}

\begin{table}[tp]
  \centering
  \fontsize{12}{16}\selectfont
  \caption{Time Elapsed and Iteration Number among Algorithms on Gaussian Signals with $n=1000$.}
  \label{tab:performance_comparison}
    \begin{tabular}{|c|c c|cc|cc|cc|}
    \hline
    \multirow{2}{*}{Algorithm}&
    \multicolumn{4}{c|}{Real Gaussian}&\multicolumn{4}{c|}{ Complex Gaussian }\cr\cline{2-9}
    &\multicolumn{2}{c|}{$10^{-5}$ }&\multicolumn{2}{c|}{$10^{-10}$ }& \multicolumn{2}{c|}{$10^{-5}$ } &\multicolumn{2}{c|}{$10^{-10}$ }\cr \hline
    & Iter & Time(s) & Iter & Time(s) & Iter & Time(s) & Iter & Time(s) \cr \hline
   SAF & 44&\bf{0.1556} &68 &\bf{0.2276} &113&\bf{1.3092} & 190 &\bf{2.3596} \cr\hline
    QIM2 &58&2.0589&117& 3.7204 &155&21.6235& 314&37.1972\cr\hline
    QIM3 &88&2.4423&161& 4.2229 &211&30.2235& 422&48.1972\cr\hline
    WF &125&4.4214& 229 &6.3176 &304&34.6266& 655&86.6993\cr \hline
    TAF &29&0.2744&60&0.3515 &100&1.7704& 211 &2.7852\cr \hline
    TWF&40&0.3181&87&0.4274&112&1.9808& 244&3.7432\cr \hline
    Trust Region &\bf{21}&2.9832&\bf{29}&4.4683&{\bf 33}&19.1252& \bf{42}&29.0338\cr \hline
    \end{tabular}
\end{table}


\subsection{Recovery of Natural Image}
We next compare the performance of the above algorithms on recovering a natural image from masked Fourier intensity  measurements. The image is the Milky Way Galaxy with resolution $1080 \times 1920$. The colored image has RGB channels. We use $L=20$ random octanary patterns to obtain the Fourier intensity measurements for each R/G/B channel as in \cite{WF}. Table \ref{tab:performance_comp_image} lists the averaged time elapsed and the iteration needed to achieve the relative error  $10^{-5}$ and $10^{-10}$  over the three RGB channels. We can see that our algorithms have good performance comparing with state-of-the-art algorithms with spectral initialization. 
%

\begin{table}[tp]
  \centering
  \fontsize{13}{16}\selectfont
  \caption{Time Elapsed and Iteration Number among Algorithms on Recovery of Galaxy Image.}
  \label{tab:performance_comp_image}
    \begin{tabular}{|c|c c|cc|}
    \hline
    \multirow{2}{*}{Algorithm}&
    \multicolumn{4}{c|}{The Milky Way Galaxy}\cr\cline{2-5}
    &\multicolumn{2}{c|}{$10^{-5}$ }&\multicolumn{2}{c|}{$10^{-10}$ }\cr \hline
    & Iter & Time(s) & Iter & Time(s) \cr \hline
   SAF & 92 &\bf{202.47} &148 &\bf{351.21} \cr\hline
    QIM2 &168&351.32&282& 601.68 \cr\hline
    QIM3 &173&371.59&296& 709.21 \cr\hline
    WF &158 &381.7 & 277 &621.63 \cr \hline
    TAF &\bf{65} &223.89&\bf{122}&368.22 \cr \hline
    TWF&68 &315.14&145&566.84\cr \hline
    \end{tabular}
\end{table}

\subsection{ Recovery of signals with noise}
We now demonstrate the robustness of QIM2, QIM3 to noise and compare them with SAF,  WF, TWF, TAF. We  consider the noisy model $y_i=\abs{\nj{ a_i, x}}+\eta_i$ and add different level of Gaussian noises to explore the relationship between the signal-to-noise rate (SNR) of the measurements and the mean square error (MSE) of the recovered signal. Specifically, SNR and MSE are evaluated by
\[
\mbox{MSE}:= 10 \log_{10} \frac{\mbox{dist}^2(u,x)}{\norms{x}^2} \quad \mbox{and} \quad \mbox{SNR}=10 \log_{10} \frac{\sum_{i=1}^m \abs{a_i^\T x}^2}{\norms{\eta}^2},
\]
where $u $ is the output of the algorithms given above after $2500$ iterations. We choose $n=128$ and $m=8n$. The SNR varies from $20$db to $60$db. The result is shown in Figure \ref{figure:SNR}. We can see that our algorithms are stable for noisy phase retrieval.

\begin{figure}[H]
\centering
\subfigure[]{
     \includegraphics[width=0.45\textwidth]{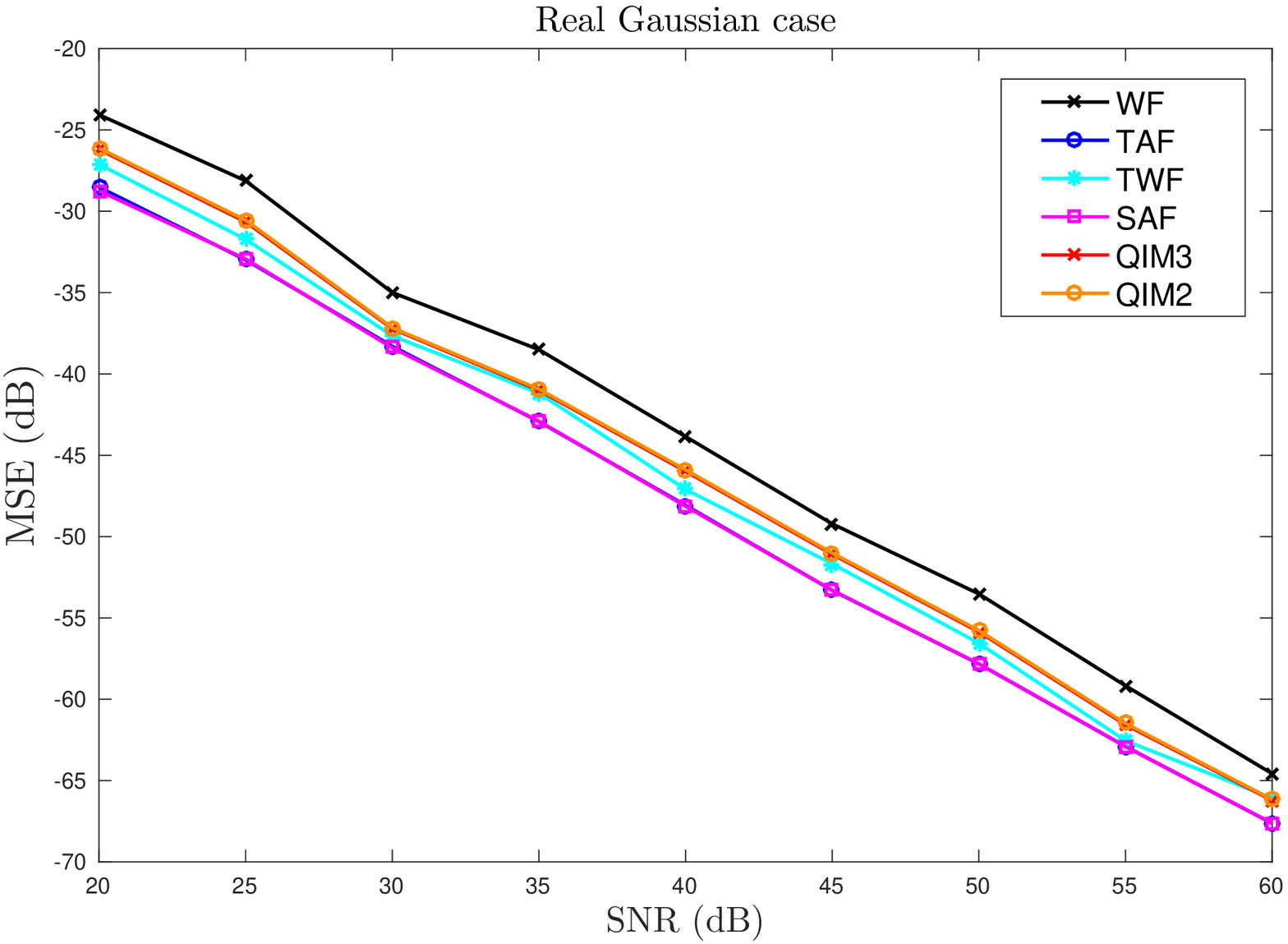}}
\subfigure[]{
     \includegraphics[width=0.45\textwidth]{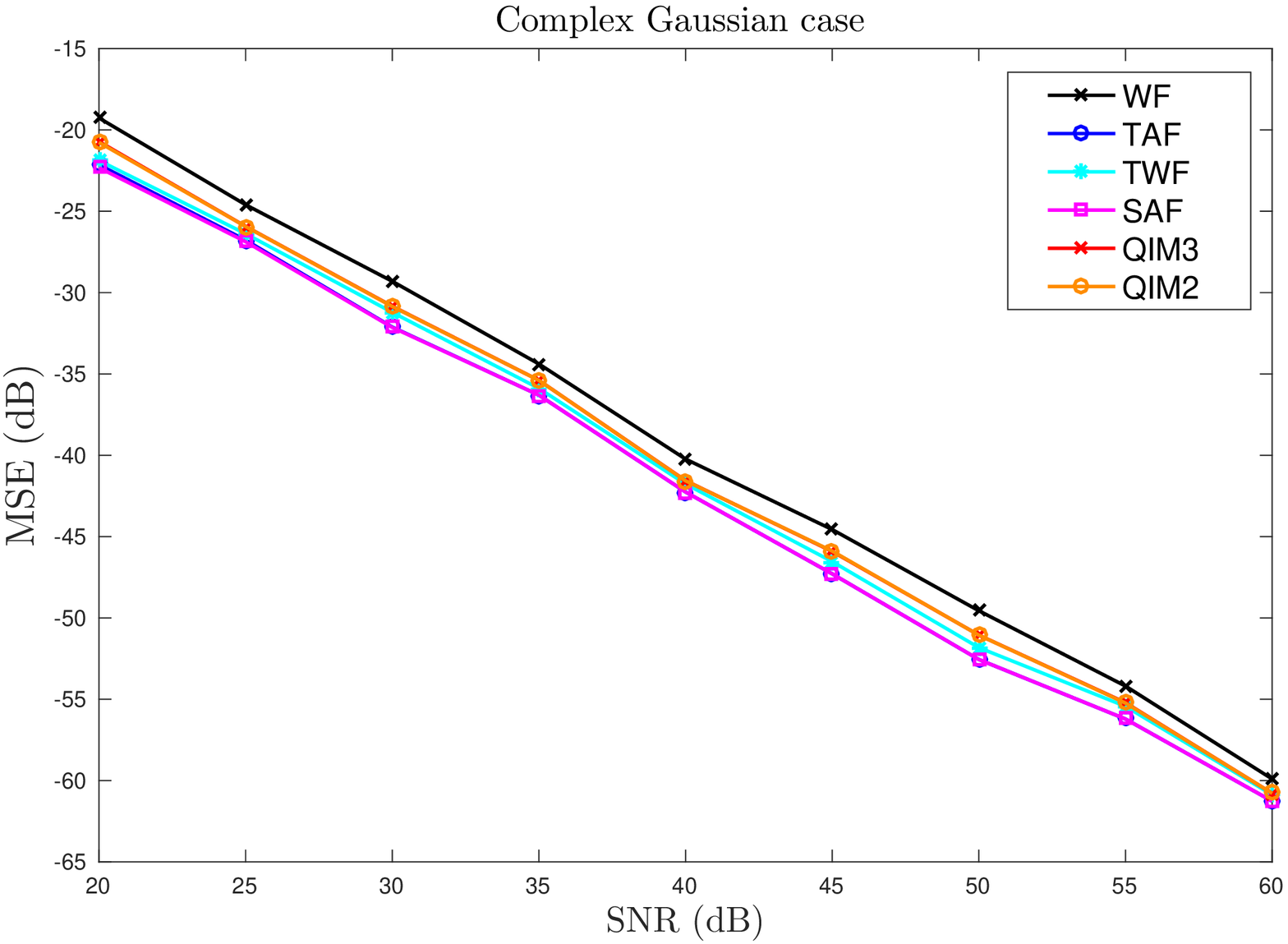}}
\caption{ SNR versus relative MSE on a dB-scale under the noisy Gaussian model: (a) Real Gaussian case; (b)  Complex Gaussian case.}
\label{figure:SNR}
\end{figure}

 \appendix
 \renewcommand{\appendixname}{Appendix~\Alph{section}}
 
\section{ Technical estimates for Section \ref{S:model4a}}
\begin{lem} \label{lemSe7_0a}
Let $\phi \in C_c^{\infty}(\mathbb R)$ satisfies
$0\le \phi(x) \le 1$ for all $x$, $\phi(x)=1$ for $|x| \le 1$
and $\phi(x)=0$ for $|x|\ge 2$. There 
exist $\epsilon>0$ sufficiently small, and
$N$ sufficiently large such that 
\begin{align*}
\mathbb E \frac {(a\cdot \xi)^2 (a\cdot e_1)^2}
{\epsilon+ (a\cdot e_1)^2}
\phi(\frac {a \cdot \xi} N)
\ge 0.99, \qquad\forall\, \xi \in \mathbb S^{n-1}, 
\end{align*}
where $a \sim \mathcal N (0, \operatorname{I_n} )$.
\end{lem}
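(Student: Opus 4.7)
The plan is to prove this by showing that, uniformly in $\xi \in \mathbb S^{n-1}$, the given expectation converges to $\mathbb E (a\cdot \xi)^2 = 1$ as $\epsilon \to 0$ and $N \to \infty$. I begin by writing
\begin{align*}
1 - \mathbb E \frac {(a\cdot \xi)^2 (a\cdot e_1)^2}{\epsilon+(a\cdot e_1)^2} \phi\!\left(\tfrac{a\cdot \xi}{N}\right)
&= \mathbb E \frac{\epsilon\,(a\cdot \xi)^2}{\epsilon + (a\cdot e_1)^2} \phi\!\left(\tfrac{a\cdot \xi}{N}\right) + \mathbb E\,(a\cdot \xi)^2\frac{(a\cdot e_1)^2}{\epsilon+(a\cdot e_1)^2}\Bigl(1-\phi\!\left(\tfrac{a\cdot \xi}{N}\right)\Bigr),
\end{align*}
and then bound the two pieces separately by a small constant (say $1/200$ each).

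For the second piece, since $\frac{(a\cdot e_1)^2}{\epsilon+(a\cdot e_1)^2}\le 1$ and $a\cdot \xi \sim \mathcal N(0,1)$ for every $\xi \in \mathbb S^{n-1}$, it is dominated by $\mathbb E[Z^2(1-\phi(Z/N))]$ with $Z \sim \mathcal N(0,1)$, which is a one-dimensional quantity tending to $0$ as $N \to \infty$ by dominated convergence. This bound is uniform in $\xi$.

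For the first piece, the key reduction is to observe that the joint law of $(a\cdot e_1, a\cdot \xi)$ depends only on $\rho := \xi\cdot e_1 \in [-1,1]$: writing $X = a\cdot e_1$ and $a\cdot \xi = \rho X + \sqrt{1-\rho^2}\,Y$ with $X,Y$ independent standard Gaussians, and dropping $\phi \le 1$ to get an upper bound, the first piece is at most
\begin{align*}
\mathbb E\frac{\epsilon(\rho X + \sqrt{1-\rho^2}\,Y)^2}{\epsilon + X^2}
= \rho^2\,\mathbb E\frac{\epsilon X^2}{\epsilon + X^2} + (1-\rho^2)\,\mathbb E\frac{\epsilon}{\epsilon + X^2},
\end{align*}
where the cross term vanishes by independence and mean-zero. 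The first summand is bounded by $\epsilon$ trivially. For the second, I would use the density bound $\frac{1}{\sqrt{2\pi}}e^{-x^2/2}\le \frac{1}{\sqrt{2\pi}}$ and the substitution $x = \sqrt{\epsilon}\,y$ to get $\mathbb E[\epsilon/(\epsilon+X^2)] \le \sqrt{\pi\epsilon/2}$, which tends to $0$ as $\epsilon \to 0$. Because $\rho^2 \le 1$ and $1-\rho^2 \le 1$, both bounds are uniform in $\xi$.

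Combining, choose $N$ so large that the cutoff error is $\le 1/200$, then choose $\epsilon$ so small that $\epsilon + \sqrt{\pi\epsilon/2} \le 1/200$; the stated inequality $\ge 0.99$ follows. The only delicate point is ensuring uniformity in $\xi$, which is handled cleanly by the $(X,Y)$ reduction above; no step requires anything beyond elementary Gaussian integrals and dominated convergence.
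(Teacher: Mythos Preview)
Your approach is essentially the paper's: decompose $\xi$ along and orthogonal to $e_1$, use $\mathbb E[\epsilon X^2/(\epsilon+X^2)]\le \epsilon$ and $\mathbb E[\epsilon/(\epsilon+X^2)]\lesssim\sqrt\epsilon$, then control the cutoff via $\mathbb E[Z^2(1-\phi(Z/N))]\to 0$. One small slip: your displayed decomposition is not an equality (it omits the piece $\mathbb E\bigl[(a\cdot\xi)^2\,\tfrac{\epsilon}{\epsilon+(a\cdot e_1)^2}\,(1-\phi)\bigr]$), but since you immediately drop $\phi\le 1$ in the first term and $(a\cdot e_1)^2/(\epsilon+(a\cdot e_1)^2)\le 1$ in the second, the quantities you actually bound do dominate $1-\mathbb E[\cdots]$ and the argument stands.
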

\begin{proof}
We first show that there exist $\epsilon>0$, such that
\begin{align} \label{lemSe7_0a.1}
\mathbb E \frac {(a\cdot \xi)^2 (a\cdot e_1)^2}
{\epsilon +(a\cdot e_1)^2} \ge 0.995, \qquad\forall\, \xi \in \mathbb S^{n-1}.
\end{align}
Clearly it suffices for us to show
\begin{align} \label{lemSe7_0a.2}
\sup_{\xi \in \mathbb S^{n-1}} \mathbb E  \frac {\epsilon (a\cdot \xi)^2}
{\epsilon+(a\cdot e_1)^2} \to 0, \quad \text{as $\epsilon\to 0$}.
\end{align}
Observe that $\xi = s e_1+ \sqrt{1-s^2} e_1^{\perp}$, $|s|\le 1$, 
$e^{\perp} \cdot e_1=0$. Thus  denoting $X$ and $Y$
as two independent standard Gaussian random variables with mean zero
and unit variance, we have
\[
\sup_{\xi \in \mathbb S^{n-1}} \mathbb E  \frac {\epsilon (a\cdot \xi)^2}
{\epsilon+(a\cdot e_1)^2}  \lesssim \mathbb E \frac {\epsilon X^2 } {\epsilon +X^2}
+\mathbb E  \frac {\epsilon Y^2} {\epsilon+X^2}
\lesssim  \epsilon + \mathbb E \frac {\epsilon} {\epsilon +X^2}
\lesssim \sqrt{\epsilon},
\]
where in the last inequality we used the fact that
\begin{align*}
\int_{|x|\le 1} \frac {\epsilon}{\epsilon+x^2} dx \sim \sqrt{\epsilon}.
\end{align*}
Thus \eqref{lemSe7_0a.2} and \eqref{lemSe7_0a.1} hold.
Now $\epsilon$ is fixed. To show the final inequality, we note that
\[
 \mathbb E \frac { (a\cdot \xi)^2 (a\cdot e_1)^2} {\epsilon+(a\cdot e_1)^2}
\chi_{|a\cdot \xi|\ge N}  \le \mathbb E (a\cdot \xi)^2 \chi_{|a\cdot \xi|\ge N} \le
\mathbb E X^2 \chi_{|X|\ge N} \to 0,
\]
as $N$ tend to infinity. Thus the desired inequality easily follows.
\end{proof}

\begin{lem} \label{lemSe7_1a}
Let $0<\eta_0\ll 1$ be given. Then if $m\gtrsim n$, then the following
hold with high probability:
\begin{align*}
\frac 1m \sum_{k=1}^m \frac {(a_k\cdot \hat u)^4}
{(a_k\cdot e_1)^2} \ge 100, \qquad\forall\,
\hat u \in \mathbb S^{n-1} 
\text{ with $||\hat u \cdot e_1| -1| \ge \eta_0 $}.
\end{align*}
\end{lem}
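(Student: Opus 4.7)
The plan is to mimic the truncation strategy used in Lemma \ref{lemSe7_0a}: lower bound each summand by a bounded, smooth surrogate for which standard concentration plus a net argument applies uniformly in $\hat u$. Since we want a lower bound on a quantity involving the singular factor $1/(a_k\cdot e_1)^2$ and the unbounded factor $(a_k\cdot\hat u)^4$, the first step is to regularize both:
\begin{align*}
\frac{(a_k\cdot\hat u)^4}{(a_k\cdot e_1)^2}
\;\ge\;
\frac{(a_k\cdot\hat u)^4\,\phi\!\left(\frac{a_k\cdot\hat u}{N}\right)}{\epsilon+(a_k\cdot e_1)^2},
\end{align*}
where $\phi\in C_c^\infty(\mathbb R)$ is the usual cutoff ($0\le\phi\le 1$, $\phi\equiv 1$ on $[-1,1]$, $\phi\equiv 0$ outside $[-2,2]$). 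The surrogate summand is bounded by $(2N)^4/\epsilon$, so it is manifestly in the regime where Bernstein / Hoeffding and covering arguments apply.

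The key reduction is to a two-dimensional problem: writing $\hat u=s\,e_1+\sqrt{1-s^2}\,e^\perp$ with $s=\hat u\cdot e_1\in[-1,1]$ and $e^\perp\perp e_1$, we have $(a\cdot e_1,a\cdot\hat u)\stackrel{d}{=}(X,sX+\sqrt{1-s^2}\,Y)$ for independent standard Gaussians $X,Y$, so the expectation of the surrogate depends only on $s$. I would show that
\begin{align*}
\Psi_{\epsilon,N}(s)
:=\mathbb E\,\frac{(sX+\sqrt{1-s^2}\,Y)^4\,\phi\!\left(\tfrac{sX+\sqrt{1-s^2}\,Y}{N}\right)}{\epsilon+X^2}
\;\longrightarrow\;\mathbb E\,\frac{(sX+\sqrt{1-s^2}\,Y)^4}{X^2}=+\infty
\end{align*}
as $(\epsilon,N^{-1})\to(0^+,0^+)$, whenever $|s|\le 1-\eta_0$. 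Indeed, for such $s$ the integrand contains the term $(1-s^2)^2 Y^4/X^2$ with $(1-s^2)\ge\eta_0(1+|s|)\ge\eta_0$, and $\mathbb E\,Y^4\cdot\mathbb E\,X^{-2}=+\infty$. Monotone convergence (in $\epsilon\downarrow 0$) together with $\phi\uparrow 1$ (as $N\uparrow\infty$) gives the divergence, and by a standard continuity/compactness argument on the compact parameter set $\{s:|s|\le 1-\eta_0\}$ I can fix $\epsilon=\epsilon(\eta_0)>0$ and $N=N(\eta_0)<\infty$ so that
\begin{align*}
\inf_{|s|\le 1-\eta_0}\Psi_{\epsilon,N}(s)\ \ge\ 101.
\end{align*}

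The remainder is concentration plus a net argument, entirely parallel to the one used right after Lemma \ref{lemSe7_0a}. For these fixed $\epsilon, N$, the summand $F_k(\hat u):=(a_k\cdot\hat u)^4\phi((a_k\cdot\hat u)/N)/(\epsilon+(a_k\cdot e_1)^2)$ is bounded by $(2N)^4/\epsilon$ (hence sub-Gaussian), and for $\|a_k\|\lesssim\sqrt n$ (true with high probability) it is Lipschitz in $\hat u\in\mathbb S^{n-1}$ with a constant polynomial in $N,\epsilon^{-1},\sqrt n$, because $t\mapsto t^4\phi(t/N)$ has derivative $O(N^3)$. A Hoeffding bound at a fixed $\hat u$ followed by a union bound over a $\delta$-net in $\mathbb S^{n-1}$ (of cardinality $(C/\delta)^n$) with $\delta$ chosen polynomially small in $n$ yields, for $m\gtrsim n$ and with probability at least $1-e^{-cm}$,
\begin{align*}
\frac1m\sum_{k=1}^m F_k(\hat u)\ \ge\ \Psi_{\epsilon,N}(s)-1\ \ge\ 100,
\qquad\forall\,\hat u\in\mathbb S^{n-1}\text{ with }|\hat u\cdot e_1|\le 1-\eta_0.
\end{align*}
Combining with the pointwise lower bound on each $(a_k\cdot\hat u)^4/(a_k\cdot e_1)^2$ finishes the proof.

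The one genuinely delicate point is verifying that $\Psi_{\epsilon,N}(s)$ can be made $\ge 101$ \emph{uniformly} in $s\in[-(1-\eta_0),1-\eta_0]$; this requires (i) the monotone convergence as $\epsilon\downarrow 0$, $N\uparrow\infty$, and (ii) continuity of $\Psi_{\epsilon,N}$ in $s$ so that the pointwise divergence upgrades to a uniform lower bound by a compactness argument. Everything else is routine Gaussian concentration and a net argument of the kind already invoked several times in Section \ref{S:model4a}.
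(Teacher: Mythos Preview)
Your argument is correct in outline and close in spirit to the paper's, but the regularization you choose is different and this has a small cost. The paper does \emph{not} introduce a cutoff $\phi$ on the numerator; instead it regularizes the denominator by $\epsilon\bigl(1+(a_k\cdot e^{\perp})^2\bigr)+(a_k\cdot e_1)^2$, where $e^{\perp}$ is the unit vector in the decomposition $\hat u=s\,e^{\perp}\pm\sqrt{1-s^2}\,e_1$. With this choice the surrogate is quadratic in Gaussians (hence sub-exponential and directly amenable to Bernstein plus a net), and the lower bound on the expectation is obtained in one line: by parity the odd cross terms in the expansion of $(a\cdot\hat u)^4$ vanish, so the expectation dominates $s^4\,\mathbb E\bigl[(a\cdot e^{\perp})^4/(\epsilon(1+(a\cdot e^{\perp})^2)+(a\cdot e_1)^2)\bigr]\gtrsim s_0^4\,\epsilon^{-1/2}$, which is uniform in $\hat u$ by construction. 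Your route via $\Psi_{\epsilon,N}$, monotone convergence, and a continuity/compactness argument on $[-(1-\eta_0),1-\eta_0]$ reaches the same conclusion but is less explicit.

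One technical point to tighten: your Lipschitz bound ``polynomial in $\sqrt n$'' (coming from the crude estimate $|a_k\cdot(\hat u-\hat u')|\le\|a_k\|\,\|\hat u-\hat u'\|$) forces $\delta\sim n^{-1/2}$ in the net and therefore $m\gtrsim n\log n$, not $m\gtrsim n$. The fix is standard: the Lipschitz constant of the \emph{empirical average} $\hat u\mapsto\frac1m\sum_k F_k(\hat u)$ is controlled by $\frac{C N^3}{\epsilon}\sup_{v\in\mathbb S^{n-1}}\frac1m\sum_k|a_k\cdot v|$, and this supremum is $O(1)$ with high probability once $m\gtrsim n$. With that correction the net radius is an absolute constant (depending only on $N,\epsilon,\eta_0$) and the optimal sampling complexity is recovered. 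The paper's regularization sidesteps this issue because the surrogate is already a second-degree polynomial in $(a_k\cdot\hat u,\,a_k\cdot e^{\perp},\,a_k\cdot e_1)$, for which the net argument at scale $O(1)$ is immediate.
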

\begin{proof}
Without loss of generality we write
\begin{align*}
\hat u = s e^{\perp} \pm \sqrt{1-s^2} e_1, \qquad e^{\perp} \in
\mathbb S^{n-1} \text{ with $e^{\perp} \cdot e_1=0$}.
\end{align*}
Clearly $|s| \ge s_0=s_0(\eta_0)>0$ where $s_0(\eta_0)$ is a constant depending only
on $\eta_0$. Take $a\sim \mathcal N(0, \operatorname{I_n})$ and observe that
\begin{align*}
&\mathbb E \frac { (a\cdot \hat u)^4} {
\epsilon(1+ (a\cdot e^{\perp})^2) + (a\cdot e_1)^2} \notag \\
\ge & \; \mathbb E \frac {s^4 (a\cdot e^{\perp} )^4} {
\epsilon (1+(a\cdot e^{\perp} )^2) + (a\cdot e_1)^2 } \notag \\
\ge & s_0^4\frac 1 {2\pi} \int_{1\le y \le 2, \,x \in \mathbb R}
\frac { y^4}{ \epsilon (1+y^2) +x^2}
e^{-\frac {x^2+y^2} 2} dx dy \notag \\
\ge & s_0^4 \frac 1 {200}
\int_{|x| \le 1} \frac 1 {5 \epsilon+ x^2} dx 
\ge s_0^4 \cdot O(\epsilon^{-\frac 12} ) \ge 200,
\end{align*}
if $\epsilon>0$ is taken sufficiently small. 
Now we fix this $\epsilon$. Clearly for $m\gtrsim n$ with high probability it holds
that
\begin{align*}
\frac 1m \sum_{k=1}^m \frac {(a_k\cdot \hat u)^4}
{(a_k\cdot e_1)^2}
& \ge \; \frac 1m \sum_{k=1}^m
\frac {(a_k\cdot \hat u)^4} { \epsilon(1+(a_k\cdot e^{\perp})^2) +(a_k\cdot e_1)^2}
\notag \\
& \ge\, 100, \qquad \forall\, \hat u \in \mathbb S^{n-1} \text{ with $||\hat u\cdot e_1|-1|\le
\eta_0$}.
\end{align*}
\end{proof}

\section{Technical estimates for Section \ref{S:model4b}}
\begin{lem} \label{lemSeH:0}
For any $\epsilon>0$, there exists $R_0=R_0(\beta,\epsilon)>0$ sufficiently small,
such that if $m\gtrsim n$, then the following hold with high probability:
\begin{align*}
 \frac Rm \sum_{k=1}^m 
\frac {(a_k\cdot \hat u)^2} 
{\beta R + (a_k\cdot e_1)^2} < \epsilon,
\qquad\forall\, \hat u \in \mathbb S^{n-1}, \quad\forall\, 0<R \le R_0.
\end{align*}
\end{lem}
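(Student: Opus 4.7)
The plan is to first reduce the uniform-in-$R$ statement to a single scale, then to split the sum dyadically in $X_k := a_k\cdot e_1$. For the reduction, I would note that for fixed $X$, $\partial_R [R/(\beta R+X^2)] = X^2/(\beta R+X^2)^2 \ge 0$, so $R\mapsto R/(\beta R+X^2)$ is monotone non-decreasing in $R$. Hence it suffices to prove the estimate at $R = R_0$; the statement for every $R\in(0,R_0]$ follows automatically, and this is what converts the continuum of scales into a single scale.

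With $R = R_0$ fixed I would introduce a threshold $\tau = \tau(R_0,\epsilon,\beta)>0$ (take $\tau \sim R_0^{1/4}$) and use
\[
\frac{R_0}{\beta R_0 + X_k^2} \le \frac{R_0}{\tau^2}\chi_{|X_k|\ge \tau} + \frac{1}{\beta}\chi_{|X_k|<\tau}
\]
to split the sum. The first piece is bounded by $(R_0/\tau^2)\cdot \tfrac{1}{m}\sum_k Z_k^2$ with $Z_k=a_k\cdot \hat u$, and the standard Gaussian-matrix operator-norm bound gives $\tfrac{1}{m}\sum_k Z_k^2 \le 2$ uniformly in $\hat u$ for $m\gtrsim n$. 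For the second piece I would decouple the $e_1$-direction by writing $\hat u = s\, e_1 + \sqrt{1-s^2}\, v$ with $v\in \mathbb S^{n-1}\cap\{e_1\}^\perp$, setting $a_k^\perp := a_k - X_k e_1$ (independent of $X_k$) and using $Z_k^2 \le 2X_k^2 + 2(a_k^\perp\cdot v)^2$ to obtain, with $I_\tau := \{k:|X_k|<\tau\}$,
\[
\frac{1}{m}\sum_{k\in I_\tau} Z_k^2 \;\le\; 2\tau^2\cdot \frac{|I_\tau|}{m} + \frac{2}{m}\sup_{v\in\mathbb S^{n-2}}\sum_{k\in I_\tau}(a_k^\perp\cdot v)^2.
\]
A Chernoff bound gives $|I_\tau|/m \le C\tau$ with high probability (since $\mathbb P(|X|<\tau)\le \sqrt{2/\pi}\,\tau$), and conditional on $\{X_k\}$ the vectors $\{a_k^\perp:k\in I_\tau\}$ are i.i.d.\ standard Gaussians on $\{e_1\}^\perp$, so the non-asymptotic singular-value estimate yields $\sup_v\sum_{k\in I_\tau}(a_k^\perp\cdot v)^2 \lesssim |I_\tau|+n+c^2 m$ with probability $1-2e^{-c^2 m/2}$.

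Collecting the estimates produces a bound of the form $\lesssim R_0/\tau^2 + \beta^{-1}(\tau + n/m + c^2)$; taking $\tau = R_0^{1/4}$, then $R_0$ small, $c$ small, and the implicit constant in $m\ge C(\beta,\epsilon)\,n$ large, renders the right-hand side $<\epsilon$ uniformly in $\hat u$ and in $R\in(0,R_0]$. The main obstacle I expect is exactly the uniformity over $\hat u\in \mathbb S^{n-1}$ in the second piece: the integrand $Z_k^2\chi_{|X_k|<\tau}$ is not uniformly bounded, and a naive Lipschitz-plus-net argument would pick up a $\sqrt{n}$ factor in the Lipschitz constant and force the suboptimal $m\gtrsim n\log n$. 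The decoupling trick above sidesteps this by reducing the sup over $\hat u$ to a sup over $v\in\mathbb S^{n-2}$, which is then absorbed directly into the operator-norm bound for the sub-sampled Gaussian matrix with row index set $I_\tau$.
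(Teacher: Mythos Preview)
Your proposal is correct, and its overall architecture---reduce to $R=R_0$, then split the sum according to whether $|X_k|$ is below or above a threshold, with the large-$|X_k|$ piece controlled by $R_0/\tau^2$ times $\tfrac1m\sum_k Z_k^2$---matches the paper's proof exactly (there the threshold is called $\eta_0$ and a smooth cutoff $\phi(X_k/\eta_0)$ replaces your hard indicator, but this is cosmetic).

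The only substantive difference is in how the small-$|X_k|$ piece, $\tfrac1{\beta m}\sum_k Z_k^2\,\chi_{|X_k|<\tau}$, is handled uniformly in $\hat u$. The paper simply observes that this term is a quadratic form $\hat u^\top M\hat u$ with $M=\tfrac1m\sum_k \phi(X_k/\eta_0)\,a_k a_k^\top$, whose expectation satisfies $\mathbb E[Z^2\phi(X/\eta_0)]\lesssim \eta_0$, and declares it ``amenable to union bounds.'' Implicitly this uses Bernstein for each fixed $\hat u$ together with the standard quadratic-form net identity $\|M\|_{\mathrm{op}}\le (1-2\delta)^{-1}\max_{\hat u\in\mathcal N_\delta}\hat u^\top M\hat u$, which bootstraps and incurs no $\sqrt n$ or $\log n$ loss. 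So your stated concern---that a naive Lipschitz-plus-net argument would force $m\gtrsim n\log n$---is well-taken for a truly naive argument but does not apply here, because the quadratic structure lets the net absorb itself. Your decoupling route (separate the $e_1$-coordinate, Chernoff for $|I_\tau|$, operator-norm bound for the sub-sampled Gaussian matrix on $\{e_1\}^\perp$) is a clean alternative that reaches the same $m\gtrsim n$ conclusion; it trades the quadratic-form net trick for independence of $X_k$ and $a_k^\perp$, and has the virtue of being fully explicit. Either argument is fine.
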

\begin{proof}
Let $\phi \in C_c^{\infty}(\mathbb R)$ be such that $0\le \phi(x) \le 1$ for all $x$,
$\phi(x)=1$ for $|x|\le 1$ and $\phi(x)=0$ for $|x| \ge 2$. We then split the sum
as
\begin{align*}
 \frac  Rm \sum_{k=1}^m 
\frac {(a_k\cdot \hat u)^2} 
{\beta R + (a_k\cdot e_1)^2}  \le &   \frac 1 {\beta m }\sum_{k=1}^m
(a_k\cdot \hat u)^2 \phi( \frac {a_k \cdot e_1} {\eta_0} ) 
+   R \cdot \eta_0^{-2}  \frac 1m \sum_{k=1}^m (a_k\cdot \hat u)^2.
\end{align*}
Clearly the first term is amenable to union bounds, and we can make it sufficiently
small with high probability by taking $\eta_0$ small (depending only on $\beta$ and
$\epsilon$). The second term is trivial since we can take $R$ sufficiently small.
Thus we complete the proof.
\end{proof}

\begin{lem} \label{lemSeH:1}
There exists $R_1=R_1(\beta)>0$ sufficiently small,
such that if $m\gtrsim n$, then the following hold with high probability:
\begin{align*}
c_1\le \frac 1m \sum_{k=1}^m \frac {(a_k\cdot \hat u) (a_k\cdot e_1)^3}
{\beta R+ (a_k\cdot e_1)^2} \le c_2, 
\qquad\forall\, \hat u \in \mathbb S^{n-1} \text{ with $\widehat{u}_1\cdot
e_1 \ge \frac 1{10}$}, \quad\forall\, 0<R \le R_1.
\end{align*}
In the above $c_1$, $c_2>0$ are constants depending only on $\beta$.
\end{lem}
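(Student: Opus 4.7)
My plan is to isolate the dominant contribution via an orthogonal decomposition and then handle concentration with a careful net argument, exploiting the fact that the quotient $|X_k|^3/(\beta R+X_k^2)$ is dominated by $|X_k|$.

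\emph{Decomposition.} Write $\hat u = s e_1 + \sqrt{1-s^2}\, e^\perp$ with $s=\hat u\cdot e_1\ge \tfrac{1}{10}$ and $e^\perp\in \mathbb S^{n-1}\cap e_1^\perp$. Setting $X_k=a_k\cdot e_1$ and $Y_k = a_k\cdot e^\perp$ (independent standard Gaussians), we obtain
\begin{align*}
T(R,\hat u) := \frac 1m\sum_{k=1}^m \frac{(a_k\cdot\hat u)\,X_k^3}{\beta R+X_k^2}
= s\cdot A(R) + \sqrt{1-s^2}\cdot B(R,e^\perp),
\end{align*}
where $A(R)=\tfrac{1}{m}\sum_k X_k^4/(\beta R+X_k^2)$ and $B(R,e^\perp)=\tfrac{1}{m}\sum_k X_k^3 Y_k/(\beta R+X_k^2)$. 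Thus the lower bound reduces to showing $A(R)\approx 1$ and $B$ is small, uniformly.

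\emph{Control of $A(R)$.} The map $R\mapsto A(R)$ is monotone decreasing, so $A(R_1)\le A(R)\le A(0)=\tfrac{1}{m}\sum_k X_k^2$ for all $R\in(0,R_1]$. Bernstein's inequality (the summands $X_k^2$ are sub-exponential) yields $A(0)\le 1+\epsilon$ with high probability when $m\gtrsim n$. For the lower end, $X_k^4/(\beta R_1+X_k^2)\le X_k^2$ is sub-exponential, $\mathbb E [X^4/(\beta R_1+X^2)]\to 1$ as $R_1\to 0$ by dominated convergence, so Bernstein gives $A(R_1)\ge 1-\epsilon$ w.h.p.\ for $R_1=R_1(\beta,\epsilon)$ small enough. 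Hence $1-\epsilon\le A(R)\le 1+\epsilon$ uniformly in $R\in(0,R_1]$.

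\emph{Control of $B(R,e^\perp)$.} Rewrite $B(R,e^\perp)=\tfrac{1}{m}\sum_k X_k Y_k\, g_R(X_k)$ where $g_R(x)=x^2/(\beta R+x^2)\in[0,1]$. Conditional on $\{X_k\}$, $B$ is linear in independent $N(0,1)$ variables $Y_k$, hence Gaussian with variance $\tfrac{1}{m^2}\sum_k X_k^2 g_R(X_k)^2\le \tfrac{1}{m^2}\sum_k X_k^2\lesssim \tfrac{1}{m}$ w.h.p. Thus $|B(R,e^\perp)|\le \epsilon$ with probability $\ge 1-e^{-cm\epsilon^2}$ for each fixed pair $(R,e^\perp)$. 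To obtain uniformity over $e^\perp\in\mathbb S^{n-1}\cap e_1^\perp$, take a $\delta$-net of cardinality $(C/\delta)^{n-1}$; the function $e^\perp\mapsto B(R,e^\perp)$ is linear with Lipschitz constant $\le \tfrac{1}{m}\sum_k\|a_k\||X_k|=O(\sqrt n)$ w.h.p. Uniformity in $R\in(0,R_1]$ is obtained by a fine grid $\{R_j\}$ in $(0,R_1]$ together with the pointwise bound $|g_R(x)-g_{R_j}(x)|\le \beta|R-R_j|\big/(\beta R_j+x^2)$, which after pairing with the sub-exponential factor $|X_k Y_k|$ and taking $\delta,\delta_R\ll \epsilon/\sqrt n$ and $m\gtrsim n$ absorbs the discretization error; a single union bound yields $\sup_{R,e^\perp}|B|\le 2\epsilon$ w.h.p.

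\emph{Conclusion and main obstacle.} Combining the two estimates, $T(R,\hat u)\in[s(1-\epsilon)-2\epsilon,\, s(1+\epsilon)+2\epsilon]$, so choosing $\epsilon$ small and using $s\in[\tfrac{1}{10},1]$ gives constants $c_1,c_2>0$ depending only on $\beta$. The main technical obstacle is the uniformity in $R\in(0,R_1]$: naively the divisor $\beta R+X_k^2$ becomes singular as $R\to 0^+$ when $X_k\to 0$, but the cancellation $|X_k|^3/(\beta R+X_k^2)\le |X_k|$ keeps the integrand sub-exponential, allowing a standard net-plus-Bernstein argument to succeed.
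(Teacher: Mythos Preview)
Your decomposition $\hat u = s e_1 + \sqrt{1-s^2}\,e^\perp$ and the split $T = sA(R) + \sqrt{1-s^2}\,B(R,e^\perp)$ are exactly what the paper does, and your treatment of $A(R)$ via monotonicity in $R$ matches the paper's sandwich $\tfrac{1}{m}\sum_k X_k^4/(\beta+X_k^2)\le A(R)\le \tfrac{1}{m}\sum_k X_k^2$. The difference lies in the cross term $B$. The paper uses the algebraic identity
\[
\frac{X_k^3}{\beta R + X_k^2} \;=\; X_k \;-\; \frac{\beta R\, X_k}{\beta R + X_k^2},
\]
so that $B(R,e^\perp) = \tfrac{1}{m}\sum_k X_k Y_k - \tfrac{1}{m}\sum_k Y_k\,\dfrac{\beta R X_k}{\beta R + X_k^2}$. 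The first piece is $R$-independent and is controlled by a single net in $e^\perp$; the second is bounded \emph{deterministically} by $\tfrac{1}{2}\sqrt{\beta R}\cdot\tfrac{1}{m}\sum_k |Y_k|$ (using $\beta R + X_k^2 \ge 2\sqrt{\beta R}\,|X_k|$), hence is $\lesssim \sqrt{R_1}$ uniformly in $(R,e^\perp)$ once $\sup_{e^\perp}\tfrac{1}{m}\sum_k|a_k\cdot e^\perp|\lesssim 1$. This sidesteps any discretization in $R$ altogether.

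Your net-in-$R$ route is workable, but as written there is a quantitative slip in the $e^\perp$-net: the Lipschitz bound $\tfrac{1}{m}\sum_k \|a_k\|\,|X_k| = O(\sqrt{n})$ is the crude triangle-inequality estimate, and taking $\delta\ll \epsilon/\sqrt{n}$ makes the net of size $(C\sqrt{n}/\epsilon)^{n-1}$, which after the union bound only yields $m\gtrsim n\log n$ rather than $m\gtrsim n$. Since $e^\perp\mapsto B(R,e^\perp)$ is \emph{linear}, the standard device $\sup_{e^\perp}|B|\le (1-\delta)^{-1}\max_{e^\perp\in\mathcal N}|B|$ with a constant-mesh net (e.g.\ $\delta=\tfrac12$) suffices; equivalently, the true Lipschitz constant is $\bigl\|\tfrac{1}{m}\sum_k X_k g_R(X_k)\,a_k\bigr\| = O(1)$ w.h.p.\ when $m\gtrsim n$, not $O(\sqrt n)$. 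With that correction your argument recovers the optimal $m\gtrsim n$.
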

\begin{proof}
Denote $X_k= a_k\cdot e_1$. Write $\hat u = s e_1 +\sqrt{1-s^2} e^{\perp}$,
where $s\ge \frac 1 {10}$ and $ e^{\perp} \in \mathbb S^{n-1}$ satisfies
$e^{\perp}\cdot e_1=0$. We then write
\begin{align*}
 & \frac 1m \sum_{k=1}^m \frac { (a_k\cdot \hat u) X_k^3} {\beta R+ X_k^2}
 \notag \\
=& s \frac 1m
\sum_{k=1}^m \frac {X_k^4}{ \beta R+ X_k^2}
+ \sqrt{1-s^2} \frac 1m \sum_{k=1}^m
\frac { (a_k\cdot e^{\perp} ) X_k^3} { \beta R+ X_k^2} \notag \\
=&  s \frac 1m
\sum_{k=1}^m \frac {X_k^4}{ \beta R+ X_k^2}
+ \sqrt{1-s^2}\frac 1m \sum_{k=1}^m (a_k\cdot e^{\perp} ) X_k
- \sqrt{1-s^2} \frac 1m \sum_{k=1}^m (a_k\cdot e^{\perp})
\frac { \beta R X_k} {\beta R +X_k^2}. 
\end{align*}
For the first term we note that for $0<R\le 1$, 
\begin{align*}
\frac 1m \sum_{k=1}^m
\frac {X_k^4}{\beta+X_k^2}
\le \frac 1m \sum_{k=1}^m
\frac {X_k^4}{\beta R +X_k^2}
\le \frac 1m \sum_{k=1}^m X_k^2.
\end{align*}
Thus we 
clearly have for all $0<R\le 1$, $\frac 1 {10} \le s \le 1$, 
with high probability it holds that 
\begin{align*}
2c_1 \le s \frac 1m \sum_{k=1}^m \frac {X_k^4}{ \beta R+ X_k^2}
\le \frac 12 c_2. 
\end{align*}
The second term is clearly OK for union bounds and with high probability
it can be made sufficiently small.
For the last term,  observe that with high probability,
\begin{align*}
\frac 1m \sum_{k=1}^m |a_k\cdot e^{\perp}|
\frac { \beta R |X_k|} {\beta R +X_k^2}
\lesssim \sqrt{\beta R} \frac 1m \sum_{k=1}^m |a_k\cdot e^{\perp} | \ll 1,
\quad \forall\, e^{\perp} \in \mathbb S^{n-1},
\end{align*}
if $R \le R_1$ and $R_1$ is sufficiently small.
The desired result then clearly follows.
\end{proof}

\begin{proof}[Proof of Lemma \ref{Sep19_e0}]
Without loss of generality we consider the situation 
$\hat u = e_1 \cos \theta + e^{\perp} \sin \theta$ with
$\epsilon_1 \le \theta \le \frac {\pi}2- \epsilon_2$, where $0<\epsilon_1, \epsilon_2\ll 1$.
The point is that $\theta$ stays away from the end-points $0$ and $\frac {\pi}2$.
Denote $X_k=a_k \cdot e_1$, $Y_k = a_k \cdot e^{\perp}$ and $Z_k =a_k \cdot \hat u$.
Then 
\begin{align*}
& Z_k = \cos \theta X_k + \sin \theta Y_k \; \Rightarrow
\; Y_k = \frac 1 {\sin\theta} Z_k - \frac {\cos\theta} {\sin \theta} X_k ; \\
& \partial_{\theta} Z_k
= -\sin\theta X_k + \cos\theta Y_k
= \cot \theta Z_k - \frac 1 {\sin\theta} X_k.
\end{align*}
We then obtain
\begin{align*}
\partial_{\theta} f
&= 4R^2 \cot \theta \underbrace{\frac 1m \sum_{k=1}^m \frac {Z_k^4} {\beta R+ X_k^2}
}_{=:H_0}
-4R^2 \csc \theta \frac 1m \sum_{k=1}^m \frac {Z_k^3 X_k}{\beta R+ X_k^2}
\notag \\
& \qquad  - 4R \cot \theta \frac 1m \sum_{k=1}^m
\frac {Z_k^2 X_k^2} { \beta R+ X_k^2} 
+4R \csc \theta \frac 1m \sum_{k=1}^m
\frac { Z_k X_k^3} {\beta R+ X_k^2}.
\end{align*}
Since $R\sim 1$, it is not difficult to check that the third and fourth terms above are
amenable to union bounds\footnote{The union bound includes
covering in $\hat u$ and $R$.}, i.e. with high probability (for $m\gtrsim n$) we have
\begin{align*}
\Bigl| \frac 1m \sum_{k=1}^m
\frac {Z_k^2 X_k^2} { \beta R+ X_k^2} 
- \operatorname{mean} \Bigr| 
+ 
\Bigl| 
\frac 1m \sum_{k=1}^m
\frac { Z_k X_k^3} {\beta R+ X_k^2}
- \operatorname{mean} \Bigr|
\ll 1, 
\qquad \forall\, c_1 \le R\le c_2, \forall\, \hat  u \in \mathbb S^{n-1}.
\end{align*}

Next we treat the second term. 
Let $\phi \in C_c^{\infty}(\mathbb R)$ be such that $0\le \phi(x) \le 1$ for all $x$,
$\phi(x)=1$ for $|x|\le 1$ and $\phi(x)=0$ for $|x| \ge 2$.  We have
\begin{align*}
\frac 1m \sum_{k=1}^m
\frac {Z_k^3 X_k}
{\beta R+ X_k^2}
=\underbrace{\frac 1m \sum_{k=1}^m
\frac {Z_k^3 X_k}
{\beta R+ X_k^2}
\phi( \frac {Z_k} {M \langle X_k \rangle} ) }_{=:H_1}
+
\underbrace{\frac 1m \sum_{k=1}^m
\frac {Z_k^3 X_k}
{\beta R+ X_k^2}
\Bigl(1-\phi( \frac {Z_k} {M \langle X_k \rangle} ) \Bigr) }_{=:H_2},
\end{align*}
where $\langle z\rangle =(1+|z|^2)^{\frac 12}$.   It is not difficult to
check that $H_1$ is OK for union bounds, and
with high probability it holds that
\begin{align*}
\Bigl| H_1 - \mathbb E H_1 \Bigr| \ll 1,
\qquad\forall\, \hat u \in \mathbb S^{n-1}, \;
\forall\, c_1 \le R \le c_2.
\end{align*}
For $H_2$ we have ($\eta_0$ will be taken sufficiently small)
\begin{align*}
H_2 & \le 
 \eta_0 \frac 1m \sum_{k=1}^m
\frac {Z_k^4} { \beta R +X_k^2}
+ \eta_0^{-3} 
\frac 1m \sum_{k=1}^m  \frac{ X_k^4} {\beta R+ X_k^2} 
\Bigl(1-\phi( \frac {Z_k} {M \langle X_k \rangle} ) \Bigr) \notag \\
& \le \underbrace{\eta_0 \frac 1m \sum_{k=1}^m
\frac {Z_k^4} { \beta R+X_k^2}}_{=: H_{2,a}}
+ \underbrace{\eta_0^{-3} 
\frac 1m \sum_{k=1}^m  X_k^2 \Bigl(1-\phi( \frac {Z_k} {M \langle X_k \rangle} ) \Bigr).
}_{=:H_{2,b}} \notag \\
\end{align*}
We first take $\eta_0$ sufficiently small so that $H_{2,a}$ can be included in the estimate
of $H_0$ without affecting too much the main order.  On the other hand, once $\eta_0$ is fixed,
we can take $M$ sufficiently large such that
\begin{align*}
| H_{2,b} | +|\mathbb E H_{2,b}| \ll 1, \qquad\forall\, \hat u \in \mathbb S^{n-1},
\; \forall\, c_1 \le R\le c_2.
\end{align*}

Finally we treat $H_0$.  Clearly
\begin{align*}
H_0 \ge \underbrace{ \frac 1m \sum_{k=1}^m
\frac {Z_k^4} {\beta R+ X_k^2}
\phi( \frac {Z_k} { K  }).}_{=: H_{0,a}}
\end{align*}
By taking $K$ large, it can be easily checked that
\begin{align*}
\sup_{\hat u \in \mathbb S^{n-1},
c_1 \le R \le c_2} |\mathbb E H_0 - \mathbb E H_{0,a} | \ll 1.
\end{align*}
On the other hand, for fixed $K$, clearly
 $H_{0,a}$ is OK for union bounds. 
 It holds with high probability  that
\begin{align*}
| H_{0,a} -\mathbb E H_{0,a} | \ll 1.
\end{align*}
Collecting all the estimates, we obtain
\begin{align*}
\partial_{\theta} f \ge \mathbb E \partial_{\theta} f + \operatorname{Error},
\end{align*}
where $|\operatorname{Error} | \ll 1$. 
The desired lower bound for $\partial_{\theta} f$  then easily follows from Lemma 
\ref{lemSeH:3} below. 
\end{proof}
\begin{lem} \label{lemSeH:3}
Let $u=\sqrt R \hat u$ with $0<c_1\le R \le c_2 <\infty$ and $\hat u \in \mathbb S^{n-1}$.
Assume $\hat u = \cos \theta e_1+ \sin \theta e^{\perp}$, where
$\theta \in [0, {\pi}]$ and $e^{\perp} \in \mathbb S^{n-1}$ satisfies
$e^{\perp} \cdot e_1=0$. We have
\begin{align*}
&\mathbb E f(u) = h(\beta, R, \cos^2 \theta),
\end{align*}
where 
\begin{align*}
& \max_{0\le s\le 1}\partial_s h(\beta ,R ,s) \le -\gamma_{1} <0,  \\
& \min_{0\le s\le 1}\partial_{ss} h(\beta, R, s)\ge \gamma_2>0.
\end{align*}
Here $\gamma_i=\gamma_i(\beta, c_1,c_2)$, $i=1,2$ depend only on 
($\beta$, $c_1$, $c_2$).
It follows that
\begin{align*}
&\mathbb E \partial_{\theta} f =  a_1(\beta,R,\cos^2\theta) \sin (2\theta); \\
&\mathbb E \partial_{\theta \theta} f =  2a_1(\beta,R,\cos^2\theta) \cos  (2\theta)
+a_2(\beta,R,\theta) \sin^2(2\theta),
\end{align*}
where  
\begin{align*}
\gamma_3< a_i(\beta, R ,s ) \le \gamma_4, \forall\, s\in [0,1],\, i=1,2;
\end{align*}
and $\gamma_3>0$, $\gamma_4>0$ are constants depending only 
on ($\beta$, $c_1$, $c_2$).
\end{lem}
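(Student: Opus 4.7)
The plan is to reduce $\mathbb E f(u)$ to an explicit polynomial in $s := \cos^2\theta$ and then read off the three claimed properties from that polynomial. The opening move is a rotation/independence argument: since $a \sim \mathcal N(0, I_n)$ and $e^{\perp} \perp e_1$, the scalars $X := a \cdot e_1$ and $Y := a \cdot e^{\perp}$ are independent standard normals, so $a \cdot \hat u = X \cos\theta + Y \sin\theta$. Conditioning on $X$ and using $\mathbb E Y = \mathbb E Y^3 = 0$, $\mathbb E Y^2 = 1$, $\mathbb E Y^4 = 3$ gives
$$\mathbb E_Y (a \cdot \hat u)^2 = X^2 s + (1-s), \qquad \mathbb E_Y (a \cdot \hat u)^4 = X^4 s^2 + 6 X^2 s(1-s) + 3(1-s)^2.$$
Substituting into $(R W^2 - X^2)^2 = R^2 W^4 - 2 R W^2 X^2 + X^4$ and then dividing by the $\theta$-independent weight $\beta R + X^2$ yields $\mathbb E f(u) = A(\beta, R)\, s^2 + B(\beta, R)\, s + C(\beta, R) =: h(\beta, R, s)$ with
$$A = R^2 \,\mathbb E \frac{X^4 - 6X^2 + 3}{\beta R + X^2}, \qquad B = 2 R \,\mathbb E \frac{(X^2 - 1)(3R - X^2)}{\beta R + X^2}.$$

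For the strict convexity $\partial_{ss} h = 2A \ge \gamma_2 > 0$, I would write $c := \beta R$ and apply the algebraic identity $X^4 - 6X^2 + 3 = (X^2 + c)^2 - 2(c+3)(X^2 + c) + (c^2 + 6c + 3)$, which rewrites $A/R^2 = -(c + 5) + (c^2 + 6c + 3) M_c$, where $M_c := \mathbb E [1/(\beta R + X^2)]$. One-dimensional Gaussian integration shows $M_c$ is smooth and strictly positive on $(0, \infty)$, diverges like $\sqrt{\pi/(2c)}$ as $c \downarrow 0$, and satisfies $M_c = 1/c + O(1/c^2)$ as $c \to \infty$. Therefore $A/R^2$ is continuous in $c \in [\beta c_1, \beta c_2]$ and strictly positive throughout (it blows up at $c \downarrow 0$ and tends to $R^2 \cdot 1$ at $c \to \infty$), so the claim $2A \ge \gamma_2 > 0$ on the compact interval follows by continuity.

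The monotonicity $\partial_s h \le -\gamma_1 < 0$ reduces, by convexity of $h$ in $s$, to the single inequality $(\partial_s h)(s = 1) = 2A + B < -\gamma_1$. Using the identities $\mathbb E [X^4/(\beta R + X^2)] = 1 - \beta R + (\beta R)^2 M_c$ and $\mathbb E [X^2/(\beta R + X^2)] = 1 - \beta R M_c$, one rewrites
$$2A + B = -4 R^2 - 2 R c(R - 1) + 2 R c M_c \bigl( c(R-1) + 3R - 1 \bigr).$$
At $R = 1$ this collapses to $-4(1 - cM_c) = -4 \,\mathbb E [X^2/(\beta + X^2)] < 0$, and continuity in $R$ together with compactness of $[c_1, c_2]$ upgrades this to the uniform bound $2A + B \le -\gamma_1 < 0$. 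This step is the main technical obstacle: the pointwise integrand $R X^2 [(R-1) X^2 - (3R - 1)] / (\beta R + X^2)$ of $2A + B$ is not sign-definite in $X$ for most $R$, so the required negativity must be extracted from the weighted Gaussian average rather than read off term by term.

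The final two formulas are then routine chain-rule bookkeeping. From $s = \cos^2\theta$ we get $s'(\theta) = -\sin 2\theta$ and $s''(\theta) = -2 \cos 2\theta$, whence
$$\mathbb E \partial_{\theta} f = -h_s \sin 2\theta, \qquad \mathbb E \partial_{\theta \theta} f = h_{ss} \sin^2 2\theta - 2 h_s \cos 2\theta.$$
Setting $a_1(\beta, R, s) := -h_s(\beta, R, s)$ and $a_2(\beta, R, s) := h_{ss}(\beta, R, s)$ puts these in the claimed form, and the two-sided bounds $\gamma_3 < a_i \le \gamma_4$ follow: the positive lower bound was just established, while the upper bound is immediate from the crude estimate $\mathbb E [X^{2j}/(\beta R + X^2)] \le \mathbb E X^{2j}/(\beta c_1)$ valid on $R \ge c_1$.
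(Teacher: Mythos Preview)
Your reduction of $\mathbb E f(u)$ to a quadratic $h(\beta,R,s)=As^2+Bs+C$ in $s=\cos^2\theta$ is exactly what the paper does, and your formulas for $A,B$ match the paper's $c_1,c_2$ up to the normalization $\mathbb E f=\sqrt{2/\pi}\,R(c_1 s^2+2c_2 s+c_3)$.  The chain-rule consequences for $\mathbb E\partial_\theta f$ and $\mathbb E\partial_{\theta\theta}f$ are also correct.

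The substantial gap is in the two sign claims.  For the convexity $A>0$, you argue that $A/R^2=-(c+5)+(c^2+6c+3)M_c$ ``blows up at $c\downarrow 0$ and tends to $R^2\cdot 1$ at $c\to\infty$'' and conclude it is ``strictly positive throughout''.  First, the large-$c$ asymptotic is wrong: since $\mathbb E[X^4-6X^2+3]=0$ one has $A/R^2=O(1/c^2)\to 0$, not to a positive constant.  Second, and more importantly, positivity at the two ends of $(0,\infty)$ (even if both held) says nothing about positivity in between.  Likewise, for $2A+B<0$ you verify the inequality only at $R=1$ and then invoke ``continuity in $R$ together with compactness of $[c_1,c_2]$'' to get a uniform bound.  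Compactness upgrades a \emph{strict} pointwise inequality on the whole interval to a uniform one; it cannot propagate the sign from the single point $R=1$ to all $R\in[c_1,c_2]$, and the lemma is stated for arbitrary $0<c_1<c_2<\infty$, which need not even contain $R=1$.

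The paper closes both gaps by expressing $c_1$, $c_2$, and $c_1+c_2$ explicitly in terms of the complementary error function and then proving, for every $y=\sqrt{\beta R/2}>0$, the sharp two-sided inequalities
\[
\frac{y(5+2y^2)}{3+4y^2(3+y^2)}<e^{y^2}\int_y^\infty e^{-t^2}\,dt<\frac{1+y^2}{y(3+2y^2)}
\]
(via the asymptotic expansion of $e^{y^2}\operatorname{Erfc}(y)$ for $y\ge 4$ and rigorous numerics on $[0,4]$); see Lemma~\ref{lemH3.ad01}.  These inequalities are exactly what force $c_1>0$ and $c_1+c_2<0$ for \emph{all} $R>0$, after which your continuity/compactness step is legitimate.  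Your convexity observation does give a genuine economy---since $\partial_sh$ is affine in $s$, it suffices to show $\partial_sh(1)=2A+B<0$, and one does not need the paper's separate verification that $c_2<0$---but the hard analytic input ($A>0$ and $2A+B<0$ for all $\beta R>0$) cannot be bypassed.
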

\begin{proof}
We have
\begin{align*}
\mathbb E f (u)
&= \frac 1 {2\pi} \int_{\mathbb R^2}
\frac { ( R ( x \cos \theta + y \sin \theta)^2 - x^2)^2}
{ \beta R + x^2} e^{-\frac {x^2+y^2} 2} dx dy \notag \\
&= \frac 1 {\pi} \int_0^{\infty} 
\frac 1{\beta R+x^2}
e^{-\frac {x^2} 2 } \cdot \sqrt{2\pi} h_1(R, x,\cos^2\theta) dx,
\end{align*}
where
\begin{align*}
h_1(R,x,s)= 3 R^2 -2R x^2+x^4+s(6R^2-2Rx^2)(-1+x^2)
+R^2 s^2(3-6x^2+x^4)
\end{align*}
Integrating further in $x$ then gives 
\begin{align*}
\mathbb E f(u) =\sqrt{2\pi}
\cdot \frac 1 {\pi}
\cdot R \Bigl( c_1 s^2 +2 c_2 s +c_3\Bigr),\qquad s=\cos^2\theta, 
\end{align*}
where the value of $c_3$ is unimportant for us, and
\begin{align*}
&c_1= R \int_0^{\infty} \frac 1 {\beta R+x^2} e^{-\frac {x^2}2}(3-6x^2+x^4) dx;\notag \\
&c_2=  \int_0^{\infty}
\frac 1 {\beta R +x^2}e^{-\frac {x^2}2} (3R-x^2)(-1+x^2) dx.
\end{align*}
First we show that $c_2<0$. By a short computation, we have
\begin{align*}
c_2 =\frac {3+\beta}{2\beta}
\cdot \Bigl( \beta R \sqrt{2\pi} 
-e^{\frac {\beta R}2} \pi 
\sqrt{\beta R} (1+\beta R)
\operatorname{Erfc}( \sqrt{ \frac {\beta R} 2} ) \Bigr),
\end{align*}
where
\begin{align*}
\operatorname{Erfc}(y)= \frac 2 {\sqrt{\pi} } \int_y^{\infty} e^{-t^2} dt.
\end{align*}
We then reduce the matter to showing
\begin{align} \label{lemH3.001tmp}
y < e^{y^2} (1+2y^2) \int_y^{\infty} e^{-t^2} dt, \quad\forall\, y>0.
\end{align}
This follows easily from the usual bound on $\operatorname{Erfc}(y)$:
\begin{align} \label{lemH3.002tmp}
\frac 1 {y+\sqrt{y^2+2}}
<\operatorname{Erfc}(y) \cdot e^{y^2}\cdot \frac {\sqrt{\pi}}2 \le 
\frac 1 {y+\sqrt{y^2+\frac 4 {\pi} } }, \quad \forall\, y>0.
\end{align}
Thus $c_2<0$. 

Next we show that $c_1>0$.  We have
\begin{align*}
2\beta  c_1=
-\sqrt{2\pi} \beta R (5+\beta R)
+e^{\frac {\beta R}2} \pi
\sqrt{\beta R}
(3+\beta R (6+\beta R) ) \cdot \operatorname{Erfc}(
\sqrt{\frac {\beta R} 2} ).
\end{align*}
It amounts to checking
\begin{align*}
e^{y^2} \int_y^{\infty}
e^{-t^2} dt >\frac {y (5+2y^2)}
{3+4y^2(3+y^2)}, \quad\forall\, y>0.
\end{align*}
This follows from Lemma \ref{lemH3.ad01} below.

Finally we show $c_1+c_2<0$.  We have
\begin{align*}
&2(c_1+c_2) \notag \\
=&\;
\sqrt{2\pi} R (-2+\beta -\beta R)
-e^{\frac {\beta R}2}
\pi\cdot(-\beta^{\frac 32} R^{\frac 52}
+ (\beta R)^{\frac 32}+\sqrt{\beta R}
-3R \sqrt{\beta R})
\operatorname{Erfc}(\sqrt{\frac {\beta R}2}).
\end{align*}
Denote $y=\sqrt{\frac {\beta R}2}>0$. We then reduce matters to showing
\begin{align*}
2y^2 -2R (1+y^2)
<e^{y^2} \cdot 2y \cdot (-2y^2 R-3R +1+2y^2) \int_y^{\infty} e^{-t^2}dt.
\end{align*}
Since we have shown \eqref{lemH3.001tmp}, we then only need to check
\begin{align*}
1+y^2>e^{y^2} y (2y^2+3) \int_y^{\infty} e^{-t^2} dt.
\end{align*}
This in turn follows from Lemma \ref{lemH3.ad01}.

Finally we consider the polynomial
\begin{align*}
\tilde h(s) = c_1 s^2+2c_2 s.
\end{align*}
Since $\tilde h^{\prime}(s) = 2c_1 s+2c_2$ and $\tilde h^{\prime}(0)=2c_2<0$,
$\tilde h^{\prime}(1)=2c_1+2c_2<0$, we have $\tilde h^{\prime}(s)<0$ for all
$s\in[0,1]$. Since $c_1>0$, we have $\tilde h^{\prime\prime}(s)>0$. The desired
result then easily follows.
\end{proof}

\begin{lem} [Refined upper and lower bounds on the
Complementary Error function]\label{lemH3.ad01}
Let $\operatorname{Erfc}(x) = \frac 2 {\sqrt{\pi}}
\int_x^{\infty} e^{-t^2} dt$ for $x>0$. Then 
\begin{align*}
&e^{x^2} \cdot \operatorname{Erfc}(x) \cdot 
\frac {\sqrt{\pi}} 2
> \frac {x(5+2x^2)} {3+4x^2(3+x^2)}, \quad\forall\, x>0; \\
&e^{x^2} \cdot \operatorname{Erfc}(x) \cdot 
\frac {\sqrt{\pi}} 2
< \frac {1+x^2}{x(3+2x^2)}, \quad\forall\, x>0.
\end{align*}
\end{lem}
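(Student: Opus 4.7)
\medskip

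\noindent\textbf{Proof plan for Lemma \ref{lemH3.ad01}.}

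The plan is to reduce both inequalities to a single algebraic sign computation by exploiting the simple ODE satisfied by the Mills-ratio-type function
\begin{equation*}
F(x) \;=\; e^{x^2}\cdot\operatorname{Erfc}(x)\cdot \frac{\sqrt{\pi}}{2}
\;=\; e^{x^2}\int_x^{\infty} e^{-t^2}\,dt.
\end{equation*}
Differentiating under the integral gives $F'(x) = 2xF(x) - 1$, and the standard asymptotics $\operatorname{Erfc}(x)\sim e^{-x^2}/(x\sqrt{\pi})$ force $F(x)\to 0$ as $x\to\infty$.

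The key observation is the following. For any smooth rational function $R(x)$ with $R(\infty)=0$, set $g(x) = F(x)-R(x)$. Then
\begin{equation*}
g'(x) - 2xg(x) \;=\; 2xR(x) - 1 - R'(x).
\end{equation*}
Multiplying by the integrating factor $e^{-x^2}$ and integrating from $x$ to $\infty$ (using $g(\infty)=0$) yields the representation
\begin{equation*}
g(x) \;=\; -\,e^{x^2}\int_x^{\infty} e^{-t^2}\bigl(2tR(t)-1-R'(t)\bigr)\,dt.
\end{equation*}
Thus the sign of $F(x)-R(x)$ on $(0,\infty)$ is the opposite of the sign of $2xR(x)-1-R'(x)$, provided the latter has constant sign.

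With this in hand, the two inequalities reduce to purely algebraic checks. For the lower bound I take $P(x) = \frac{2x^3+5x}{4x^4+12x^2+3}$ and compute
\begin{equation*}
2xP(x)-1 \;=\; \frac{-2x^2-3}{4x^4+12x^2+3}, \qquad
P'(x) \;=\; \frac{-8x^6-36x^4-42x^2+15}{(4x^4+12x^2+3)^2}.
\end{equation*}
Combining these over the common denominator $(4x^4+12x^2+3)^2$, the polynomial in the numerator of $2xP(x)-1-P'(x)$ miraculously collapses to the constant $-24$, giving
\begin{equation*}
2xP(x)-1-P'(x) \;=\; \frac{-24}{(4x^4+12x^2+3)^2} \;<\; 0,
\end{equation*}
hence $F(x) > P(x)$. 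For the upper bound I take $Q(x) = \frac{x^2+1}{x(2x^2+3)}$ and perform the analogous calculation. Here $2xQ(x)-1 = -1/(2x^2+3)$ and $Q'(x) = -(2x^4+3x^2+3)/(x^2(2x^2+3)^2)$; the common-denominator collapse produces
\begin{equation*}
2xQ(x)-1-Q'(x) \;=\; \frac{3}{x^2(2x^2+3)^2} \;>\; 0,
\end{equation*}
which forces $F(x) < Q(x)$.

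There is no real obstacle once the integrating-factor identity is in place; the main thing to verify is the ``miraculous'' cancellation in the two numerator polynomials, which I would present as a direct expansion. As a sanity check one can note that both $P$ and $Q$ are Padé-type approximants of $F$ at $x=\infty$ agreeing with the asymptotic expansion $F(x)\sim \tfrac{1}{2x}-\tfrac{1}{4x^3}+\tfrac{3}{8x^5}-\cdots$ through order $x^{-5}$, which explains both why they sandwich $F$ and why the numerator of $2xR(x)-1-R'(x)$ collapses to such low degree (constant for $P$, constant for $Q$ after factoring out $1/x^2$).
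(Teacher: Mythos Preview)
Your proof is correct and takes a genuinely different route from the paper's. The paper argues via the alternating asymptotic expansion $F(x)\sim\sum_{k\ge 0}(-1)^k(\tfrac12)_k\, 2^{-1}x^{-(2k+1)}$: truncating at $m=4$ gives an upper bound and at $m=5$ a lower bound, and these truncated sums are compared algebraically with the rational bounds only for $x\ge 3$ (respectively $x\ge 4$); the remaining compact interval $[0,4]$ is then handled by an appeal to ``rigorous numerics.'' Your argument instead exploits the first-order ODE $F'=2xF-1$ together with the integrating factor $e^{-x^2}$, reducing each inequality to checking the sign of $2xR(x)-1-R'(x)$, which collapses to a constant over an explicit positive denominator in both cases. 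This is more elementary and more self-contained: it covers all $x>0$ in one stroke and dispenses with the numerical verification on a compact interval. The paper's asymptotic-series viewpoint, on the other hand, makes the Pad\'e-approximant structure of the bounds more transparent and explains where they come from, which is precisely the ``sanity check'' you mention at the end.
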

\begin{rem}
In the regime $y\ge 1$, one can check that the upper and lower bounds here are sharper than
\eqref{lemH3.002tmp}. 
One should also recall that the usual way
to derive the lower bound in \eqref{lemH3.002tmp} through
conditional expectation. Namely one can regard
$e^{-y^2}/(\sqrt{\pi} \operatorname{Erfc}(y) )$ as the conditional
mean $\mu_1(y)= \mathbb E(X|X>y)$ where $X$ has the p.d.f. 
$\frac 1 {\sqrt{\pi} } e^{-x^2}$. Then evaluating the variance
$\mathbb E ( (X-\mu_1)^2|X>y)>0$ gives $y\mu_1+\frac 12 -\mu_1^2>0$.
This  yields the upper bound for $\mu_1$ which in turn is the desired lower bound
in \eqref{lemH3.002tmp}. An interesting question is to derive a sharper two-sided
bounds via more careful conditioning. However we shall not dwell on this issue here.
\end{rem}
\begin{proof}[Proof of Lemma \ref{lemH3.ad01}]
We focus on the regime $x> 1$. 
By performing successive simple change of variables, we have
\begin{align*}
g(x):=e^{x^2} \int_x^{\infty}
e^{-t^2} dt 
& = \int_0^{\infty} e^{-2x s} e^{-s^2} ds \notag \\
&=\frac 1 {2x} \int_0^{\infty}
e^{-s} e^{- (\frac {s} {2x} )^2} ds \notag \\
&\sim \sum_{k=0}^{\infty} (-1)^k x^{-(2k+1)} \cdot 
\frac 12 \cdot \frac {(2k)!} {4^k k!} \notag \\
&\sim \sum_{k=0}^{\infty} (-1)^k x^{-(2k+1)} \cdot 
\frac 12 \cdot  \left(\frac 12\right)_k,
\end{align*}
where in the last line we adopted Pochhammer's symbol $(a)_n
=a(a+1)\cdots(a+n-1)$.  Note that the above is an asymptotic series, and it 
is not difficult to check that 
\begin{align*}
\Bigl|
g(x) - \sum_{k=0}^m
(-1)^k x^{-(2k+1)} \cdot 
\frac 12
\cdot \left( \frac 12 \right)_k
\Bigr|\le x^{-2m-3}\cdot \frac 12 \cdot 
\left( \frac 12 \right)_{m+1}, \qquad\forall\, m\ge 1, \forall\, x>0.
\end{align*}
Moreover, if $m$ is an even integer, then
\begin{align*}
g(x) <\sum_{k=0}^m
(-1)^k x^{-(2k+1)} \cdot 
\frac 12
\cdot \left( \frac 12 \right)_k, \qquad\forall\, x>0;
\end{align*}
and if $m$ is odd, then
\begin{align*}
g(x) >\sum_{k=0}^m
(-1)^k x^{-(2k+1)} \cdot 
\frac 12
\cdot \left( \frac 12 \right)_k, \qquad\forall\, x>0.
\end{align*}

Now taking $m=4$, we have
\begin{align*}
g(x)< \frac 12 x^{-1} -\frac 14 x^{-3}
+\frac 38 x^{-5} -\frac {15}{16} x^{-7}
+\frac{105}{32} x^{-9}.
\end{align*}
For $x\ge 3$, it is not difficult to verify that 
\begin{align*}
\frac 12 x^{-1} -\frac 14 x^{-3}
+\frac 38 x^{-5} -\frac {15}{16} x^{-7}
+\frac{105}{32} x^{-9} < \frac{1+x^2}{x(3+2x^2)}.
\end{align*}
Hence the upper bound is OK for $x\ge 3$. 

Next taking $m=5$, we have
\begin{align*}
g(x)> \frac 12 x^{-1} -\frac 14 x^{-3}
+\frac 38 x^{-5} -\frac {15}{16} x^{-7}
+\frac{105}{32}x^{-9}-\frac{945}{64}x^{-11}.
\end{align*}
It is not difficult to verify that for $x\ge 4$, we have
\begin{align*}
\frac 12 x^{-1} -\frac 14 x^{-3}
+\frac 38 x^{-5} -\frac {15}{16} x^{-7}
+\frac{105}{32}x^{-9}-\frac{945}{64}x^{-11}
>\frac{x(5+2x^2)}{3+12x^2+4x^4}.
\end{align*}
Hence the lower bound is OK for $x\ge 4$. 

Finally for the regime $x\in[0,4]$, we use rigorous numerics to verify the inequality.
Since we are on a compact interval, this can be done by a rigorous computation
with controllable numerical errors. 
\end{proof}


\begin{proof}[Proof of Lemma \ref{Sep19_e1}]
Again denote $X_k=a_k\cdot e_1$ and $Z_k=a_k\cdot \hat u$.
Without loss of generality we assume $\theta \in [\frac {\pi}2 -\eta,
\frac {\pi} 2+\eta]$ for some sufficiently small $\eta>0$. 
By a tedious computation, we have 
\begin{align*}
\partial_{\theta\theta} f
& = 4R^2 (1+2 \cos 2 \theta) \csc^2 \theta \frac 1m 
\sum_{k=1}^m \frac {Z_k^4} {\beta R+ X_k^2} 
- 24R^2 (\cot \theta \csc \theta) \frac 1m \sum_{k=1}^m
\frac { X_k Z_k^3} {\beta R+ X_k^2} \notag \\
& \quad +4R (\csc^2 \theta) (3R-\cos 2\theta) \frac 1m \sum_{k=1}^m
\frac{ Z_k^2 X_k^2} { \beta R+ X_k^2} 
+8 R (\cot \theta \csc \theta)  \frac 1m \sum_{k=1}^m
\frac { X_k^3 Z_k} { \beta R + X_k^2} \notag \\
& \qquad\qquad
-4R \csc^2 \theta\frac 1m \sum_{k=1}^m \frac {X_k^4}{\beta R+ X_k^2}.
\end{align*} 
Note that the third, fourth and fifth terms are OK for union bounds. The second and the
first term
can be handled in a similar way as in the proof of  Lemma \ref{Sep19_e0}. The only difference
is that the sign is now negative in the regime $\theta \to \frac {\pi}2$.  Using Lemma
\ref{lemSeH:3} it follows that $\partial_{\theta\theta} f <0$ in this regime. 
We omit the
repetitive details.
\end{proof}

\begin{proof}[Proof of Theorem \ref{Sep19e8}]
Without loss of generality we consider the regime $\| u-e_1\|_2 \ll 1$. 
Before we work out the needed estimates for the restricted convexity, we explain
the main difficulty in connection with the full Hessian matrix. 
Denote $X_k =a_k \cdot e_1$.  Then for any $\xi \in \mathbb S^{n-1}$, we have
\begin{align}
H_{\xi\xi} & = \sum_{i,j} \xi_i \xi_j  (\partial_{u_i u_j} f)(u) \notag \\
&= 12 \frac 1m \sum_{k=1}^m \frac {(a_k\cdot \xi)^2 (a_k \cdot u)^2}{
\beta |u|^2 + X_k^2}    \label{Sep19e8.1a} \\
& \quad-4 \frac 1m \sum_{k=1}^m \frac { (a_k \cdot \xi)^2 X_k^2} { \beta |u|^2+
X_k^2}  \label{Sep19e8.1b} \\
&\quad -16 \beta \frac 1 m\sum_{k=1}^m
\frac { (a_k\cdot u)^3 (a_k\cdot \xi ) (u \cdot \xi) } { (\beta |u|^2 + X_k^2)^2}
\label{Sep19e8.1c} \\
& \quad + 16\beta \frac 1m
\sum_{k=1}^m\frac {  X_k^2 (a_k\cdot u) (a_k\cdot \xi) (u\cdot \xi)}
{(\beta |u|^2 + X_k^2)^2} \label{Sep19e8.1d} \\
& \quad- 2\beta \frac 1m \sum_{k=1}^m 
\frac { ( (a_k\cdot u)^2 -X_k^2)^2} { (\beta |u|^2 + X_k^2)^2}  
\label{Sep19e8.1e} \\
& \quad +8\beta^2 (\xi \cdot u)^2
\frac 1m \sum_{k=1}^m 
\frac { ( (a_k\cdot u)^2 -X_k^2)^2} { (\beta |u|^2 + X_k^2)^3}.
\label{Sep19e8.1f}
\end{align}

First observe that if $u=e_1$, then the Hessian can be controlled rather easily thanks
to the damping $\beta |u|^2+ X_k^2$.

On the other hand, for $u\ne e_1$, 
as far as the lower bound is concerned, 
the main difficult terms are \eqref{Sep19e8.1e} and
\eqref{Sep19e8.1c} which are out of control if we do not impose any
condition on $\xi$ (i.e. using \eqref{Sep19e8.1a} to control it). On the other hand,
if we restrict $\xi$ to the direction $ u -e_1$, then we can control these difficult terms by using
the main good term \eqref{Sep19e8.1a}. Namely, introduce the decomposition
\begin{align*}
u= e_1 +t \xi,  
\end{align*}
where $t= \| u-e_1\|_2 \ll 1$.  Then for  \eqref{Sep19e8.1c} we write 
\begin{align*}
(a_k\cdot u)^3 (a_k\cdot \xi) = (a_k\cdot u)^2 (a_k\cdot e_1) (a_k\cdot \xi)
+ t (a_k\cdot u)^2 (a_k\cdot \xi)^2
\end{align*}
Since $t \ll 1$, the term $t (a_k\cdot u)^2 (a_k\cdot e_1)^2$ (together with the
pre-factor term in \eqref{Sep19e8.1c})  can be included into 
\eqref{Sep19e8.1a} which still has a good lower bound by using localization. 
On the other hand, the term $(a_k\cdot u)^2 (a_k\cdot e_1) (a_k\cdot \xi)$
can be split as
\begin{align}
&(a_k\cdot u)^2 (a_k\cdot e_1) (a_k\cdot \xi) \notag \\
= &(a_k\cdot u)^2 (a_k\cdot e_1) (a_k\cdot \xi) \phi( \frac {a_k \cdot u} K) \label{Sp19e8.2a}
\\
& \quad + (a_k\cdot u)^2 (a_k\cdot e_1) (a_k\cdot \xi) \Bigl( 1-
\phi(\frac {a_k\cdot u} K) \Bigr), \label{Sp19e8.2b}
\end{align}
where $\phi$ is a smooth cut-off function satisfying $0\le \phi(z)\le 1$ for all $z\in \mathbb R$,
$\phi(z)=1$ for $|z|\le 1$ and $\phi(z)=0$ for $|z|\ge 2$. Clearly the contribution of
\eqref{Sp19e8.2a} in \eqref{Sep19e8.1c} is OK for union bounds. On the other hand,
for \eqref{Sp19e8.2b} we have
\begin{align*}
&(a_k\cdot u)^2 |a_k\cdot e_1| |a_k\cdot \xi| \cdot \Bigl( 1-
\phi(\frac {a_k\cdot u} K) \Bigr) \notag \\
\le &\;
(a_k\cdot u)^2 \epsilon (a_k\cdot \xi)^2 
+\epsilon^{-1} (a_k\cdot u)^2 (a_k\cdot e_1)^2 
\Bigl( 1- \phi( \frac {a_k\cdot u} K) \Bigr).
\end{align*}
Clearly this is under control (the first term can again be controlled using \eqref{Sep19e8.1a}). 

Now we turn to \eqref{Sep19e8.1e}. The main term is $(a_k\cdot u)^4$.  We write
\begin{align*}
(a_k\cdot u)^2 (a_k\cdot u)^2
= (a_k\cdot u)^2 ( a_k\cdot e_1)^2 
+ t^2 (a_k\cdot u)^2  (a_k\cdot \xi)^2
+ 2t (a_k\cdot u)^2 (a_k\cdot e_1) (a_k\cdot \xi).
\end{align*}
Clearly then this is also under control. 

By further using localization, we can then show that with high probability,
it holds that 
\begin{align*}
H_{\xi\xi} \ge \mathbb E H_{\xi \xi} + \operatorname{Error},
\end{align*}
where $|\operatorname{Error}| \ll 1$.  The desired conclusion then follows
from Lemma \ref{Sp20.1}.
\end{proof}
\begin{rem}
Introduce the parametrization $u =\sqrt R ( e_1 \cos \theta + e^{\perp} \sin \theta)$ where
$e^{\perp} \in e_1=0$, $|R-1| \ll1 $ and $ |\theta| \ll 1$. One might hope to prove that
the Hessian matrix 
\begin{align*}
\begin{pmatrix}
\partial_{RR} f \quad \partial_{R\theta} f \\
\partial_{R\theta}f \quad \partial_{\theta\theta} f
\end{pmatrix}
\end{align*}
is positive definite near $u=e_1$ under the mere assume $m\gtrsim n$ and with
high probability. However there is a subtle issue which we explain as follows.
Consider the main term  (write $X=a_k\cdot e_1$ and $Y=a_k\cdot e^{\perp}$)
\begin{align*}
\tilde f=\tilde f_k = \frac { \Bigl( R (X\cos \theta + Y \sin \theta)^2 -X^2 \Bigr)^2}
{\beta R+ X^2}.
\end{align*}
The most troublesome piece come from quartic and cubic terms in $Y$, and we consider
\begin{align*}
\tilde h_1= \frac {R^2  Y^4 \sin^4 \theta  }
{\beta R+X^2},
\quad \tilde h_2=
 \frac {R^2 \Bigl(  4 Y^3 X\sin^3 \theta \cos \theta \Bigr)}
{\beta R+X^2}.
\end{align*}
For $\tilde h_2$ we do not have a favorable sign and the only hope is to control it via $\tilde h_1$.
On the other hand, for $\tilde h_1$,  we can take $X=Y=1$, $\beta=1$, and compute
\begin{align*}
(\partial_{RR} \tilde h_1 ) \cdot (\partial_{\theta\theta} \tilde h_1)
- (\partial_{R\theta} \tilde h_1)^2
= - \frac {8R^2} {(1+R)^4}
\sin^6 \theta
\cdot \Bigl( 3+4R +R^2+ (2+4R+R^2) \cos 2\theta \Bigr).
\end{align*}
In yet other words, the sign is not favorable and this renders the Hessian out of control
(before taking the expectation).
\end{rem}

\begin{lem} \label{Sp20.1}
Let $u= e_1 +t \xi$ where $\xi \in \mathbb S^{n-1}$. Then for $|t| \ll 1$, we have
\begin{align*}
\mathbb E \partial_{tt} f(u) \ge c_0>0, \qquad\forall\, \xi \in \mathbb S^{n-1},
\end{align*}
where $c_0>0$ is a constant depending only on $\beta$.
\end{lem}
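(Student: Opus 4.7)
The plan is to directly compute $\partial_{tt} f(u)$ at $t=0$, take expectation, show it is positive uniformly in $\xi$, and then extend to a neighborhood $|t| \ll 1$ by continuity.

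First I would exploit rotational invariance: writing $s = \xi \cdot e_1 \in [-1,1]$ and decomposing $\xi = s e_1 + \sqrt{1-s^2}\, e^{\perp}$ with $e^{\perp} \perp e_1$ and $\|e^{\perp}\|_2=1$, the quantity $\mathbb{E} f(e_1+t\xi)$ depends only on $(t,s)$ since $a\sim \mathcal N(0,I_n)$. Denote $X = a\cdot e_1$, $Y = a \cdot e^{\perp}$, which are independent standard Gaussians. Write $u = e_1+t\xi$, so $a\cdot u = X + t(sX+\sqrt{1-s^2}Y)$ and $|u|^2 = 1+2ts+t^2$. Plugging in,
\begin{align*}
f(u) = \frac{1}{m}\sum_{k=1}^m \frac{N_k(t)}{D_k(t)}, \quad
N_k(t) = \bigl(2tX_k(a_k\cdot\xi) + t^2(a_k\cdot\xi)^2\bigr)^2,\quad
D_k(t) = \beta(1+2ts+t^2)+X_k^2.
\end{align*}

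Since $N_k(0)=N_k'(0)=0$, the quotient rule yields the clean formula
\begin{align*}
\partial_{tt} f(u)\big|_{t=0} = \frac{1}{m}\sum_{k=1}^m \frac{N_k''(0)}{D_k(0)} = \frac{8}{m}\sum_{k=1}^m \frac{X_k^2(a_k\cdot\xi)^2}{\beta+X_k^2}.
\end{align*}
Taking expectation and expanding $(a\cdot\xi)^2 = s^2 X^2 + 2s\sqrt{1-s^2}\,XY+(1-s^2)Y^2$, the cross term vanishes by $\mathbb{E} Y=0$ and $X,Y$ independent, giving
\begin{align*}
\mathbb{E} \partial_{tt} f(u)\big|_{t=0} = 8 s^2\, \mathbb{E} \frac{X^4}{\beta+X^2} + 8(1-s^2)\,\mathbb{E} \frac{X^2}{\beta+X^2}.
\end{align*}
Both coefficients are strictly positive constants depending only on $\beta$, so the right-hand side is bounded below, uniformly in $s\in[-1,1]$, by $8\min\!\bigl(\mathbb{E}\tfrac{X^4}{\beta+X^2},\,\mathbb{E}\tfrac{X^2}{\beta+X^2}\bigr) =: 2c_0 > 0$.

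To pass from $t=0$ to $|t|\ll 1$, I would note that $G(t,s):= \mathbb{E}\partial_{tt} f(e_1+t\xi)$ depends only on $(t,s)\in [-1/2,1/2]\times[-1,1]$ by rotational invariance, and is continuous there since $\beta R + X^2 \ge \beta R$ makes all integrands smooth with integrable dominating functions. Uniform continuity on this compact set gives $\delta>0$ such that $G(t,s)\ge c_0$ for $|t|\le \delta$, uniformly in $s$, which is the desired conclusion. There is no genuine obstacle here: the main thing to be careful about is the Taylor expansion at $t=0$ (to verify that $N_k$ really does vanish to second order and thus the quotient rule collapses to $N_k''(0)/D_k(0)$) and the observation that the problem is effectively one-dimensional in $s$, which is what makes the lower bound hold \emph{uniformly} in the high-dimensional direction $\xi$.
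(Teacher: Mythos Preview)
Your proposal is correct and follows essentially the same approach as the paper: both reduce to a two-dimensional Gaussian integral via the parametrization $\xi = s e_1 + \sqrt{1-s^2}\,e^{\perp}$, compute $\partial_{tt}$ at $t=0$ to obtain $8X^2(sX+\sqrt{1-s^2}Y)^2/(\beta+X^2)$, and then extend to small $|t|$ by a continuity/perturbation argument (the paper phrases this as boundedness of $\mathbb{E}\partial_{ttt}f$, you as uniform continuity on a compact $(t,s)$ set). Your explicit splitting into the convex combination $8s^2\,\mathbb{E}\tfrac{X^4}{\beta+X^2}+8(1-s^2)\,\mathbb{E}\tfrac{X^2}{\beta+X^2}$ is a slightly cleaner way of justifying the uniform lower bound than the paper's terse ``$\gtrsim 1$''.
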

\begin{proof}[Proof of Lemma \ref{Sp20.1}]
Introduce the parametrization $\xi =se_1 +\sqrt{1-s^2} e^{\perp}$ 
where $e^{\perp} \cdot e_1=0$, $|s|\le 1$.  Then
$u=e_1+ t (se_1+ \sqrt{1-s^2} e^{\perp})=(1+ts)e_1+ t\sqrt{1-s^2} e^{\perp}$.
Thus
\begin{align*}
\mathbb E f(u)
= \frac 1 {2\pi}
\int_{\mathbb R^2}
\underbrace{\frac { \Bigl( ( (1+ts)x + t \sqrt{1-s^2} y)^2 -x^2 \Bigr)^2}
{\beta (1+2ts +t^2) +x^2} }_{=:h(t,s,x,y)} e^{-\frac {x^2+y^2} 2} dx dy.
\end{align*}
It is not difficult to check that
\begin{align*}
\partial_{tt} h(t,s,x,y)\Bigr|_{t=0}= 
\frac { 8 x^2 (s x + \sqrt{1-s^2} y)^2} {\beta +x^2}.
\end{align*}
Thus it follows that
 \begin{align*}
 \mathbb E \partial_{tt} f(u) \Bigr|_{t=0,|s|\le 1} \gtrsim 1.
 \end{align*}
 The desired result then follows by a simple perturbation argument using the fact
 that $\mathbb E\partial_{ttt}f$ is uniformly bounded and taking $|t|$ sufficiently
 small. 
\end{proof}

\section{Technical estimates for Section  \ref{S:model4c}}
\begin{lem} \label{lemSeI:1}
Let $u=\sqrt R \hat u$ with $0<c_1\le R \le c_2 <\infty$ and $\hat u \in \mathbb S^{n-1}$.
Assume $\hat u = \cos \theta e_1+ \sin \theta e^{\perp}$, where
$\theta \in [0, {\pi}]$ and $e^{\perp} \in \mathbb S^{n-1}$ satisfies
$e^{\perp} \cdot e_1=0$. We have
\begin{align*}
&\mathbb E \partial_{\theta} f =  a_1(\beta_1,\beta_2, R,\theta) \sin (2\theta); 
\end{align*}
where  
\begin{align*}
\gamma_1< a_1(\beta_1,\beta_2, R ,\theta ) \le \gamma_2, \quad\forall\, \theta\in [0,\pi],\,
c_1\le R\le c_2;
\end{align*}
and $\gamma_1>0$, $\gamma_2>0$ are constants depending only 
on ($\beta_1$, $\beta_2$, $c_1$, $c_2$).
Furthermore for some sufficiently small constants
$\theta_0=\theta_0(\beta_1,\beta_2,c_1,c_2)>0$, $\theta_1=\theta_1(\beta_1,
\beta_2,c_1,c_2)>0$, we have
\begin{align*}
 &\gamma_3 < \mathbb E \partial_{\theta\theta} f < \gamma_4,
 \qquad \text{if $0\le \theta \le \theta_0$ or $\pi -\theta_0 \le \theta \le \pi$}, \\
 & \gamma_5< - \mathbb E \partial_{\theta\theta} f <\gamma_6,
 \qquad \text{if $\Bigl|\theta-\frac {\pi}2 \Bigr| <\theta_1$},
 \end{align*}
 where $\gamma_i>0$, $i=3,\cdots, 6$ depend only on ($\beta_1$, $\beta_2$,
 $c_1$, $c_2$).
\end{lem}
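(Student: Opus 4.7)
\medskip

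\noindent\textbf{Proof plan.} The plan is to reduce the statement to an estimate on a function of only two real variables, $(R,s)$ with $s=\cos^2\theta$, and then use Gaussian integration by parts to obtain a workable expression for the $s$–derivative.

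\emph{Step 1 (symmetry reduction).} Write $X=a\cdot e_1$, $Y=a\cdot e^{\perp}$ and $Z=a\cdot \hat u=\cos\theta\, X+\sin\theta\, Y$. Then $(X,Y)$ is a standard bivariate Gaussian on $\mathbb R^2$, and $(X,Z)$ is bivariate normal with correlation $\rho=\cos\theta$. Since the integrand of $f(u)$ depends only on $X^2$ and $Z^2$, the involution $(X,Z)\mapsto(X,-Z)$ leaves it invariant while reversing the sign of $\rho$; hence $F(R,\rho):=\mathbb E f$ is even in $\rho$ and $F(R,\rho)=h(R,s)$ with $s=\rho^2$.

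\emph{Step 2 (chain rule).} Differentiating, $\mathbb E\partial_\theta f=-\sin(2\theta)\,\partial_s h$, which is exactly the first claim with $a_1(\beta_1,\beta_2,R,\theta)=-\partial_s h(R,s)$. A second differentiation yields
\begin{equation*}
\mathbb E\partial_{\theta\theta}f \;=\; -2\cos(2\theta)\,\partial_s h \;+\;\sin^2(2\theta)\,\partial_{ss}h.
\end{equation*}
Hence, provided one can show $-\gamma_2\le\partial_s h\le -\gamma_1<0$ and $|\partial_{ss}h|\le\gamma_\ast$ uniformly on $[c_1,c_2]\times[0,1]$, both halves of the lemma follow: near $\theta=0$ or $\theta=\pi$ the first term $-2\cos(2\theta)\partial_s h\approx -2\partial_s h>0$ dominates the $\sin^2(2\theta)$ remainder, giving $\gamma_3<\mathbb E\partial_{\theta\theta}f<\gamma_4$; near $\theta=\pi/2$ the sign of $\cos(2\theta)$ flips, so $-2\cos(2\theta)\partial_s h\approx 2\partial_s h<0$ dominates, giving $\gamma_5<-\mathbb E\partial_{\theta\theta}f<\gamma_6$.

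\emph{Step 3 (key identity for $\partial_\rho F$).} Using $\partial_\rho\phi_\rho=\partial_x\partial_z\phi_\rho$ for the bivariate Gaussian density $\phi_\rho$ and integrating by parts twice, one has
\begin{equation*}
\partial_\rho F(R,\rho)\;=\;\mathbb E_\rho\!\bigl[\partial_X\partial_Z\Psi(X,Z)\bigr],\qquad \Psi=\frac{(RZ^2-X^2)^2}{R+\beta_1 R Z^2+\beta_2 X^2}.
\end{equation*}
A direct (lengthy but entirely elementary) algebraic computation — mirroring the one carried out in the proof of the analogous factorization in Theorem~\ref{Sep19e8} — yields the clean identity
\begin{equation*}
\partial_X\partial_Z\Psi \;=\; -\,\frac{8 R^2\,XZ\,\bigl(R+(\beta_1+\beta_2)X^2\bigr)\bigl(1+(\beta_1+\beta_2)Z^2\bigr)}{\bigl(R+\beta_1 R Z^2+\beta_2 X^2\bigr)^{3}},
\end{equation*}
so that $\partial_\rho F=-8R^2\,\mathbb E_\rho[XZ\,P(X,Z)]$ with $P>0$. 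Since $F$ is even in $\rho$, the quantity $\mathbb E_\rho[XZ P]$ is odd in $\rho$, and one may write $\mathbb E_\rho[XZ P]=\rho\,T(R,s)$; then $\partial_s h=-4R^2\,T(R,s)$.

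\emph{Step 4 (uniform positivity and boundedness of $T$).} This is the main obstacle. Parametrize $X=\rho Z+\sqrt{1-\rho^2}\,Z'$ with $Z'$ an independent standard normal. Splitting
\begin{equation*}
\mathbb E[XZ P]\;=\;\rho\,\mathbb E\bigl[Z^2 P\bigr]\;+\;\sqrt{1-\rho^2}\,\mathbb E[Z Z' P],
\end{equation*}
the first piece is manifestly positive and, together with a lower bound on the integrand $Z^2 P$ on a fixed Gaussian set of positive measure (depending only on $\beta_1,\beta_2,c_1,c_2$), provides the leading positive contribution $\rho\cdot\gamma_1$. For the cross term one applies Stein's identity in $Z'$ to obtain $\mathbb E[Z Z' P]=\sqrt{1-\rho^2}\,\mathbb E[Z\partial_X P]$, and once more Stein in $Z$ to convert it into an expectation of (signed) second derivatives of $P$; these terms are uniformly bounded by a constant multiple of $|\rho|$ (using that the denominator $R+\beta_1 RZ^2+\beta_2 X^2$ is bounded below by a positive multiple of $1+X^2+Z^2$), and the constant can be made smaller than the leading bound by choosing the cut-offs appropriately. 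The uniform upper bound on $T$ follows from integrability of the integrand against the Gaussian measure, uniformly for $R\in[c_1,c_2]$. The bound $|\partial_{ss}h|\le\gamma_\ast$ is obtained analogously by applying $\partial_\rho^2\mathbb E_\rho[\cdot]=\mathbb E_\rho[\partial_X^2\partial_Z^2\cdot]$ and bounding the resulting integrand.

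The algebraic miracle making the whole approach tractable is the factorization of $\partial_X\partial_Z\Psi$ in Step~3; the hardest analytic step is Step~4, where one must argue that the Stein correction terms (which are themselves of order $\rho$) cannot cancel the leading positive contribution $\rho\,\mathbb E[Z^2 P]$ — this is where the precise structure $P>0$ together with the bounded denominator are essential.
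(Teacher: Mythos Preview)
Your Steps~1--3 are correct and take a genuinely different route from the paper. The paper does not invoke Price's identity $\partial_\rho\phi_\rho=\partial_X\partial_Z\phi_\rho$; instead it works in the $(x,y)$ coordinates and uses the rotational relation
\[
\partial_\theta\bigl(h(x\cos\theta+y\sin\theta,\,x^2)\bigr)=(y\partial_x-x\partial_y)h-2xy\,(\partial_b h),
\qquad h(a,b)=\frac{(Ra^2-b)^2}{R+\beta_1Ra^2+\beta_2 b}.
\]
The rotational part integrates to zero against the Gaussian, leaving $\mathbb E\,\partial_\theta f=-\frac{1}{\pi}\int xy\,(\partial_b h)\,e^{-(x^2+y^2)/2}\,dx\,dy$. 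Symmetrizing over the quadrant $x,y>0$ and applying the fundamental theorem of calculus on the segment from $(x\cos\theta+y\sin\theta)^2$ to $(x\cos\theta-y\sin\theta)^2$ produces directly a factor $x^2y^2\sin(2\theta)$ multiplied by $-\partial_{ab}h_1>0$, where $h_1(a,b)=\frac{(Ra-b)^2}{R+\beta_1Ra+\beta_2 b}$. Your factorization of $\partial_X\partial_Z\Psi$ in Step~3 is precisely $4XZ\,(\partial_{ab}h_1)(Z^2,X^2)$, so the two arguments share the same algebraic core; the paper's packaging simply delivers the positivity of $a_1$ without any further analysis.

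The genuine gap is Step~4. After Stein in $Z'$ and then in $Z$ your cross term becomes $(1-\rho^2)\bigl(\rho\,\mathbb E[\partial_{XX}P]+\mathbb E_\rho[\partial_{XZ}P]\bigr)$, and while parity plus one more application of Price shows $\mathbb E_\rho[\partial_{XZ}P]=O(\rho)$, this only yields $T=\mathbb E[Z^2P]+(1-\rho^2)\,C(R,\rho)$ with a \emph{fixed} bounded $C$. Nothing here prevents $C$ from being negative and large enough in magnitude to kill $\mathbb E[Z^2P]$; the phrase ``choosing the cut-offs appropriately'' has no referent, since no truncation was introduced. A short fix within your framework: because $P(X,Z)>0$ is even in each variable separately, write
\[
2\,\mathbb E_\rho[XZ\,P]=\int_{\mathbb R^2} xz\,P(x,z)\,\bigl(\phi_\rho(x,z)-\phi_{-\rho}(x,z)\bigr)\,dx\,dz,
\]
and observe that $\phi_\rho-\phi_{-\rho}$ has the sign of $\rho xz$ (immediate from the bivariate density). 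Hence the integrand is nonnegative for $\rho>0$ and strictly positive on a set of positive measure, giving $T>0$ uniformly on $[c_1,c_2]\times[0,1]$. The upper bound on $T$ and the bound on $\partial_{ss}h$ then follow from the uniform integrability you already indicate, and your Step~2 conclusion for $\mathbb E\,\partial_{\theta\theta}f$ goes through.
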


\begin{proof}
We have
\begin{align*}
\mathbb E f(u)
= \frac 1 {2\pi}
\int_{\mathbb R^2}
\frac { \Bigl( R(x\cos \theta +y \sin \theta)^2 -x^2 \Bigr)^2}
{R + \beta_1R(x\cos \theta+y\sin \theta)^2+\beta_2 x^2}
e^{-\frac{x^2+y^2}2} 
dx dy.
\end{align*}
Denote 
\begin{align*}
h(a,b)= \frac {( R a^2 -b)^2} {R +\beta_1 R a^2 +\beta_2 b}.
\end{align*}
Then
\begin{align*}
& \partial_{\theta}
\Bigl( h(x\cos \theta +y \sin \theta, x^2) \Bigr)
= (-x \sin \theta +y \cos \theta) \partial_a h; \\
&\partial_x \Bigl( h (x\cos \theta+y \sin \theta, x^2) 
\Bigr) = \partial_a h \cdot \cos \theta + 2x \partial_b h;\\
& \partial_y \Bigl( h (x\cos \theta+y \sin \theta, x^2) 
\Bigr) = \partial_a h \cdot \sin \theta ;\\
& \partial_{\theta}
\Bigl( h(x\cos \theta +y \sin \theta, x^2) \Bigr)
= (y\partial_x- x\partial_y) 
\Bigl( h(x\cos \theta +y \sin \theta, x^2) \Bigr)
-2xy \partial_b h.
\end{align*}
By using integration by parts, we then obtain
\begin{align*}
\mathbb E \partial_{\theta} f 
&=\frac 1{\pi} \int_{\mathbb R^2}
(-xy) (\partial_b h)(x\cos\theta+y\sin \theta, x^2) 
e^{-\frac {x^2+y^2} 2} dx dy \notag \\
&=\frac 2 {\pi}
\int_{x>0, y>0}
\Bigl( (\partial_b h)(x\cos \theta-y \sin \theta,x^2)
-(\partial_b h)(x\cos \theta+y \sin \theta, x^2)
\Bigr) xy e^{-\frac {x^2+y^2}2} dx dy.
\end{align*}
Now denote
\begin{align*}
h_1(a,b)= \frac {( R a -b)^2} {R +\beta_1 R a+\beta_2 b}.
\end{align*}
It is not difficult to check that for
$a\ge 0$, $b\ge 0$, $\beta_1,\beta_2>0$, $R>0$,
\begin{align*}
\partial_{ab} h_1 =
- 2R^2 \frac { (1+a(\beta_1+\beta_2)) \cdot ( b (\beta_1+\beta_2)+R) }
{ (\beta_2 b +R + \beta_1 a R)^3}<0.
\end{align*}
Observe that 
\begin{align*}
(\partial_b h)(a,b)= (\partial_b h_1)(a^2,b).
\end{align*}
Then if $x,y>0$ and $\theta \in [0, {\pi}]$, then
\begin{align*}
& (\partial_b h)(x\cos \theta-y \sin \theta,x^2)
-(\partial_b h)(x\cos \theta+y \sin \theta, x^2) \notag \\
= &
(\partial_b h_1 )( (x\cos \theta-y \sin \theta)^2,x^2)
-(\partial_b h_1)( (x\cos \theta+y \sin \theta)^2, x^2)\notag \\
=&  - 2\int_0^1 (\partial_{ab} h_1) \Bigl(
(x\cos \theta+y \sin \theta)^2 -4\tau xy \cos\theta \sin \theta
, x^2\Bigr) d\tau \cdot xy \cdot \sin(2\theta).
\end{align*}
Integrating in $x$ and $y$, we then obtain
\begin{align*}
&\mathbb E \partial_{\theta} f =  a_1(\beta_1,\beta_2, R,\theta) \sin (2\theta),
\end{align*}
 where $a_1 \sim 1$ and is a smooth function of $\theta$. Differentiating in $\theta$ then
 gives
 \begin{align*}
&\mathbb E \partial_{\theta\theta} f =  2a_1(\beta_1,\beta_2, R,\theta) \cos (2\theta)
+ \partial_{\theta} a_1(\beta_1,\beta_2, R, \theta) \sin (2\theta).
\end{align*}
Then second term clearly vanishes near $\theta=0, \, \frac {\pi}2, \pi$. 
Thus the desired estimate for $\mathbb E \partial_{\theta\theta} f$ follows.
\end{proof}

\begin{lem}[Strong convexity of $\mathbb E f$ when $\| u\pm e_1\| \ll 1$]
\label{lemSeI:2}
Let $h(u)=\mathbb E f (u)$.
There exists $0<\epsilon_0\ll 1$ such that the following hold:
\begin{enumerate}
\item If $\| u-e_1\|_2 \le \epsilon_0$, then for any $\xi \in \mathbb S^{n-1}$, we have
\begin{align*}
\sum_{i,j=1}^n \xi_i \xi_j 
(\partial_i \partial_j h)(u) \ge \gamma_1 >0,
\end{align*}
where $\gamma_1$ is a constant. 

\item If $\| u+ e_1\|_2 \le \epsilon_0$, then for any $\xi \in \mathbb S^{n-1}$, we have
\begin{align*}
\sum_{i,j=1}^n \xi_i \xi_j 
(\partial_i \partial_j h)(u) \ge \gamma_1 >0.
\end{align*}
\end{enumerate}
\end{lem}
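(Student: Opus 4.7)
The plan is to compute the Hessian of $h(u) = \mathbb E f(u)$ exactly at $u = \pm e_1$, show it is strictly positive definite with a lower bound in every direction $\xi \in \mathbb S^{n-1}$ depending only on $(\beta_1,\beta_2)$, and then propagate this positivity to a full neighborhood by continuity of $\nabla^2 h$ in $u$. Note that the statement is purely deterministic (the expectation has already been taken), so no concentration or covering arguments are needed.

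First I would exploit the fact that at $u = e_1$ the numerator $((a\cdot u)^2 - (a\cdot e_1)^2)^2$ vanishes to second order. Writing $u = e_1 + t\xi$ with $\xi \in \mathbb S^{n-1}$, one has
\[
(a\cdot u)^2 - (a\cdot e_1)^2 \;=\; 2t (a\cdot e_1)(a\cdot\xi) + t^2 (a\cdot\xi)^2,
\]
while the denominator at $t=0$ equals $1+(\beta_1+\beta_2)(a\cdot e_1)^2$. Differentiating twice in $t$ under the expectation (justified below) gives
\[
\xi^\top (\nabla^2 h)(e_1) \xi \;=\; 8\, \mathbb E\!\left[\frac{(a\cdot e_1)^2 (a\cdot\xi)^2}{1+(\beta_1+\beta_2)(a\cdot e_1)^2}\right].
\]

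Next I would decompose $\xi = s e_1 + \sqrt{1-s^2}\,e^\perp$ with $e^\perp\cdot e_1 = 0$ and set $X = a\cdot e_1$, $Y = a\cdot e^\perp$, which are independent standard Gaussians. The cross term $s\sqrt{1-s^2}\, X^3 Y$ has zero mean, so
\[
\xi^\top (\nabla^2 h)(e_1)\xi \;=\; 8 s^2\, \mathbb E\!\left[\frac{X^4}{1+cX^2}\right] + 8(1-s^2)\, \mathbb E\!\left[\frac{X^2}{1+cX^2}\right],
\]
where $c := \beta_1 + \beta_2$. Both expectations are strictly positive constants depending only on $c$, so their minimum furnishes a lower bound $2\gamma_1(\beta_1,\beta_2) > 0$ that is uniform in $s \in [-1,1]$, hence in $\xi \in \mathbb S^{n-1}$.

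Finally, to extend to the full neighborhood $\|u - e_1\|_2 \le \epsilon_0$ I would appeal to continuity of $\nabla^2 h$ in $u$. For $\|u-e_1\|_2 \le \tfrac14$ the denominator of the integrand satisfies $|u|^2 + \beta_1(a\cdot u)^2 + \beta_2 (a\cdot e_1)^2 \ge \tfrac{9}{16}$, so the integrand and its first three $u$-derivatives are dominated by polynomial-in-$a$ quantities with finite Gaussian moments. Dominated convergence then ensures that $\nabla^2 h(u)$ is continuous in $u$, and since $\mathbb S^{n-1}$ is compact, there exists $\epsilon_0 = \epsilon_0(\beta_1,\beta_2) > 0$ such that $\xi^\top \nabla^2 h(u) \xi \ge \gamma_1$ for all $\xi \in \mathbb S^{n-1}$ and all $u$ with $\|u-e_1\|_2 \le \epsilon_0$. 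The case $\|u+e_1\|_2 \le \epsilon_0$ is identical by the symmetry $h(u) = h(-u)$. The only mild technical point is verifying the integrable majorants needed to differentiate under the expectation and to transfer continuity; this is routine given the uniform lower bound on the denominator and finiteness of all Gaussian moments, so I do not expect any serious obstacle.
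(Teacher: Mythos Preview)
Your proof is correct and more direct than the paper's. The paper parametrizes $u = \rho(\sqrt{1-s^2}\,e_1 + s e^\perp)$, writes $h_1(\rho,s) = h(u)$, and computes $\partial_{\rho\rho}h_1(1,0)$ explicitly while invoking Lemma~\ref{lemSeI:1} for $\partial_{ss}h_1(1,0)$; it then defers the passage from positivity of the $(\rho,s)$-Hessian back to the full $u$-Hessian to an external reference (``Theorem 2.5 in the second paper of this series''). Your argument bypasses all of this by exploiting directly that the numerator vanishes to second order at $u=e_1$, which collapses the Hessian computation to the single clean formula $\xi^\top(\nabla^2 h)(e_1)\xi = 8\,\mathbb E\bigl[X^2(a\cdot\xi)^2/(1+(\beta_1+\beta_2)X^2)\bigr]$; the decomposition $\xi = se_1+\sqrt{1-s^2}e^\perp$ then gives positivity uniformly in $\xi$ at once. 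One small point worth making explicit in your write-up: the reason $\epsilon_0$ depends only on $(\beta_1,\beta_2)$ and not on $n$ is that $h(u)$ depends on $u$ only through $(\|u\|_2,\,u\cdot e_1)$, so the eigenvalues of $\nabla^2 h(u)$ are governed by a two-dimensional problem and the continuity modulus is dimension-free. Your dominated-convergence bound already implicitly contains this, since all the random quantities appearing ($a\cdot u$, $a\cdot e_1$, $a\cdot\xi$) are one-dimensional Gaussians with variance $O(1)$.
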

\begin{proof}
We shall employ the same approach as in the proof of Theorem 2.5 in the second paper of this series of work and
sketch only the needed modifications. Without loss of generality consider the 
regime $\| u-e_1\|_2\ll 1$ and introduce the change of variables:
\begin{align*}
& u = \rho  \hat u; \\
& \hat u = \sqrt{1-s^2} e_1 + s e^{\perp}, \qquad e^{\perp} \cdot e_1 =0, \, e^{\perp}
\in \mathbb S^{n-1}, 
\end{align*}
where $ |\rho -1 | \ll 1$ and $0\le s \ll 1$.  Denote
\begin{align*}
h_1(\rho,s) = h( u) =h( \rho ( \sqrt{1-s^2} e_1 +s e^{\perp}) ),
\end{align*}
where we note that the value of $h(u)$ depends only on ($\rho$, $s$). Clearly
\begin{align*}
h_1(\rho,s) = \frac 1 {2\pi}
\int_{\mathbb R^2}
\underbrace{\frac { \Bigl(\rho^2 ( \sqrt{1-s^2} x + s y)^2 - x^2 \Bigr)^2}
{ \rho^2 +\beta_1 \rho^2 (\sqrt{1-s^2} x +sy)^2  +\beta_2 x^2} }_{=:h_2(\rho,s,x,y)}
e^{-\frac {x^2+y^2} 2} dx dy.
\end{align*}
It is easy to check that 
\begin{align*}
\max_{\frac 12 \le \rho \le 2, |s|\le \frac 12}
\sum_{i,j\le 4} |\partial_{\rho}^i \partial_s^{j} h_1(\rho,s)| \lesssim 1.
\end{align*}
By a tedious computation, we have
\begin{align*}
&\partial_{\rho\rho}h_2(\rho,0,x,y) \notag \\
=&\;
\frac{ 2(3\rho^2+\rho^6) x^4 +k_1 \cdot x^6+ k_2 x^8}
{(\beta_2 x^2 +\rho^2(1+\beta_1 x^2) )^3},
\end{align*}
where 
\begin{align*}
&k_1=2(-\beta_2 +6\beta_1 \rho^2 +6\beta_2 \rho^2 +3\beta_2 \rho^4
+2\beta_1 \rho^6);\\
&k_2=2(-\beta_1\beta_2
-2\beta_2^2+3\beta_1^2\rho^2
+6\beta_1\beta_2 \rho^2 +6\beta_2^2 \rho^2 +3\beta_1\beta_2 \rho^4
+\beta_1^2 \rho^6).
\end{align*}
Since $\rho \to 1$, it is clear that $k_1>0$ and $k_2>0$, and thus 
\begin{align*}
\partial_{\rho\rho} h_1 (1, 0) \gtrsim 1.
\end{align*}
It is not difficult to check that $\partial_{s} h_1 (\rho, 0)=0$ for any $\rho>0$.
Clearly also $\partial_{\rho s} h_1( \rho, 0)=0$ for any $\rho>0$. 
To compute $\partial_{ss} h_1(1,0)$ we shall use Lemma \ref{lemSeI:1}. Observe
that ($s=\sin \theta$ with $\theta\to 0+$)
\begin{align*}
h_1(\rho, \sin \theta) &= \mathbb E f (u) ;\\
\cos \theta \partial_s h_1(\rho, \sin \theta) &= \mathbb E \partial_{\theta} f; \\
 -\sin \theta \partial_s h_1(\rho,\sin \theta)
+\cos^2\theta \partial_{ss} h_1(\rho,\sin \theta)&=
\mathbb E \partial_{\theta\theta} f.
\end{align*}
Clearly it follows that
\begin{align*}
\partial_{ss} h_1(1,0) \gtrsim 1.
\end{align*}
The rest of the argument is then essentially the same as in the proof of 
Theorem 2.5 in the second paper. We omit further details.
\end{proof}
\begin{proof}[Proof of Theorem \ref{Sep24e1}]
We rewrite
\begin{align*}
f(u)  =\frac 1m \sum_{k=1}^m G( |u|^2, (a_k\cdot u)^2, X_k^2),
\end{align*}
where 
\begin{align*}
G(a,b,c) = \frac { (b-c)^2} {a+\beta_1 b+\beta_2 c}.
\end{align*}
Clearly for any $\xi \in \mathbb S^{n-1}$, 
\begin{align} 
&\sum_{i,j=1}^n \xi_i \xi_j \partial_{u_i u_j} f \notag \\
=&\; \frac 1m \sum_{k=1}^m \partial_a G \cdot 2 |\xi|^2  \label{Sep24eA0.1}\\
& \quad + \frac 1m \sum_{k=1}^m \partial_{aa} G \cdot 4 (u\cdot \xi)^2 
\label{Sep24eA0.2} \\
& \quad + \frac 1m \sum_{k=1}^m \partial_{ab} G \cdot 8 (a_k\cdot u)
(a_k\cdot \xi) (\xi \cdot u)  \label{Sep24eA0.3}\\
& \quad+ \frac 1m \sum_{k=1}^m \partial_{bb} G
\cdot 4 (a_k\cdot u)^2 (a_k\cdot \xi)^2 \label{Sep24eA0.4} \\
&\quad+ \frac 1m \sum_{k=1}^m \partial_b G \cdot 2 (a_k\cdot \xi)^2.
\label{Sep24eA0.5}
\end{align}
In the above, $\partial_a G = (\partial_a G)( |u|^2, (a_k\cdot u)^2, X_k^2)$ and
similar notation is used for $\partial_{aa} G$, $\partial_{bb}G$, $\partial_b G$.

\texttt{Estimate of \eqref{Sep24eA0.1} and \eqref{Sep24eA0.2}}.  Clearly these two terms
are OK for union bounds, and we have (for $m\gtrsim n$ and with high probability)
\begin{align*}
|\eqref{Sep24eA0.1} - \operatorname{mean} |
+|\eqref{Sep24eA0.2} - \operatorname{mean} |
 \ll 1, \qquad\forall\, \xi \in
\mathbb S^{n-1}, \qquad\forall\, \frac 12 \le \|u\|_2 \le 2.
\end{align*}

\texttt{Estimate of \eqref{Sep24eA0.3}}. We have
\begin{align*}
(\partial_{ab} G)(a,b,c)= - \frac{2(b-c) (a+(\beta_1+\beta_2) c)} { (a+\beta_1 b+\beta_2 c)^3}.
\end{align*}
Consider the function 
\begin{align*}
\tilde G_1 (a,y,c) = - 
y \frac{2(y^2-c) (a+(\beta_1+\beta_2) c)} { (a+\beta_1 y^2+\beta_2 c)^3}.
\end{align*}
Clearly for $\frac 1 {10}\le a,\tilde a \le 10$, $y,\tilde y\in \mathbb R$, $c\ge 0$,
we have $|\tilde G_1|\lesssim 1$ and 
\begin{align*}
|\tilde G_1(a,y,c)-\tilde G_1(\tilde a,\tilde y, c)|
\lesssim |a-\tilde a| +|y-\tilde y|.
\end{align*}

Then for any ($u$, $\tilde u$) with
$\frac 12 \le \|u\|_2, \|\tilde u\|_2 \le 2$ and ($\xi$, $\tilde \xi$) with
$\xi, \tilde \xi \in \mathbb S^{n-1}$, we have 
\begin{align*}
  &\Bigl| (\partial_{ab} G) (|u|^2, (a_k\cdot u)^2, X_k^2) (a_k\cdot u) (a_k\cdot \xi) \notag \\
  &\quad- (\partial_{ab} G)(|\tilde u|^2,
  (a_k\cdot \tilde u)^2, X_k^2) (a_k\cdot \tilde u ) (a_k\cdot \tilde \xi) \Bigr| \notag \\
\lesssim & \;|a_k\cdot (\xi -\tilde \xi)| + |a_k\cdot \xi| \cdot 
(|a_k\cdot (u-\tilde u)| +\|u-\tilde u\|_2). 
\end{align*}  
Thus the union bound is also OK for this term, and we have 
\begin{align*}
|\eqref{Sep24eA0.3} - \operatorname{mean} |
\ll 1, \qquad\forall\, \xi \in
\mathbb S^{n-1}, \qquad\forall\, \frac 12 \le \|u\|_2 \le 2.
\end{align*}

\texttt{Estimate of \eqref{Sep24eA0.4} and \eqref{Sep24eA0.5}}. 
We begin by noting that  \eqref{Sep24eA0.4} and \eqref{Sep24eA0.5} can be 
combined into one term. Namely, observe that 
\begin{align*}
 & (\partial_{bb}G)(a,b,c) \cdot 2b +(\partial_b G)(a,b,c) \notag \\
 =& \frac {H_1} {(a+\beta_1 b+\beta_2 c)^3},
 \end{align*}
 where
 \begin{align*}
 H_1&=\beta_1^2 b^3
 +a^2(6b-2c)+3\beta_1\beta_2 b^2c+3b(\beta_1^2+2\beta_1\beta_2
 +2\beta_2^2)c^2-\beta_2(\beta_1+2\beta_2)c^3 \notag \\
 &\quad 
 +a\Bigl(3\beta_1 b^2+6(\beta_1+2\beta_2)bc-(\beta_1+4\beta_2)c^2\Bigr).
 \end{align*}
We can then write
\begin{align*}
 & \eqref{Sep24eA0.4} + \eqref{Sep24eA0.5} \notag\\
 =&\; \frac 1m \sum_{k=1}^m (a_k\cdot \xi)^2 h_3(u, a_k\cdot u, X_k),
 \end{align*}
 where $h_3$ is a bounded smooth function with bounded derivatives in all of its arguments.
 Now let $\phi \in C_c^{\infty}$ be such that $0\le \phi (x) \le 1$ for all $x$, $\phi(x)=1$
 for $|x|\le 1$ and $\phi(x)=0$ for $|x|\ge 2$. We then split the sum as
 \begin{align*}
 & \frac 1m \sum_{k=1}^m (a_k\cdot \xi)^2 h_3(u, a_k\cdot u, X_k),
  \notag \\
 =& \frac 1m \sum_{k=1}^m (a_k\cdot \xi)^2 \phi(\frac{a_k\cdot \xi} K)
  h_3(u, a_k\cdot u, X_k) \notag \\
  & \qquad+ \frac 1m \sum_{k=1}^m (a_k\cdot \xi)^2  (1-\phi(\frac {a_k\cdot \xi} K) )
  \cdot h_3(u, a_k\cdot u, X_k),
 \end{align*}
 where $K$ will be taken sufficiently large. Clearly the first term will be OK
 for union bounds. On the other hand, the second term can be dominated by
 \begin{align*}
 \operatorname{const} \cdot \frac 1m \sum_{k=1}^m
 (a_k\cdot \xi)^2 ( 1- \phi(\frac {a_k\cdot \xi} K) ),
 \end{align*}
 which can be made small by taking $K$ large. 
 Thus we have 
 \begin{align*}
 | \eqref{Sep24eA0.4}+\eqref{Sep24eA0.5}
 -\operatorname{mean} | \ll 1, 
 \quad\forall\, \xi \in \mathbb S^{n-1}, \quad\forall\, \frac 12 \le \|u\|_2\le 2.
 \end{align*}
 
 Collecting the estimates, we have for $m\gtrsim n$ and with high probability,
 \begin{align*}
 \Bigl| \sum_{i,j=1}^n \xi_i \xi_j \partial_{u_i u_j} f(u ) - \operatorname{mean} \Bigr|\ll 1, 
\quad\forall\, \xi \in \mathbb S^{n-1}, \quad\forall\, \frac 12 \le \|u\|_2\le 2.
 \end{align*}
 The desired result then follows from Lemma \ref{lemSeI:2}.
\end{proof}


\begin{thebibliography}{10}

\bibitem{bhojanapalli2016global}
S. Bhojanapalli, N. Behnam, and N. Srebro,
\newblock ``Global optimality of local search for low rank matrix recovery,''
\newblock {\em  Advances in Neural Information Processing Systems},  pp. 3873--3881, 2016.




\bibitem{Phaseliftn}
E. J.  Cand\`es and X. Li,
\newblock ``Solving quadratic equations via PhaseLift when there are about as many equations as unknowns,''
\newblock {\em Found. Comut. Math.}, vol. 14, no. 5, pp. 1017--1026, 2014.


\bibitem{WF}
E. J.  Cand\`es, X. Li, and M. Soltanolkotabi,
\newblock ``Phase retrieval via Wirtinger flow: Theory and algorithms,''
\newblock {\em IEEE Trans. Inf. Theory}, vol. 61, no. 4, pp. 1985--2007,
  2015.

\bibitem{phaselift}
E. J.  Cand\`es, T. Strohmer, and V. Voroninski,
\newblock ``Phaselift: Exact and stable signal recovery from magnitude
  measurements via convex programming,''
\newblock {\em Commun. Pure Appl. Math.},  vol. 66, no. 8, pp. 1241--1274, 2013.


\bibitem{2020a}
J. Cai, M. Huang, D. Li and Y. Wang,
\newblock `` Solving phase retrieval with random initial guess is nearly as good as by spectral
 initialization,"
\newblock {\em Appl. Comput. Harmon.  Anal.}, 2021.

\bibitem{2021b}
J. Cai, M. Huang, D. Li and Y. Wang,
\newblock `` Nearly optimal bounds for the global geometric landscape of phase retrieval,"
\newblock  arxiv preprint, in preparation, 2021.

\bibitem{TWF}
Y. Chen and E. J. Cand\`es,
\newblock `` Solving random quadratic systems of equations is nearly as easy as
  solving linear systems,''
\newblock {\em Commun. Pure Appl. Math.}, vol. 70, no. 5, pp. 822--883, 2017.


\bibitem{fienup1987phase}
J. C. Dainty and J.R. Fienup,
 \newblock ``Phase retrieval and image reconstruction for astronomy,''
 \newblock {\em Image Recovery: Theory and Application}, vol. 231, pp. 275, 1987.


\bibitem{du2017gradient}
S. S. Du, C. Jin, J. D. Lee, and M. I. Jordan,
\newblock ``Gradient descent can take exponential time to escape saddle points,''
\newblock {\em  Advances in Neural Information Processing Systems},  pp. 1067--1077, 2017.


\bibitem{ER3}
J. R. Fienup,  ``Phase retrieval algorithms: a comparison,''
\newblock {\em Appl. Opt.},  vol. 21, no. 15, pp. 2758--2769, 1982.




\bibitem{PAF}
B. Gao, Y. Wang, and Z. Xu,
\newblock Solving a perturbed amplitude-based model for phase retrieval, 2019 [Online].
\newblock Available: http://arxiv.org/abs/1904.10307

\bibitem{Gaoxu}
B. Gao and Z. Xu,
\newblock ``Phaseless recovery using the Gauss--Newton method,''
\newblock {\em IEEE Trans. Signal Process.},  vol. 65, no. 22, pp. 5885--5896,
  2017.

\bibitem{ge2016matrix}
R. Ge, F. Huang, C. Jin, and Y. Yuan,
\newblock ``Escaping from saddle points—online stochastic gradient for tensor decomposition,''
\newblock {\em Conference on Learning Theory},  pp. 797--842,
  2015.

\bibitem{ge2015escaping}
R. Ge, J. Lee, C. Jin, and T. Ma,
\newblock ``Matrix completion has no spurious local minimum,''
\newblock {\em Advances in Neural Information Processing Systems},  pp. 2973--2981,
  2016.




\bibitem{Gerchberg1972}
R. W. Gerchberg,
\newblock “A practical algorithm for the determination of phase from image and diffraction plane pictures,”
\newblock {\em Optik}, vol. 35, pp. 237--246, 1972.
%






\bibitem{gerchberg1972practical}
R. W. Gerchberg and W. O. Saxton,
\newblock ``A practical algorithm for the determination of the phase from image and diffraction plane pictures,''
\newblock {\em Optik}, vol. 35, pp. 237-246, 1972.




\bibitem{harrison1993phase}
R. W. Harrison,
\newblock ``Phase problem in crystallography,''
\newblock {\em JOSA A},  vol. 10, no. 5, pp. 1046--1055, 1993.

\bibitem{huang2021linear}
M. Huang and Y. Wang,
\newblock Linear convergence of randomized Kaczmarz method for solving complex-valued phaseless equations, 2021 [Online].
\newblock Available: http://arxiv.org/abs/2109.11811

\bibitem{jin2017escape}
C. Jin, R. Ge, P. Netrapalli, S. M. Kakade, and M. I. Jordan,
\newblock ``How to escape saddle points efficiently,''
\newblock {\em Proceedings of the 34th International Conference on Machine Learning-Volume 70}, pp. 1724--1732, 2017.


\bibitem{jin2017accelerated}
C. Jin, P. Netrapalli, and M. I. Jordan,
\newblock Accelerated gradient descent escapes saddle points faster than gradient descent, 2017 [Online].
\newblock Available: http://arxiv.org/abs/1711.10456

\bibitem{cai2019}
Z. Li, J. F. Cai, and K. Wei,
\newblock ``Towards the optimal construction of a loss function without spurious local minima for solving quadratic equations,''
\newblock {\em IEEE Trans. Inf. Theory}, vol. 66, no. 5, pp. 3242--3260, 2020.


\bibitem{miao2008extending}
J. Miao, T. Ishikawa, Q. Shen, and T. Earnest,
\newblock ``Extending x-ray crystallography to allow the imaging of
  noncrystalline materials, cells, and single protein complexes,''
\newblock {\em Annu. Rev. Phys. Chem.}, vol. 59, pp. 387--410, 2008.

\bibitem{millane1990phase}
R. P. Millane,
\newblock ``Phase retrieval in crystallography and optics,''
\newblock {\em J. Optical Soc. America A}, vol. 7, no. 3, pp.  394-411, 1990.


\bibitem{AltMin}
P. Netrapalli, P. Jain, and S. Sanghavi,
\newblock ``Phase retrieval using alternating minimization,''
\newblock {\em IEEE Trans. Signal Process.},  vol. 63, no. 18, pp. 4814--4826,
  2015.


\bibitem{park2016non}
D. Park, A. Kyrillidis, and C. Caramanis,
\newblock Non-square matrix sensing without spurious local minima via the Burer-Monteiro approach, 2016 [Online].
\newblock Available: http://arxiv.org/abs/1609.03240
%
%
%
%

\bibitem{Sahinoglou}
H. Sahinoglou and S. D. Cabrera,
\newblock ``On phase retrieval of finite-length sequences using the initial time sample,''
\newblock {\em IEEE Trans. Circuits and Syst.},  vol. 38, no. 8, pp. 954--958, 1991.
%

\bibitem{shechtman2015phase}
Y. Shechtman, Y. C. Eldar, O. Cohen, H. N. Chapman, J. Miao, and M. Segev,
\newblock ``Phase retrieval with application to optical imaging: a contemporary
  overview,''
\newblock {\em IEEE Signal Process. Mag.}, vol. 32, no. 3, pp. 87--109, 2015.

\bibitem{Sun18}
J. Sun, Q. Qu, and J, Wright,
\newblock ``A geometric analysis of phase retrieval,''
\newblock {\em Found. Comput. Math.}, vol. 18, no. 5, pp.  1131--1198, 2018.

\bibitem{sun2016complete}
J. Sun, Q. Qu, and J, Wright,
\newblock ``Complete dictionary recovery over the sphere I: Overview and the geometric picture,''
\newblock {\em IEEE Trans. Inf. Theory}, vol. 63, no. 2, pp.  853--884, 2016.


\bibitem{tan2019phase}
Y. S.  Tan and R. Vershynin,
\newblock ``Phase retrieval via randomized kaczmarz: Theoretical guarantees,''
\newblock {\em Information and Inference: A Journal of the IMA},  vol. 8, no. 1, pp.  97--123, 2019.


\bibitem{Vershynin2018}
R. Vershynin,
\newblock {\em High-dimensional probability: An introduction with applications
  in data science.}
\newblock U.K.:Cambridge Univ. Press, 2018.




\bibitem{Waldspurger2015}
I. Waldspurger, A. d'Aspremont, and S. Mallat,
\newblock ``Phase recovery, maxcut and complex semidefinite programming,''
\newblock {\em Math. Prog.},  vol. 149, no. 1-2, pp. 47--81, 2015.


\bibitem{walther1963question}
A. Walther,
\newblock ``The question of phase retrieval in optics,''
\newblock {\em J. Mod. Opt.}, vol. 10, no. 1, pp. 41--49, 1963.

\bibitem{TAF}
G. Wang, G. B. Giannakis, and Y.~C. Eldar,
\newblock ``Solving systems of random quadratic equations via truncated amplitude
  flow,''
\newblock {\em IEEE Trans. Inf. Theory},  vol. 64, no. 2, pp. 773--794, 2018.


\bibitem{Wei2015} 
K. Wei,
\newblock ``Solving systems of phaseless equations via kaczmarz methods: a proof of concept study,''
\newblock {\em  Inverse Probl.},  vol. 31, no. 12, 125008, 2015.


\bibitem{RWF}
H. Zhang, Y. Zhou, Y. Liang, and Y. Chi,
\newblock ``A nonconvex approach for phase retrieval: Reshaped wirtinger flow and incremental algorithms,''
\newblock {\em The Journal of Machine Learning Research}, vol. 18, no. 1, pp. 5164--5198, 2017.




\end{thebibliography}
\end{document}